\newcommand{\tu}{\tilde{u}}
\newcommand{\barP}{\overline P}
\newcommand{\TryPackage}[3]{\IfFileExists{#1.sty}{\usepackage{#1}#2}{#3}
}
\newcommand{\ep}{\epsilon}
\newcommand{\Si}{\Sigma}
\newcommand{\eA}{\epsilon_A}
\newcommand{\eB}{\epsilon_B}
\newcommand{\bbi}{{{\bf i}}}
\newcommand{\bbk}{{{\bf k}}}
\newcommand{\HH}{{\mathbb H}}
\newcommand{\FF}{{\mathbb F}}
\newcommand{\ZZ}{{\mathbb Z}}
\newcommand{\RR}{{\mathbb R}}
\newcommand{\tA}{{\widetilde A}}
\newcommand{\tB}{{\widetilde B}}
\newcommand{\tC}{{\widetilde C}}
\newcommand{\tD}{{\widetilde D}}
\newcommand{\hC}{{\widehat C}}
\newcommand{\hA}{{\widehat A}}
\newcommand{\hB}{{\widehat B}}
\newcommand{\hD}{{\widehat D}}
\newcommand{\hb}{{\hat b}}
\newcommand{\Hom}{\operatorname{Hom}}
\newcommand{\Mas}{\operatorname{Mas}}
\newcommand{\nat}{\natural}
\newcommand{\Real}{\operatorname{Re}}
\theoremstyle{definition}
\newtheorem{df}{Definition}[section]
\theoremstyle{plain}
\newtheorem{theorem}{Theorem}
\newtheorem{thm}[df]{Theorem}
\newtheorem{cor}[df]{Corollary}
\newtheorem{lem}[df]{Lemma}
\newtheorem{prop}[df]{Proposition}
\newtheorem{rem}[df]{Remark}
\newtheorem{conj}[df]{Conjecture}
\date{}
\thanks{ }
\author{Matthew Hedden}
\address{Department of Mathematics, Michigan State University, East Lansing, MI 48824} 
\email{mhedden@math.msu.edu}
\author{Christopher M. Herald}
\address{Department of Mathematics,   University of Nevada, Reno, Reno, NV 89557} 
\email{herald@unr.edu}
 \author{Paul Kirk}
\address{Department of Mathematics, Indiana University, Bloomington, IN 47405} 
\email{pkirk@indiana.edu}
\subjclass[2010]{Primary 57M27, 57R58, 57M25 ; Secondary 81T13} 
\keywords{pillowcase, holonomy perturbation, instanton, Floer homology, character variety, two bridge knot, torus knot}
\begin{document}

\title[The pillowcase and traceless representations of knot groups II] {The pillowcase and traceless representations of knot groups II: a Lagrangian-Floer theory in the pillowcase }

\begin{abstract}  We define an elementary  relatively $\ZZ/4$ graded Lagrangian-Floer chain complex   for  restricted immersions  of   compact 1-manifolds into the pillowcase, and apply it to the  intersection diagram obtained by taking traceless $SU(2)$ character varieties of 2-tangle decompositions of   knots.  Calculations for torus knots are  explained in terms of pictures in the punctured plane.  The relation to the reduced instanton homology of knots is explored. \end{abstract}

 \maketitle

 \section{Introduction}
 In \cite{KM1}, Kronheimer and Mrowka introduced a powerful invariant of a knot or link in a  3-manifold $L\subset X$ called {\em singular instanton knot homology}.  Denoted  $I^\nat(Y,L)$, their invariant is defined in the context of gauge theory.  Roughly, the theory associates to $L$ a chain group $C^\nat(X,L)$ generated by flat $SO(3)$ connections on  $X\setminus L$ which have a prescribed singularity near $L$. This group is endowed with a differential that counts anti-self-dual instantons on $X \times \mathbb{R}$ which limit to given flat connections on the ends.   Singular instanton knot homology has an important computational tool called the {\em skein exact triangle}. This is a long exact sequence relating the homology groups of links which agree outside of a small 3-ball, where they differ in a simple way.   Iterated application of the exact triangle using a collection of 3-balls leads to a spectral sequence which converges to $I^\nat(X,L)$. This spectral sequence, when applied to a collection of 3-balls containing all the crossings of a diagram for a link in the $3$-sphere, has $E_2$ page isomorphic to the well-known combinatorial knot invariant  Khovanov homology \cite{KM-khovanov}.  The existence of this spectral sequence, together with a non-triviality result for $I^\nat(S^3,L)$ coming from its relation to another knot invariant (sutured instanton knot homology) \cite{KM-s}, allowed Kronheimer and  Mrowka to prove the striking result that Khovanov homology detects the unknot.  Despite this triumph,  $I^\nat(X,L)$ remains rather mysterious.  This is due in large part to the fact that computations are extremely scarce.  Initially, the only route for computation was through  the aforementioned spectral sequence, but, aside from the instances where it collapses for simple reasons at Khovanov homology, little headway has been made in this direction  (though see \cite{KM-filtrations,LZ} for more sophisticated computations using the spectral sequence).
 
 Motivated by a desire for a more explicit understanding of the singular instanton chain complexes, we began a project in \cite{HHK} which aims to make concrete  direct calculations of $I^\nat(X,L)$.  This is not so easy, and a serious initial sticking  point arises from  the fact that the flat connections which generate $C^\nat(X,L)$ are never isolated.  Indeed, aside from the case of the unknot in $S^3$, the  spaces of flat connections studied by the theory are always positive dimensional varieties.  For this reason, one needs to perturb the Chern-Simons functional that gives rise to $I^\nat(X,L)$ through its Morse homology.  The robust holonomy perturbations used to set up the general theory destroy the concrete algebraic interpretation of the generating set for $C^\nat(X,K)$ in terms of certain traceless representations of the fundamental group of $X\setminus L$, and the goal of our first paper was to retain such an interpretation by way of explicit local, and in some sense minimal, holonomy perturbations.  The main idea from \cite{HHK} was to  pick a particular distinguished 3-ball in $X$ which intersects $L$ in a trivial 2-stranded tangle.   We then performed an explicit holonomy perturbation to the  Chern-Simons functional in the neighborhood of a curve living in this ball.  Using this perturbation allowed us, in a  variety of examples, to perform computations of singular instanton chain groups for many knots (e.g., many torus knots)  which implied that the spectral sequence from Khovanov homology necessarily had large rank higher differentials.  In many of these cases we produced  perfect complexes, i.e.,  complexes with trivial differential, so that our computation determined the singular instanton homology despite the fact that the spectral sequence from Khovanov homology was not understood.
 
 The key perspective for the computations of \cite{HHK} is that the generators of the singular instanton chain groups can be interpreted as the intersections of two immersed 1-manifolds in a 2-dimensional orbifold, {\em the pillowcase}, which  arises as the quotient of the torus by the hyperelliptic involution (see Section \ref{RLITP} for more details).   This perspective results from the  observation that the choice of trivial 2-stranded tangle $(D,U)\subset (X,L)$ where we perform the perturbation results in a decomposition of the link
 $$  (X,L)  =   (X\setminus D, L\setminus U)  \cup_{(S^2,\{ 4\ \text{points}\})} (D,U)$$
We let $(Y,T):= (X\setminus D, L\setminus U)$ denote  the complementary tangle.  Now $C^\nat(X,L)$ is generated by certain conjugacy classes of  perturbed traceless representations  $\rho:\pi_1(X\setminus L)\rightarrow SU(2)$.  Using the above decomposition, they can be viewed as the intersection of the restrictions 
$$
\begin{diagram}\node{R(Y,T)}\arrow{se}\node[2]{R^\nat_\pi(D, U)}\arrow{sw}\\
\node[2]{R(S^2,\{4 \ \text{points}\})\simeq P}
\end{diagram}
$$
where $R(Y,T)$ denotes the traceless $SU(2)$ character variety of the complementary tangle, ${R^\nat_\pi(D, U)}$ denotes the perturbed traceless character variety of the trivial tangle (suitably summed with the Hopf link to allow for a non-trivial bundle), and  
 $R(S^2,\{4 \ \text{points}\})$ is the traceless character variety of the $4$-punctured 2-sphere where the tangles intersect.  The latter variety is  isomorphic to the pillowcase, which we denote by $P$ \cite[Proposition 3.1]{HHK}.  In \cite[Theorem 1]{HHK} we calculated the restriction map $L_0:{R^\nat_\pi(D, U)}\rightarrow P$, showing that for certain perturbation data its image was an immersed circle with exactly one double point, a ``figure eight".  Provided that the image of the restriction $L_1:R(Y,T) \rightarrow P$ is an immersed 1-manifold transverse to this figure eight, there is a bijection  between generators for $C^\nat(X,L)$ and intersections of the images of $L_0$ and $L_1$:
 $$  C^\nat(X,L)  =  \bigoplus_{x\in \{\text{Image}\ L_0\ \cap\  \text{Image}\ L_1\}} \mathbb{Z}/2 \langle x\rangle$$ 
    This perspective allows for the computation of $C^\nat(X,L)$ for an arbitrary 2-bridge or torus knot and for certain pretzel knots (see \cite[Sections 10 and 11]{HHK} and  \cite{FKP}). 

Though it made  progress towards our goal of making the singular instanton complexes more computable, the approach of \cite{HHK} had two serious drawbacks.  The first is that it was not clear whether, given a  link $(X,L)$, a trivial tangle $(D,U)\subset (X,L)$ can be found for which  $L_1: R(Y, T)\rightarrow P$ is an immersed 1-manifold transverse to the image of $L_0$.  The second is that even when such a tangle can be found, we had no way to compute the instanton differential  on the resulting chain group.  The  purpose of the present article is to address this second issue.  

A hint towards a possible understanding of the differential is gleaned by  viewing our setup through the lens of an ever growing body of conjectured or established relationships between gauge theoretic and symplectically defined Floer theories (e.g., \cite{Atiyah1,Atiyah2,Taubes,DS,wehrheim,WW,Duncan} discuss relationships between Yang-Mills gauge theory and symplectic invariants).  These relationships are often described as ``Atiyah-Floer Conjectures",  and  our description of $C^\nat(X,L)$ suggests looking for a differential on the instanton complex in terms of the symplectic geometry of the pillowcase.  In fact such a differential exists, and in the first half of this paper  we introduce an elementary $\ZZ/4$ relatively graded Lagrangian-Floer type chain complex  for appropriate  immersions  of   compact 1-manifolds ({\em restricted immersed Lagrangians}) into the pillowcase.  That is, we define  a complex generated by intersections of the images of immersed 1-manifolds, whose boundary operator is defined by counting the analog of holomorphic disks connecting them (in this low-dimensional setting, it will suffice to count orientation preserving immersions of disks into the pillowcase connecting intersections of $L_0$ and $L_1$).  In pursuit of our chain complexes, we draw liberally from the foundational work of Abouzaid \cite{abou}   on Floer homology for immersed Lagrangians in Riemann surfaces and de Silva-Robbin-Salamon \cite{SRV} for combinatorial and homotopy-theoretic aspects of Lagrangian Floer homology in this setting. Our work here can be viewed both as generalization and specialization of the existing literature, and our primary contribution is clarifying invariance proofs and properties of immersed Lagrangian Floer homology in the $\mathbb{Z}/4$ graded setting, and when some of the Lagrangians are immersed arcs (as opposed to circles).  The main result towards this end is Theorem \ref{rhi}, which can be paraphrased as follows.

\begin{theorem} Let $(L_0,L_1)$ be a pair of restricted  immersed $1$-manifolds in the pillowcase such that at least one of $L_i$ consists only of circles.  Then there is a well-defined  Floer homology group $HF(L_0,L_1)$ whose relatively $\mathbb{Z}/4$ graded isomorphism type depends only on the free homotopy type of the pair $(L_0,L_1)$.
\end{theorem}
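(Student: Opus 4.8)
The plan is to follow the standard construction of Lagrangian Floer homology, while exploiting the fact that $P$ is two-dimensional so that throughout one may replace ``holomorphic disk'' by ``orientation-preserving immersed disk,'' as in the work of Abouzaid \cite{abou} and de Silva--Robbin--Salamon \cite{SRV}. Fix a pair $(L_0,L_1)$ of restricted immersed $1$-manifolds in \emph{general position}: all mutual and self intersections are transverse and disjoint from the four corners of $P$, and the prescribed local model holds along any boundary arcs; any pair can be put in general position by an arbitrarily $C^0$-small perturbation that does not change its free homotopy type. Set $CF(L_0,L_1)=\bigoplus_{x\in L_0\cap L_1}\ZZ/2\langle x\rangle$, a finite group by transversality and compactness, and define $\partial x=\sum_y n(x,y)\,y$ where $n(x,y)\in\ZZ/2$ is the number, mod $2$, of rigid orientation-preserving immersed disks (bigons) from $x$ to $y$ whose boundary alternates between $L_0$ and $L_1$ and turns only at $x$ and $y$. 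One first checks that, for generic $(L_0,L_1)$, only finitely many such disks occur; this is the combinatorial compactness of \cite{abou,SRV}, whose inputs are that $P$ is compact and that, in general position, no intersection point lies at a corner of $P$.

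Next one proves $\partial^2=0$. The $z$-coefficient of $\partial^2 x$ counts broken pairs consisting of a rigid bigon $x\to y$ followed by a rigid bigon $y\to z$; by the usual degeneration argument these are exactly the boundary points of a compact $1$-manifold of immersed disks from $x$ to $z$ of the next index, so their number is even. For this to be the \emph{entire} boundary one must exclude two orbifold-specific degenerations: a corner of the disk colliding with a corner of $P$, and teardrop bubbling, i.e., an immersed disk with a single corner at a self-intersection of $L_0$ or $L_1$ splitting off. The first cannot happen because, as one of $L_0,L_1$ consists only of circles, every boundary corner of a disk lies in the smooth part of $P$; the second is ruled out by the restricted hypothesis, which is arranged precisely so that restricted immersions in $P$ admit no teardrops. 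The relative grading is by the Maslov index of the disks, which with the paper's grading conventions takes values in $\ZZ/4$; a direct computation on a bigon shows that $\partial$ lowers this index by $1$, so $CF(L_0,L_1)$ is a relatively $\ZZ/4$-graded complex and $HF(L_0,L_1):=H_\ast(CF(L_0,L_1),\partial)$ is defined.

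Invariance is then reduced to a finite list of elementary moves. Making a free homotopy of the pair generic, it becomes a concatenation of: (a) ambient isotopies through general-position pairs; (b) a single $L_0$--$L_1$ tangency, i.e., the birth or death of a canceling pair of intersection points; (c) a triple point among the strands of $L_0$ and $L_1$; (d) a self-tangency or triple point internal to $L_0$, or to $L_1$; and (e) a strand crossing a corner of $P$. For (a), no rigid disk can appear or disappear without a disk corner meeting an intersection point or a corner of $P$, so the complexes are canonically identified. For (b), immediately after a birth the new generators $x,y$ satisfy $\langle\partial x,y\rangle=1$ via the small bigon, and every other rigid disk through $x$ or $y$ is obtained by absorbing that small bigon into a unique disk of the old pair; Gaussian elimination of $(x,y)$ then recovers a complex isomorphic to the pre-birth one. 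For (c) and the triple points in (d), the sets of counted disks are in canonical bijection across the wall, so $\partial$ changes at most by a canonical chain isomorphism. For the self-tangencies in (d), since boundaries do not turn at self-intersection points the small self-bigon created by the move bounds no counted disk and no count changes, and one only rechecks, via the restricted hypothesis, that no teardrop is created. For (e) one builds a continuation-type chain map; composing it with the chain map of the reverse move gives the chain map of a null-homotopic homotopy, hence a map chain homotopic to the identity, so (e) induces a chain homotopy equivalence. Composing these isomorphisms along the homotopy shows that the relatively $\ZZ/4$-graded isomorphism type of $HF(L_0,L_1)$ depends only on the free homotopy type of $(L_0,L_1)$.

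The step I expect to be the genuine obstacle is the orbifold analysis near the four corners of $P$: pinning down the correct ``restricted'' local model, proving the attendant absence of teardrops, and defining the $\ZZ/4$ Maslov index, so that the compactness statement and the classification of degenerations in both the $\partial^2=0$ and invariance arguments go through. This is exactly where the hypothesis that one of $L_0,L_1$ consists only of circles is essential: it prevents arcs of the two Lagrangians from colliding at a corner and thereby confines every disk corner to the smooth locus of $P$. A secondary technical point is the transversality and elementary-singularity argument showing that a generic free homotopy of pairs of \emph{restricted} immersions really does decompose into the moves (a)--(e) above, which is again complicated by the presence of the orbifold points.
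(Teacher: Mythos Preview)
Your construction of the complex and proof that $\partial^2=0$ are essentially the paper's, with two terminological slips worth flagging: in the paper ``unobstructed'' is the no-fishtails condition that rules out teardrops, while ``restricted'' is the separate Maslov condition $\mu(L,\ell_{\mathrm{inst}})\equiv 0\bmod 4$ that makes the $\ZZ/4$ grading well defined; and finiteness of bigons also uses an admissibility hypothesis (no circle of $L_0$ freely homotopic to one of $L_1$) that you do not mention. Your invariance argument, however, takes a genuinely different route. You propose a bifurcation analysis: make the homotopy generic and check invariance across each elementary wall (birth/death, triple points, self-tangencies) by hand, in the spirit of de~Silva--Robbin--Salamon. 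The paper instead follows Abouzaid's continuation-map strategy: it passes to the annular cover $\Sigma\cong S^1\times\RR$ of $P^*$ corresponding to the cyclic subgroup generated by the circle $L_0$, arranges (via a finite sequence of intermediate curves, Lemma~4.2 of \cite{abou}) that consecutive homotopic circles meet transversely in exactly two points $a,b$ in $\Sigma$, and uses the cycle $b\in C(L_0,L_0')$ to define a chain map $\mu_2(b,-)\colon C(L_0',L_1)\to C(L_0,L_1)$ by counting immersed triangles; the $A_2$ relation makes this a chain map and the $A_3$ relation furnishes the chain homotopy to an isomorphism $\mu_2(e,-)$ with a nearby pushoff. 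A parallel argument in the universal cover $\HH$ handles the arc case. Your approach is more elementary and avoids the $A_\infty$ machinery, but the paper's approach absorbs all the wall-crossing bookkeeping of your moves (b)--(d) into the single $A_3$ identity and generalizes directly to the Fukaya-categorical framework alluded to in the introduction. One small correction: your move (e), a strand crossing a corner of $P$, should not arise at all---circles are homotoped within $P^*$ and arc endpoints remain fixed at their corners throughout---so the ``continuation-type chain map'' you sketch for it is unnecessary.
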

 
 The second half of the article applies this construction to the situation described above, when $L_0:{R^\nat_\pi(D, U)}\rightarrow P$ is the restriction map from the perturbed traceless $SU(2)$ character variety of the trivial tangle and $L_1:R_\pi(Y,T)\rightarrow P$ is the restriction  of the perturbed traceless character variety of a  tangle $T$ in a homology 3-ball $Y$ (e.g.,  the complementary tangle to an embedded trivial tangle in a pair $(X,L)$ as above).  In favorable circumstances, such as when $(Y,T)$ is a certain tangle naturally associated to a 2-bridge or torus knot, the map $L_1:R_\pi(Y,T)\to P$ is a restricted Lagrangian without perturbations, so that the chain complex  $C^\nat(Y,T,\pi):=CF(L_0,L_1)$ and its homology $H^\nat(Y,T,\pi)$, which we refer to as the {\bf pillowcase homology}  of $(Y,T)$, are defined.     We then calculate the pillowcase homology for a number of examples, and show that it agrees with the singular instanton homology    in cases where the latter is known (e.g., 2-bridge knots and many torus knots).  More generally, our computations of pillowcase homology agree with conjectures for  $I^\nat(S^3,K)$  in many other cases.  We make the following Atiyah-Floer type conjecture:
 
 \medskip

\noindent{\bf Conjecture.} {\em Given a knot $K$ in a homology sphere $X$, there exists a 2-tangle decomposition $(X,K)=(Y,T)\cup (D,U)$   with  $(D,U)$ a trivial 2-tangle, and arbitrarily small  perturbations $\pi$, so that $L_1:R_\pi(Y,T)\to P$ is a restricted immersed 1-manifold for which the resulting pillowcase homology $H^\nat(Y,T,\pi)$ is isomorphic to $I^\nat(X,K)$ as a  $\mathbb{Z}/4$ relatively graded group.}

\medskip

We should note that the holonomy perturbations $\pi$ that we use to make the traceless character varieties regular and transverse are compatible with the perturbations used to make the moduli spaces regular in the construction of $I^\nat(S^3,K)$.   We should also note that while we have in some sense dealt with the second drawback from our first paper, in the sense that we have constructed a differential, the present results still leave us quite far from achieving our goal of computing $I^\nat(X,K)$.  Indeed, we do not yet know how to construct the general perturbations necessary to even define the pillowcase  homology (nor do we know how to pick the embedded trivial tangle $(D,U)$).  Moreover, even in the cases that we can find perturbations which make the pillowcase homology well-defined, it is not true that any such perturbations will yield a complex whose homology agrees with $I^\nat(X,K)$.

We hope to remedy  these concerns for the case of links in the 3-sphere in a subsequent article, which will  develop a spectral sequence from Khovanov homology to  pillowcase homology.  This will rely on picking a particular trivial tangle with which to apply our construction, and then iterating a skein exact triangle satisfied by pillowcase homology  in a similar manner to Kronheimer and Mrowka's construction.  This will rely on picking diagrammatically defined perturbation curves for the diagram of the complementary tangle $(Y,T)$.    Provided that we can construct this spectral sequence, it will have the advantage of providing an algebraic route to proving not only that pillowcase homology can be defined for arbitrary links in the 3-sphere, but also for showing that it is an invariant of the isotopy type of the link (in reality, we will   prove the stronger result that the quasi-isomorphism type of a certain twisted complex built from Lagrangian immersions  is a tangle invariant living in an appropriately defined Fukaya category of the pillowcase).   Moreover, it will provide a mechanism for calculating the higher differentials in the spectral sequence, since they will be combinatorially computable via the Riemann mapping theorem for polygons.  While much remains to be done to achieve this goal, we are given hope from the fact that we have already established the skein exact triangle for pillowcase homology in our work-in-progress.

\medskip

\noindent{\bf Outline:}  We now summarize the results of the article. In Section \ref{LFT} we recall  and extend work of Abouzaid \cite{abou} to construct a Lagrangian-Floer theory for curves   in surfaces, and outline the basic properties of the Maslov index.

In Section \ref{RLITP}
we recall the construction of the pillowcase $P$ as a quotient of $\RR^2$ by $\ZZ^2\ltimes \ZZ/2$. This is a 2-sphere with four singular points  {\em (corners)}.  Motivated by the main result of our previous article \cite{HHK}, we fix a  family $\{L^{\ep,g}_0\}_{\ep,g}$ of immersed circles with one double point in a certain regular homotopy class.   We define a {\em restricted immersed 1-manifold in $P$} (roughly)  to be an immersion $L_1:R\to P$, where either $R$ is  a circle and $L_1$ misses the corners of $P$, or $R$ is an arc with endpoints mapping to the corners; see Definition \ref{RLP}.   Choosing   $L_0=L_0^{\ep,g}$  transverse to a restricted lagrangian $L_1$, we then define a chain complex $(C^\nat(L_0,L_1),\partial)$   with differential $\partial$ determined by immersed bigons in the smooth part of $P$ with boundary lying on $L_0 $ and $L_1$,   following Floer \cite{Floer} and Abouzaid \cite{abou}.  
We show how to endow this complex with a  relative $\ZZ/4$ grading, a variant of an idea due to Seidel \cite{seidel1}.

In Section \ref{homotopy}, we prove that the resulting Floer homology $HF(L_0,L_1)$ depends only on the homotopy classes of the restricted immersed curves $L_0,L_1$ (the  result paraphrased as the theorem above). This result,  together with some basic observations described in Section \ref{special}, provides a set of tools to calculate $ HF(L_0,L_1)$. 

In Section \ref{2strand}, we apply this construction to traceless character varieties of knots.  We first recall  that the traceless character variety of the pair $(S^2,\{a,b,c,d\})$ is the pillowcase $P$. The main theorem of \cite{HHK} shows that if $(X,K)$ is a knot in a 3-manifold and $(S^2,\{a,b,c,d\})\subset (X,K)$ is a  2-sphere which separates $(X,K)$ into a trivial 2-tangle in the 3-ball $(D,U)$ and its complement $(Y,T)$, then generators of Kronheimer-Mrowka's reduced instanton knot complex can be identified with the intersection of $L_0^{\ep,g}$ and $L_1:R(Y,T)\to P$.   Hence, in favorable circumstances,  to such a decomposition  and an appropriate holonomy perturbation $\pi$ we can assign the corresponding Floer homology $HF(L_0^{\ep,g},L_1)$, which we denote by $H^\nat(Y,T,\pi)$.  This leads us to make the Atiyah-Floer conjecture stated above (see Conjectures \ref{con1} and \ref{con3} for more precise statements).

We show in Section \ref{2b} that this conjecture holds for 2-bridge knots (where all differentials are zero in both complexes). Sections \ref{general} and    \ref{chris}  establish   some general properties of $R_\pi(Y,T)$, such as identifying the two boundary points, showing they are stable under holonomy perturbations, and map to the corners of the pillowcase. We also examine the effect of applying holonomy perturbations in a collar neighborhood of the separating 2-sphere; in particular, this is used to make $L_0$ and $L_1$ transverse. 

In Sections \ref{torussec} and \ref{examples} we turn to calculations for torus knots, which display a rich and complicated collection of examples.  We find two appropriate perturbation curves in a useful tangle decomposition for any $(p,q)$ torus knot, and prove (Theorem \ref{torusknotsgood}) that there exist perturbations $\pi$ so that $R_\pi(Y,T)$ is a compact 1-manifold with two boundary points. 
We extend the work of \cite{FKP}, identifying $R_\pi(Y,T)$ and its image in the pillowcase,  to give many calculations of $H^\nat(Y,T,\pi)$ for tangles associated to torus knots. We give examples where different tangle decompositions and perturbations of a knot yield the same Lagrangian-Floer homology, which agrees with reduced instanton homology.   We give examples with non-zero differentials.   The reader is encouraged to examine Figures \ref{fig34} through 
\ref{fig47pert2} to get a feel for how calculations are carried out.

The upshot of our calculations is that the conjecture stated above holds for all the calculations of $H^\nat(Y,T)$ for which the  corresponding instanton homology $I^\nat(S^3,K)$ is known, and is consistent with the conjectured equality of ranks of $I^\nat(S^3,K)$ and the Heegaard knot Floer homology of $K$ when the instanton homology is unknown.

 \medskip

As an important final remark, we should say that while this article is rather lengthy we believe the results are quite natural and can be relatively easily understood through examples. Thus for the benefit of  the  reader we have
included a running example which is illustrated in Figures \ref{re1}, \ref{re2}, \ref{PP}, and \ref{re3},  and with details of the resulting calculations in Section \ref{examplecalc}. Understanding this example, and   its relation to the traceless character variety associated to a 2-tangle decomposition of the $(5,11)$ torus knot (Section \ref{5-11tk})  should      make our ideas quickly accessible.

 \bigskip
 
 \noindent{\bf Acknowledgements}  The first author gratefully acknowledges support from NSF grant DMS-0906258,  NSF CAREER grant DMS-1150872, and an Alfred P. Sloan Research Fellowship.  The third author gratefully acknowledges support from NSF  grant DMS-1007196 and Simons foundation collaboration grant  278714.
 
 \medskip
 
 The authors thank Juanita Pinzon-Caicedo  and Yoshihiro Fukumoto for helpful discussions and assistance with writing the computer programs to produce the data and figures used in Section \ref{examples}. They also thank Nikolai Saveliev, Matt Hogencamp for providing   very useful insights, and  Paul Seidel for pointing us to some  influential references.

 \section{Immersed Lagrangian Floer theory on a surface}\label{LFT}
 
 In \cite{abou}, Abouzaid constructs a Lagrangian-Floer theory for  {\em unobstructed immersed} curves in an oriented  surface.  In this section we recall his contruction, adapted slightly for our purposes. We also recall and relate various versions of the Maslov index for curves and $n$-gons in a surface equipped with a line field.

     \begin{figure}[h]
\begin{center}
\def\svgwidth{2.4in}
 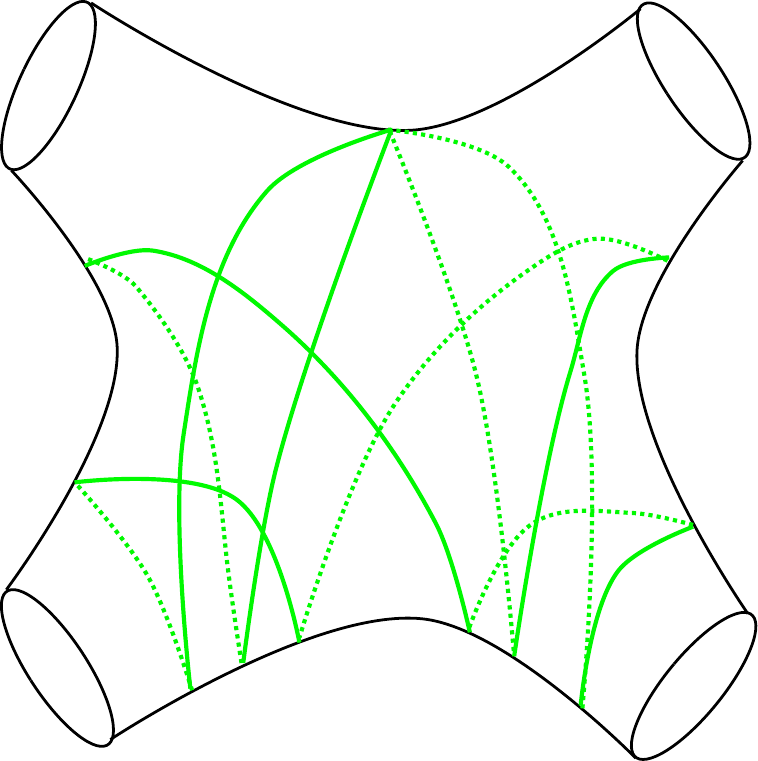
 \caption{An immersed circle $L_1$ in the 4-punctured 2-sphere.\label{re1} }
\end{center}
\end{figure}

\subsection{Unobstructed immersed curves}  \medskip
Let $S$ be a compact oriented surface, possibly with boundary, with infinite fundamental group.      
     

\begin{df}\label{unobstructed}
  An {\em unobstructed immersed arc}  is an immersion $L:[0,1]\to S$ which maps the endpoints to distinct points in the boundary of $S$, which is transverse to the boundary at its endpoints, and so that  some (and hence every) lift of $L$ to  the universal cover of $S$ is  embedded.

An {\em unobstructed immersed  circle} is an immersion of a circle $L:S^1\to S$ so that  each lift of the composite $\RR\xrightarrow{e}S^1\xrightarrow{L} S$ to the universal cover of $S$ is a properly embedded line.  Here $e(t)=\exp(2\pi\bbi t)$.

Either one of these is called an {\em unobstructed immersed curve}.
\end{df}

  An immersion of a  1-manifold $R$ to $S$    is said to {\em contain a fishtail} if there is an interval $I\subset R$ whose endpoints are sent to the same point in $S$ and so that the resulting loop is nullhomotopic.    Lemma 2.2 of \cite{abou} shows that an immersed circle is unobstructed if and only if it is homotopically essential and contains no fishtails.  Similarly, an immersed arc is unobstructed if and only if it contains no fishtails.

The reader can easily verify  that the immersed circle illustrated in Figure \ref{re1} is essential and contains no fishtails, hence is unobstructed.

\subsection{Intersection points} 
Let $L_0:R_0\to S$  and $L_1:R_1\to S$  be unobstructed immersed curves which  intersect transversely.   

\begin{df}\label{intpt}
Define an {\em intersection point} of $L_0$ and $L_1$ to be a pair $p=(r_0,r_1)\in R_0\times R_1$ where $L_0 (r_0)=L_1(r_1)$. 
\end{df}

By transversality and compactness, there are only finitely many intersection points.    
We will frequently abuse notation and write ``$L_0\cap L_1$''  for the set of intersection points of $L_0$ with $L_1$, or confuse $p=(r_0,r_1)$ with its image $L_0 (r_0)=L_1(r_1)$  in $S$. 

\bigskip

 Define 
$C(L_0,L_1)$ to be the  $\FF_2$ vector space generated by the  intersection points of $L_0$ and $L_1$.
\medskip

\subsection{Line fields and the Maslov index}  We recall some well known facts about the Maslov index  in the 2-dimensional setting  for the convenience of the reader, but also to make our conventions precise.

First, suppose that $\alpha:[0,1]\to S^1$ is a smooth map.  Define {\em the degree of $\alpha$}  to be the sum of the local degrees of $\alpha$  over the preimages of a regular value  $e^{\delta\bbi}$ for any small $\delta\ge0$ chosen so that $\alpha(0),\alpha(1)\not\in\{e^{t\delta\bbi}~|~t\in (  0,1]\}$: 
$$\deg(\alpha):=\sum_{x\in \alpha^{-1}(e^{\delta\bbi})}\deg_x(\alpha)$$
($\deg_x(\alpha)$ denotes the sign of $d\alpha$ at $x$).
Typically $\alpha(0)\ne 1\ne \alpha(1)$ so that we may take $\delta=0$. The integer $\deg(\alpha)$ has the property that it is additive under composition of paths, and invariant under homotopy of $\alpha$ rel endpoints.  Define the degree of a continuous path to be the degree of any smooth approximation with the same endpoints.

Next, suppose that ${\bf P}\to B$ is a principal circle bundle over a space $B$ and $\ell_0, \ell$ are two  sections.   The section $\ell$ defines a trivialization ${\bf P}\cong_{\ell} B\times S^1$  sending $\ell$ to $1\in S^1$. Given a path $\alpha:[0,1]\to B$,  define {\em the Maslov index of $\alpha$ with respect to $\ell_0$ and $\ell $}, denoted $\mu(\ell_0,\ell)_\alpha$, to be the degree of the composite 
$$ [0,1]\xrightarrow{\alpha}B\xrightarrow{\ell_0}\bf P\cong_{\ell} B\times S^1\xrightarrow{\pi}S^1.$$

\bigskip  
For the rest of this section we consider a pair $(S,\ell)$, where $S$ is an oriented   Riemannian  surface equipped with a line field $\ell$  (so $S$ is either a torus or else $S$ is not closed).    Formally, $\ell$ is a section of the projective tangent bundle ${\bf P}(T_*S)$, which we consider as a principal  $S^1=SO(2)/(\ZZ/2)$ bundle. Hence $\ell$ defines a trivialization ${\bf P}(T_*S)\cong_{\ell} S\times S^1$.
 
An immersion of a 1-manifold $R$ in $S$ defines a line field along $R$ and hence a Maslov index for any path in $R$. More precisely, given an immersion $L_0:R\to S$, the  subspaces $dL_0(T_{r}R)\subset T_{L_0(r)} S $ of $L$    define a section, which we denote  $\ell_0$, of the pullback bundle  $L_0^*({\bf P}(T_*S))\to R$.
Taking account of  the pullback of the line field $\ell$, this yields the composite map 
\begin{equation}
\label{mas1}
R\xrightarrow{\ell_0}L^*({\bf P}(T_*S))\cong_{\ell} R\times S^1\xrightarrow{\pi}S^1
\end{equation}

\begin{df}\label{Masdef} 
 Given an immersion $L_0:R\to S$ of a 1-manifold and a path $\alpha:[0,1]\to R$, the {\em Maslov index $\mu(L_0,\ell)_\alpha,$} is defined to be $\mu(\ell_0,\ell)_\alpha$, the  Maslov index  of $\alpha$ with respect to $\ell_0$ and $\ell$. When clear from context this will be denoted $\mu(\alpha,\ell)$  or simply $\mu(\alpha)$.
\end{df}

As   explicit examples (and to explain our conventions), if $S=\RR^2$, $\ell$ is the horizontal line field, and $R$ is the parabola $y=x^2$, then the path $\alpha:[-1,1]\to R$ given by $t\mapsto (t,t^2)$ satisfies $\mu(\alpha)=1$. If $\beta:[0,1]\to R$ is given by $\beta(t)=(t,t^2)$ and $\gamma:[-1,0]\to R $ is given by $\gamma(t)=(t,t^2)$, then  $\mu(\beta)=1$ and $\mu(\gamma)=0$. 

The basic properties of $\mu$, including its dependence on the choice of the background line field $\ell$ on $S$, are well known and easily understood using obstruction theory. We summarize the facts we need in the following Proposition. 

\begin{prop}\label{field}\hfill
 \begin{enumerate}
\item If $L_0:R\to S$ is an immersion of a 1-manifold and $\alpha, \beta:[0,1]\to R$ are continuous paths with $\alpha(1)=\beta(0)$, then $\mu(L_0,\ell)_{\alpha}$ and $\mu(L_0,\ell)_{\beta}$ depend  only on the homotopy classes of $\alpha $ and $\beta$ rel boundary. Moreover,
 $\mu(L_0,\ell)_{\alpha *\beta}=\mu(L_0,\ell)_{\alpha }+\mu(L_0,\ell)_{\beta}$. 
 \item If $R= S^1$ and $\alpha(t)=e^{2\pi \bbi t}$,  $0\leq t \leq 1$,  then $\mu(L_0,\ell)_\alpha$ is unchanged by a regular homotopy of $L_0$. More generally, if $R$ is any 1-manifold and $\alpha:[0,1]\to R$ arbitrary, then $\mu(L_0,\ell)_\alpha$ is unchanged by a regular homotopy of $L_0:R\to S$ which leaves $\alpha(0)$ and $\alpha(1)$  and the tangent spaces of $L_0$ at these points stationary.
 \item If $\ell'$ is any other line field, let $z\in [S,S^1]=H^1(S;\ZZ)=\Hom(H_1(S),\ZZ)$ denote the homotopy class of the difference map (i.e., $\ell'(s)=z(s)\ell(s)$).  Here,  we identify $S^1$ with $\RR{\bf P}^1$, so that one rotation corresponds to a rotation of a line through an angle of $\pi$.   If $\alpha:[0,1]\to R$ is a loop, then $\mu(L_0,\ell')_\alpha=\mu(L_0,\ell)_\alpha+z( L_0 \circ    \alpha)$. 
 \end{enumerate}

\end{prop}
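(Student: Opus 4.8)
The plan is to verify each of the three statements using obstruction theory on the principal $S^1$-bundle $\mathbf{P}(T_*S) \to S$, exactly as the text suggests. All three assertions reduce to the two elementary facts recorded earlier about $\deg(\alpha)$ for a path $\alpha:[0,1]\to S^1$: namely, that $\deg$ is additive under concatenation of paths, and that it is invariant under homotopy rel endpoints. The strategy in each case is to unwind the definition of $\mu(L_0,\ell)_\alpha$ as the degree of the composite $[0,1]\xrightarrow{\alpha}R\xrightarrow{\ell_0}L_0^*(\mathbf{P}(T_*S))\cong_\ell R\times S^1\xrightarrow{\pi}S^1$, and then exploit a suitable invariance of the various maps feeding into this composite.

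For part (i), I would first observe that the composite map $R\to S^1$ obtained from $\ell_0$, the $\ell$-trivialization, and projection is a fixed continuous map, call it $f:R\to S^1$; then $\mu(L_0,\ell)_\alpha = \deg(f\circ\alpha)$. Homotopy invariance rel boundary of $\alpha$ in $R$ transports to homotopy invariance rel endpoints of $f\circ\alpha$ in $S^1$, and then invariance of $\deg$ under homotopy rel endpoints (already established) gives the first claim. Additivity under concatenation, $\mu(L_0,\ell)_{\alpha*\beta}=\mu(L_0,\ell)_\alpha+\mu(L_0,\ell)_\beta$, follows because $f\circ(\alpha*\beta)=(f\circ\alpha)*(f\circ\beta)$ together with the additivity of $\deg$ under composition of paths.

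For part (ii), the key point is that a regular homotopy $L_0^s:R\to S$ induces a homotopy of the sections $\ell_0^s$ of $\mathbf{P}(T_*S)$ along $R$, hence a homotopy of the maps $f^s:R\to S^1$. When $R=S^1$ and $\alpha$ is the standard parametrization of the circle, $f^s\circ\alpha$ is a loop in $S^1$, and a homotopy of loops (not necessarily fixing the basepoint) preserves degree, which here equals $\mu(L_0,\ell)_\alpha$; hence it is unchanged. For general $R$ and $\alpha$, if the regular homotopy leaves $\alpha(0),\alpha(1)$ and the tangent lines of $L_0$ at those points fixed, then $f^s(\alpha(0))$ and $f^s(\alpha(1))$ are constant in $s$, so $f^s\circ\alpha$ is a homotopy rel endpoints, and again $\deg$ is preserved.

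For part (iii), I would use that changing the line field from $\ell$ to $\ell'$ changes the trivialization of $\mathbf{P}(T_*S)$ by a map $S\to S^1$ representing the class $z\in H^1(S;\ZZ)=[S,S^1]$; concretely, the two projection maps differ by pointwise multiplication by $z$. Thus the composite $f'$ built from $\ell'$ equals $f\cdot(z\circ L_0)$ as maps $R\to S^1$ (using the group structure on $S^1$). When $\alpha$ is a loop in $R$, taking degrees and using that $\deg$ is a homomorphism from $[S^1,S^1]$ to $\ZZ$ gives $\mu(L_0,\ell')_\alpha = \mu(L_0,\ell)_\alpha + \deg\big(z\circ L_0\circ\alpha\big) = \mu(L_0,\ell)_\alpha + z(L_0\circ\alpha)$, where the last equality is the identification of $z$ with a homomorphism $H_1(S)\to\ZZ$ evaluated on the homology class of the loop $L_0\circ\alpha$; the convention identifying one rotation of $S^1$ with a rotation of a line by $\pi$ is exactly what makes the normalization come out without an extra factor. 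The only mild subtlety — and the part I would be most careful about — is bookkeeping the $\ZZ/2$ in $SO(2)/(\ZZ/2)$ consistently, i.e., being sure that "degree" is measured in the projective circle so that the stated identification with $H^1(S;\ZZ)$ (rather than a half-integral correction) holds; this is the one spot where a sign or factor-of-two error could creep in, so I would pin down the conventions carefully there. Everything else is formal.
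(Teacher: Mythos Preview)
Your proposal is correct and fills in exactly the details the paper omits: the paper gives no proof of this proposition, stating only that the facts ``are well known and easily understood using obstruction theory.'' Your argument---reducing each assertion to the additivity and homotopy-rel-endpoints invariance of $\deg$ via the composite $f:R\to S^1$---is the standard obstruction-theoretic verification the authors had in mind, and your flagged concern about the $\ZZ/2$ in $SO(2)/(\ZZ/2)$ is precisely the convention the paper addresses with the remark that ``one rotation corresponds to a rotation of a line through an angle of $\pi$.''
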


\bigskip

We next define the triple index $\tau(\ell_0,\ell_1,\ell)$.  
\begin{df}\label{taudef} Suppose that $s\in S$.   Given a pair $\ell_0,\ell_1$ of transverse 1-dimensional subspaces of $T_sS$, let $\ell_t,t\in [0,1]$ be the {\em shortest clockwise path from $\ell_0$ to $\ell_1$}. Then define the {\em triple index}
$$\tau(\ell_0,\ell_1,\ell)=-\mu(\ell_t,\ell)_{[0,1]}\in\{0,1\}$$
\end{df}

When $\ell_0,\ell_1$ and $\ell$ are pairwise transverse,
 $\tau(\ell_0,\ell_1,\ell)$ is equal to $1$ if $\ell_0$ passes through $\ell$ when rotating $\ell_0$  negatively (clockwise) to $\ell_1$; otherwise  $\tau(\ell_0,\ell_1,\ell)=0$.  See Figure \ref{taufig}.

    \begin{figure}[h]
\begin{center}
\def\svgwidth{3in}
 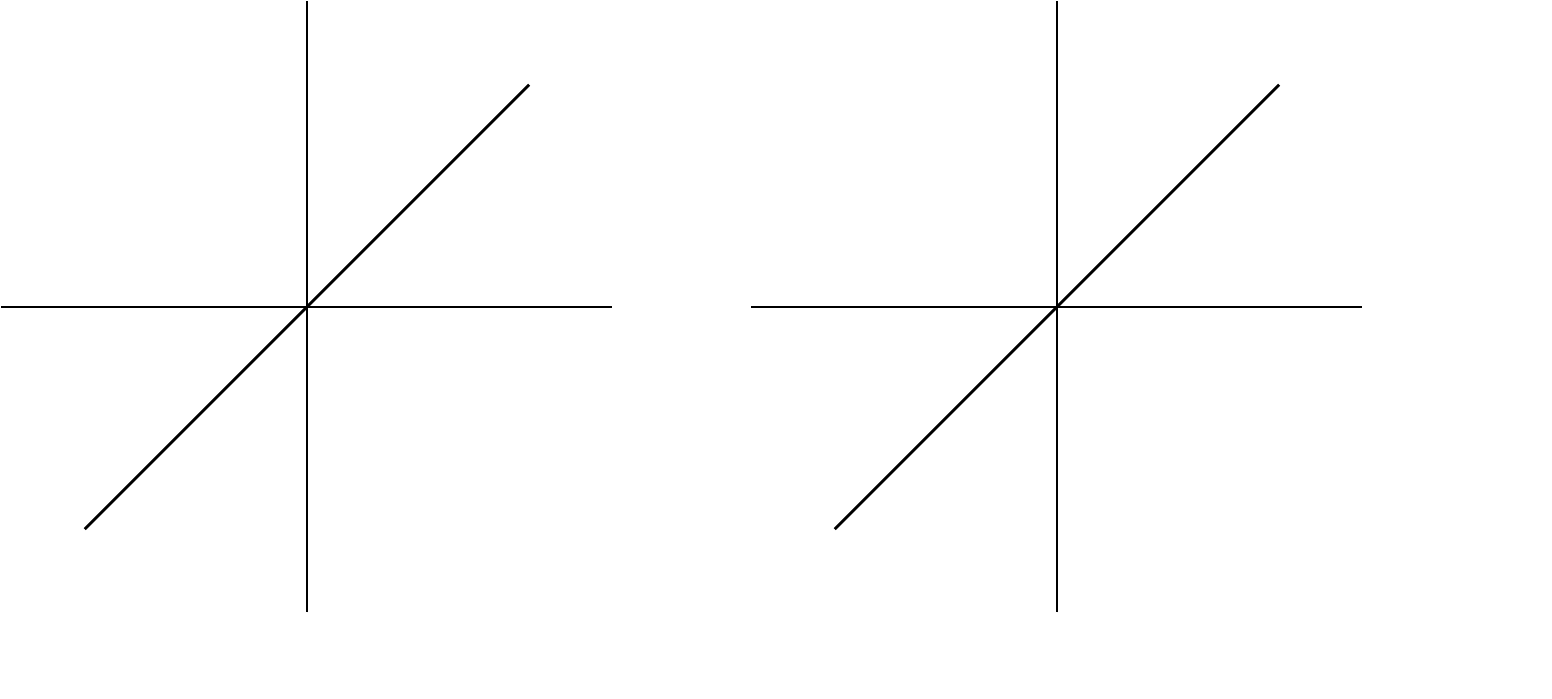
 \caption{ \label{taufig} }
\end{center}
\end{figure} 

The triple index has the following properties: if $\ell_0,\ell_1,\ell_2$ are pairwise transverse, then 
\begin{equation}\label{symtau}
\tau(\ell_1,\ell_2,\ell_0)= \tau(\ell_0, \ell_1,\ell_2) \text{ and } \tau(\ell_1,\ell_0,\ell)=1-\tau(\ell_0,\ell_1,\ell_2)
\end{equation}
\bigskip
  
 Let $L_k:R_k\to S$, $k=0,1,2,\dots, n-1$,  be a sequence of pairwise transverse  unobstructed immersed curves. Let $p_k$   be an intersection point of $L_{k-1}$ and $L_k$ for $k=1,\dots,n$.
 We consider   the indices cyclically ordered, so that $L_n=L_0$ and $p_0=p_n$.
 Figure \ref{5gonfig} illustrates the notation. 
  
     \begin{figure}[h]
\begin{center}
\def\svgwidth{3.3in}
 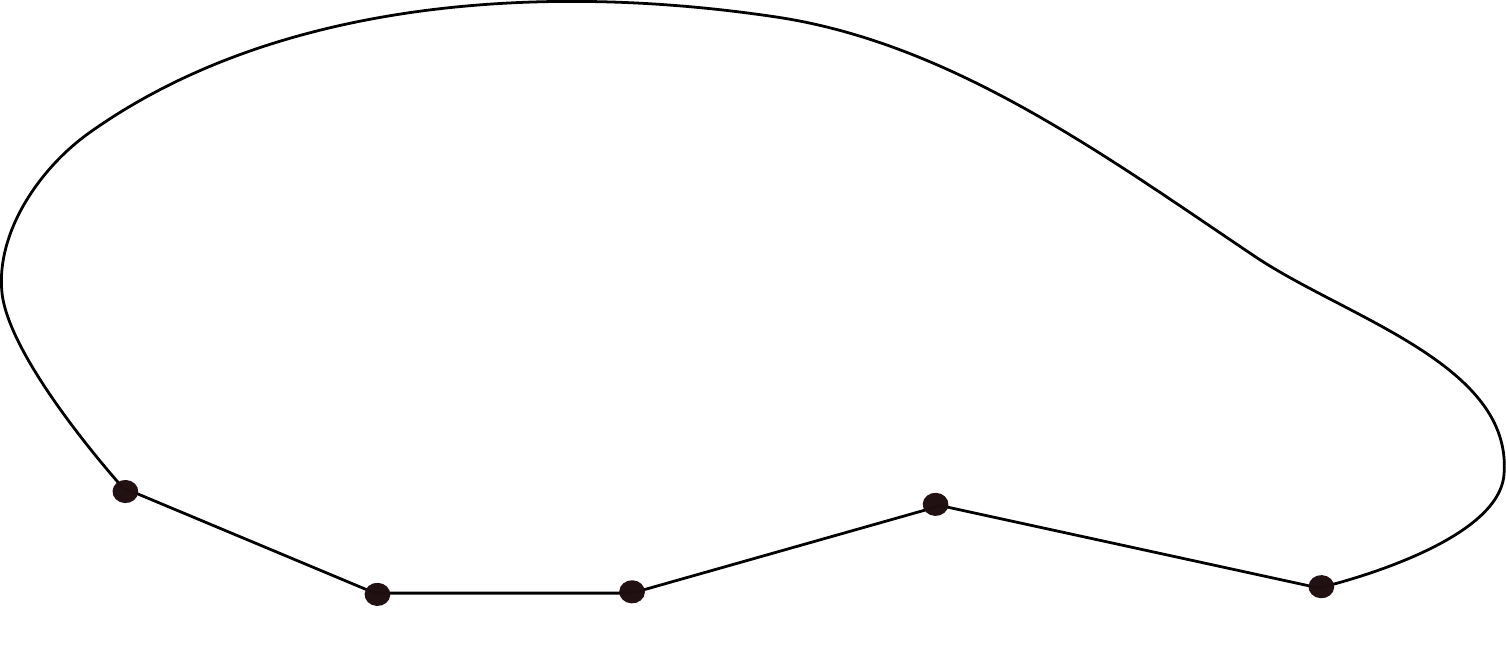
 \caption{A 5-gon\label{5gonfig} }
\end{center}
\end{figure}

\begin{df}\label{defpi1} 
Given an ordered list $L_0,L_1,\dots, L_{n-1}$ of pairwise transverse unobstructed immersed curves and a sequence of intersection points $p_n=p_0, p_1, \dots, p_{n-1}$ with $p_i\in L_{i-1} \cap L_i$,  define $\pi_1(p_1,\dots, p_{n})$ to be the  homotopy rel boundary classes of $n$-tuples of paths $(\gamma_0, \dots,\gamma_{n-1})$, where  $\gamma_k$ is a path in $R_k$ from (the first coordinate of the) intersection point $p_{k}$ to $p_{k+1}$.  
\end{df} 
The reader will notice that $p_n=p_0$ is placed last in the notation, in contrast to the order of the $L_i$ (and $\gamma_i$) where $L_0$ is placed first. This is motivated   by the fact that it is often useful to treat $p_0=p_n$   differently than the rest of the $p_k$, for example in   the definition of the differential  and, more generally, the $A_{\infty}$ structure on the Fukaya category of $S$ (see Definition \ref{munmaps}  below).  In particular, this ensures that  notation to be introduced below  for an $n$-gon from $(p_1,\dots,p_{n-1})$ to $p_n=p_0$ 
is as simple as possible.

\begin{df}\label{defpi2}  Define $\pi_2(p_1, \dots, p_{n}) $ to be homotopy classes of 
 pairs $(u,(\gamma_0, \dots,\gamma_{n-1 }))$  where $(\gamma_0,\dots,\gamma_{n-1})\in \pi_1(p_1,\dots, p_n)$, and $u:D^2\to S$ is a continuous map so that the (counterclockwise) boundary of $D^2$ is sent to the loop  
 $$\big(   \left( L_0 \circ \gamma_0\right )*\cdots*\left( L_{n-1}\circ \gamma_{n-1}\right)\big)^{-1}.$$
   There is a well defined forgetful map $\pi_2(p_1, \dots, p_{n})\to \pi_1(p_1, \dots, p_{n}) $ obtained by sending  the equivalence class of $(u,(\gamma_0, \dots,\gamma_{n-1}))$ to  that of  $ (\gamma_0, \dots,\gamma_{n-1})$. 
 \end{df}
 
 There is a well defined function 
 \begin{equation*}\label{nloop}
 \theta:\pi_1(p_1, \dots, p_{n})\to H_1(S), ~
\theta(\gamma_0,\dots, \gamma_{n-1})= -\left[\left( L_0 \circ \gamma_0\right)*\cdots*\left( L_{n-1} \circ\gamma_{n-1}\right)\right]
  \end{equation*}
which vanishes on the image of  the forgetful map $\pi_2(p_1, \dots, p_{n})\to \pi_1(p_1, \dots, p_{n}) $. 

We  use a ``label clockwise, but orient the boundary of a disk counterclockwise'' convention. This is why the inverse of the loop  $ L_0(\gamma_0)*\cdots*L_{n-1}(\gamma_{n-1})$  appears in Definition \ref{defpi2}, and that the negative sign   is used in the definition of $\theta$.

 Given an intersection point $p_k$ of $L_{k-1}$ and $L_k$, $\pi_2(p_k,p_k)$ is a group.  
If $\pi_2(p_1, \dots, p_{n})$  is non-empty, then $\pi_2(p_k,p_k)$ acts on $\pi_2(p_1, \dots, p_{n})$ by attaching $\tau\in \pi_2(p_k,p_k)$ to $\phi\in \pi_2(p_1, \dots, p_{n})$ along the vertex $p_k$ to form a new map of the disk $\tau\cdot\phi\in\pi_2(p_1, \dots, p_{n})$.  

In the case of a pair $(L_0,L_1)$, it is easy to check that $\pi_2(p_1,p_1)$ acts freely and transitively on $\pi_2(p_1,p_2)$ for any $p_2$ for which $\pi_2(p_1,p_2)$ is non-empty.

\medskip

\begin{df}\label{Masbi}
Given an ordered list $L_0,L_1,\dots, L_{n-1}$ of pairwise transverse unobstructed immersed curves,  intersection points $p_k\in L_{k-1}\cap L_k$,  and an $n$-tuple of paths $(\gamma_0,\dots, \gamma_{n-1})$ representing  a class in $\pi_1(p_1, \dots, p_{n})$, define  
 \begin{equation*}
\Mas_\ell(\gamma_0, \dots,\gamma_{n-1})
=1- \sum_{k=0}^{n-1} \big(\mu(L_k,\ell)_{\gamma_k} + \tau(L_{k},L_{k-1},\ell)_{p_{k}}\big)\end{equation*}
\end{df}

We describe a few equivalent formulas for $\Mas_\ell$.
First notice that letting $\alpha_k$ denote  the reverse of the path $\gamma_k$,
$$\alpha_k(t) =\gamma_k(1-t), $$
then $\mu(L_k,\ell)_{\gamma_k}=-\mu(L_k,\ell)_{\alpha_k}$.
Next, $-\tau(L_{k },L_{k-1},\ell)_{p_{k}}$ is equal to $\mu(M_{k}(t),\ell)_{[0,1]}$, where $M_{k}(t)$ is the path of lines in $T_{ p_{k}}S$ obtained as the shortest clockwise rotation of $T_{ p_{k} }L_{k} $ to $T_{ p_{k} }  L_{k-1} $.  Hence 
\begin{equation}
\label{goodeq}\Mas_\ell(\gamma_0, \dots,\gamma_{n-1})=1+\sum_{k=0}^{n-1} \big(\mu(L_k,\ell)_{\alpha_k} +\mu(M_{k}(t),\ell)_{[0,1]}\big) 
\end{equation}
 which, by path additivity of the Maslov index, is one greater than the Maslov index with respect to the line field $\ell$ of the continuous loop  in the projective tangent bundle of $S$: 
$$ \mathbb{M}:=TL_{n-1}|_{\alpha_{n-1}}*M_{n-1}*
TL_{n-2}|_{\alpha_{n-2}}*M_{n-2}*\cdots*TL_0|_{\alpha_0} *M_0,$$
 i.e., $$\Mas_\ell(\gamma_0, \dots,\gamma_{n-1})=1+\mu(\mathbb{M},\ell).$$

 As mentioned before, $p_0$ plays a special role, which motivates taking the  continuous loop obtained by  rotating  clockwise  at $p_k,k>0$, but rotating {\em counterclockwise} at $p_0$:
 \begin{equation*}
\mathbb{A}:=TL_{n-1}|_{\alpha_{n-1}}*M_{n-1}*
TL_{n-2}|_{\alpha_{n-2}}*M_{n-2}*\cdots*TL_0|_{\alpha_0} *N_0 \end{equation*}
where $N_{0}(t)$ is the shortest counterclockwise rotation of $T_{ p_{0} }L_{0} $ to $T_{ p_{0} }  L_{n-1} $ in $T_{p_0}S$.
Then using (\ref{symtau}), 
 \begin{equation}\label{Mattsversion}\Mas_\ell(\gamma_0, \dots,\gamma_{n-1})=  \mu(\mathbb{A},\ell),\end{equation}
justifying the use of the notation $\Mas_\ell$.

\begin{prop} \label{basicmas} Given an ordered set $L_0,\dots ,L_{n-1}$ of pairwise transverse unobstructed immersed curves and intersection points $p_k$ of $L_{k-1}$ and $L_k$,  then $\Mas_\ell$ defines a function $\pi_1(p_1,\dots,p_{n })\to \ZZ$.   If $\ell'$ is another line field and $z\in H^1(S)=[S,S^1]$ is the difference class, then 
$$\Mas_{\ell'}(\gamma_0, \dots,\gamma_{n-1})-\Mas_{\ell}(\gamma_0, \dots,\gamma_{n-1})=z(\theta(\gamma_0, \dots,\gamma_{n-1})).$$ In particular, $\Mas_\ell$ depends only on the homotopy class of $\ell$, and the composite
$$\pi_2(p_1,\dots,p_{n })\to \pi_1(p_1,\dots,p_{n})\xrightarrow{\Mas_\ell} \ZZ$$ is independent of the choice of line field $\ell$.

Furthermore,  $\Mas_\ell$ has the properties:
\begin{enumerate}
\item {\em (Splicing)}  If $q$ is another intersection point of $L_0$ and $L_k$, and $\gamma_0', \gamma_0'', \gamma_k', \gamma_k''$ are paths so that $\gamma_0'(1)=q=\gamma_0''(0)$, $\gamma_k'(1)=q=\gamma_k''(0)$, $\gamma_0=\gamma_0'*\gamma_0''$, and  $\gamma_k=\gamma_k'*\gamma_k''$, then 
$$\Mas_\ell(\gamma_0,\dots, \gamma_{n-1})=\Mas_\ell(\gamma_0'',\gamma_1,\dots, \gamma_{k-1},\gamma_k')+ \Mas_\ell(\gamma_k'',\gamma_{k+1},\dots, \gamma_{n-1}, \gamma_0').
 $$
 \item {\em (Path reversal)} Let $\alpha_k(t)=\gamma_k(1-t)$. Then
 $$\Mas_\ell(\alpha_{n-1},\alpha_{n-2},\dots, \alpha_0)= 2-n-\Mas_\ell (\gamma_0,\gamma_1,\dots,\gamma_{n-1}). 
 $$
\item {\em (Cyclic invariance)} $\Mas_\ell (\gamma_0,\gamma_1,\dots,\gamma_{n-1})=\Mas_\ell(\gamma_1,\dots,\gamma_{n-1},\gamma_0)$.
\end{enumerate}

 \end{prop}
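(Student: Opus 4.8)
The plan is to treat $\Mas_\ell$ in its guise as a genuine Maslov index of a loop in the projective tangent bundle, via the formula $\Mas_\ell(\gamma_0,\dots,\gamma_{n-1}) = \mu(\mathbb{A},\ell)$ of (\ref{Mattsversion}), since the degree of a loop in $S^1$ is the most flexible of the available descriptions for the kind of cut-and-paste arguments needed below. Before anything else, I would verify the claimed well-definedness: $\mu(L_k,\ell)_{\gamma_k}$ and $\tau(L_k,L_{k-1},\ell)_{p_k}$ depend only on homotopy classes rel endpoints by Proposition \ref{field}(1) and Definition \ref{taudef}, so $\Mas_\ell$ descends to a function on $\pi_1(p_1,\dots,p_n)$.

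For the line-field dependence formula, I would apply Proposition \ref{field}(3) termwise. The path $\alpha_k = \gamma_k$ is generally not a loop, so strictly Proposition \ref{field}(3) does not apply term-by-term; the clean way is to note that the loop $\mathbb{A}$ in ${\bf P}(T_*S)$ projects to the loop $\left(L_0\circ\gamma_0\right)*\cdots*\left(L_{n-1}\circ\gamma_{n-1})\right)$ in $S$ (the rotation segments $M_k$, $N_0$ are constant in $S$), whose class in $H_1(S)$ is exactly $-\theta(\gamma_0,\dots,\gamma_{n-1})$. Then Proposition \ref{field}(3), applied to the loop $\mathbb{A}$, gives $\mu(\mathbb{A},\ell') = \mu(\mathbb{A},\ell) + z(\text{image loop}) = \mu(\mathbb{A},\ell) + z(\theta(\gamma_0,\dots,\gamma_{n-1}))$ (the sign of $z$ on $-\theta$ versus $\theta$ needs care with the convention that $z(s)\ell(s) = \ell'(s)$, but this is exactly the bookkeeping the statement anticipates). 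The consequences — dependence only on the homotopy class of $\ell$, and independence of $\ell$ after precomposing with $\pi_2 \to \pi_1$ — are then immediate, since $\theta$ vanishes on the image of the forgetful map.

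For the three enumerated properties I would argue as follows. \textbf{(Splicing):} cut the loop $\mathbb{A}$ at the point in ${\bf P}(T_*S)$ over $q$, writing it as a concatenation of two loops based there; to make the two pieces into honest closed loops one inserts the rotation segment at $q$ going one way in the first piece and the reverse way in the second, which exactly matches the $M$-type rotations appearing in the two $\Mas_\ell$ terms on the right-hand side — here (\ref{symtau}) is used to see that the "clockwise at $p_k$" in one piece and "counterclockwise at $p_0'$" in the other contribute the complementary $\tau$ and $1-\tau$, and the extra $+1$'s in the definition are absorbed correctly (one $+1$ becomes two $+1$'s, but the inserted-and-reversed rotation at $q$ costs one, netting out). \textbf{(Path reversal):} reversing every $\gamma_k$ and reversing the cyclic order turns $\mathbb{A}$ into (a reparametrization of) $-\mathbb{A}$ up to replacing each shortest-clockwise rotation $M_k$ by the complementary shortest-counterclockwise rotation, which by Definition \ref{taudef} and (\ref{symtau}) converts each $\tau(L_k,L_{k-1},\ell)$ into $1 - \tau(L_{k-1},L_k,\ell)$; summing the $n$ complementations produces the $2-n$ shift and the overall sign flip gives $-\Mas_\ell$. \textbf{(Cyclic invariance):} this is the subtle one because $p_0$ is singled out (clockwise at $p_k$ for $k>0$, but counterclockwise at $p_0$). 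Cyclically relabeling moves $p_0$ to a new slot, so I would show that the formula $\Mas_\ell(\gamma_0,\dots,\gamma_{n-1}) = \mu(\mathbb{A},\ell)$ with the asymmetric rotation at the special vertex gives the same integer regardless of which vertex is chosen as special — the key identity being $\tau(\ell_0,\ell_1,\ell) = \tau(\ell_1,\ell_2,\ell_0)$ from (\ref{symtau}) together with the fact that at any vertex "clockwise from $\ell_k$ to $\ell_{k-1}$" plus "counterclockwise from $\ell_{k-1}$ back to $\ell_k$" together traverse a full $\pi$-rotation, contributing a total of $1$ to the index.

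I expect cyclic invariance to be the main obstacle: it is exactly the place where the bookkeeping of the "label clockwise, orient the disk boundary counterclockwise" convention and the special status of $p_0$ collide, and one has to be careful that moving the special vertex neither double-counts nor drops a unit from the Maslov index of the closed-up loop $\mathbb{A}$. The cleanest rigorous route is probably to prove cyclic invariance first at the level of the loop $\mathbb{A}$ (showing any two choices of special vertex yield loops in ${\bf P}(T_*S)$ that are homotopic rel nothing, or differ by an explicit constant-in-$S$ rotation of total degree zero), and only then read off the integer equality.
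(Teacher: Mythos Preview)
Your proposal is correct and matches the paper's approach: both use the identification $\Mas_\ell = \mu(\mathbb{A},\ell)$ from (\ref{Mattsversion}) together with Proposition~\ref{field} for well-definedness and the line-field dependence, and both reduce the three enumerated properties to path additivity of $\mu(-,\ell)$ and the identity $\tau(\ell_0,\ell_1,\ell)+\tau(\ell_1,\ell_0,\ell)=1$ from (\ref{symtau}). One small correction: the loop $\mathbb{A}$ projects in $S$ to the \emph{reverse} of $(L_0\circ\gamma_0)*\cdots*(L_{n-1}\circ\gamma_{n-1})$ (since $\mathbb{A}$ is built from the reversed paths $\alpha_k$), whose homology class is exactly $\theta(\gamma_0,\dots,\gamma_{n-1})$ by definition --- so the sign works out directly without the extra bookkeeping you flagged.
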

\begin{proof}  The homotopy invariance property of the Maslov index $\mu$ and   Equation (\ref{Mattsversion}) shows that   $\Mas_\ell(\gamma_0, \dots,\gamma_{n-1})$ depends only on the homotopy class of $\ell$ and the class  of
$(\gamma_0, \dots,\gamma_{n-1})$ in $\pi_1(p_1,\dots, p_{n})$.

If $\ell, \ell'$ are two different line fields and $z\in H^1(S)$ is their difference class, Proposition \ref{field} gives 
$$ \Mas_{\ell'} (\gamma_0, \dots,\gamma_{n-1})-\Mas_{\ell}(\gamma_0, \dots,\gamma_{n-1}) = \mu(\mathbb{A},\ell')-\mu(\mathbb{A},\ell)=z(\theta( \gamma_0, \dots,\gamma_{n-1})).$$
If $(\gamma_0, \dots,\gamma_{n-1})$ lies in the image of  $\pi_2(p_1,\dots, p_{n})\to \pi_1(p_1,\dots, p_{n})$, then $\theta( \gamma_0, \dots,\gamma_{n-1})=0$ in $H_1(S)$, and hence  $\Mas_{\ell} (\gamma_0, \dots,\gamma_{n-1})$ is independent of $\ell$.

The three properties are easily checked using path additivity of   $\mu(-,\ell)$ and the identity $\tau(\ell_0,\ell_1,\ell)+\tau(\ell_1,\ell_0,\ell)=1$. \end{proof}

\subsection{Immersed polygons}
Define an {\em $n$-gon in $\RR^2$} to be a pair $(D, (\beta_0,\beta_1,\dots, \beta_{n-1}))$ where $D\subset \RR^2$ is a closed topological disc, and  $(\beta_0,\beta_1,\dots, \beta_{n-1})$ is a sequence  of smooth embeddings of the unit interval in $\RR^2$ with image in the boundary of $D$ so that  
\begin{enumerate}
\item $\beta_k(1)=\beta_{k+1}(0)$ for $k=0,\dots, n$ (with $\beta_n=\beta_0$),
\item the composite path $\beta_0*\cdots*\beta_{n-1}$ forms an embedded simple closed curve in $\RR^2$, which forms the {\em clockwise}  boundary of $D$,  
\item the $\beta_k$ meet transversely at their endpoints. 
\end{enumerate}

\begin{df}\label{ngondef}  Given a pair $(S,  (L_0,\dots ,L_{n-1}))$, where $S$ is an oriented surface and $L_k:R_k\to S$, $k=0,\dots, n-1$ are pairwise transverse immersions of 1-manifolds into $S$,
define an  {\em an immersed $n$-gon in $S$ for the ordered $n$-tuple $(L_0,\dots,L_{n-1})$ through the points $(p_1,\dots, ,p_{n})$} to be   a triple consisting of 
\begin{enumerate}
\item an $n$-gon in $\RR^2$, $(D, (\beta_0,\beta_1,\dots, \beta_{n-1}))$,
\item a representative $n$-tuple of paths $(\gamma_0,\dots,\gamma_{n-1})$ for a class in $\pi_1(p_1,\dots,p_{n})$,
\item an orientation preserving immersion $u:D\to S$ satisfying $u\circ \beta_k=L_k\circ \gamma_k$.
\end{enumerate}
We  use the brief notation   $u$ for the triple   $$((D,(\beta_0,\dots,\beta_{n-1})),(\gamma_0,\dots,\gamma_{n-1}), u:D\to S),$$
and we call 
this {\em an immersed $n$-gon for the ordered $n$-tuple $(L_0,\dots,L_{n-1})$ from $(p_1,\dots, p_{n-1})$ to $p_0=p_n$.}

Define the {\em Maslov index of an immersed $n$-gon in $S$} by
 $$\Mas(u)=\Mas_\ell( \gamma_0,\dots,\gamma_{n-1})$$ for any line field $\ell$. Since an immersed $n$-gon represents an element of $\pi_2(p_1,\dots,p_{n})$, it follows from Proposition \ref{basicmas} that $\Mas(u)$ is independent of the choice of line field $\ell$.

\end{df}

 Figure \ref{conventionfig} indicates the corresponding model examples of 2-gons and 3-gons in $S=\RR^2$:

    \begin{figure}[h]
\begin{center}
\def\svgwidth{3.9in}
 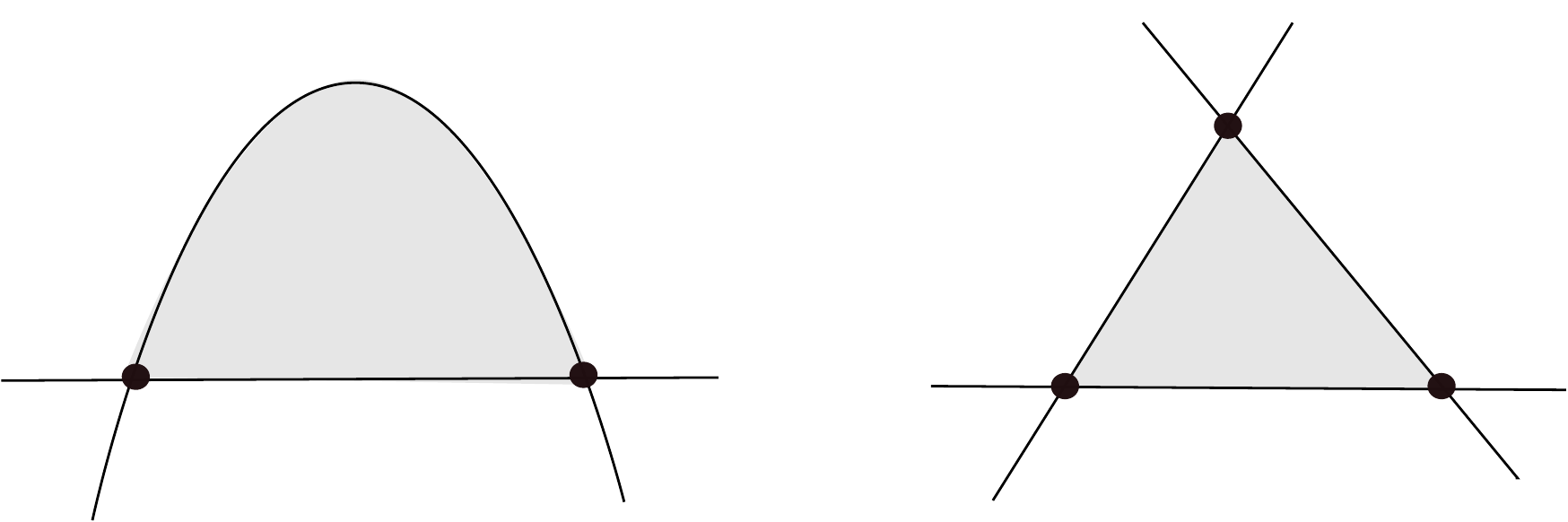
 \caption{A 2-gon of  Maslov index 1 from $p$ to $q$ for the ordered pair $(L_0,L_1)$ and a 3-gon of Maslov index 0   from $(p,q)$ to $r$ for the ordered triple $(L_0,L_1,L_2)$.\label{conventionfig} }
\end{center}
\end{figure}

Notice that an $n$-gon in $\RR^2$ is just a special (embedded) case of an immersed $n$-gon in the surface $S=\RR^2$. As such, its Maslov index can be easily computed by taking, for example, any line field of constant slope. For example, the 5-gon of Figure \ref{5gonfig} has Maslov index $-1$.  One can see this by taking $\ell$ to be the vertical  line field. Then $\mu(L_k,\ell)_{\gamma_k}=0$ for $i=0,1,2,3$ and $\mu(L_4,\ell)_{\gamma_4}=-2.$  Also $\tau(L_{k},L_{k-1},\ell)_{p_k}$ equals 1 for $k=0,2,3,4$ and $\tau(L_{1},L_0,\ell)_{p_1}=0$, so $\Mas=1-(-2+4)=-1$.  This calculation easily generalizes to yield the following proposition.
\begin{prop} \label{convex}Let $(D,\beta_0,\dots,\beta_{n-1})$ be an $n$-gon in $\RR^2$. Let $\kappa(D)\in\{0,\dots, n\}$ denote the number of non-convex corners of $D$.

Then any immersed $n$-gon   $u:D\to S$  satisfies
$$\Mas(u)=3-n+\kappa(D).
$$
 \qed
\end{prop}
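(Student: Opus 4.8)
The plan is to reduce the general case to the model computation sketched before the proposition for the $5$-gon, by exhibiting a line field $\ell$ on $\RR^2$ for which every term in Definition \ref{Masbi} is transparent. Concretely, I would take $\ell$ to be a \emph{constant-slope} line field on $\RR^2$ — say the vertical line field — chosen generically so that $\ell$ is transverse to all the tangent lines $T_{p_k}L_k$ and $T_{p_k}L_{k-1}$ at the $n$ vertices (this is possible since there are only finitely many such tangent directions; if the vertical direction happens to coincide with one of them, rotate slightly). For such an $\ell$ the Maslov terms $\mu(L_k,\ell)_{\gamma_k}$ are controlled by the turning of the edge $\beta_k$ relative to a fixed direction, and the triple-index terms $\tau(L_k,L_{k-1},\ell)_{p_k}$ are each $0$ or $1$ according to whether $\ell$ lies in the clockwise sector at $p_k$ from $T_{p_k}L_k$ to $T_{p_k}L_{k-1}$.

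The key step is to assemble these pieces using Equation (\ref{goodeq})/(\ref{Mattsversion}): $\Mas(u)$ is (one plus, resp.\ exactly) the Maslov index relative to $\ell$ of the closed loop $\mathbb{A}$ of tangent lines obtained by traversing the boundary $\partial D$ clockwise — alternately following the tangent line of each edge along $\alpha_k$ and rotating through the corner turning angles $M_k$ (with $N_0$ counterclockwise at $p_0$). Since $u$ is an orientation-preserving immersion and $\partial D$ is an embedded simple closed curve traversed clockwise, the total turning of the \emph{oriented} tangent direction of $\partial D$ is $-2\pi$ (clockwise once around). Decomposing this total turning as a sum of the edge contributions (homotopic, through edges, to $\alpha_k$) and the exterior-angle contributions at the corners, and then projecting $S^1 \to \RR\PP^1$ (so one full rotation of a line counts as a rotation through $\pi$, i.e.\ the line field winds half as fast), I would conclude that $\mu(\mathbb{A},\ell)$ equals $-1$ plus a correction coming from the difference between the \emph{unoriented} corner rotations $M_k$ used in $\mathbb{A}$ and the \emph{oriented} exterior angles. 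At a convex corner the clockwise unoriented rotation $M_k$ from $T L_k$ to $T L_{k-1}$ agrees (mod $\pi$) with the oriented exterior angle and contributes nothing extra; at a non-convex (reflex) corner it differs by a half-turn of the line, contributing $+1$. Counting these gives $\mu(\mathbb{A},\ell) = -1 + (3-n+\kappa(D)) - $ bookkeeping, which after carefully tracking the constant offsets yields exactly $\Mas(u) = 3-n+\kappa(D)$; the $5$-gon example with $\kappa=1$ giving $\Mas=-1$ is the consistency check, and the convex $n$-gon (triangle $n=3$, bigon $n=2$) giving $\Mas = 3-n$ matches Figure \ref{conventionfig}.

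Alternatively — and this may be cleaner to write — I would argue by induction on $\kappa(D)$. The base case $\kappa(D)=0$ (convex $D$) is handled by the explicit constant-slope computation: for a convex $n$-gon, with $\ell$ of generic constant slope, exactly $n-1$ of the corners have $\tau=1$ and one has $\tau=0$ (the one "facing" the chosen direction), while the edges $\gamma_k$ contribute $\mu$ summing to $-2$ around the loop, exactly as in the $5$-gon model but with $\kappa=0$; this gives $\Mas = 1-(-2+(n-1)) = 3-n$. For the inductive step, if $D$ has a non-convex corner, I would cut $D$ along an arc from that corner into two polygons $D'$, $D''$ with $\kappa(D')+\kappa(D'') = \kappa(D)-1$ and numbers of sides summing appropriately (introducing one new edge shared by both pieces), apply the Splicing property of Proposition \ref{basicmas} together with the induction hypothesis, and check the arithmetic $3-n'+\kappa' + 3-n'' + \kappa'' = 3-n+\kappa$.

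The main obstacle I anticipate is the sign/offset bookkeeping in relating the \emph{line-field} Maslov index (where a line rotating by $\pi$ counts as degree $1$) to the \emph{oriented} turning number of $\partial D$, and in correctly accounting for the asymmetric treatment of $p_0$ (counterclockwise $N_0$ versus clockwise $M_k$) in the loop $\mathbb{A}$; getting the constant $3$ rather than $1$ or $2$ hinges entirely on this. The inductive approach sidesteps the global turning-number computation in favor of a single clean base case plus the Splicing lemma, so I would lean toward presenting that, using the constant-slope model computation only for convex polygons where it is transparent.
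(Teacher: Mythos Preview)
Your direct approach via a constant-slope line field is exactly the paper's (its entire proof is the sentence ``This calculation easily generalizes,'' pointing back to the $5$-gon example). However, the specifics you give need correction. In your convex base case, for a straight-sided convex $n$-gon with generic constant $\ell$ one has $\sum_k \mu(L_k,\ell)_{\gamma_k}=0$ and $\sum_k \tau(L_k,L_{k-1},\ell)_{p_k}=n-2$, whence $\Mas=1-(n-2)=3-n$; your values $\sum\mu=-2$, $\sum\tau=n-1$ are specific to the $5$-gon of Figure~\ref{5gonfig} (curved top edge, one non-convex corner), and indeed $1-(-2+(n-1))=4-n\neq 3-n$, so what you wrote is internally inconsistent. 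In your turning-number paragraph the convex/non-convex roles are reversed: it is at each \emph{convex} corner that the shortest clockwise rotation $M_k$ differs from the natural exterior-angle rotation by an extra clockwise half-turn in $\RR\PP^1$, while at a non-convex corner they agree. Comparing $\mathbb{M}$ to the tangent-line loop of the counterclockwise boundary (which has degree $+2$ in the projectivized tangent bundle, since the tangent \emph{direction} has turning number $+1$ and the quotient $S^1\to\RR\PP^1$ doubles degree, not halves it) then gives $\mu(\mathbb{M},\ell)=2-(n-\kappa)$ and $\Mas=1+\mu(\mathbb{M},\ell)=3-n+\kappa$.

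Your inductive step has a separate gap: Splicing (Proposition~\ref{basicmas}(i)) cuts at a point $q$ already lying on two of the given curves $L_0,L_k$, with both paths $\gamma_0,\gamma_k$ passing through $q$; it does not introduce a new edge. Cutting $D$ along an arc from a non-convex corner adds a curve not among the $L_i$, so Splicing as stated does not apply. Extending an edge adjacent to the non-convex vertex until it meets a non-adjacent edge does produce a candidate $q$, but then the extended path overshoots the vertex and the two Splicing pieces are no longer both sub-polygons of $D$ with the corner structure your induction assumes. Your arithmetic $n'+n''=n+2$, $\kappa'+\kappa''=\kappa-1$ is fine; what is missing is a separate additivity lemma for $\Mas$ under cutting along a chord, which is not Proposition~\ref{basicmas}(i) off the shelf.
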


\subsection{The differential and $A_n$ maps.} 
We recall  the definitions of the differential and maps $\mu_n$ next. Our goal is to relate  a Lagrangian-Floer theory to singular instanton homology, which is  the homology of a chain complex rather than a cochain complex, and hence our orientation conventions differ slightly from the similar constructions in the literature which typically produce cochain complexes.

\begin{df}\label{eqngons} Fix an ordered $n$-tuple $(L_0,L_1,\dots, L_{n-1})$ of distinct unobstructed pairwise transverse immersed curves in $S$ and intersection points $p_k$ of $ L_{k-1}$ and $L_k$.

  Two immersed $n$-gons in $S$ through  $(p_1,\dots, p_{n})$, $u:D\to S, u':D'\to S$ are called {\em equivalent} if there is an  orientation preserving   diffeomorphism $\psi:D\to D'$ so that $u=u'\circ\psi$. 
  
  The set of equivalence classes of immersed $n$-gons with Maslov index $3-n$   for the ordered $n$-tuple $(L_0,\dots,L_{n-1})$   through $(p_1,\dots, p_{n})$  is denoted by $\mathcal{M}_{L_0,\dots,L_{n-1}}( p_1,\dots,  p_{n})$, or simply  by $\mathcal{M}( p_1,\dots,  p_{n})$  when the order of the $L_k$ is clear from context.   \end{df}
  
  When $n\ge 3$, the list $(p_1,\dots,p_{n})$ determines the order $(L_0,\dots, L_{n-1})$.    Cyclically permuting the $n$-tuple $(L_0,\dots, L_{n-1})$ and the points $(p_1,\dots, p_{n})$  preserves immersed bigons and the Maslov index, and hence 
  $$\mathcal{M}_{L_0,L_1,\dots,L_{n-1}}( p_1,\dots,  p_{n})=\mathcal{M}_{L_1,\dots,L_{n-1},L_0}( p_2,\dots,  p_{n},p_1).$$

When $n=2$, care must be taken with the ordering since the ordered pair $(p_0,p_1)$ does not determine the order of $L_0,L_1$. In particular, $\mathcal{M}_{L_0,L_1}(p,q)=\mathcal{M}_{L_1,L_0}(q,p)$, but these are different from $\mathcal{M}_{L_1,L_0}(p,q)=\mathcal{M}_{L_0,L_1}(q,p)$.

\bigskip
Given a finite set $X$, let  $\# X\in \FF_2$ denote the number of elements of $X$ mod 2. 
Recall that $C(L_0,L_1)$ is defined to be the free $\FF_2$ vector space on the intersection points of $L_0$ and $L_1$.

\begin{df}\label{munmaps} Fix an ordered $n$-tuple $(L_0,\dots, L_{n-1})$ of unobstructed pairwise transverse curves in $S$.   Suppose $n-1$ intersection points $p_k$ of $ L_{k-1}$ and $L_k$ are given for $k=1,\dots,n-1$, with the property that for every intersection point  $q$ of $L_0$ and $L_{n-1}$, 
$\mathcal{M}_{L_0,\dots,L_{n-1}}( p_1,\dots,  p_{n-1},q)$ is finite.  

 Define
 $$\mu_{n-1}(p_1, \dots, p_{n-1})=\sum_{q\in L_{0}\cap L_{n-1}}
\big(\# \mathcal{M}_{L_0,\dots ,L_{n-1}}( p_1,\dots,p_{n-1},q)\big)~ q \ \text{~in~} C(L_{0},L_{n-1}).$$

If $\mathcal{M}_{L_0,\dots,L_{n-1}}(p_1,\dots,  p_{n})$ is finite for all choices of intersection points $(p_1,\dots,  p_{n-1})$ {\em and} $p_n$, then $\mu_{n-1}$ defines a linear map:
$$\mu_{n-1}:C(L_{0},L_{1})\otimes C(L_{1},L_{2})\otimes\cdots\otimes C(L_{n-2},L_{n-1})\to C(L_{0},L_{n-1}).$$
 \end{df}

  Most important for us is the map $\mu_1$, which we also denote by $\partial$.  Explicitly
  \begin{equation}
\label{diff} \partial:C(L_0,L_1)\to C(L_0,L_1),~\partial p=\sum_{q\in L_0\cap L_1} \#\mathcal{M}_{L_0,L_1}(p,q)~ q
\end{equation}
Recall that we call  representatives of $\mathcal{M}_{L_0,L_1}(p,q)$ 2-gons from $p$ to $q$ for $(L_0,L_1)$, so $\partial p$ is the linear combination of intersection points $q$ weighted by the mod 2 count of  2-gons with   Maslov index 1 from $p$ to $q$.  

Since they occur frequently, we call a 2-gon with Maslov index 1 a {\em bigon}.
Note that a bigon from $p$ to $q$ for the ordered pair $(L_0,L_1)$ is also a bigon from $q$ to $p$ for $(L_1,L_0)$, but is not a bigon from $q$ to $p$ for $(L_0,L_1)$.    See the paragraph following Definition \ref{eqngons}.

\medskip

Figure \ref{re2}  shows the   curve  $L_1$ of Figure \ref{re1} and another unobstructed curve $L_0$. These intersect in eight points.  Two intersection points $p,q$ are indicated, and the image of a bigon from $p$ to $q$ for the pair $(L_0,L_1)$ is shaded. The reader should check that there is precisely one other bigon between $L_0$ and $L_1$, joining a different pair of intersection points.
      \begin{figure}[h]
\begin{center}
\def\svgwidth{2.4in}
 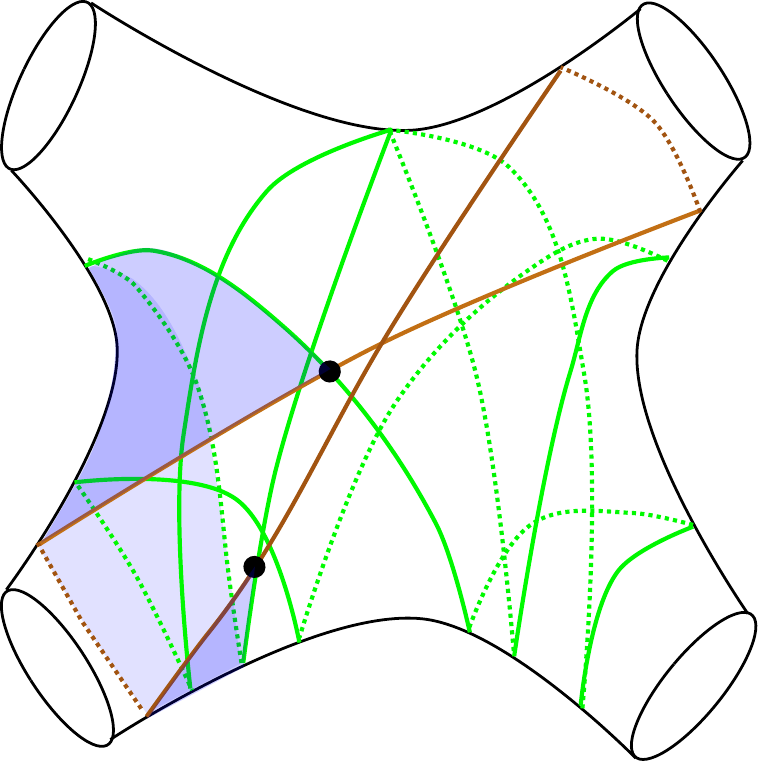
 \caption{A bigon from $p$ to $q$.\label{re2} }
\end{center}
\end{figure}

 \medskip

  The following is proved in  Abouzaid's article \cite{abou}.  In that article, coefficients are taken in a Novikov ring (over $\ZZ$) to account for the possibility that $\mathcal{M}_{L_0,L_1}(q,p)$ is infinite. Since this will not be the case in our applications, we set the  Novikov variable $t$ equal to 1 and reduce the coefficients from $\ZZ$ to $ \FF_2$. 

\begin{thm}[Abouzaid \cite{abou}]  \label{abouza} Let $(L_0,L_1)$ be a pair of unobstructed transverse immersed curves in $S$, and assume that $\mathcal{M}_{L_0,L_1}(p,q)$ is finite for all intersection points $p,q$ of $L_0$ and $L_1$. Then  $\partial :C(L_0, L_1)\to C(L_0,L_1)$  satisfies $\partial^2=0$.
\qed
\end{thm}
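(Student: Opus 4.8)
The plan is to run the standard ``counting the boundary of one-dimensional moduli spaces'' argument, using the Maslov index formula from Proposition \ref{convex} and the gluing/splicing structure established above. First I would fix intersection points $p,q$ of $L_0$ and $L_1$ and consider the set $\mathcal{M}_{L_0,L_1}(p,r)$ of bigons from $p$ to $r$ (Maslov index $1$) together with, for each such bigon, a second bigon from $r$ to $q$; concatenating along the vertex $r$ produces an immersed $2$-gon of Maslov index $2$ from $p$ to $q$, which by the cyclic/splicing properties of $\Mas_\ell$ in Proposition \ref{basicmas} is exactly an element of the moduli space of index-$2$ immersed bigons from $p$ to $q$. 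So the coefficient of $q$ in $\partial^2 p$ counts, mod $2$, the number of ways an index-$2$ bigon decomposes as a broken configuration of two index-$1$ bigons.

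The heart of the argument is then to show that the space of index-$2$ immersed bigons from $p$ to $q$ is (the interior of) a compact $1$-manifold whose boundary points are exactly these broken configurations; since a compact $1$-manifold has an even number of boundary points, the mod-$2$ count vanishes, giving $\partial^2 p = 0$. To set this up I would use Proposition \ref{convex}: an immersed $2$-gon has $\Mas(u) = 1 + \kappa(D)$, so index-$1$ bigons are honest convex bigons and index-$2$ bigons are $2$-gons in $\RR^2$ with exactly one non-convex corner. Such a domain $D$ carries a one-parameter family of ``cutting arcs'' from the reflex vertex, and pulling this family back through the immersion $u$ (using that $u$ is an orientation-preserving immersion and that $L_0,L_1$ are unobstructed, so lifts to the universal cover are embedded and the relevant disks are embedded there) exhibits a $1$-dimensional family of index-$2$ bigons; the two ends of this interval degenerate either when the cut hits an intersection point of $L_0\cap L_1$ on the boundary (producing a broken pair of index-$1$ bigons) or when it runs off to a vertex (which, for a single reflex corner, must also produce a breaking rather than a non-compact end). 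Conversely, any broken pair of index-$1$ bigons can be glued to an index-$2$ bigon, and this gluing is a local diffeomorphism onto a half-neighborhood of the corresponding end — this is the boundary-identification statement.

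The main obstacle is precisely the compactness-and-gluing step: verifying that the moduli space of index-$2$ immersed bigons is a manifold-with-boundary with no hidden ends, no contributions from disk bubbling or sphere bubbling (absent here because we are in a surface and the ``disks'' are genuine immersions, not pseudoholomorphic maps), and no degenerations other than the expected node at an intersection point. In this combinatorial $2$-dimensional setting this amounts to a careful but elementary analysis of how the reflex corner of the polygon $D$ sweeps out the family, and of what happens at the ends of the parameter interval — one must rule out the cut vanishing into a vertex without producing a legitimate broken bigon, which uses the unobstructedness hypothesis (no fishtails) to guarantee the relevant sub-disks in the universal cover are embedded. Since Theorem \ref{abouza} is attributed to Abouzaid \cite{abou}, I would lean on his foundational treatment for the technical heart of this step and present the surface-specific picture (cutting at the unique reflex corner) as the conceptual skeleton, rather than re-deriving transversality from scratch.
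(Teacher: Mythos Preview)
The paper gives no proof of this theorem; it is stated with a \qed\ and attributed directly to Abouzaid \cite{abou}, with only the remark that Abouzaid's Novikov coefficients may be specialized to $\FF_2$ under the finiteness hypothesis. So there is nothing in the paper to compare your sketch against beyond the citation itself.

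Your outline is the standard argument, and is in the spirit of what Abouzaid does. One stylistic point worth flagging: your language of a ``one-parameter family of cutting arcs from the reflex vertex'' and ``a $1$-dimensional family of index-$2$ bigons'' imports the pseudoholomorphic picture (where the index-$2$ moduli space, before dividing by the $\RR$-action, genuinely is $1$-dimensional). In the purely combinatorial surface setting there is no continuous family --- each immersed index-$2$ bigon is a single rigid object --- and the argument is instead a direct pairing: one shows that every broken configuration of two index-$1$ bigons glues to an immersed index-$2$ bigon (the one with a single non-convex corner at the gluing point, consistent with your use of Proposition \ref{convex}), and that this index-$2$ bigon admits exactly one other decomposition into two index-$1$ bigons. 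This gives a fixed-point-free involution on the set of broken configurations from $p$ to $q$, hence an even count. Your sketch gestures at this when you say the combinatorial version ``amounts to a careful but elementary analysis,'' so you clearly see the distinction; just be aware that the literal ``ends of a $1$-manifold'' language does not survive the passage to immersed polygons. Your deferral to \cite{abou} for the technical verification is exactly what the paper does.
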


For example, the chain complex $C(L_0,L_1)$ for the pair $(L_0,L_1)$ illustrated in Figure \ref{re2} (see also Figures \ref{PP} and \ref{re3})  is generated by the eight intersection points of $L_0$ and $L_1$. Two bigons (one of which is shaded in Figure  \ref{re2}) define a non-trivial differential $\partial$  of rank   $2$. The resulting homology has rank four.

\bigskip

More generally, Abouzaid   proves in \cite{abou} that the $\mu_n,~n\ge 2$ satisfy the {\em $A_n$ relations} for all $n$ when $(L_0,\dots, L_{n-1})$ are pairwise transverse unobstructed immersed curves with no triple points. 

We will only use the $A_2$  and $A_3$ relations in the present article and so we write them out explicitly. (We refer to  \cite{abou, Auroux1} for the formulas for the $A_n$ relations.)
The $A_2$ relation says
\begin{equation}
\label{mu2rel}\mu_2(\mu_1(x),y)+\mu_2(x,\mu_1(y))+\mu_1(\mu_2(x,y))=0,\end{equation} 
and the $A_3$ relation says
\begin{equation}
\label{mu3rel}
\begin{multlined}
  \mu_3(\mu_1(x),y,z)+\mu_3(x,\mu_1(y),z)+\mu_3(x,y,\mu_1(z)))  +\mu_2(\mu_2(x,y),z)\\
  +\mu_2(x,\mu_2(y,z))
  +\mu_1(\mu_3(x,y,z))=0\qquad\end{multlined}
\end{equation}
These hold when all the sets  of equivalence classes of $n$-gons of Maslov index $3-n$, $n=2,3,4$ which appear in the formulas defining each term in Equation (\ref{mu2rel}) or (\ref{mu3rel}) are finite.

\section{Restricted Lagrangians in the pillowcase}\label{RLITP}
 In this section, we will apply the constructions of Section \ref{LFT} to the pillowcase.

 \subsection{The pillowcase}
 
 The {\em pillowcase} $P$ is the quotient of the torus by the hyperelliptic involution. It is a topological 2-sphere with four singular points corresponding to the four fixed points of the involution. For concreteness, define $P$ to be the  quotient of $\RR^2$ by the group of orientation preserving isometries generated by the maps 
 $$(\gamma,\theta)\mapsto (\gamma+2\pi,\theta),~(\gamma,\theta)\mapsto (\gamma,\theta+2\pi),~(\gamma,\theta)\mapsto (-\gamma,-\theta)$$  (this group is a semi-direct product $\ZZ^2\ltimes \ZZ/2$).  
 The quotient map is a branched covering
 \begin{equation}
\label{brcover} \RR^2\to P.
\end{equation}
A fundamental domain for the action is  given by the rectangle $(\gamma,\theta)\in [0,\pi]\times [0,2\pi]$.     We will frequently specify  a point  in $P$ by giving its coordinates $(\gamma,\theta)\in \RR^2$. We refer to  points in $(\pi\ZZ)^2$ as {\em lattice points}.     The four singular points  of $P$, which we call the corners, make up the image of the lattice points.  
Our theory will take place in the complement of the corners, so it is convenient to adopt the notation $P^*=\left( \RR^2 \setminus (\pi \ZZ)^2 \right)/ (\ZZ^2\ltimes \ZZ/2 )$.
Note that $P^*$ inherits an orientation and a symplectic structure  from the standard orientation and symplectic structure $d\gamma\wedge d\theta$  on $\RR^2 \setminus (\pi \ZZ)^2$ via the branched covering (\ref{brcover}).

The pillowcase $P$  is illustrated in two ways in Figure \ref{PP}. One should view the figure on the left as obtained by folding the fundamental domain  $[0,\pi]\times[0,2\pi]$  for the branched cover  (\ref{brcover}), illustrated on the right, along $ [0,\pi]\times \{\pi\}$ and making  identifications along the edges as indicated. The front face is the image of $[0,\pi]\times[0,\pi]$ and the back face is the image of $[0,\pi]\times[\pi,2\pi]$, upside down.  In Figure \ref{PP} we have also indicated the immersed circle $L_1$ of Figure \ref{re1}.

    \begin{figure}[h]
\begin{center}
\def\svgwidth{3.5in}
 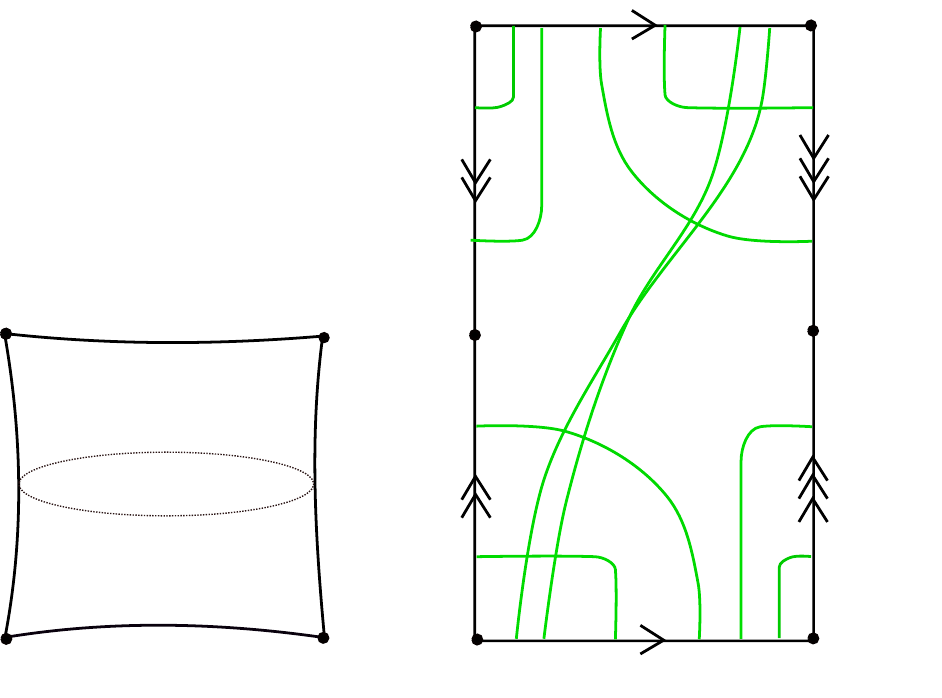
 \caption{Two depictions of the pillowcase $P$. The immersed circle $L_1$ of Figure \ref{re1} is indicated on the right.\label{PP} }
\end{center}
\end{figure}

 \subsection{A line field on the pillowcase}  To apply the Maslov index constructions  described in Section \ref{LFT}, a line field on $P^*$ is needed.    We fix a line field $\ell_{\operatorname{inst}}$ in a particular homotopy class, so that the  $\ZZ/4$ grading we construct below using  the Maslov index matches the $\ZZ/4$ grading on singular instanton knot homology.  The connection to   gauge theory is explained in Section \ref{2strand}.   
 
 The line field $\ell_{\operatorname{inst}}$ is somewhat complicated to depict or calculate with, as it twists along the edges of the pillowcase.   Our approach in calculations is to keep track of  the mod 4 Maslov index information determined by $\ell_{\operatorname{inst}}$ by instead using a pair $(\ell,z)$, where $\ell$ is a  simple (constant slope) line field and $z\in H^1(P^*;\ZZ/4)$ keeps track of the extra twisting of $\ell_{\operatorname{inst}}$  relative to $\ell$, as described in Proposition \ref{field}.
 
 Any  constant slope line field  on $\RR^2$ is invariant under the $\ZZ^2\ltimes\ZZ/2$ action and hence its restriction 
  to $\RR^2\setminus(\pi\ZZ)^2$ descends to a line field  on $P^*$.  Call such a line field a {\em  constant slope} line field on $P^*$.  We will make frequent use of 
 the slope one line field on $\RR^2$, and hence we give it the label $\ell_1$.

%
%

\begin{df}\label{theclassz}
 Let $z\in H^1(P^*;\ZZ/4)=\Hom(H_1(P^*),\ZZ/4)$ denote the unique cohomology class which assigns $1\in \ZZ/4$ to each small loop circling a corner counterclockwise.  

\end{df}
  
 Immersed circles $\gamma:S^1\to P^*$ satisfy $\mu(\gamma,\ell_1)\equiv z(\gamma)$ mod 2.  They need not be equal modulo 4, however.   For example, if $\gamma$ is  the boundary of a smoothly embedded disk in $P^*$, then $\mu(\gamma,\ell_1)=\pm2$ and $z(\gamma)=0$.  For a small embedded loop encircling one corner of $P$ counterclockwise,  $\mu(\gamma,\ell_1)=1=z(\gamma)$. For the curve  $L_1$ illustrated in Figure  \ref{re1} and the right in Figure \ref{PP}, $\mu(L_1,\ell_1)=0=z(L_1)$.  For the curves $L_0^{\ep,g}$ depicted in Figure \ref{Lzero}, $\mu(L_0^{\ep,g}, \ell_1)=0=z(L_0^{\ep,g})$.

\medskip

\begin{df}\label{instanton line field}    Fix a  map $\tilde z:P^*\to S^1$ so that its class $\tilde z\in H^1(P^*;\ZZ)=[P^*, S^1]$  is a lift of the class $z$.  Think of $S^1$ as acting freely and  transitively on lines in $\RR^2$.  Define the {\em instanton line field} by:
\begin{equation}
\label{ilf} 
\ell_{\operatorname{inst}}(p)=\tilde z(p)  \ell_1(p).
\end{equation}
 
\end{df}
 
The  homotopy class of the line field $\ell_{\operatorname{inst}}$ depends on choice of lift $\tilde z$, as do Maslov indices computed using $\ell_{\operatorname{inst}}$, however, \begin{equation}\label{ivs1}
\mu(L,\ell_{\operatorname{inst}})\equiv \mu(L,\ell_1)+z(L)\mod{4}.
\end{equation}

 \subsection{Proper arcs in $P$}

\begin{df}\label{pimm}
Define a {\em proper  immersion}   of an interval $L:I\to P$ to be the image under the branched cover (\ref{brcover}) of a smooth immersion $\widetilde L:I\to \RR^2$ which  takes the two endpoints of the interval  to $(\pi\ZZ)^2$ and the interior  to $\RR^2\setminus (\pi\ZZ)^2$.  We call the slopes of $\widetilde L$ at the  endpoints  (which  are determined by $L$) the {\em limiting slopes of $L$}.   Note that a proper immersion cannot spiral infinitely many times as it limits to a corner. \end{df}

\medskip

In order to easily apply the results of \cite{abou}, it is convenient to work in a compact surface with boundary.   It will suffice for our purposes to simply remove a   small neighborhood  of the corners. More precisely, given some small $\delta>0$, let $\barP_\delta\subset P$ denote the image under the branched cover (\ref{brcover}) of the subspace of $\RR^2$ obtained by removing open $\delta$ neighborhoods of the lattice points.

If $R$ is a compact 1-manifold with boundary 
and $L_1:R\to P$ is  a proper immersion (as defined above) on each arc, and maps each circle of $R$ into $P^*$, then for  $\delta>0$  small enough so that the $\delta$ disks miss the circle components,  $L_1(R)\cap \barP_\delta$ is a properly immersed compact 1-manifold in the compact surface $\barP_\delta$.  In the following, we will typically write $\barP$ instead of $\barP_\delta$, with the understanding that $\delta$ is chosen small enough to  miss circle components and result in  arc components transverse to the boundary.

\subsection{Perturbation functions and a family of isotopies of $P$}

Let $$\mathcal{X}=\{ f\in C^\infty(\RR,\RR)~|~ f(x+2\pi)=f(x), f\text{ odd}\}.$$ We call this the {\em space of perturbation functions}.  It is a vector space, and is preserved by pre-composition by $x\mapsto x+\pi$. In particular, $f(\pi)=0$ for all $f\in \mathcal{X}$.  The sine function   is a member of  $\mathcal{X}$.

 The usual terminology  in the literature describes perturbation data as a choice of an embedded solid torus and a  conjugation invariant function on $SU(2)$, which together are used to define a gauge invariant perturbation of the   Chern-Simons functional.  In our notation, an element $f\in \mathcal{X}$ is the derivative of such a conjugation invariant function on $SU(2)$, restricted to the maximal torus.  The function $f$ determines the effect on the critical set of Chern-Simons function.  More precisely, $f$ determines which flat connections on the complement of  the perturbation solid torus extend to be perturbed flat on the solid torus, so it is more convenient for us to refer to these functions in our pertubation data.  See Section \ref{perturbationsection} for more details.

We associate,  to each  perturbation function $g\in \mathcal{X}$, an isotopy  of the pillowcase by: 
\begin{equation}
\label{iso}c_{g}:P\times I\to P,~c_{g}((\gamma,\theta),s)=(\gamma,\theta+sg(\gamma)). 
\end{equation}
 Since $c_{-g}(c_g(p,s),s)=p$, $c_g(-,s)$ is a homeomorphism and hence $c_g$ is an isotopy starting at the identity. Notice that $c_g$ fixes the left and right edges of the pillowcase.

The formula (\ref{iso}) shows that $c_g$ lifts to a Hamiltonian isotopy of $\RR^2$ which is $\ZZ \ltimes\ZZ/2$ invariant  and fixes the vertical lines $\{x=n\pi\}$.  In particular, we can think of $c_g$ as a Hamiltonian isotopy of $P^*$, or of the orbifold $P$.

\subsection{The family $L_0^{\ep,g}$ of immersed circles in the pillowcase}
In the applications to singular instanton homology in Section \ref{2strand}, we show that   a 2-tangle decomposition of a knot gives rise to two unobstructed immersed curves $L_0,L_1$ in $P^*$,  which in turn define a chain complex $C(L_0,L_1)$ as in Section \ref{LFT}.  Identification of the  immersed circle $L_0$, which depends on a parameter $\epsilon\ne 0$, was accomplished in \cite[Theorem 7.1]{HHK}. In order to ensure we can choose $L_0$ transverse to $L_1$ we enlarge the family of $L_0$ to include the isotopies described above. 

\bigskip

 Let $\Delta\subset P$ denote the arc of slope one, i.e., the diagonal arc  
\begin{equation}
\label{Delta}
\Delta=\{(\gamma,\gamma)~|~\gamma\in[0,\pi]\}.
\end{equation}

\begin{df} \label{elzero} Fix an $\epsilon>0$ and $g\in \mathcal{X}$. Let $L_0^{\ep, g}:S^1\to P^*$ denote the immersion given as the composite of the map
\begin{equation}\label{tilfixed}\tilde L_0^{\ep,g}(t)=
 (t+ \epsilon \sin(t)+ \tfrac\pi 2, t- \epsilon  \sin(t)+ \tfrac\pi 2+g(t+\epsilon\sin(t)+\tfrac{\pi} 2) ),~ t\in [0,2\pi]\end{equation}
 with the branched cover $\RR^2\to P$  of Equation (\ref{brcover}), so 
 \begin{equation}\label{fixed}
L_0^{\ep,g}:[0,2\pi]\xrightarrow{\tilde L_0^{\ep,g}}\RR^2\to P^*.
\end{equation}

 \end{df}

The   image of $L_0^{\ep, g}$ in $P^*$ for $g=0$ and $\ep$ small is illustrated in Figure \ref{Lzero} and also in Figure \ref{re2}.  As $\ep$ and $g$ approach zero, $L_0^{\ep, g}$ limits to a generically 2-1 map onto $\Delta$, with two points mapping to corners.  
\medskip
 
    \begin{figure}[h]
\begin{center}
\def\svgwidth{1.6in}
 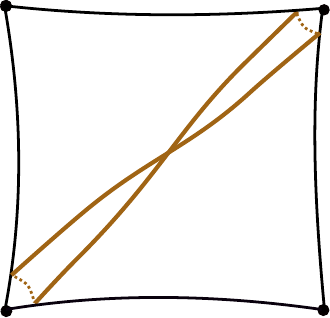
 \caption{The curve $L_0^{\ep,0}$ in $P$.\label{Lzero} }
\end{center}
\end{figure}

Note that  $L_0^{\ep ,g}(t)=c_g(L_0^{\ep,0}(t),1)$, so that  $L_0^{\ep ,g}$ is isotopic to $L_0^{\ep,0}$. 
In particular, the family of immersions $L_0^{\ep, g}$  for $\epsilon>0$  small  are self-transverse with one double point. Furthermore, $L_0^{\ep, g}$   is an unobstructed circle in the sense of Definition \ref{unobstructed}.
 
 \medskip
The following easily proved genericity lemma  says that  given any unobstructed immersed curve $L_1$,   arbitrarily small  $\ep,g$ can be found so that $L_1$ and $L_0^{\ep,g}$ are transverse.

\begin{lem}\label{generic} Given an unobstructed immersed circle or arc $L_1$, there exist $\ep>0$ and $\delta>0$ arbitrarily close to zero so that,  with $g(x)=\delta\sin(x)$, 
 $L_0^{\ep,g}$ and $L_1$
are transverse.  \qed
\end{lem}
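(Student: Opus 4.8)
The plan is to exhibit a two-parameter family of immersions indexed by $(\ep,\delta)$ and argue that transversality with a fixed $L_1$ fails only on a codimension-$\geq 1$ subset, so that the complement is dense near the origin. Concretely, for $g(x)=\delta\sin(x)$ the formula \eqref{tilfixed} becomes
\begin{equation*}
\tilde L_0^{\ep,g}(t)=\Bigl(t+\ep\sin(t)+\tfrac\pi2,\ t-\ep\sin(t)+\tfrac\pi2+\delta\sin\bigl(t+\ep\sin(t)+\tfrac\pi2\bigr)\Bigr),
\end{equation*}
which I regard as a smooth map $F\colon S^1\times (-\eta,\eta)^2\to P^*$, $F(t,\ep,\delta)=L_0^{\ep,g}(t)$ (for $\eta$ small enough that the image avoids the corners, using that $L_1$ also avoids a neighborhood of the corners when it is a circle, or runs into them transversely along the diagonal when it is an arc). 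First I would lift everything to $\RR^2$, where $L_1$ lifts to a disjoint union of properly embedded arcs/lines $\tilde L_1$; transversality of $L_0^{\ep,g}$ with $L_1$ in $P^*$ is equivalent to transversality of $\tilde L_0^{\ep,g}$ with this lift, so I may work upstairs in the plane with the standard smooth structure.

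The key step is a parametric transversality (Sard--Thom) argument. Consider the map $G\colon S^1\times (-\eta,\eta)^2 \times (\text{domain of } \tilde L_1)\to \RR^2\times\RR^2$ sending $(t,\ep,\delta,s)\mapsto (\tilde L_0^{\ep,g}(t),\,\tilde L_1(s))$, and intersect with the diagonal $\Delta_{\RR^2}$; I want to show $G$ is transverse to $\Delta_{\RR^2}$. The point is that the $\delta$-direction alone already moves $\tilde L_0^{\ep,g}(t)$ in the $\theta$-coordinate by $\sin(t+\ep\sin t+\tfrac\pi2)$, which is nonzero for all but finitely many $t$, while the $\ep$-direction moves it in both coordinates; together with the $t$-derivative (the tangent to $\tilde L_0$) these span $\RR^2$ away from the isolated $t$'s where $\cos(\text{argument})=0$ and $\sin(\text{argument})$ conspire. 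At the finitely many bad $t$-values one argues directly that $\tilde L_0^{0,0}$ has a specific (diagonal) tangent there and a generic small perturbation via a second Fourier mode — or simply noting these are isolated and can be avoided because $\tilde L_1$ meets the diagonal in finitely many points — handles them; alternatively one enlarges the perturbation family by $g(x)=\delta_1\sin x+\delta_2\sin 2x$ to get genuine transversality of $G$, then restricts back. Granting transversality of $G$ to $\Delta_{\RR^2}$, the preimage $G^{-1}(\Delta_{\RR^2})$ is a submanifold, and the projection to the $(\ep,\delta)$-parameter space has the property that $L_0^{\ep,g}\pitchfork L_1$ exactly when $(\ep,\delta)$ is a regular value; by Sard the regular values are dense, hence there are $(\ep,\delta)$ arbitrarily close to $0$ that work. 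Finiteness of the intersection at any such regular value follows from transversality plus compactness, as already noted after Definition \ref{intpt}.

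I would also need to separately ensure self-transversality of $L_0^{\ep,g}$ (one double point) and unobstructedness, but these are already recorded in the paragraph preceding the lemma for all small $\ep$ and all $g\in\mathcal X$, so they impose no new constraint. The main obstacle I anticipate is the honest verification that the parametrized map $G$ is transverse to the diagonal using only the one-parameter family $g=\delta\sin x$: the $\sin$-perturbation degenerates precisely where $\cos(t+\ep\sin t+\tfrac\pi2)=0$, and one must check that at those points the remaining directions (the $\ep$-derivative and the curve's own tangent) still span, or else pass to the richer perturbation family $\delta_1\sin x+\delta_2\sin 2x$ and then note that restricting to the $\delta_2=0$ slice still catches regular values arbitrarily near $0$ because the bad slice is nowhere dense. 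Once that local computation is pinned down the rest is the standard Thom transversality machinery, which is why the lemma is labeled "easily proved."
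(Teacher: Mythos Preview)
Your parametric transversality (Sard--Thom) argument is correct and is exactly the standard approach one would expect here; note that the paper itself omits the proof entirely, placing the \qed\ immediately after the statement and calling it an ``easily proved genericity lemma'' in the preceding sentence. One minor point worth tightening: at $\ep=0$ the map $L_0^{0,g}$ degenerates to a generically 2--1 map onto the diagonal $\Delta$ and is no longer an immersion into $P^*$, so your parameter domain should be $(0,\eta)\times(-\eta,\eta)$ rather than a full neighborhood of the origin --- this does not affect the Sard argument, and your identification of the degeneracy of the $\delta$-direction (and the suggested fixes) is the right way to close the computation.
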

 
 \color{black}
\subsection{Restricted immersed arcs and circles}

 To complete the construction of a $\ZZ/4$ relatively graded chain complex we refine the notion of an unobstructed curve. Experts will recognize this  notion as a $\ZZ/4$ variant of Seidel's notion of a {\em graded Lagrangian} (\cite{seidel1,Auroux1}).

\begin{df}\label{RLP}\hfill

A {\em restricted immersed circle in $P^*$} is an unobstructed immersed circle
$ L_1:S^1\to P^*$ 
which satisfies  $\mu(L_1(S^1),\ell_{\operatorname{inst}})\equiv0 \mod 4$, or, equivalently,
 $\mu(L_1(S^1),\ell_1 )+ z(L_1(S^1)) \equiv0 \mod 4$.

A {\em restricted  immersed arc in $P$} is a proper immersion on an interval  (in the sense of  Definition \ref{pimm})  $ L_1:I\to P$ such that $L_1(I)\cap \barP_\delta$ is unobstructed for small $\delta>0$.
 
A {\em restricted immersed curve} is either a restricted immersed circle or a restricted immersed arc.
\end{df}

The curves $L_0^{\ep,g}$ and the curve $L_1$ of Figure \ref{re1}  are restricted immersed circles. An embedded circle $L$ encircling one corner of $P$ counterclockwise  is unobstructed but not  restricted since $\mu(L,\ell_1)+z(L)=2$.   The image of any straight line segment  in $\RR^2$ joining two lattice points whose interior misses the lattice is mapped via the branched cover (\ref{brcover}) to a restricted immersed arc.

\bigskip

\subsection{A relative $\ZZ/4$ grading}\label{z4}
 
 \medskip
 
 We revisit the notation and constructions of Section \ref{LFT} in the context of restricted immersed curves.  Recall that for simplicity we write $L_k\cap L_j$ for the set of intersection points of $L_k$ with $L_j$ (see Definition \ref{intpt}).

\begin{df} \label{grading}  Given an ordered list $(L_0,\dots, L_{n-1})$ of pairwise transverse restricted immersed curves, 
define
$$gr_{L_0,L_1,\dots, L_{n-1}}:(L_{0}\cap L_{1})\times (L_{1}\cap L_{2})\times\cdots\times (L_{n-2}\cap L_{n-1})\times (L_{n-1}\cap L_0)\to \ZZ/4$$
by 
$$ gr_{L_0,L_1,\dots, L_{n-1}}(p_1,\dots, p_{n})= \Mas_{\ell_{\operatorname{inst}}}(\gamma_0,\dots,\gamma_{n-1}) \mod 4$$
for any choice of $(\gamma_0,\dots,\gamma_{n-1})\in \pi_1(p_1,\dots,p_{n}).$
  
\end{df}

\begin{prop}\label{grade}  Given an ordered list $(L_0,\dots, L_{n-1})$ of pairwise transverse restricted immersed curves,
\begin{enumerate}
\item $gr_{L_0,L_1,\dots, L_{n-1}}(p_1,\dots, p_{n})$ is   independent of the choice of $(\gamma_0,\dots,\gamma_{n})$  and is   invariant under simultaneous cyclic permutations of   $L_0,L_1,\dots, L_{n-1}$ and $p_1, \dots,p_{n-1}, p_{n}=p_0$.

\item If $q$ is another  intersection point of $L_0$ and $L_{k}$, then
\[\begin{multlined}
gr_{L_0,L_1,\dots, L_{n-1}}(p_1,\dots, p_{n})\\
=
gr_{L_0,L_1,\dots L_{k}}(p_1,p_2,\dots,p_{k},q)+
gr_{L_0,L_{k},L_{k+1},\dots L_{n-1}}(q,p_{k+1},\dots,p_{n-1},p_n).
\end{multlined}\]

\item $gr_{L_{n-1},L_{n-2},\dots,L_0}(p_{n-1},p_{n-1},\dots, p_1,p_n)=
2-n-gr_{L_0,L_1,\dots, L_{n-1}}( p_1,p_2,\dots, p_{n-1},p_n)$.
\end{enumerate}

In particular,
$$gr_{L_0,L_1}(p,r)=gr_{L_0,L_1}(p,q)+gr_{L_0,L_1}(q,r),~  gr_{L_0,L_1}(p,p)=0,$$ 
$$   gr_{L_1,L_0}(q,p)= gr_{L_0,L_1}(p,q)=-gr_{L_0,L_1}(q,p),$$
and if $\mathcal{M}_{L_0,L_1}(p,q)$ is non-empty, then $gr_{L_0,L_1}(p,q)=1$.

 \end{prop}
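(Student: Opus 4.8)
The plan is to deduce every assertion from the corresponding statement about $\Mas_\ell$ in Proposition \ref{basicmas}, reduced modulo $4$, with the one new ingredient being the ``restricted'' hypothesis, which is precisely what makes the reduction independent of the choices built into the definition of $gr$.

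First I would settle well-definedness, i.e.\ the first half of (1). Fix intersection points $p_1,\dots,p_n$. When non-empty, $\pi_1(p_1,\dots,p_n)$ is a torsor over $\prod_{k=0}^{n-1}\pi_1(R_k)$: for an arc component $\pi_1(R_k)=0$, and for a circle component $\pi_1(R_k)=\ZZ$, generated by once around $L_k$. By Proposition \ref{basicmas}, $\Mas_{\ell_{\operatorname{inst}}}$ is constant on each class in $\pi_1(p_1,\dots,p_n)$; and since only the terms $\mu(L_k,\ell_{\operatorname{inst}})_{\gamma_k}$ in Definition \ref{Masbi} depend on the $\gamma_k$ (the $\tau$ terms depend only on the fixed points $p_k$), changing $\gamma_k$ by a loop of degree $m_k$ changes $\Mas_{\ell_{\operatorname{inst}}}$ by $-\sum_k m_k\,\mu(L_k(S^1),\ell_{\operatorname{inst}})$ by Proposition \ref{field}. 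For arc components this contribution vanishes, and for a restricted immersed circle $\mu(L_k(S^1),\ell_{\operatorname{inst}})\equiv 0\pmod 4$ by Definition \ref{RLP}; hence $\Mas_{\ell_{\operatorname{inst}}}\bmod 4$ is independent of the chosen tuple, so $gr$ is well-defined. Independence of the auxiliary lift $\tilde z$ defining $\ell_{\operatorname{inst}}$ then follows from the difference formula in Proposition \ref{basicmas} together with \eqref{ivs1}, since $gr$ equals $\Mas_{\ell_1}(\gamma_0,\dots,\gamma_{n-1})+z(\theta(\gamma_0,\dots,\gamma_{n-1}))\bmod 4$. The cyclic invariance in (1) is then immediate from Proposition \ref{basicmas}(3) read modulo $4$.

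Next I would obtain (2) and (3). Property (3) is the Path reversal property of Proposition \ref{basicmas} reduced modulo $4$, once one checks that reversing all paths and the cyclic order carries the data $(L_0,\dots,L_{n-1};p_1,\dots,p_n)$ to $(L_{n-1},\dots,L_0;p_{n-1},\dots,p_1,p_n)$, a direct unwinding of Definition \ref{defpi1}. Property (2) is the Splicing property of Proposition \ref{basicmas}: choosing the tuple defining $gr_{L_0,\dots,L_{n-1}}(p_1,\dots,p_n)$ so that $\gamma_0$ and $\gamma_k$ each factor through $q$, that identity expresses $\Mas_{\ell_{\operatorname{inst}}}$ as a sum of two terms; one is already $gr_{L_0,\dots,L_k}(p_1,\dots,p_k,q)$, and the other, which begins with a path in $R_k$, is $gr_{L_k,\dots,L_{n-1},L_0}(p_{k+1},\dots,p_n,q)$, equal to $gr_{L_0,L_k,\dots,L_{n-1}}(q,p_{k+1},\dots,p_n)$ by the cyclic invariance just established. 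Reducing modulo $4$ gives (2).

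Finally, the displayed consequences are the cases $n\le 2$. Taking $n=2$, $k=1$ in (2) gives $gr_{L_0,L_1}(p,r)=gr_{L_0,L_1}(p,q)+gr_{L_0,L_1}(q,r)$; the identity $gr_{L_1,L_0}(q,p)=gr_{L_0,L_1}(p,q)$ is cyclic invariance; and $gr_{L_0,L_1}(p,p)=0$ follows by evaluating $\Mas_{\ell_{\operatorname{inst}}}$ on the tuple of constant paths, for which Definition \ref{Masbi} gives $1-\tau(L_0,L_1,\ell_{\operatorname{inst}})_p-\tau(L_1,L_0,\ell_{\operatorname{inst}})_p=1-1=0$ via $\tau(\ell_0,\ell_1,\ell)+\tau(\ell_1,\ell_0,\ell)=1$; combining the last two with additivity yields $gr_{L_0,L_1}(p,q)=-gr_{L_0,L_1}(q,p)$. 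For the last claim, any representative of a class in $\mathcal{M}_{L_0,L_1}(p,q)$ is an immersed $2$-gon of Maslov index $3-2=1$ by Definition \ref{eqngons}, and its boundary paths represent a class in $\pi_1(p,q)$ on which $\Mas_{\ell_{\operatorname{inst}}}$ equals $1$, so $gr_{L_0,L_1}(p,q)=1$. The only genuinely substantive step is the well-definedness argument, and in particular the need for the $\bmod 4$ vanishing in the definition of a restricted circle rather than the automatic $\bmod 2$ vanishing; everything else is mod-$4$ bookkeeping on Proposition \ref{basicmas}.
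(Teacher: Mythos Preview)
Your proof is correct and follows essentially the same approach as the paper: well-definedness comes from the restricted hypothesis (so that changing $\gamma_k$ by a loop in $R_k$ changes $\Mas_{\ell_{\operatorname{inst}}}$ by a multiple of $4$), and all remaining assertions are the mod $4$ reductions of the corresponding properties in Proposition~\ref{basicmas}. The paper's proof is a two-sentence sketch of exactly this, and you have simply filled in the details (including the explicit check that $gr_{L_0,L_1}(p,p)=0$ via constant paths and the unwinding of path reversal and splicing), all of which are correct.
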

\begin{proof} The assumption that the $L_k$ are restricted immersed curves implies that the mod 4 reduction of $\Mas_{\ell_{\operatorname{inst}}}(\gamma_0,\dots,\gamma_{n-1})$  is independent of the choice of paths $\gamma_k$ and therefore
$gr_{L_0,L_1,\dots, L_{n-1}}(p_1, \dots, p_n)$ is well defined. 

The remaining assertions follow from their counterparts in Proposition \ref{basicmas}. 
\end{proof}

The function 
$gr_{L_0,L_1}:(L_0\cap L_1)^2\to \ZZ/4$ is called 
  {\em the relative $\ZZ/4$ grading on $C(L_0,L_1)$.}  Proposition \ref{grade} and Equation (\ref{diff}) imply that the differential (if defined) $\partial:C(L_0,L_1)\to C(L_0,L_1)$ lowers the relative grading by $1$, i.e.,  $(C(L_0,L_1),\partial)$ is a chain (rather than a cochain) complex.

\medskip

We thank Matt Hogancamp for formulating   the following corollary. Its  proof   follows quickly from Proposition \ref{grade}, and we omit  it. 
\begin{cor} \label{hogansheroes} Given an ordered list $(L_0,\dots, L_{n-1})$ of pairwise transverse restricted immersed curves,
$$gr_{L_0,L_1,\dots, L_{n-1}}(p_1,\dots,p_n)-gr_{L_0,L_1,\dots, L_{n-1}} (q_1,\dots,q_n)=\sum_{k=1}^n gr_{L_{k-1},L_k}(p_k,q_k).$$
Moreover, if there exists a Maslov index $k_p$ immersed $n$-gon through $(p_1,\dots, p_{n})$ and  a Maslov index $k_q$ immersed $n$-gon through $(q_1,\dots, q_{n}) $, then 
$$gr_{L_{0},L_{n-1}}(p_n,q_n)=k_q-k_p+\sum_{k=1}^{n-1} gr_{L_{k-1},L_k}(p_k,q_k)$$\qed
\end{cor}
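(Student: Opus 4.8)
The plan is to deduce both identities of Corollary~\ref{hogansheroes} directly from the additivity and reversal properties recorded in Proposition~\ref{grade}. For the first identity, the idea is to insert the points $q_1,\dots,q_n$ one at a time into the grading $gr_{L_0,\dots,L_{n-1}}(p_1,\dots,p_n)$ by repeatedly applying the splitting property, part (2) of Proposition~\ref{grade}, at each vertex in turn. Concretely, I would argue by induction on $n$: using part (2) to split off the last vertex, write $gr_{L_0,\dots,L_{n-1}}(p_1,\dots,p_n)$ as a sum of $gr_{L_0,\dots,L_{n-1}}(p_1,\dots,p_{n-1},q_n)$ and a two-term grading $gr_{L_{n-1},L_0}(p_n, q_n)$ composed with the appropriate pieces — then telescope. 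The cleanest way to organize it is probably to show first that changing a single $p_k$ to $q_k$ changes the total grading by $gr_{L_{k-1},L_k}(p_k,q_k)$, and then sum over $k$; this is just $n$ applications of part (2) together with the two-term relations $gr_{L_0,L_1}(p,r)=gr_{L_0,L_1}(p,q)+gr_{L_0,L_1}(q,r)$ and the sign relation $gr_{L_1,L_0}(q,p)=-gr_{L_0,L_1}(p,q)$ already extracted at the end of Proposition~\ref{grade}.

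For the second, ``moreover'' identity, the starting observation is that an immersed $n$-gon through $(p_1,\dots,p_n)$ of Maslov index $k_p$ gives, by definition of $gr$ via $\Mas_{\ell_{\operatorname{inst}}}$ (Definition~\ref{grading}) together with Definition~\ref{ngondef} identifying $\Mas(u)$ with $\Mas_\ell(\gamma_0,\dots,\gamma_{n-1})$, that $gr_{L_0,\dots,L_{n-1}}(p_1,\dots,p_n)\equiv k_p \bmod 4$; similarly $gr_{L_0,\dots,L_{n-1}}(q_1,\dots,q_n)\equiv k_q \bmod 4$. Substituting these two congruences into the first identity yields
\[
k_p - k_q \equiv \sum_{k=1}^{n} gr_{L_{k-1},L_k}(p_k,q_k) = gr_{L_{n-1},L_0}(p_n,q_n) + \sum_{k=1}^{n-1} gr_{L_{k-1},L_k}(p_k,q_k) \pmod 4,
\]
and then solving for $gr_{L_{n-1},L_0}(p_n,q_n)$, rewriting it as $-gr_{L_0,L_{n-1}}(q_n,p_n) = gr_{L_0,L_{n-1}}(p_n,q_n)\cdot(-1)$... more precisely using $gr_{L_{n-1},L_0}(p_n,q_n) = -gr_{L_0,L_{n-1}}(p_n,q_n)$ together with the sign relations, one rearranges to $gr_{L_0,L_{n-1}}(p_n,q_n) = k_q - k_p + \sum_{k=1}^{n-1} gr_{L_{k-1},L_k}(p_k,q_k)$. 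One must be slightly careful about the ordering convention in the two-term grading $gr_{L_{n-1},L_0}$ versus $gr_{L_0,L_{n-1}}$ and that the target is $\ZZ/4$, but no genuine difficulty arises.

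The main obstacle — really the only thing requiring care — is bookkeeping the indices and the cyclic/order conventions correctly when iterating part (2) of Proposition~\ref{grade}: part (2) splits off a vertex $q$ between $L_0$ and $L_k$, and one has to track how the ``first'' Lagrangian $L_0$ is threaded through each of the two resulting pieces, and how the reversal identity (part (3)) and the sign relation interact when the split vertex is the distinguished $p_n = p_0$. Since this is exactly the kind of routine but conventions-sensitive manipulation the authors explicitly chose to omit (``its proof follows quickly from Proposition~\ref{grade}, and we omit it''), I would present the argument in the form: (i) reduce to the single-index-change statement, (ii) prove that by one application of part (2) plus the two-term relations, (iii) sum, and (iv) substitute the $n$-gon congruences. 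No new geometric input is needed beyond what is already in Propositions~\ref{basicmas} and~\ref{grade} and Definitions~\ref{grading}, \ref{ngondef}.
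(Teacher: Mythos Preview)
Your proposal is correct and follows essentially the same route as the paper's own argument (which appears commented out in the source): reduce to the single-index-change statement via Proposition~\ref{grade}, telescope over $k$, and for the ``moreover'' part substitute $gr(p_1,\dots,p_n)\equiv k_p$ and $gr(q_1,\dots,q_n)\equiv k_q$ and use $gr_{L_{n-1},L_0}(p_n,q_n)=-gr_{L_0,L_{n-1}}(p_n,q_n)$. The only cosmetic difference is that the paper frames the telescoping as induction on the number of indices $k$ with $p_k\ne q_k$, whereas you phrase it as summing $n$ single-index changes; these are the same argument.
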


  \medskip

  In the next lemma, we provide  a practical formula for $gr_{L_0,L_1}$ in terms of the slope 1 line field  and the reversed paths $\alpha_k(t)=\gamma_{k}(1-t)$.     We find this formula to be the simplest to remember, and most of the subsequent calculations of relative gradings in this paper are obtained using this formula, without referring back to Maslov index definitions and conventions.   The omitted proof consists applying Equation (\ref{symtau}) and Proposition \ref{basicmas} to the difference class $z$.

\begin{lem}The relative $\ZZ/4$ grading on $C(L_0,L_1)$  is given as follows. 

Let $p,q$ be intersection points of $L_0$ with $L_1$, let $\alpha_0$ be a path in $L_0$ from $p$ to $q$, let $\alpha_1$ be a path in $L_1$ from $q$ to $p$, $\tau(L_0,L_1,\ell_{1})_{p}$ and $\tau(L_0,L_1,\ell_{1})$ the triple indices with respect to the slope 1 line field $\ell_1$. Then 
\begin{equation}\label{gradeeq}gr_{L_0,L_1}(p,q)=\mu(L_0,\ell_{1})_{\alpha_0}+ \mu(L_1,\ell_{1})_{\alpha_1}+ \tau(L_0,L_1,\ell_{1})_{p}-\tau(L_0,L_1,\ell_{1})_q + z(L_0(\alpha_0)*L_1(\alpha_1)).\end{equation} 
\qed
\end{lem}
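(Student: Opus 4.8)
The plan is to unwind the definition of $gr_{L_0,L_1}(p,q)$ as $\Mas_{\ell_{\operatorname{inst}}}(\gamma_0,\gamma_1) \bmod 4$, where $\gamma_0$ is a path in $L_0$ from $p$ to $q$ and $\gamma_1$ is a path in $L_1$ from $q$ to $p$, and then apply the line-field change formula from Proposition \ref{basicmas} to pass from $\ell_{\operatorname{inst}}$ to the constant slope-one line field $\ell_1$. Concretely, by Equation (\ref{ivs1}) we have $\mu(L,\ell_{\operatorname{inst}})\equiv \mu(L,\ell_1)+z(L) \bmod 4$, and the analogous statement for $\Mas$ is precisely the formula in Proposition \ref{basicmas}: $\Mas_{\ell_{\operatorname{inst}}}(\gamma_0,\gamma_1) \equiv \Mas_{\ell_1}(\gamma_0,\gamma_1) + z(\theta(\gamma_0,\gamma_1)) \bmod 4$, where $\theta(\gamma_0,\gamma_1) = -[L_0\circ\gamma_0 * L_1\circ\gamma_1]\in H_1(P^*)$.

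First I would write out $\Mas_{\ell_1}(\gamma_0,\gamma_1)$ from Definition \ref{Masbi} with $n=2$: it equals $1 - \mu(L_0,\ell_1)_{\gamma_0} - \mu(L_1,\ell_1)_{\gamma_1} - \tau(L_0,L_1,\ell_1)_{p} - \tau(L_1,L_0,\ell_1)_{q}$ (being careful that with $n=2$ the cyclic indices give $L_2=L_0$, $p_2=q$, $p_0=p$, and the triple-index term at $p_k$ is $\tau(L_k,L_{k-1},\ell)_{p_k}$, so at $p_1=q$ it is $\tau(L_0,L_1,\ell_1)_q$ — I will need to track the index bookkeeping carefully here and use the symmetry relation (\ref{symtau}) $\tau(L_0,L_1,\ell)+\tau(L_1,L_0,\ell)=1$). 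Then I would convert the forward paths to reversed paths via $\mu(L_k,\ell_1)_{\gamma_k} = -\mu(L_k,\ell_1)_{\alpha_k}$, so that $-\mu(L_0,\ell_1)_{\gamma_0} = \mu(L_0,\ell_1)_{\alpha_0}$ and similarly for the $L_1$ term. Note $\alpha_0$ is then a path in $L_0$ from $q$ to $p$; to match the statement, which takes $\alpha_0$ from $p$ to $q$ in $L_0$ and $\alpha_1$ from $q$ to $p$ in $L_1$, I will need to reconcile orientations — recall $\gamma_1$ already goes from $q$ to $p$, so $\alpha_1 = \gamma_1^{-1}$ goes from $p$ to $q$; thus in the statement's convention the roles are: the statement's $\alpha_0$ is our $\gamma_0$ and the statement's $\alpha_1$ is our $\gamma_1$. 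Substituting accordingly and collecting the $1$'s and $\tau$'s using (\ref{symtau}) so that $-\tau(L_0,L_1,\ell_1)_p - \tau(L_1,L_0,\ell_1)_q = -\tau(L_0,L_1,\ell_1)_p - (1 - \tau(L_0,L_1,\ell_1)_q) = -1 + \tau(L_0,L_1,\ell_1)_q - \tau(L_0,L_1,\ell_1)_p$, the constant $1$ cancels and one is left with $\mu(L_0,\ell_1)_{\gamma_0} + \mu(L_1,\ell_1)_{\gamma_1} + \tau(L_0,L_1,\ell_1)_q - \tau(L_0,L_1,\ell_1)_p$ — up to a possible sign I would double-check against the model examples in the paper, since the stated formula has $+\tau_p - \tau_q$; this sign discrepancy is exactly the sort of thing the running example and Proposition \ref{convex} are there to pin down, so I would verify the final formula against the Maslov-index-$1$ bigon example in Figure \ref{conventionfig}.

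Finally I would add in the difference-class correction: $z(\theta(\gamma_0,\gamma_1)) = z(-[L_0\circ\gamma_0 * L_1\circ\gamma_1]) = -z([L_0\circ\gamma_0]*[L_1\circ\gamma_1])$ as elements of $\ZZ/4$; modulo $4$ this equals $z(L_0(\alpha_0)*L_1(\alpha_1))$ once the orientation matching of the previous paragraph is applied and one observes that reversing both paths negates the homology class, which cancels the minus sign coming from the definition of $\theta$. Assembling everything gives precisely Equation (\ref{gradeeq}). The main obstacle is purely bookkeeping: getting every sign and every triple-index argument ($\tau$ at $p$ vs.\ at $q$, and the order of $L_0,L_1$ inside $\tau$) consistent with the paper's "label clockwise, orient counterclockwise" conventions, and making sure the path-orientation conventions in the lemma's statement ($\alpha_0$ from $p$ to $q$, $\alpha_1$ from $q$ to $p$) are correctly aligned with the $\gamma_k$ appearing in Definition \ref{Masbi}; there is no analytic or conceptual difficulty, just the need for care, which is why the paper omits the proof.
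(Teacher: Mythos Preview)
Your approach is exactly the paper's (the paper says the omitted proof ``consists [of] applying Equation (\ref{symtau}) and Proposition \ref{basicmas} to the difference class $z$''), and there is no missing idea. However, the sign confusion you flag is not a matter to be resolved by checking examples; it comes from a single misidentification at the start. For $gr_{L_0,L_1}(p,q)$ one has $p_1=p$ and $p_2=p_0=q$ (not $p_1=q$), so by Definition~\ref{defpi1} the path $\gamma_0$ runs in $L_0$ from $p_0=q$ to $p_1=p$, and $\gamma_1$ runs in $L_1$ from $p$ to $q$---the reverse of what you wrote. With this correction the lemma's $\alpha_0,\alpha_1$ really are the reversals $\gamma_0^{-1},\gamma_1^{-1}$ (consistent with the paper's standing convention $\alpha_k(t)=\gamma_k(1-t)$); the $-\mu(L_k,\ell_1)_{\gamma_k}$ terms in Definition~\ref{Masbi} become $+\mu(L_k,\ell_1)_{\alpha_k}$ after reversal; the two $\tau$ terms are $\tau(L_0,L_1,\ell_1)_q$ and $\tau(L_1,L_0,\ell_1)_p$, and (\ref{symtau}) converts the latter to $1-\tau(L_0,L_1,\ell_1)_p$, giving $+\tau_p-\tau_q$ on the nose; and the minus sign in $\theta(\gamma_0,\gamma_1)=-[L_0\circ\gamma_0 * L_1\circ\gamma_1]$ is cancelled because $L_0(\alpha_0)*L_1(\alpha_1)$ is, up to cyclic reordering, the inverse loop. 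No verification against model bigons is needed.
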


 When the order is clear from context, we   write $gr(p,q)$ rather than $gr_{L_0,L_1}(p,q)$ for the relative $\ZZ/4$ grading on $C(L_0,L_1)$.

\subsection{Finiteness of bigons} 

  When $(L_0,L_1)$ is an transverse pair of restricted immersed curves, we have constructed a relative $\ZZ/4$ grading  on the vector space $C(L_0,L_1)$ spanned by the intersection points of $L_0:R_0\to P^*$ and $L_1:R_1\to P$. To  show that $C(L_0,L_1)$ is a chain complex, we must show that $\mathcal{M}(p,q)=\mathcal{M}_{L_0,L_1}(p,q)$ is finite for any pair intersection points $p,q$.  To this end we introduce the notion of an admissible pair.  
  
\begin{df}\label{admis}  A pair 
 $$(L_0:R_0\to P,L_1:R_1\to P)$$ of  restricted immersed curves in $P$ is called an {\em admissible pair} provided:
\begin{enumerate}
\item at least one of $L_0$ or $L_1$ is a restricted immersed circle,   
\item if $\alpha_0:S^1\to R_0$ and $\alpha_1:S^1\to R_1$   are loops so that  $L_0\circ \alpha_0$ and $L_1\circ \alpha_1  $ are freely homotopic, then both $ \alpha_0 $ and $ \alpha_1 $ are nullhomotopic (this holds automatically if  one of $L_0,L_1$ is a restricted immersed arc, since restricted immersed circles are essential),
\item $L_0$ and $L_1$ intersect transversely.
\end{enumerate}
 
\end{df}
 
 If we   put a complete hyperbolic metric on $P^*$, then the second assumption is equivalent to the requirement that the unique geodesic representatives of   the homotopy classes of  $L_0$ and $L_1$ are transverse.

 \medskip

Given an admissible pair of restricted curves $L_0,L_1$, to each element of $u\in \pi_2(p,q)$ one can assign a {\em local degree} function $f_u$, which is an integer valued function with domain the set  of complementary regions of $L_0\cup L_1$, i.e., the path components of $P^*\setminus(L_0\cup L_1)$. Its value on a complementary region is the signed number of preimages of a regular value of any smooth representative of $u$.

\begin{lem}\label{pi2} For each pair $(p,q)$ of intersection points, $\pi_2(p,q)$ is either empty or contains a unique element.
 \end{lem}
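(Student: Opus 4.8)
The plan is to work on the universal cover $\widetilde{P^*}$ (equivalently on $\RR^2\setminus(\pi\ZZ)^2$, or better on the hyperbolic plane $\HH$ equipped with the complete hyperbolic metric promised in Definition~\ref{admis}). The key point is that the admissibility hypothesis was designed precisely so that lifts of $L_0$ and $L_1$ behave like geodesics: each lift $\widetilde L_0$ of $L_0$ and each lift $\widetilde L_1$ of $L_1$ is a properly embedded line (this is the content of being an unobstructed curve, Definition~\ref{unobstructed}), and by condition (2) in Definition~\ref{admis} no lift of $L_0$ is a bounded distance from a lift of $L_1$ — geometrically, two such lines either are disjoint or cross in exactly one point. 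Thus any two lifts of $L_0$ and $L_1$ meet in at most one point of $\widetilde{P^*}$.

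First I would set up the correspondence between $\pi_2(p,q)$ and lifts. An intersection point $p=(r_0,r_1)$, together with a choice of lift $\tilde p\in\widetilde{P^*}$ of its image, determines a unique lift $\widetilde L_0$ of $L_0$ through $\tilde p$ and a unique lift $\widetilde L_1$ of $L_1$ through $\tilde p$. Given $u\in\pi_2(p,q)$ represented by paths $(\gamma_0,\gamma_1)$ with a filling disk, lift the boundary loop starting at $\tilde p$: the disk filling shows the lifted boundary is a loop, so it closes up at some lift $\tilde q$ of $q$, and the lifted $\gamma_0$ runs inside $\widetilde L_0$ from $\tilde p$ to $\tilde q$ while the lifted $\gamma_1$ runs inside $\widetilde L_1$ from $\tilde q$ back to $\tilde p$. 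In particular $\tilde q$ is the unique (if it exists) common point of $\widetilde L_0$ and $\widetilde L_1$ other than $\tilde p$, or $\tilde q=\tilde p$. This shows two things at once: the data $u\in\pi_2(p,q)$ determines $\tilde q$ uniquely given $\tilde p$; and since $\widetilde{P^*}$ is simply connected, the paths $\tilde\gamma_0$ (inside the embedded arc $\widetilde L_0$) and $\tilde\gamma_1$ (inside the embedded arc $\widetilde L_1$) are determined up to homotopy rel endpoints by their endpoints, and any loop they form bounds a unique homotopy class of disk. Hence $u$ is determined by $\tilde q$, and in fact by the pair of lifts $(\widetilde L_0,\widetilde L_1)$ through a chosen $\tilde p$.

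Next I would argue that $\pi_2(p,q)$ has at most one element: changing the choice of lift $\tilde p$ changes $\widetilde L_0,\widetilde L_1$ by a deck transformation, which carries one candidate $\tilde q$ and disk to another, so the resulting class in $\pi_2(p,q)$ is unchanged; and for a fixed $\tilde p$ the preceding paragraph pins down $u$ completely. It remains only to observe that $\pi_2(p,q)$ can be empty (e.g. when $\widetilde L_0$ and $\widetilde L_1$ meet only at $\tilde p$, so there is no $\tilde q\ne\tilde p$ to terminate a nontrivial disk, yet $p\ne q$). For the constant case $p=q$ one takes $\tilde q=\tilde p$ and the constant disk, giving the (unique) element of $\pi_2(p,p)$ — consistent with $\pi_2(p,p)$ being a group acting freely transitively on a nonempty $\pi_2(p,q)$ as already noted. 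I would phrase the whole argument in terms of the universal cover rather than hyperbolic geometry to keep it self-contained, invoking the hyperbolic metric only as motivation, exactly as the text does after Definition~\ref{admis}.

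The main obstacle I anticipate is the bookkeeping needed to show the lift of the boundary loop of a disk in $\pi_2(p,q)$ genuinely closes up and that the two ``sides'' $\tilde\gamma_0,\tilde\gamma_1$ land inside a \emph{single} lift of $L_0$ and a \emph{single} lift of $L_1$ respectively — this uses that $\widetilde L_0$ is embedded (no fishtails, Lemma 2.2 of \cite{abou}) so that a path mapping into $L_0$ and lifting has a well-defined lifted image inside one component of the preimage. A secondary subtlety is checking that, having matched the boundary, two disks with the same boundary lift are homotopic rel boundary in $\pi_2$; this is immediate from $\pi_2$ of the surface being what it is (the relevant disks differ by an element of $\pi_2(\widetilde{P^*})=0$ after lifting), but it should be stated carefully. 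Everything else is routine once the ``two lifts meet at most once'' principle is in hand.
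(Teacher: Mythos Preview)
Your universal-cover approach has the right spirit, but there is a genuine gap at the key step: the claim that ``any two lifts of $L_0$ and $L_1$ meet in at most one point of $\widetilde{P^*}$'' is false. That property holds for \emph{geodesics} in the hyperbolic plane (two geodesics with distinct endpoints at infinity meet at most once), but $L_0$ and $L_1$ are merely unobstructed immersed curves, not geodesics. Being unobstructed only guarantees that each lift is an \emph{embedded} line, and two embedded lines in $\HH$ can cross many times. For a concrete counterexample, take $L_0$ to be a simple closed geodesic and $L_1$ a transverse simple closed curve in a different free homotopy class (so the pair is admissible), then isotope $L_1$ by a Reidemeister~II move across $L_0$: in the universal cover the corresponding lifts now meet in three points instead of one. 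Condition~(ii) of Definition~\ref{admis} rules out the lifts being \emph{asymptotic} (sharing an endpoint at infinity), but that alone does not bound the number of crossings for non-geodesic curves.

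What you actually need, and what admissibility does give, is the weaker statement: among the points of $\widetilde L_0\cap\widetilde L_1$, at most one corresponds to the specific intersection point $q=(r_0(q),r_1(q))$. Indeed, if $\tilde q_1\ne\tilde q_2$ were two such, the arcs joining them inside $\widetilde L_0$ and $\widetilde L_1$ would project to loops $\alpha_0$ in $R_0$ and $\alpha_1$ in $R_1$ with $L_0\circ\alpha_0$ and $L_1\circ\alpha_1$ homotopic rel basepoint (they lift to paths with the same endpoints in a simply connected space), hence freely homotopic; condition~(ii) then forces $\alpha_0,\alpha_1$ to be nullhomotopic, so $\tilde q_1=\tilde q_2$. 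This correction is exactly the content of the paper's proof, which phrases it as showing $\pi_2(p,p)$ is trivial (using the free transitive action on $\pi_2(p,q)$): given $(u,(\gamma_0,\gamma_1))\in\pi_2(p,p)$, the loops $L_0\circ\gamma_0$ and $L_1\circ\gamma_1$ are (freely) homotopic via the disk $u$, admissibility makes $\gamma_0,\gamma_1$ nullhomotopic, and then $u$ represents a class in $\pi_2(P^*)=0$. So once your false geometric claim is replaced by the correct use of condition~(ii), your argument collapses into the paper's.
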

\begin{proof} 
Since $\pi_2(p,q)$ is either empty or else $\pi_2(p,p)$ acts transitively on $\pi_2(p,q)$, it suffices to prove that $\pi_2(p,p)$ contains a unique class, namely the class of the constant map.

Write $p=(p_0,p_1)\in R_0\times R_1$.  Given $(u,(\gamma_0,\gamma_1))\in\pi_2(p,p)$, $\gamma_0$  (resp. $\gamma_1$) is a loop  in $R_0 $ (resp. $R_1$)  based at $p_0$ (resp. $p_1$). 
If $R_1$ is an arc,   then $L_1\circ \gamma_1$ is homotopic rel endpoints to the constant path, and hence so is $\gamma_0$.
If $R_1$ is a circle, then  the second assumption of Definition \ref{admis}  implies that $\gamma_0$ and $\gamma_1$ are nullhomotopic loops.   Either way, the images $L_0\circ \gamma_0$ and $L_1\circ \gamma_1$   in $P^*$ are homotopic loops based at $p$.

 By the homotopy extension property, $(u,(\gamma_0,\gamma_1))$ may be homotoped in $\pi_2(p,p)$ so that $\gamma_0$ and $\gamma_1$ are constant.  But then $u$ sends the entire boundary of the bigon to $p$, and hence represents a class in $\pi_2(P^*)=0$.  Thus we may further homotop $u$ rel boundary to the constant map and so $\pi_2(p,p)=0$, as desired.  
\end{proof}

\begin{cor}\label{finite} Given an admissible pair $(L_0,L_1)$, each set $\mathcal{M}(p,q)$ is either empty or contains one equivalence class of bigons. 
 \end{cor}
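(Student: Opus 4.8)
The plan is to derive the statement directly from Lemma \ref{pi2} together with the rigidity of orientation-preserving immersed polygons in a surface (Abouzaid). Any bigon representing an element of $\mathcal{M}(p,q)$ is in particular an immersed $2$-gon from $p$ to $q$ for the ordered pair $(L_0,L_1)$, and so, via the forgetful construction of Definitions \ref{ngondef} and \ref{defpi2}, it determines a class in $\pi_2(p,q)$. If $\mathcal{M}(p,q)=\emptyset$ there is nothing to prove, so suppose $u$ and $u'$ are bigons in $\mathcal{M}(p,q)$. By Lemma \ref{pi2}, $\pi_2(p,q)$ contains at most one element, so $[u]=[u']=:\phi$ in $\pi_2(p,q)$, and the task reduces to showing that any two immersed bigons representing the same class $\phi$ are equivalent in the sense of Definition \ref{eqngons}.

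\textbf{Key step.} For this I would invoke the local degree function $f_\phi$ introduced just before Lemma \ref{pi2}, whose value on a complementary region of $L_0\cup L_1$ in $P^*$ is the signed count of preimages of a regular value of any smooth representative of $\phi$. By construction $f_\phi$ depends only on $\phi$, so $f_u=f_{u'}=f_\phi$. Since $u$ and $u'$ are orientation-preserving immersions, at such a regular value the signed count equals the actual number of preimages, so $f_\phi$ is everywhere non-negative. The essential input is then the rigidity of immersed polygons: an orientation-preserving immersed $n$-gon for a fixed ordered tuple through fixed corner points is determined, up to the equivalence of Definition \ref{eqngons}, by its local degree function, because $f_\phi$ dictates the combinatorial structure of the domain disk (how the preimages of the regions and edges of $L_0\cup L_1$ are assembled) and the immersion is then the tautological projection onto $P^*$. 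This is the content of the relevant lemma of Abouzaid \cite{abou} (see also \cite{SRV}). Applying it to $u$ and $u'$ shows they determine the same element of $\mathcal{M}(p,q)$, so $\mathcal{M}(p,q)$ consists of a single equivalence class.

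\textbf{Main obstacle.} Everything outside the rigidity statement is bookkeeping around Lemma \ref{pi2}; the one point requiring care is the reconstruction of the domain disk from $f_\phi$, together with the fact that homotopic (let alone regularly homotopic) immersed disks need not be equivalent, so it is the local degree -- not merely the $\pi_2$-class -- that must be seen to pin down the polygon. In our low-dimensional setting one can make this self-contained by lifting $u$ to the universal cover of $P^*$: since $L_0$ and $L_1$ are unobstructed their lifts are properly embedded, the boundary arcs of the lifted bigon lie on such lifts, and one reconstructs the lifted bigon region-by-region from $f_\phi$, so that equivalence of $u$ and $u'$ reduces to equality of the reconstructed disks. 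I would either carry out this lifting argument in detail or simply cite \cite{abou}, where the statement is proved in the generality we need. Finally, this analysis also makes transparent why $\mathcal{M}(p,q)$ can genuinely be empty even when $\pi_2(p,q)\neq\emptyset$: precisely when the unique class $\phi$ has Maslov index different from $1$, or has local degree function taking a negative value, so that $\phi$ admits no orientation-preserving immersed representative of the required index.
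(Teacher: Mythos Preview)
Your argument is correct and follows essentially the same route as the paper's proof: both invoke Lemma \ref{pi2} to conclude that any two bigons in $\mathcal{M}(p,q)$ represent the same class in $\pi_2(p,q)$, observe that the common local degree function is non-negative because the bigons are orientation-preserving immersions, and then appeal to the reconstruction-from-local-degrees rigidity result (the paper cites \cite[Theorem 6.8]{SRV}, applied after lifting to the universal cover) to conclude equivalence. Your additional remarks on why $\mathcal{M}(p,q)$ may be empty and on carrying out the lifting argument directly are compatible with, and slightly elaborate on, the paper's treatment.
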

\begin{proof}    Fix intersection points $p$ and $q$, and suppose that $\mathcal{M}(p,q)$ is non-empty.    
Choose two immersed bigons $u,u'$ from $p$ to $q$. 
Lemma  \ref{pi2}    implies that $\pi_2(p,q)$ contains a unique element, so their local degree functions are equal. Moreover, since $u$ (and $u'$) are immersed by an orientation preserving immersion, $f_u=f_{u'}$ takes only non-negative values.

Standard arguments  now show that $u$ and $u'$ can be reconstructed from the data of their local degrees up to reparameterization, so that $u$ and $u'$ are equivalent.  For example, see
 \cite[Theorem 6.8]{SRV}, whose proof applies verbatim to our setting by passing to a compact simply connected submanifold of the universal cover of $P^*$.
 \end{proof}

\begin{rem}\label{finiteeasy}
 {\rm 
That  $\mathcal{M}(p,q)$ is finite (which is all we require for the assertions in the present article) when $L_0$ and $L_1$ are self-transverse immersions can be shown even more easily, as follows. Label the closure of the complementary regions of $P^*\setminus (\text{ image}(L_0)\cup \text{ image}(L_1))$ by $A_1,\dots, A_m$. Notice that the boundary of each $A_i$ is a union of arcs $\alpha_{i,j}$ meeting at convex double points.  The set of all such arcs, $\{\alpha_{i,j}\}$, can be partitioned into pairs which map to the same arc in $P^*$, and hence  each pair comes with an identification so that the surface obtained by identifying these two arcs immerses into $P^*$.

If $u\in \pi_2(p,q)$ has all local degrees $f_u(A_i)$ non-negative,  take $f_u(A_i) $ copies of $A_i$, $i=1,\dots,m$, and label the corresponding edges as $\alpha_{i,j;k}, k=1,\dots ,f_u(A_i)$.    There are finitely many ways of pairing all the arcs $\{
\alpha_{i,j;k}\}$ and gluing them to get a surface which immerses each copy of $A_i$ to its corresponding complementary region. Any immersed bigon from $p$ to $q$ must be equivalent to one of these resulting glued surfaces, hence there are finitely many bigons.}
 
 \end{rem}

 From Theorem \ref{abouza}, Proposition \ref{grade}, and Corollary \ref{finite}  we conclude the following.

\begin{thm}\label{mattmademedothis}
If $(L_0,L_1)$ is an admissible pair, then $(C(L_0,L_1),\partial)$ is a relatively $\ZZ/4$ graded chain complex with $\FF_2$ coefficients.  \qed
\end{thm}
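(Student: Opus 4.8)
The statement is a synthesis of Theorem \ref{abouza}, Proposition \ref{grade}, and Corollary \ref{finite}, so the plan is to verify that the hypotheses of each are in force and then combine them; no new geometric input is required.

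First I would set up the underlying surface. Given an admissible pair $(L_0,L_1)$, both curves are unobstructed and transverse, and at least one is a circle lying in $P^*$. If the other is an arc, I would fix $\delta>0$ small enough that the $\delta$-disks about the four corners miss all circle components and meet each arc component transversely in a single subarc near each endpoint; then $L_0(R_0)\cup L_1(R_1)$ restricts to a pair of properly immersed compact $1$-manifolds in the compact oriented surface $\barP_\delta$, whose fundamental group is free of rank $3$ and in particular infinite. In either case we are exactly in the situation of Section \ref{LFT}, and transversality together with compactness makes the intersection set $L_0\cap L_1$ finite, so $C(L_0,L_1)$ is a finite dimensional $\FF_2$ vector space.

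Next I would establish the finiteness hypothesis needed to run Abouzaid's theorem. The conditions defining an admissible pair are precisely what Lemma \ref{pi2} and Corollary \ref{finite} require: for every ordered pair $(p,q)$ of intersection points, $\pi_2(p,q)$ is empty or a singleton, and consequently $\mathcal{M}(p,q)=\mathcal{M}_{L_0,L_1}(p,q)$ contains at most one equivalence class of bigons. In particular every $\mathcal{M}_{L_0,L_1}(p,q)$ is finite, which is the standing assumption in Theorem \ref{abouza}. Applying that theorem --- with the Novikov variable specialized to $1$ and coefficients reduced to $\FF_2$, as discussed before its statement --- yields $\partial^2=0$ on $C(L_0,L_1)$. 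Then I would add the grading: Proposition \ref{grade} shows that $gr_{L_0,L_1}\colon (L_0\cap L_1)^2\to\ZZ/4$ is a well-defined relative $\ZZ/4$ grading, and that $gr_{L_0,L_1}(p,q)=1$ whenever $\mathcal{M}_{L_0,L_1}(p,q)\neq\emptyset$; since the coefficient of $q$ in $\partial p$ is, by (\ref{diff}), the mod $2$ count of elements of $\mathcal{M}_{L_0,L_1}(p,q)$, the differential lowers $gr_{L_0,L_1}$ by $1$, so $(C(L_0,L_1),\partial)$ is a relatively $\ZZ/4$ graded chain complex over $\FF_2$.

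The main (and essentially only) obstacle is the bookkeeping in the arc case: one must check that replacing $P^*$ by the compact surface $\barP_\delta$ alters neither the set of intersection points, nor the groups $\pi_2(p,q)$, nor the Maslov indices $\Mas_{\ell_{\operatorname{inst}}}$, so that Abouzaid's theorem and Proposition \ref{grade} apply verbatim. This is routine given the conventions of Sections \ref{LFT} and \ref{RLITP}, and with it in hand the theorem follows.
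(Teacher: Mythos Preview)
Your proposal is correct and follows essentially the same approach as the paper: the sentence immediately preceding the theorem reads ``From Theorem \ref{abouza}, Proposition \ref{grade}, and Corollary \ref{finite} we conclude the following,'' and the theorem is stated with a \qed. Your write-up simply unpacks this one-line deduction, correctly identifying that Corollary \ref{finite} supplies the finiteness hypothesis of Theorem \ref{abouza} (giving $\partial^2=0$) and that Proposition \ref{grade} supplies the relative $\ZZ/4$ grading compatible with $\partial$; the extra care you take with the passage from $P^*$ to $\barP_\delta$ in the arc case is appropriate and implicit in the paper's setup.
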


\begin{df} Call the $\ZZ/4$ graded homology of $(C(L_0,L_1),\partial) $  the  {\em  Lagrangian-Floer homology of the  $(L_0,L_1)$} and   denote   it  by $HF(L_0,L_1)$. 
\end{df}

\subsection{Example: Calculation of $HF(L_0,L_1)$ for $L_0=L_0^{\ep,0}$ and $L_1$ the restricted immersed circle of Figure \ref{re1}}\label{examplecalc}  The pair $(L_0,L_1)$ is admissible. In Figure \ref{re3}, the eight intersection points of the Lagrangian $L_1$ of Figure \ref{re1} with $L_0^{\ep,0}$ are labeled $p,q,r,s,t,u,v,w$. There is a bigon from $p$ to $q$ and hence $gr(p,q)=1$.  Similarly, there is a bigon from $w$ to $v$ and hence $gr(w,v)=1$.    

We compute $gr(q,s)=1$ in detail next. Let $\alpha_0$ be the path in $L_0$ starting at $q$, heading down and to the left, around the lower left corner, then back up to $s$.  Let $\alpha_1$ be the short arc on $L_1$ form $s$ back to $q$.  Then  $\mu(L_0,\ell_1)_{\alpha_0}=1$ (there is one tangency at the upper right part of the figure, near $(0,2\pi)$), $\mu(L_1,\ell_1)_{\alpha_1}=0$ since the arc $\alpha_1$ is everywhere transverse to the slope 1 line field $\ell_1$. Next, $\tau(L_0,L_1,\ell_1)$ equals $0$ at $q$ and $1$ at $s$, and $z(L_0(\alpha_0)*L_1(\alpha_1))=1$ since the loop $L_0(\alpha_0)*L_1(\alpha_1)$ goes once around the lower left corner counterclockwise. Using Equation (\ref{gradeeq}) we conclude:
$$gr(q,s)=1+0+0-1+1=1.$$
An identical argument gives $gr(t,p)=1$, $gr(v,r)=1$, and $gr(u,w)=1$. 

One more calculation is required to complete the calculation of the relative grading, for example $gr(t,r)$. Take $\alpha_0$ to be the path in $L_0$ from $t$ to $r$ which heads down and to the left, around the bottom left corner clockwise, then back up to $t$. Take $\alpha_1$ the path in $L_1$ from $r$ back to $t$ which starts by heading to the right, then down and continuing along $L_1$ until it returns to $t$.  Then  $\mu(L_0,\ell_1)_{\alpha_0}=1$, $\mu(L_1,\ell_1)_{\alpha_1}=1$, $\tau(L_0,L_1,\ell_1)_t=0$,  $\tau(L_0,L_1,\ell_1)_r=1$, and $z(L_0(\alpha_0)*L_1(\alpha_1))=0.$  Thus
$$gr(t,r)=1+1+0-1+0=1.$$

From these calculations and additivity of the relative grading, we conclude that $gr(p,r)=0, gr(p,q)=gr(p,u)=1, gr(p,s)=gr(p,w)=2, gr(p,t)=gr(p,v)=3$. Hence $C(L_0,L_1)$ has rank 2 in each grading.  There are only two Maslov index $1$ bigons, and hence the differential is given by $\partial p=q$ and $\partial v=w$, and so the homology has rank $1$ in each grading.

\medskip

 We introduce a bit of notation that will simplify our descriptions of the $\ZZ/4$ gradings.  The notation $(n_0, n_1,n_2,n_3)$ with $n_i$ non-negative integers denotes the $\ZZ/4$ graded vector space (over $\FF_2$) whose dimension on grading $i$ is $n_i$. 
Thus, for this example, $$C(L_0,L_1)=(2,2,2,2)\text{~and ~}HF(L_0,L_1)=(1,1,1,1).$$

      \begin{figure}[h]
\begin{center}
\def\svgwidth{2.6in}
 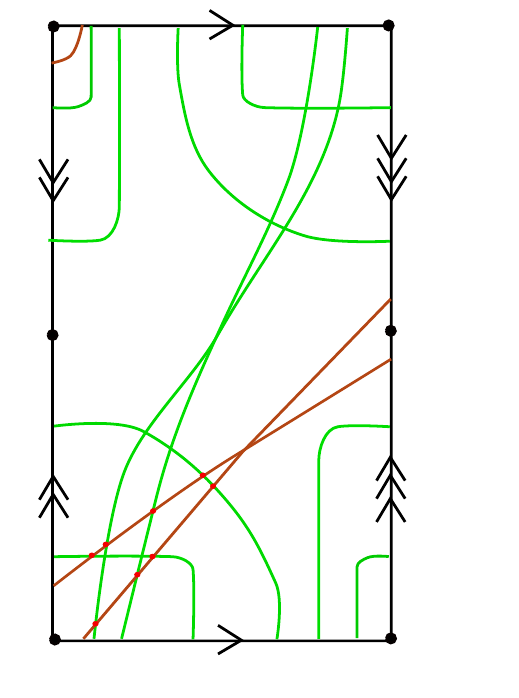
 \caption{The eight intersection points of $L_0$ and $L_1$  generating $C(L_0,L_1)$.\label{re3} }
\end{center}
\end{figure}

\section{Homotopy invariance}\label{homotopy}

 \color{black}
We  show that   the relatively  $\ZZ/4$ graded group $HF(L_0,L_1)$  depends    only on the homotopy classes of $L_0$ and $L_1$   (rel boundary)   in $P^*$.    Our argument follows the approach taken in \cite[Proposition 4.1]{abou}   and presumably will be somewhat familiar  to experts.  It is worth noting that we make  no requirement that the curves be related by a Hamiltonian isotopy.  

\begin{thm}\label{rhi}     Let $(L_0,L_1)$ and $(L_0',L_1')$  be two  admissible pairs which satisfy:
\begin{enumerate}
\item  the admissible circles $L_0, L_0'$ are freely homotopic.
\item  If $L_1,L_1'$  are immersed restricted circles  they are freely homotopic. If $L_1,L_1'$ are immersed restricted arcs,     they are homotopic rel endpoints. Assume also that near their endpoints, $L_1$ and $L_1'$ intersect only at their endpoint.
 \item   $L_0,L_1,L_0',L_1'$ are pairwise transverse and have no triple points.
\end{enumerate}

Then 
 $$HF(L_0,L_1)\cong HF(L_0',L_1').$$  
  as relatively $\ZZ/4$ graded $\FF_2$ vector spaces.
\end{thm}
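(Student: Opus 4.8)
The plan is to follow the standard chain-homotopy / continuation-map strategy for proving invariance of Lagrangian-Floer homology, adapted to the combinatorial (immersed-bigon) setting of the pillowcase, following \cite[Proposition 4.1]{abou}. First I would reduce to the case where the isotopy moves only one of the two Lagrangians: by symmetry (and using that composition of quasi-isomorphisms is a quasi-isomorphism) it suffices to prove $HF(L_0,L_1)\cong HF(L_0',L_1)$ when $L_0,L_0'$ are freely homotopic admissible circles, and likewise $HF(L_0,L_1)\cong HF(L_0,L_1')$. Since a free homotopy between circles (or a homotopy rel endpoints between arcs) in $P^*$ can be broken into finitely many elementary moves --- ambient isotopies together with finger moves (which introduce or cancel a pair of transverse intersection points bounding an innermost bigon) and, for self-transverse curves, triple-point moves --- it suffices to prove invariance under each elementary move while staying inside the class of admissible pairs. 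Invariance under ambient isotopy is essentially immediate: an isotopy $\phi_t$ of $P^*$ carrying $L_0$ to $L_0'$ induces a bijection on intersection points that preserves the relative $\ZZ/4$ grading (by Proposition \ref{grade}, since the grading depends only on homotopy data) and carries immersed bigons of Maslov index $1$ to immersed bigons of Maslov index $1$, hence is a chain isomorphism.

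The substantive step is the finger move. Suppose $L_0'$ is obtained from $L_0$ by pushing an arc of $L_0$ across an arc of $L_1$, creating a cancelling pair $a,b$ of new intersection points with $gr(a,b)=1$ and an innermost, embedded ``thin'' bigon $\Delta_0$ from $a$ to $b$ contributing $\partial a = b + (\text{other terms})$. The claim is that $C(L_0',L_1)$ is the algebraic mapping cone / acyclic extension of $C(L_0,L_1)$ by the two-dimensional acyclic complex $\langle a,b\rangle$, and that the projection (or inclusion) is a quasi-isomorphism. Concretely, I would argue --- exactly as in Abouzaid's proof --- that for the finger move done across a sufficiently thin region, the new generators $a,b$ appear, the differential on the old generators is unchanged modulo the subcomplex generated by $a$ and $b$, and the Gauss-elimination lemma for chain complexes (cancelling the arrow $\partial a \ni b$) produces a chain complex isomorphic to $C(L_0,L_1)$ with its differential, all compatibly with the $\ZZ/4$ grading (which is preserved because $gr(a,b)=1$ matches the degree shift of $\partial$, by Proposition \ref{grade}(3) and the formula \eqref{gradeeq}). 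To see that the only effect on old bigons is through $a,b$, one uses that the thin bigon $\Delta_0$ can be made arbitrarily small: any immersed bigon for the pair $(L_0',L_1)$ either avoids the finger region (hence is a bigon for $(L_0,L_1)$), or passes through it, in which case one analyzes its restriction near the finger using the local degree function (Lemma \ref{pi2}, Corollary \ref{finite}) and orientation-preservation to show it is either a broken configuration factoring through $a$ or $b$, or else cancels in pairs. Here the admissibility hypothesis (no essential loops shared, at least one circle) is exactly what prevents pathologies such as bigons wrapping infinitely or new essential configurations appearing, since passing to the universal cover of $P^*$ makes all the relevant regions simply connected and all the relevant configurations rigid.

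The triple-point move (only relevant when a curve is self-transverse and a regular homotopy pushes one strand across a double point of the same curve, or across a transverse intersection with the other curve) is handled by the same mechanism: it does not change the intersection points $L_0\cap L_1$ at all, and one checks directly that the count of Maslov-index-$1$ immersed bigons, taken mod $2$, is unchanged --- bigons that use the triple-point region before the move correspond, in pairs, to bigons using it after the move. I expect the main obstacle to be the careful bookkeeping in the finger-move step: precisely matching immersed bigons of Maslov index $1$ for $(L_0',L_1)$ with those for $(L_0,L_1)$ together with the broken ones through $a$ and $b$, while keeping track of the fact that we work with orientation-preserving immersions (so local degrees are non-negative, which rigidifies the count via Corollary \ref{finite} and the argument of \cite[Theorem 6.8]{SRV}) and that everything must respect the relative $\ZZ/4$ grading. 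Once the chain-level equivalence for each elementary move is in hand, composing the resulting quasi-isomorphisms over the finite sequence of moves realizing the homotopy from $(L_0,L_1)$ to $(L_0',L_1')$ yields the claimed isomorphism $HF(L_0,L_1)\cong HF(L_0',L_1')$ of relatively $\ZZ/4$ graded $\FF_2$ vector spaces.
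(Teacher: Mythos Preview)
Your approach---decomposing the homotopy into Reidemeister-type elementary moves (isotopy, finger move, triple-point move) and proving invariance under each via direct bigon bookkeeping and Gauss elimination---is valid in outline but is a genuinely different route from the paper's. The paper does not track how bigon counts change under elementary moves. Instead it uses the $A_\infty$ structure: passing to the cyclic cover $\Sigma\cong S^1\times\RR$ determined by $L_0$, it reduces (Lemma~\ref{case2}) to the situation where the lifts $\hat L_0,\hat L_0'$ meet in exactly two points, singles out a cycle $b\in C(L_0,L_0')$, and shows that the triangle-counting continuation map $\mu_2(b,-):C(L_0',L_1)\to C(L_0,L_1)$ is a quasi-isomorphism. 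The chain homotopy witnessing this comes from the $A_3$ relation via a quadrilateral count $\mu_3(b,f,-)$, with an auxiliary parallel copy of $L_0$ playing the role of a unit; the arc case (Lemma~\ref{case3}) is handled by an analogous argument in the universal cover $\HH$. This is in fact exactly what Abouzaid does in \cite[Proposition~4.1]{abou}, so your citation points to the paper's method rather than the one you describe. Your approach is more elementary---no $\mu_3$, no finiteness analysis for triangle and quadrilateral moduli as in Lemma~\ref{welldef}---but trades this for more case-by-case work and the delicate gluing/breaking bijection in the finger-move step that you rightly flag as the crux. The paper's route never compares old and new bigon counts directly; the quasi-isomorphism falls out abstractly from the $A_2$ and $A_3$ relations.
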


\noindent{\em Proof.}     In order to avoid the proliferation of sub and superscripts, we make the following notational changes.
Set $$A:=L_0,~ B:=L_1, ~C:=L_0', ~D:=L_1'.$$
And we must show that 
$$HF(A,B)=HF(C,D).$$

Consider $A$ and $C$ as immersions of {\em the} unit circle   $A,C:S^1\to P^*$. Also,  consider $B$ and $D$ as  immersions of the real line $\RR$ to $P^*$, with the understanding that if $R_1$ is a circle, then $B$ and $D$ are $2\pi$ periodic  and,   if $R_1$ is an arc, then we identify the interior of $R_1$ (which maps by $B,D$ to $P^*$) with $\RR$.  In brief, $B$ and $D$ are immersions of $\RR$ to $P^*$ which are periodic if $R_1$ is a circle and proper if $R_1$ is an arc. The second condition in the hypotheses implies that outside some compact set in $\RR$, $B$ and $D$ are disjoint embeddings, but with the same limit points at $\pm \infty$.

Let $x=A(1)$.  The immersion   $A$ induces a homomorphism on fundamental groups.  Consider the infinite cyclic subgroup
$$Z=\text{Image} ~A_\#:\pi_1(    S^1  ,1)\to \pi_1(P^*,x)$$
and let $$f:(\Si,\hat x)\to (P^*,x)$$  denote the (non-regular) cover corresponding to $Z$.  Thus $A:S^1\to  P^*$ lifts to $\hA:S^1\to \Si$, with $\hA(1)=\hat x$.

Since $\hA:S^1\to \Si$ generates $\pi_1(\Si,\hat x)=Z\cong\ZZ$, the preimage of $\hA$ in the universal cover $\widetilde P^*\cong\RR^2$  is connected, in fact the image of an immersion $\tA:\RR\to \RR^2$.  Since $A=L_0$ is unobstructed,  $
\tA$ is an embedding, from which it follows that $\hA:S^1\to \Si$ is an embedding.  
(In the following, we use the notation $\hA,\hB,\hC,\hD$ for lifts of $A,B,C$ to $\Si$,  and $\tA,\tB,\tC,\tD$ for lifts to the universal cover $\RR^2$.)

It can be easily shown, for example using elementary hyperbolic geometry, that $\Si$ is diffeomorphic to the cylinder $S^1\times \RR$. 
Fix such a diffeomorphism and the corresponding cover
$$(S^1\times\RR,\hat x)\to \to (P^*,x).$$

 Let $F:S^1\times[0,1]\to P^*$ be a homotopy from $A$ to $C$. Let $\widehat F:S^1\times [0,1]\to S^1\times\RR$ be the unique lift of $F$ satisfying $\hat F(1,0)=\hat x$. Let $\hC(z)=\widehat F (z,1)$. 
Then $\hC$ is a lift of $C$ to $S^1\times\RR$, and, as with $\hA$, $\hC$ is an embedding.   In particular, since $\hA$ and $\hC$ are homotopic embedded curves in $ S^1\times \RR$, $\hA$ and $\hC$ are isotopic.

The following three lemmas will complete the proof of Theorem \ref{rhi}.

\begin{lem} \label{case1} 
If  $\hA$ and $\hC$ meet transversely in precisely two points, then 
 $HF(A,B)\cong HF(C,B)$   and $HF(A,D)\cong HF(C,D)$ as relatively $\ZZ/4$ graded $\FF_2$ vector spaces.
 \end{lem}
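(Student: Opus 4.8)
The plan is to prove the lemma by a continuation‑map argument built from the two bigons that $\hA$ and $\hC$ necessarily cobound, using the $A_2$ and $A_3$ relations (\ref{mu2rel})--(\ref{mu3rel}).

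First I would establish the bigon picture. In the cylinder $\Si$, two isotopic essential embedded circles meeting transversely in exactly two points $e,f$ must cut $\Si$ into two embedded bigons $R_1,R_2$ together with two half‑open annuli; in particular $\hA$ and $\hC$ meet only at $e,f$. Projecting $R_1,R_2$ by the covering $\Si\to P^*$ produces two immersed bigons between $A$ and $C$ with common corners $e,f$. By Proposition~\ref{convex} each has Maslov index $1$, and inspecting the induced boundary orientations via the grading formula (\ref{gradeeq}) shows that, after possibly swapping the names $e,f$, both $R_1$ and $R_2$ are bigons from $e$ to $f$ for the ordered pair $(C,A)$, equivalently from $f$ to $e$ for $(A,C)$. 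Lifting an arbitrary immersed Maslov‑index‑$1$ bigon between $C$ and $A$ back to $\Si$ shows its two corners must lie on $\hA\cap\hC=\{e,f\}$ (its $C$‑side is a single lifted arc joining the two corners, hence lies on the unique lift $\hC$), so $R_1,R_2$ are \emph{all} the Maslov‑index‑$1$ bigons from $e$ to $f$ for $(C,A)$, and there are none from $f$ to $e$ for $(C,A)$ for grading reasons. Hence in $C(C,A)$ the two bigons $R_1,R_2$ cancel modulo $2$, giving $\partial e=0$, and $\partial f=0$ for grading reasons, so $e,f$ are cycles; symmetrically $e,f$ are cycles in $C(A,C)$.

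Next I would build the continuation maps. Set $\Phi:=\mu_2(e,-)\colon C(A,B)\to C(C,B)$ using the triple $(C,A,B)$, and $\Psi:=\mu_2(e,-)\colon C(C,B)\to C(A,B)$ using $(A,C,B)$; since $\partial e=0$ in $C(C,A)$ and in $C(A,C)$, the $A_2$ relation (\ref{mu2rel}) makes $\Phi$ and $\Psi$ chain maps, and Corollary~\ref{hogansheroes}, applied to the Maslov‑$0$ triangles counted by $\mu_2$ together with $gr_{C,A}(e,e)=0$, shows they respect the relative $\ZZ/4$ grading. Feeding $(e,e,x)$ for the string $(A,C,A,B)$ into the $A_3$ relation (\ref{mu3rel}) and using $\partial e=0$ collapses it to $\Psi\circ\Phi=\mu_2(\mu_2(e,e),-)+\partial H+H\partial$ with $H=\mu_3(e,e,-)$, and symmetrically for $\Phi\circ\Psi$. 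Here $\mu_2(e,e)$ lives in $C(A,A)$; to make sense of it one replaces one copy of $A$ by a small Hamiltonian pushoff $A'$, using that $HF(A,-)\cong HF(A',-)$ (the elementary ``small pushoff'' invariance, established directly as in Section~\ref{special}).

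The remaining point, and the main obstacle, is to identify $\mu_2(e,e)\in C(A,A')$ with the canonical unit generator of $HF(A,A')$: this is a local count of the ``small'' triangles sitting next to $R_1\cup R_2$, and it forces $\mu_2(\mathrm{unit},-)\simeq\mathrm{id}$, whence $\Psi\circ\Phi\simeq\mathrm{id}$ and $\Phi\circ\Psi\simeq\mathrm{id}$, so $\Phi$ is a relatively $\ZZ/4$ graded chain homotopy equivalence and $HF(A,B)\cong HF(C,B)$; the identical argument with $D$ in place of $B$ gives $HF(A,D)\cong HF(C,D)$. Beneath this lies a finiteness issue that must be checked for $\Phi,\Psi,H$ and $\mu_2(e,e)$ to be defined at all: the pairs $(C,A)$ and $(A,C)$ are \emph{not} admissible, being freely homotopic, so one must verify that the moduli spaces of triangles $\mathcal M_{C,A,B}(e,y,q)$, $\mathcal M_{A,C,B}(e,y,q)$ and $\mathcal M_{A,C,A',B}(\cdots)$ are finite --- by showing every contributing polygon localizes near $R_1\cup R_2$ together with an arc of $B$ and ruling out ``wrapping'' families with a universal‑cover/area argument. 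Once finiteness and the unit computation are in place, everything else is formal.
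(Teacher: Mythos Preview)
Your approach is essentially the same as the paper's: both build a continuation chain map $\mu_2(\text{cycle},-)$ out of the two bigons, use the $A_2$ relation to see it is a chain map, and use the $A_3$ relation together with a small pushoff to compare the composite with a ``small-triangle'' chain isomorphism; both invoke Corollary~\ref{hogansheroes} for the grading and both flag the finiteness of the relevant triangle and rectangle moduli spaces as the genuine analytic input (the paper proves this as Lemma~\ref{welldef} by a local-degree argument). The minor organizational differences are: (i) the paper introduces the pushoff $A'$ from the outset and works with three distinct points $b\in C(A,C)$, $f\in C(C,A')$, $e\in C(A,A')$, computing directly that $\mu_2(b,f)=e$ via a single embedded triangle---this makes the ``unit'' identification you flag as the main obstacle completely explicit; (ii) rather than showing $\Psi\circ\Phi\simeq\mathrm{id}$ and $\Phi\circ\Psi\simeq\mathrm{id}$ symmetrically, the paper proves surjectivity of $\mu_2(b,-)_*$ using the pushoff $A'$, and then injectivity by a second, independent argument using a pushoff $C'$ of $C$. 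Your formulation ``$\mu_2(e,e)$'' for the string $(A,C,A,B)$ is not literally well-defined (the $A_n$ relations require pairwise transverse curves), and once you replace the second $A$ by $A'$ the second input must become a specific point of $C(C,A')$; the paper's choice $f$ (the intersection near the \emph{other} corner $a$, not the one near $b$) is what makes the triangle count yield exactly the unit $e$, so you should be careful to make that same choice when you carry out the details.
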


  \begin{lem} \label{case2} 
There exists a sequence $A_0,A_1,\dots, A_r$ of homotopic restricted immersed circles so that $A_0=A$, $A_r=C$, and $\hA_k$ intersects $\hA_{k+1}$ transversely in two points.
 \end{lem}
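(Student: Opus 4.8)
The plan is to build the sequence by interpolating the ambient isotopy carrying $\hA$ to $\hC$. As observed just above, $\hA$ and $\hC$ are embedded essential circles in the cylinder $\Si\cong S^1\times\RR$ which are isotopic, so fix an ambient isotopy $\{\Phi_s\}$ of $\Si$ with $\Phi_0$ the identity and $\Phi_1(\hA)=\hC$. First I would invoke the standard fact that any such ambient isotopy factors as a finite composition of \emph{elementary isotopies}, each supported in a small embedded disk $B\subset\Si$ where it effects a single \emph{finger move}: a sub-arc of the curve is pushed across an embedded bigon in $B$, the curve being unchanged outside $B$. Each such move is realized by an ambient isotopy of $\Si$, so this yields embedded circles $\hA=\hA_0,\hA_1,\dots,\hA_r=\hC$ with $\hA_{k+1}$ ambient-isotopic to $\hA_k$ in $\Si$. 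Since $\hA_{k+1}$ coincides with $\hA_k$ outside $B_k$, I would follow the finger move by a $C^\infty$-small perturbation near $\partial B_k$ pushing the shared arc cleanly to one side of $\hA_k$; after this $\hA_k$ and $\hA_{k+1}$ are transverse, and the only region caught between them is the single bigon of the finger, so they meet in exactly two points.

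Next I would check that each $\hA_k$ descends to a restricted immersed circle $A_k:=f\circ\hA_k\colon S^1\to P^*$ freely homotopic to $A$. Free homotopy is immediate, since $f$ is a covering and $\hA_k$ is isotopic to $\hA$. For unobstructedness, reverse the implication established earlier in the proof of Theorem \ref{rhi}: because $\hA_k$ is an embedded circle generating $\pi_1(\Si)\cong\ZZ$ and the universal cover of $\Si$ coincides with the universal cover $\widetilde P^*\cong\RR^2$ of $P^*$, the lift $\tA_k$ of $\hA_k$ to $\RR^2$ is a properly embedded line, so $A_k$ is unobstructed in the sense of Definition \ref{unobstructed}. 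For the Maslov condition, $\mu(A_k(S^1),\ell_{\operatorname{inst}})=\mu(\hA_k(S^1),f^*\ell_{\operatorname{inst}})$ since $f$ is a local diffeomorphism, and the right-hand side is unchanged under the regular homotopy from $\hA$ to $\hA_k$ induced by the ambient isotopy, by Proposition \ref{field}(2); hence it equals $\mu(A(S^1),\ell_{\operatorname{inst}})\equiv 0 \pmod{4}$. Thus each $A_k$ is a restricted immersed circle homotopic to $A$, with $A_0=A$ and $A_r=C$, and consecutive lifts $\hA_k,\hA_{k+1}$ meet transversely in two points, as required.

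The step I expect to be the main obstacle is the finger-move construction itself: writing a general ambient isotopy of $\Si$ as a composition of disk-supported elementary moves, and then carrying out the intersection bookkeeping carefully enough to be sure that consecutive curves meet in \emph{exactly} two transverse points (rather than zero, four, or in a nontransverse bigon), which requires controlling the transversalizing perturbation near $\partial B_k$ so that it creates no extra crossings and choosing each finger to be ``simple''. An alternative that sidesteps the abstract isotopy decomposition is to first reduce to the disjoint case: push $\hC$ far out along the $\RR$-factor of $\Si$ to a curve $\hC'$ disjoint from both $\hA$ and $\hC$ (still isotopic to $\hA$), and connect $\hA$ to $\hC'$ and $\hC$ to $\hC'$ separately; a disjoint pair of essential circles in $\Si$ cobounds an annulus, and one can then interpolate by an explicit sequence of small finger moves sweeping across that annulus one step at a time, which makes the two-point count transparent.
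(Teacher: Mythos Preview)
Your main argument has a gap at exactly the point you flag: the assertion that an ambient isotopy of $\Si$ carrying $\hA$ to $\hC$ ``factors as a finite composition of elementary isotopies, each supported in a small embedded disk where it effects a single finger move'' is not a standard fact, and I do not see how to prove it directly. Moreover, even granting such a factorization, your bookkeeping breaks down: after you perturb $\hA_{k+1}$ near $\partial B_k$ to make it transverse to $\hA_k$, it is no longer literally the image of $\hA_k$ under the $k$th elementary isotopy, so the $(k{+}1)$st elementary isotopy no longer applies to it, and the construction does not iterate cleanly.

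The correct mechanism is not to decompose the isotopy but to work directly with the two curves via \emph{bigon removal}. The key fact (going back to Epstein, and used in \cite[Lemma~4.2]{abou}) is that two transverse isotopic embedded essential circles in a surface which intersect in more than two points cobound an embedded bigon; pushing one curve across this bigon produces an intermediate embedded curve $\hA_1$ meeting $\hA$ in exactly two points and meeting $\hC$ in two fewer points than $\hA$ did. Iterating gives the sequence. The paper simply cites Abouzaid's lemma for this non-disjoint case. For the disjoint case the paper does essentially what your alternative suggests: take a parallel copy of $\hA$ and perform a single Reidemeister~2 move introducing a pair of intersections with $\hA$ and a pair with $\hC$.

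Your verification that each $A_k=f\circ\hA_k$ is a restricted immersed circle is fine and is a detail the paper leaves implicit; the unobstructedness argument via the universal cover and the Maslov computation via regular homotopy invariance both go through once the $\hA_k$ are produced correctly.
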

   \begin{lem} \label{case3} 
$HF(A,B)\cong HF(A,D)$ as relatively $\ZZ/4$ graded $\FF_2$ vector spaces.

 \end{lem}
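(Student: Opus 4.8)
The plan is to prove Lemma \ref{case3} by the method of Lemmas \ref{case1} and \ref{case2}, with the two curves interchanged. First I would pass to a cover of $P^*$ in which the varying curve becomes embedded: if $B$ (hence $D$) is a restricted immersed circle, the cylinder cover $\Si_B\to P^*$ associated to $\langle B\rangle\le\pi_1(P^*)$, in which $\hB$ is the core circle and, using admissibility and unobstructedness of $A$, each lift $\tA$ of $A$ is embedded; if $B$ is a restricted immersed arc, a simply connected piece of the universal cover $\RR^2$, in which $\tB,\tD$ are properly embedded arcs and the lifts $\tA$ of $A$ are embedded lines. Since bigons are disks they lift, so $(C(A,B),\partial)$ and $(C(A,D),\partial)$ are computed from the lifted picture, and since $B$ and $D$ are homotopic (rel endpoints in the arc case) their lifts are isotopic in the relevant cover. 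As in Lemma \ref{case2}, I would connect these lifts through a finite chain of homotopic embedded curves, consecutive ones meeting transversely in exactly two points, reducing the lemma to showing $HF(A,B_k)\cong HF(A,B_{k+1})$ for one such elementary step.

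For one step I would exploit the reflection symmetry of the Floer complex together with Lemma \ref{case1}. By Proposition \ref{grade}, $gr_{L_1,L_0}(q,p)=gr_{L_0,L_1}(p,q)$, and a Maslov index $1$ bigon from $p$ to $q$ for $(L_0,L_1)$ is a Maslov index $1$ bigon from $q$ to $p$ for $(L_1,L_0)$; hence $\partial_{L_1,L_0}$ is the transpose of $\partial_{L_0,L_1}$, so over $\FF_2$ there is an isomorphism $HF(L_1,L_0)\cong HF(L_0,L_1)$ that reverses the relative $\ZZ/4$ grading. When $B$ is a circle, applying Lemma \ref{case1} with $(B_k,A)$ and $(B_{k+1},A)$ in place of $(A,B)$ and $(C,B)$ gives $HF(B_k,A)\cong HF(B_{k+1},A)$ as relatively graded groups; transposing on both sides, the two grading reversals cancel, yielding a grading-preserving isomorphism $HF(A,B_k)\cong HF(A,B_{k+1})$. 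When $B$ is an arc, Lemma \ref{case1} does not directly apply since the curve it varies is assumed to be a circle, so instead I would run the underlying argument by hand: the two intersection points $x,y$ of $\hB_k$ with $\hB_{k+1}$ bound two Maslov index $1$ bigons, so crossing each is a finger move that introduces or cancels a pair of generators $x,y$ with $gr(x,y)=1$ joined by a unique Maslov index $1$ bigon, the complex changing by Gaussian elimination; independence of the resulting homology isomorphism from the choices made follows from the $A_2$ relation \eqref{mu2rel}. Composing over the chain finishes the proof.

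I expect the two-intersection-point step to be the main obstacle. The grading bookkeeping — checking that the two grading reversals cancel and that the finger move shifts degrees consistently with $gr(x,y)=1$ — is routine but requires the conventions of Sections \ref{LFT} and \ref{z4}. More substantively, the arc case forces one to redo the isotopy-through-embedded-curves argument of Lemmas \ref{case1}--\ref{case2} in the universal cover, where $A$ has infinitely many lifts; there one must check that $C(A,B)$ still splits as a finite direct sum over these lifts (by transversality and compactness downstairs, cf.\ Corollary \ref{finite}) compatibly with the finger-move and Gaussian-elimination step. If this becomes unwieldy, a uniform alternative avoiding the circle/arc split is to carry out the finger-move plus $A_2$-associativity argument of \cite[Proposition 4.1]{abou} directly on the one-parameter family $(C(A,B_t),\partial)$.
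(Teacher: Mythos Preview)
Your circle case is correct and is exactly what the paper does: it simply says to rerun Lemmas~\ref{case1} and~\ref{case2} ``reversing the roles (and order) of $A,C$ and $B,D$.'' Your transpose shortcut (using $HF(L_1,L_0)\cong HF(L_0,L_1)$ with reversed relative grading) is a clean way to avoid rewriting that argument.

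The arc case is where your sketch has a genuine gap. The sentence ``the two intersection points $x,y$ of $\hB_k$ with $\hB_{k+1}$ bound two Maslov index $1$ bigons, so crossing each is a finger move that introduces or cancels a pair of generators $x,y$'' conflates two distinct mechanisms: points of $\hB_k\cap\hB_{k+1}$ are inputs to a $\mu_2$-continuation map (as in Lemma~\ref{case1}), whereas the generators of $C(A,B_k)$---and the objects of any finger-move/Gaussian-elimination argument---are points of $A\cap B_k$; reusing the letters $x,y$ for both hides that these are different sets playing different roles. Moreover, for arcs with common endpoints the natural reduction (via Epstein's innermost-bigon lemma in $\overline\HH$) is to \emph{one} interior intersection point, not two: the shared endpoints lie on the circle at infinity and are not Floer generators. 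The paper then argues as follows. With $\tB\cap\tD=\{x\}$ one has $\partial x=0$ in $C(D,B)$ trivially, so $\mu_2(-,x):C(A,D)\to C(A,B)$ is a chain map; introducing a $C^1$-close copy $\tD'$ of $\tD$ that meets $\tB$ in two extra points $y,z$ near the ends (chosen small enough that the compact circle $A$ misses those end-neighbourhoods), one checks $\partial(y+z)=0$ and $\mu_2(x,y+z)=e+f$, and the $A_3$ relation shows $\mu_2(-,y+z)\circ\mu_2(-,x)$ is chain homotopic to the chain isomorphism $\mu_2(-,e+f):C(A,D)\to C(A,D')$. So the arc case does \emph{not} reduce to Lemma~\ref{case1} by a symmetry trick: both the reason the pivot generator is a cycle and the construction of the one-sided inverse are genuinely different and use that $A$ stays away from the ends of the arcs. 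Your ``uniform alternative'' via bifurcation analysis of $A\cap B_t$ along a generic isotopy is a legitimate separate route, but it is not what your main sketch provides.
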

 
 \noindent{\em Proof of Lemma \ref{case1}.}  
 Up to diffeomorphism of the cylinder $S^1\times \RR$, the  curves $\hA$ and $\hC$  are illustrated in Figure \ref{cylinderfig}. Also illustrated is a third curve $\hA'$ which meets $\hA$ transversely in two points. We assume that $\hA'$ is very ($C^1$) close to $\hA$, so that the preimage of $B$ is also transverse to $\hA'$ and induces a bijection between the intersection points.    Three pairs of intersection points   $\hat a, \hat b\in \hA \cap \hB$, $\hat f, \hat c\in \hA' \cap \hC$, and $\hat b, \hat d\in \hA \cap \hA'$  are illustrated. Define $A'$ to be the image of $\hA'$ under the covering map $S^1\times \RR\to P^*$, and let $a,b,\dots, f$ denote the images of $\hat a, \hat b,\dots, \hat f$ in $P^*$.   
 
  \begin{figure}[h]
\begin{center}
\def\svgwidth{2.6in}
 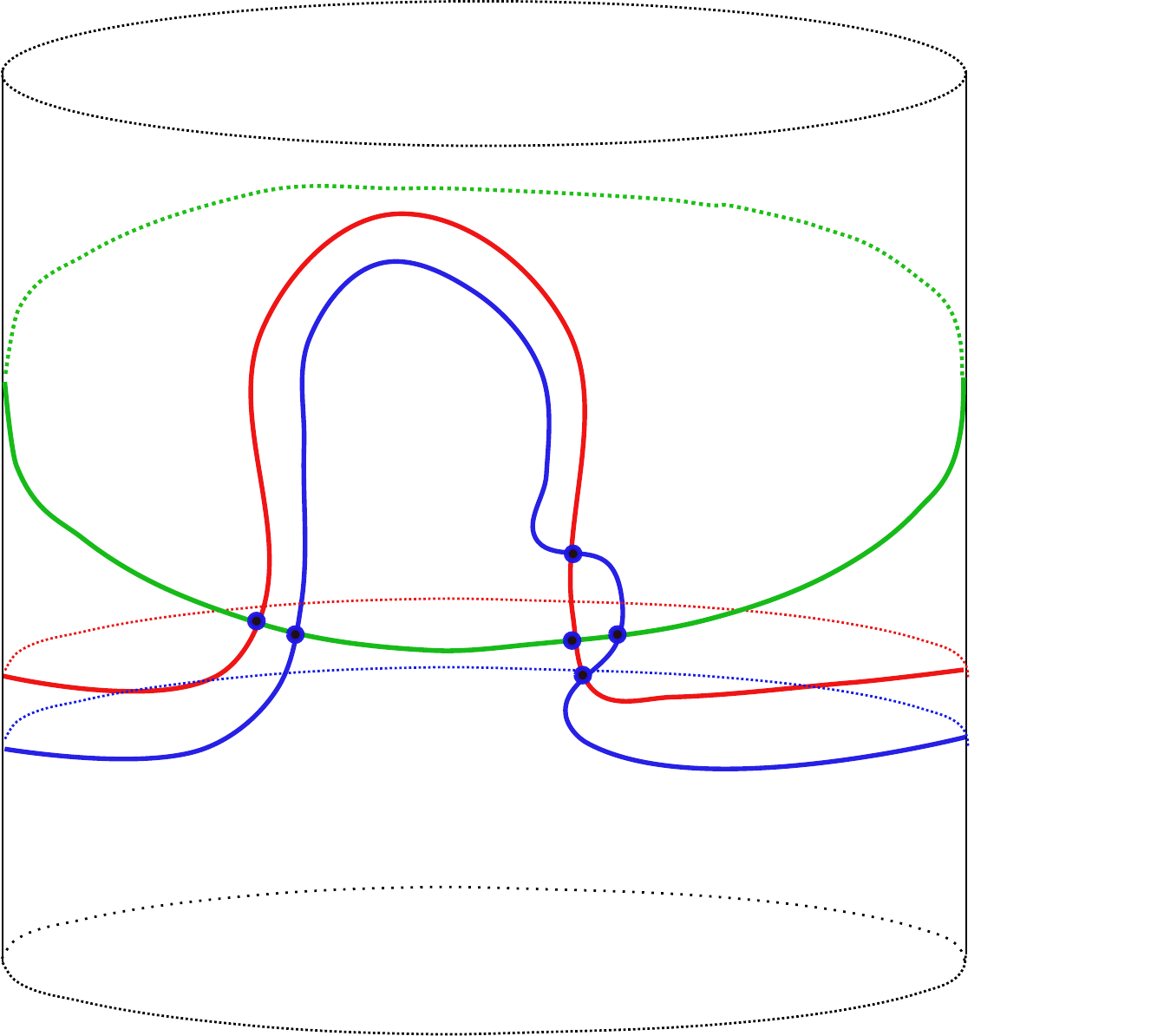
 \caption{  \label{cylinderfig} }
\end{center}
\end{figure}

  We now summarize some facts about the maps $\mu_2$ and $\mu_3$, defined in  Definition \ref{munmaps}, that will be used to complete Lemma \ref{case1}.

\begin{lem} \label{welldef} Consider $b$ as a generator of $C(A,C)$, $e$ as a generator of $C(A,A')$ and $f$ as a generator of $C(C,A')$. Then:
\begin{enumerate}
\item For any $x\in C\cap B$ and $y\in A\cap B$, the set  $\mathcal{M}_{A,C,B}(b,x,y)$ is finite. 
\item For any $x\in A'\cap B$ and $y\in C\cap B$, the set  $\mathcal{M}_{C,A',B}(f,x,y)$ is finite.
\item For any $x\in A'\cap B$ and $y\in A\cap B$, the set  $\mathcal{M}_{A,A',B}(e,x,y)$ is finite.
\item For any $x\in A'\cap B$ and $y\in A\cap B$, the set  $\mathcal{M}_{A,C,A',B}(b,f,x,y)$ is finite.
\end{enumerate}
Hence the maps
 $\mu_2(b,-):C(C,B)\to C(A,B),~\mu_2(f,-):C(A',B)\to C(C,B)$, 
 $\mu_2(e,-):C(A',B)\to C(A,B)$ and
 $\mu_3(b,f,-):C(A',B)\to C(A,B)$ are well defined.
\end{lem}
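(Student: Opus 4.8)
\emph{Proof proposal.} All four finiteness assertions are instances of a single phenomenon and I would prove them in parallel. The point is that every polygon counted has a corner at one of the distinguished points $\hat b$, $\hat f$, or $\hat e$, at which two \emph{freely homotopic} restricted immersed circles cross (namely $\hA$ and $\hC$, or $\hC$ and $\hA'$, or $\hA$ and $\hA'$), and that in the cover $\Sigma\cong S^1\times\RR$ each such pair of circles meets transversely in exactly two points. Recall also that $\mathcal{M}_{A,C,B}(b,x,y)$, $\mathcal{M}_{C,A',B}(f,x,y)$, $\mathcal{M}_{A,A',B}(e,x,y)$ consist of immersed triangles of Maslov index $0$, while $\mathcal{M}_{A,C,A',B}(b,f,x,y)$ consists of immersed $4$-gons of Maslov index $-1$. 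The plan is: (i) lift each polygon to $\Sigma$ — legitimate since the domain of an immersed $n$-gon is a disc, hence simply connected — so that the distinguished corner is carried to the chosen lift; (ii) analyze the relevant $\pi_2$-set in $\Sigma$; (iii) use positivity of the local degree of an orientation-preserving immersion, together with the local model of an immersed polygon near a corner, to bound the number of classes admitting an immersed representative; (iv) invoke the argument of Remark \ref{finiteeasy} (cf.\ Corollary \ref{finite}) that each such class is realized by at most one immersed polygon.

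For step (ii), consider $\mathcal{M}_{A,C,B}(b,x,y)$; the others are identical. At the corners $x$ and $y$ the relevant pairs of curves are $(C,B)$ and $(B,A)$; these are admissible pairs, since the condition of Definition \ref{admis} depends only on the free homotopy classes of the two curves and $C=L_0'$, $A'$ are restricted immersed circles freely homotopic to $L_0$, so that $(C,B)$ and $(A',B)$ are admissible because $(A,B)=(L_0,L_1)$ is. Hence, by the argument proving Lemma \ref{pi2}, $\pi_2(x,x)$ and $\pi_2(y,y)$ are trivial, and consequently, if $\pi_2(b,x,y)$ is non-empty, it is a single free orbit of the group $\pi_2(b,b)$ acting by attaching disks at the corner $b$. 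Computed for the pair $(A,C)$, the group $\pi_2(b,b)$ is infinite cyclic (the winding number of the bounding loops around $\hA$, equivalently minus that around $\hC$, parametrizes it; the rest is killed by $\pi_2(P^*)=0$), generated by the ``winding'' class $W$. Lifting to $\Sigma$, an Euler characteristic count applied to the $4$-valent graph $\hA\cup\hC$ (two vertices, four edges, viewed on the two-point compactification of $\Sigma$) shows that $\hA\cup\hC$ divides $\Sigma$ into exactly four regions: two half-cylindrical ends and two bigons $\beta_1,\beta_2$, each having both intersection points as its corners. The local degree function of $W$ then vanishes on the ends and takes the values $\pm1$ on $\beta_1$ and $\beta_2$.

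For step (iii), let $u_0$ represent a class in $\mathcal{M}_{A,C,B}(b,x,y)$ (if there is none, the set is empty and we are done). Every class in $\pi_2(b,x,y)$ is $W^n\cdot u_0$ for a unique $n\in\ZZ$, with local degree function $f_{u_0}+nf_W$. An orientation-preserving immersed polygon has everywhere non-negative local degree, and near any corner its local degree takes two consecutive integer values on the four sectors cut out by the two curves through that corner, occupying one or three of them according as the corner is convex or not (this is the local content of the proof of Proposition \ref{convex}). Since $\hat b$ is a corner of both $\beta_1$ and $\beta_2$, the function $f_{u_0}+nf_W$ near $\hat b$ differs from $f_{u_0}$ by $n$ on one local sector and $-n$ on another when $f_W$ has opposite signs on the two bigons, or by $n$ on a single sector when it does not; in either case only finitely many $n$ can satisfy both non-negativity and the ``two consecutive values'' condition at $\hat b$. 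Thus only finitely many classes in $\pi_2(b,x,y)$ admit an immersed representative, and step (iv) finishes the proof that $\mathcal{M}_{A,C,B}(b,x,y)$ is finite. The statements for $\mathcal{M}_{C,A',B}(f,x,y)$ and $\mathcal{M}_{A,A',B}(e,x,y)$ follow word for word, with the pairs $(C,A')$ and $(A,A')$ in place of $(A,C)$, and for $\mathcal{M}_{A,C,A',B}(b,f,x,y)$ there are two distinguished corners, $b$ and $f$: the acting group is $\pi_2(b,b)\times\pi_2(f,f)\cong\ZZ^2$, generated by the winding classes for $(A,C)$ and for $(C,A')$, and one applies the bound at $\hat b$ and at $\hat f$ simultaneously. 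The finiteness of all the moduli spaces then makes $\mu_2(b,-)$, $\mu_2(f,-)$, $\mu_2(e,-)$ and $\mu_3(b,f,-)$ well defined.

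The step I expect to be the main obstacle is (iii): making precise that adding copies of the winding class at the distinguished corner eventually forces a local degree function incompatible with being an immersed polygon. This rests on the identification in step (ii) of the domain of $W$ with a $\pm1$-combination of the two bigons, and on a careful treatment of the local model of an immersed polygon at convex and non-convex corners, after which the incompatibility is a short bookkeeping argument. A minor point to verify is the admissibility of the auxiliary pairs $(C,B)$, $(A',B)$ and $(A,B)$, which as indicated reduces to the hypothesis that $(L_0,L_1)$ is admissible together with the fact that $C$ and $A'$ are freely homotopic to $L_0$.
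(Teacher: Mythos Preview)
Your proposal is correct and follows essentially the same route as the paper: identify $\pi_2(b,x,y)$ as a single orbit of $\pi_2(b,b)\cong\ZZ$, then use the local-degree pattern $(r{+}1,r,r,r)$ at a convex corner (Proposition~\ref{convex} with $\kappa=0$) to bound how many winding classes can support an immersed polygon, and finish via Remark~\ref{finiteeasy}. Two small remarks. First, your ``consequently'' in step (ii) --- that triviality of $\pi_2(x,x)$ and $\pi_2(y,y)$ forces transitivity of $\pi_2(b,b)$ on $\pi_2(b,x,y)$ --- is not a formal implication; the paper obtains it by first acting by $\tau^n$ to match the $C$-edges, then gluing the two triangles along that edge to exhibit a free homotopy between a loop on $A$ and a loop on $B$, which admissibility of $(A,B)$ kills. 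You have the ingredients (you verified the auxiliary pairs are admissible), but the gluing step should be made explicit. Second, for the $4$-gon the paper extracts both inequalities on $(n_1,n_2)$ from the single corner $\hat b$, since $\rho_{n_2}$ already contributes $n_2$ to two adjacent quadrants there; your suggestion to use $\hat b$ and $\hat f$ separately also works but is not needed. Your choice to run the local-degree argument in the cover $\Sigma$ rather than in $P^*$ is a genuine simplification over the paper, which has to introduce a correction term to account for other preimages of $b$.
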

\begin{proof}

 For the first statement, first note that to each class   $\phi\in \pi_2(b,x,y)$ one can assign an  integer local degree  to each complementary region  of $A\cup C\cup B$, i.e., to each path component of $P^*\setminus (A\cup C\cup B)$ (see the proof of Corollary \ref {finite}).

The homotopy group $\pi_2(b,b)$  corresponding to the pair $A,C$ acts on $\pi_2(b,x,y)$ corresponding to the ordered triple $(A,C,B)$ by attaching a bigon from $b$ to $b$ to the vertex of a triangle with vertices $b,x,y$ at $b$. We  show this action is transitive.  From Figure \ref{cylinderfig} one sees that there exists $\tau_1\in \pi_2(b,b)$ whose two boundary loops represent  generators of $\pi_1(A)$ and $\pi_1(C)$. Let $\tau_n$ denote the $n$th power of $\tau_1$.

Suppose that $\phi_1,\phi_2\in \pi_2(b,x,y)$.  Denote the image  of $\phi_k$ in $\pi_1(b,x,y)$ by $(\alpha_k, \gamma_k, \beta_k),~k=1,2$, where $\alpha_k$ is a path in $A$ from $y$ to $b$,  $\gamma_k$ is a path in $C$ from $b$ to $x$ and $\beta_k$ is a path in $B$ from $x$ to $y$.

The loop $\gamma_1* \gamma_2^{-1}$ in $C$ based at $b$ represents some multiple  of the generator of $\pi_1(C)$. Hence, by  replacing $\phi_2$ by $\tau_n\cdot\phi_2$ for the appropriate $n$, one may assume that  $\gamma_1* \gamma_2^{-1}$ is nullhomotopic. Using the homotopy extension property we may arrange that $\gamma_1=\gamma_2$.  

 The triangles $\phi_1$ and $\phi_2$ then  glue together along $\gamma_1$ and $\gamma_2$ to provide a free homotopy of the loop 
 $\beta_1*\beta_2^{-1}$ in $B$ to the loop $\alpha_1* \alpha_2^{-1}$ in $B$. Since $A,B$ form an admissible pair (Definition \ref{admis}) it follows that $\alpha_1* \alpha_2^{-1}$  and  $\beta_1*\beta_2^{-1}$ are nullhomotopic, so that $ \alpha_1,\alpha_2$ (resp. $\beta_1$, $\beta_2$) are homotopic rel endpoints.   We may therefore   replace $\phi_2$ by another map in the same  homotopy class   so that $\alpha_1=\alpha_2$ and $\beta_1=\beta_2$.  Gluing $\phi_1$ to $\phi_2$ along the boundary arcs $\alpha_k$, $\beta_k$ and $\gamma_k$    yields 
a class in $\pi_2(P^*)=0$.  Hence $\phi_1=\phi_2$.  We have shown that  $\pi_2(b,b)$ acts transitively on $\pi_2(b,x,y)$.

A class $\tau\in \pi_2(b,b)$ determines local degrees for each complementary region  of $ A\cup C $, and hence also for each complementary region of $A\cup C\cup B$. 
Lifting to the cylinder, one sees that $\pi_2(b,b)\cong \pi_2(\hb,\hb)\cong\ZZ$, with $n\in \ZZ$ corresponding to the class $\tau_n\in \pi_2(\hb,\hb)$, whose   local degrees in $S^1\times \RR\setminus (A\cup C)$ are $n$ and $ -n$ in the two bounded regions of $S^1\times \RR\setminus (\hA\cup\hC)$, and $0$ in both unbounded regions.   Let $W_1$ and $W_3$ denote the two bounded regions  in $S^1\times \RR\setminus (\hA\cup\hC)$, and $W_2$ and $W_4$ denote the two unbounded regions,   indexed so that moving clockwise around $\hb$ they are ordered $W_1,W_2,W_3,W_4$ and so that the local degrees of $\tau_n$ are $n,0,-n, 0$. (see Figure \ref{finitenessfig}, with $n=n_1$ and $n_2=0$.)

If $U\subset P^*$ is a small  evenly covered disc neighborhood of $b$, only finitely many of the components of the preimage of $U$ in $S^1\times \RR$ meet $W_1$ and $W_3$. Suppose that $K_1$ such components meet $W_1$ and $K_2$ meet $W_3$. Let $K=K_1-K_2$.  Then in $P^*$, the local degrees of $\tau_n$ about $b$ are, in clockwise order, 
$ n +Kn,  Kn, -n +Kn, Kn $.

Fix $\phi\in \pi_2(y,x,b)$ and let $d_1,d_2,d_3,d_4$ denote local degrees of $\phi$ in the four quadrants  around $b$. The local degrees of $\tau_n\cdot\phi$ are just the sum of the local degrees of $\tau_n$ and $\phi$.   
 Thus the local degrees of $\tau_n\cdot\phi$ around $b$ are, in clockwise order, $d_1 +n +Kn, d_2+Kn, d_3-n +Kn, d_4+Kn.$

On the other hand, if $\tau_n\cdot \phi$ is represented by a Maslov index 0 immersed 3-gon, then all its corners are convex  (Proposition \ref{convex}) and so the local degrees near $b$ must take the form $r+1, r,r,r$ moving clockwise around $b$, for some non-negative integer $r$. In particular, $$|d_1-d_3+2n|=|(d_1+n+Kn)-(d_3-n+Kn)|\leq 1,$$
and so at most two of the classes $\tau_n\cdot \phi$ support Maslov index 0 immersed 3-gons.  Each such class determines a finite number of immersed 3-gons, by the same argument given in Remark \ref{finiteeasy}.  Thus $\mathcal{M}(b,x,y)$ is finite.

The proofs of the second and third assertions are the same, and we leave them to the reader. 

The last assertion has a similar proof, so we outline it, highlighting the differences.  The ordered 4-tuple $(A,C, A',B)$ gives rise to the sets $\mathcal{M}(b,f,x,y)$ and  $\pi_2(b,f,x,y)$. Since $A,C,A'$ are homotopic and $A,B$ form an admissible pair, a similar argument to that used in the triangle case shows that any two classes $\phi_1,\phi_2\in  \pi_2(b,f,x,y)$ are related by the action of $\pi_2(b,b)\times \pi_2(f,f)$: one first lets $\pi_2(b,b)$ act on $\phi_1$ to make the boundary paths of $\phi_1$ and $\phi_2$ along $A$ agree, then let $\pi_2(f,f)$ act to make the boundary paths along $C$ agree. Gluing the two rectangles along these edges yields a twice punctured sphere,   giving a free homotopy from a loop in $A'$ to a loop in $B$, and since $(A',B)$ forms an admissible pair, these loops are both nullhomotopic.  The argument then proceeds as in the 3-gon case to conclude that there exists $\tau_{n_1}\in \pi_2(b,b)$ and $\rho_{n_2}\in \pi_2(f,f)$ so that $\tau_{n_2}\cdot\rho_{n_1}\cdot \phi_1=\phi_2$.

Fix $\phi\in \pi_2(b,f,x,y)$  and assume its local multipicities in the four quadrants  clockwise around $b$ are $(d_1,d_2,d_3, d_4)$. 
In the four quadrants near $\hat b$, the multiplicities of $\tau_{n_1}\in \pi_1(b,b)$ are, in clockwise order,  $n_1, 0, -n_1, 0$ and the multiplicities of $\rho_{n_2}\in \pi_2(f,f)$ are $n_2,n_2, 0,0$. Figure \ref{finitenessfig} illustrates the contributions of $\tau_{n_1}$ and $\rho_{n_2}$ to the local multiplicities near $b$.

  \begin{figure}[h]
\begin{center}
\def\svgwidth{3.5in}
 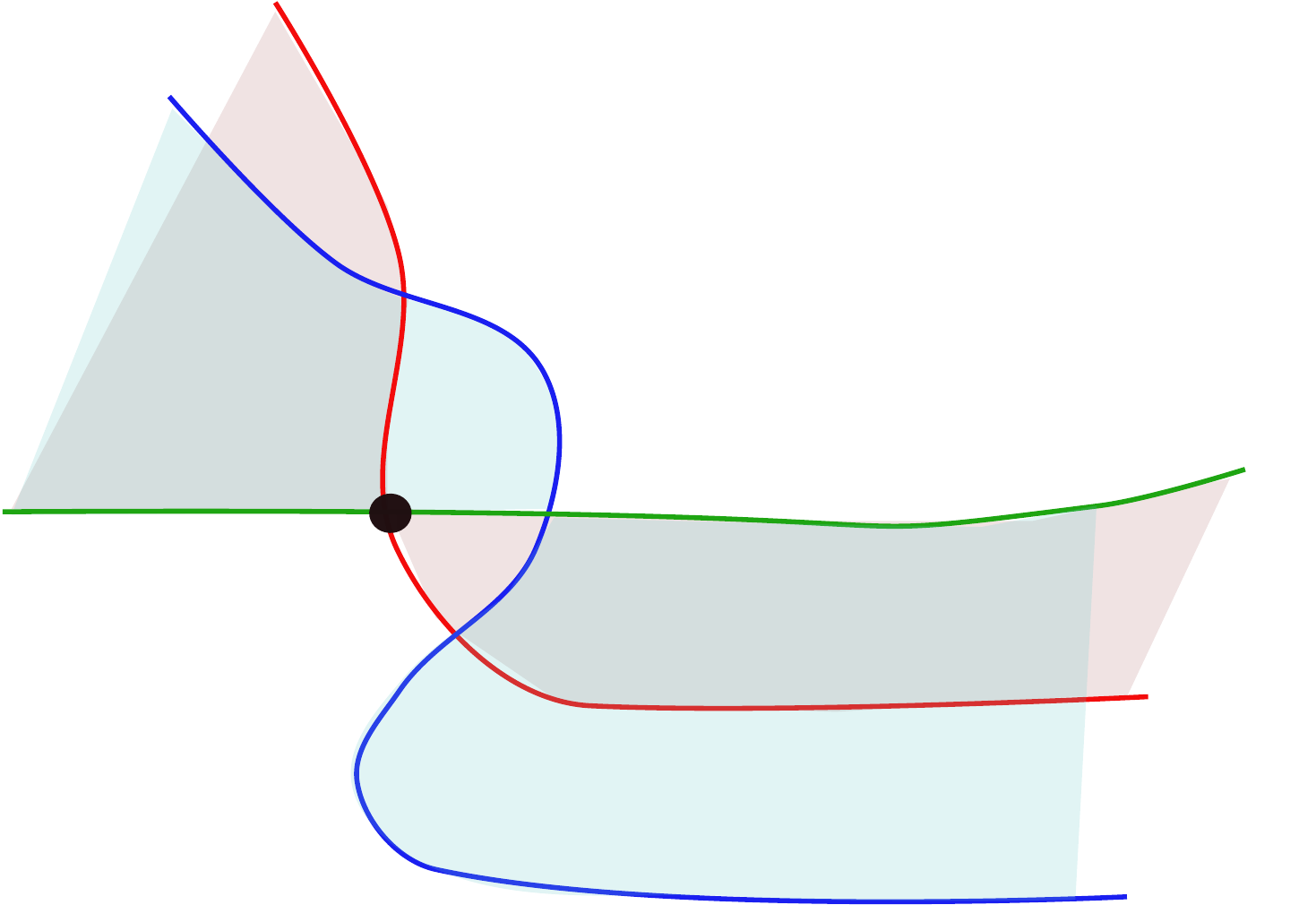
 \caption{  The local multiplicities of $\rho_{n_2}$ and $ \tau_{n_1}$ around $b$\label{finitenessfig} }
\end{center}
\end{figure} 

\medskip

 As in the triangle case,  the local multiplicities of $\tau_{n_2}\cdot\rho_{n_1}\cdot \phi$ near $b$ are, in clockwise order, $d_1+n_1+n_2 +K, d_2+K, d_3-n_1+n_2+K, d_4+K$, where $K$ is a function of $n_1$ and $n_2$ which takes into account how many of the preimages of $b$ in $S^1\times \RR$ lie in the complementary regions on which $\tau_{n_2}$ and $\rho_{n_1}$ are supported.  But to represent an orientation preserving immersed  4-gon of Maslov index $-1$, all corners are convex (Proposition \ref{convex}), and hence these numbers must be a cyclic permutation of $(r+1,r,r,r)$ for some non-negative integer $r$.  This implies that 
$$|d_1-d_3+2n_1|\leq 1 \text{ and } |d_1+n_1+n_2-d_4|\leq 1,$$
which is only possible for at most four choices of $n_1, n_2$. Thus only finitely many classes in $\pi_2(b,f,x,y)$ support immersed Maslov index $-1$ rectangles, and as  in  Remark   \ref{finiteeasy}, this implies that $\mathcal{M}(b,f,x,y)$ is finite.
\end{proof}

\noindent{\em Continuing the proof of Lemma \ref{case1}.}
 Consider $b$ as a generator of $C(A,C)$.  Any bigon from $b$ to another intersection point of $A$ with $C$ has a unique lift to the cover $S^1\times \RR$, and hence must be a bigon from $b$ to $a$. From Figure \ref{cylinderfig} one sees that there are precisely 2 such bigons up to equivalence, and hence $\partial b=2a=0$ (we are using $\FF_2$ coefficients). Thus $b$ is a cycle.    Similar arguments show that $e\in C(A,A')$ and $f\in C(C,A')$ are also cycles.

The map $\mu_2$ satisfies  the $A_2$ relation, which, applied to $b$ and an  arbitrary $x\in C(C,B)$ (writing $\partial$ instead of $\mu_1$) says:
$$0 =\mu_2(\partial b, x)+\mu_2(b,\partial x)+\partial \mu_2(b,x).$$
Since $\partial b=0$, this implies  that $\mu_2(b,-):C(C,B)\to C(A,B)$ is a chain map. 

    The cycle $f\in C(C,A')$ determines a chain map  $\mu_2(f,-):C(A',B)\to C(C,B)$. 
 The product $\mu_2(b,f)\in C(A,A')$   equals $e$, since immersed triangles lift to the cover, and there is precisely one oriented immersed Maslov index zero triangle for the ordered triple $(A,C,A')$ from  $(\hat b,\hat f)$ to $\hat x$ for an intersection point $\hat x$ of $\hA$ and $\hA'$, namely the embedded triangle  with $\hat x=\hat e$.  Similarly, the cycle $e=\mu_2(b,f)$ determines a chain map
 $$\mu_2(e,-):C(A',B)\to C(A,B).$$

 The $A_3$ relation  gives, for an arbitrary generator $x$ of $C(A',B)$:
 \begin{equation*}
\begin{multlined}
 0=\mu_3(\partial b,f,x) +\mu_3(b,\partial  f,x)+\mu_3( b,f,\partial x)  +\mu_2(\mu_2(b,f),x)+\mu_2(b,\mu_2(f,x))+\partial \mu_3(b,f,x)\\
 = H(\partial x) + \mu_2(e,x) + \mu_2(b,-)\circ \mu_2(f,x)+ \partial H(x)\quad\quad
\end{multlined}
\end{equation*}
where $H:C(A',B)\to C(A,B)$ is defined by $H(x)=\mu_3(b,f,x)$.
 In other words, $H$ is a chain homotopy from the composite
 $$C(A',B)\xrightarrow{\mu_2(f,-)}C(C,B)\xrightarrow{\mu_2(b,-)}C(A,B)$$
to $$\mu_2(e,-):C(A',B)\to C(A,B).$$

Now $\mu_2(e,-)$ is a chain isomorphism: this is an immediate consequence of the fact that $\hA$ and $\hA'$ are $C^1$ close, so that each intersection point $p'$ of $A'$ with $B$ corresponds to precisely one intersection point $p$ of $A$ with $B$, the correspondence  induced by  a unique Maslov index $0$ immersed triangle  associated to the ordered triple $(A,A',B)$ From $(e,p')$ to $ p$.
This proves that the map $\mu_2(b,-):C(C,B)\to C(A,B)$ induces a surjection $HF(C,B)\to HF(A,B)$, for any $B$.

The fact that $\mu_2(b,-)$ induces an injection $HF(C,B)\to HF(A,B)$ for any $B$ is proved by a very similar argument as surjection, using the immersed curves illustrated in Figure \ref{cylinder2fig}.
In this case,  $\hC'$ is a curve $C^1$ close to $C$, and one shows that $z\in C(C',A)$ is a cycle, $\mu_2(z,b)=w\in C(C',C)$, and the composite of chain maps
$$C(C,B)\xrightarrow{\mu_2(b,-)}C(A,B)\xrightarrow{\mu_2(z,-)}C(C',B)$$ is chain homotopic to the chain isomorphism $\mu_2(w,-):C(C,B)\to C(C',B)$. This proves $\mu_2(b,-)$ induces an injection $HF(C,B)\to HF(A,B)$ for any $B$.

  \begin{figure}[h]
\begin{center}
\def\svgwidth{2.6in}
 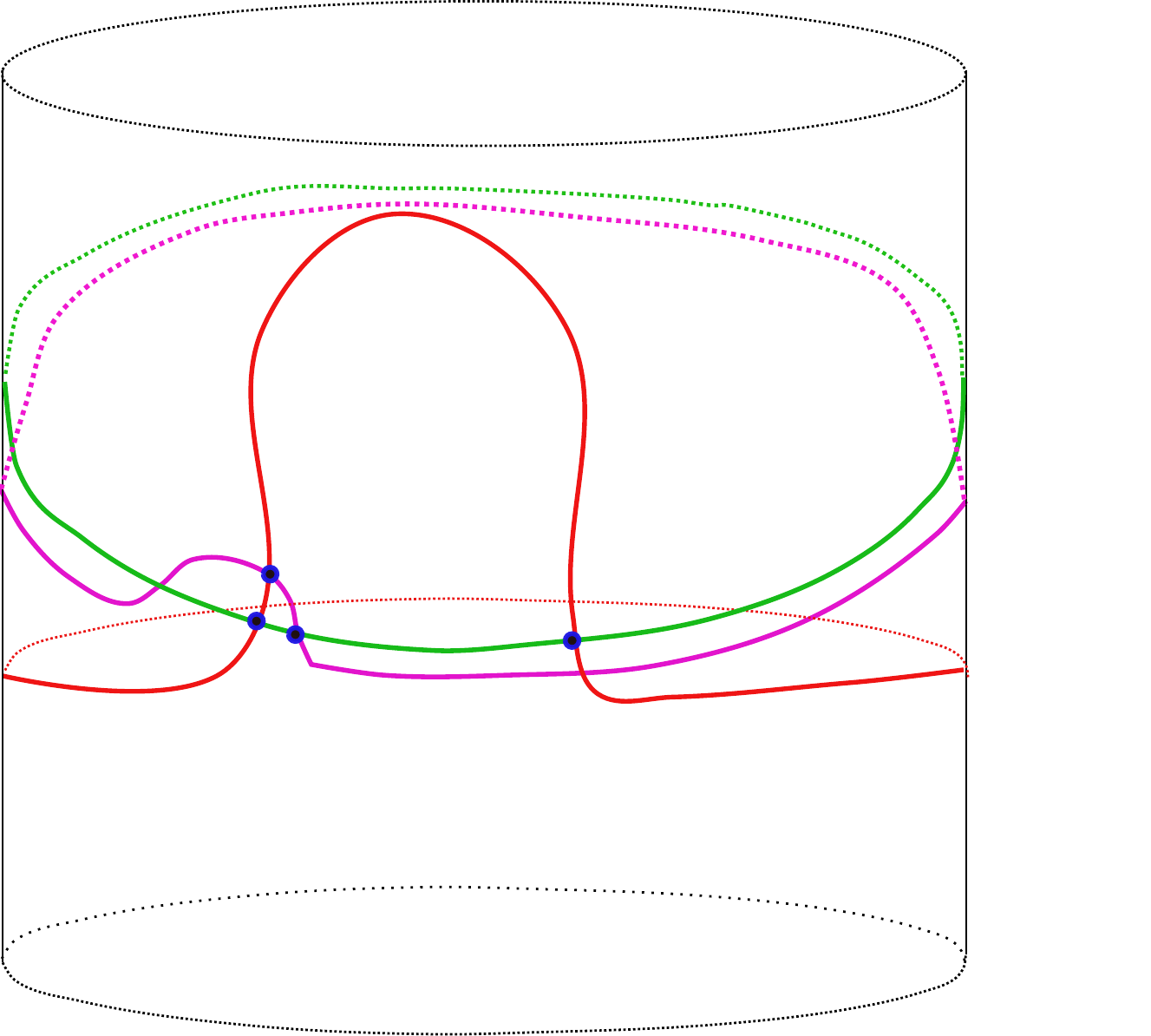
 \caption{  \label{cylinder2fig} }
\end{center}
\end{figure}

\medskip

Finally we show that  the relative $\ZZ/4$ grading is preserved.  Suppose that  $x_1, x_2\in C(C,B)$ and 
$\mu_2(b,x_1)=\sum_i m_i y_i$ and $ \mu_2(b,x_2)=\sum_j n_jz_j$ in $C(A,B)$, with $m_i, n_j$ non-zero.  Hence there exist Maslov index zero immersed 3-gons for the ordered triple $(A,C,B)$ from  $ (b, x_1)$ to $ y_i$ and from  $(b,x_2)$ to $ z_j$.

Then Corollary \ref{hogansheroes} gives
$$gr_{A,B}(y_i,z_j)= gr_{C,B}(x_1,x_2)+ gr_{A,C}(b,b)=gr_{B,C}(x_1,x_2).$$
This says that the chain map $\mu_2(b,-):C(C,B)\to C(A,B)$ preserves the relative $\ZZ/4$ grading, and hence also the induced map on homology.  
The same argument shows the chain maps $\mu_2(f,-)$ and $\mu_2(e,-)$ preserve the relative grading.  
Thus, $\mu_2(b,-)$ induces an isomorphism $HF(A,B)\to HF(A,C)$ of relatively $\ZZ/4$ graded vector spaces.
\qed\medskip

 \noindent{\em Proof of Lemma \ref{case2}.} If $\hA$ and $\hC$ intersect non-trivially (hence in an even number of points),  then the existence of the sequence $\hA_0=\hA,\hA_1,\dots,\hA_r=\hC$  with the stated property is an immediate consequence of Lemma 4.2 of \cite{abou}.         If $\hA$ and $\hC$ are disjoint, then one can take a parallel copy of $\hA$ and perform a Reidemeister 2 move that introduces a pair of intersection points with $\hA$ and a pair with $\hB$.   
\qed \medskip

 \noindent{\em Proof of Lemma \ref{case3}.} 
 If $B$ and $D$ are homotopic restricted immersed circles, then the proof follows exactly along the same lines as the proof of Lemmas \ref{case1} and \ref{case2}, reversing the roles (and order) of $A,C$ and $B,D$.
 
 If $B$ and $D$ are homotopic restricted immersed arcs, then a different proof is needed.
It is convenient to put a complete hyperbolic metric on $P^*$ and to let $\HH\to P^*$ denote the universal cover. Let $\tB:\RR\to \HH$ be a lift of $B$.  Since $D$ is homotopic rel endpoints to $B$, there is a lift $\tD:\RR\to \HH$ of $D$ with the same limit points at the circle at infinity. The assumption that the limiting slopes at the endspoints are distinct and that $B$ and $D$ are transverse imply that   the closures of $\tB$ and $\tD$ in the closed disk $\overline \HH$  intersect in finitely many points. 

 If $\tB$ and $\tD$ are disjoint, then their closures bound a bigon in $\overline \HH$ with vertices on the circle at infinity.  If $\tB$ and $\tD$ intersect in one point, then clearly there are a pair of bigons with boundary in their closures, each including one point on the boundary of $\overline \HH$.  Finally, if $\tB \cap \tD$ consists of more than one point, then the result of D.B.A Epstein \cite[Lemma 3.2]{epstein} (see also 
\cite[Lemma A.10]{buser})   shows that  one can find an embedded bigon in  $\HH$ for    $\tB$ and $\tD$. 


In each of these three cases,   one can find a sequence of embedded arcs $\tB=\tB_0,\tB_1,\dots,\tB_r=\tD$ in $\overline \HH$ so that $\tB_k$ intersects $\tB_{k+1}$ transversely  in one interior point and in their endpoints. The argument is illustrated in Figure \ref{threecasefig}. The  figure on the left corresponds to the first case, when  $\tB \cap \tD = \emptyset$.    The figure on the right corresponds to the third case, where an interior bigon between $\tB$ and $\tD$ is used to construct an arc $\tB_1$ which intersects $\tB_0=\tB$ in one interior point and $\tD$ in two fewer points, providing the required induction step. If $\tB$ and $\tD$ started out with an odd number of intersection points in their interior, one eventually reaches $\tB_r$ which meets $\tD$ in one point in their interior, and if they started out with an even number of intersection points in their interior, one reaches $\tB_r$ whose interior misses $\tD$, in which case one can add one more step as in the first case.    

  \begin{figure}[h]
\begin{center}
\def\svgwidth{4.7in}
 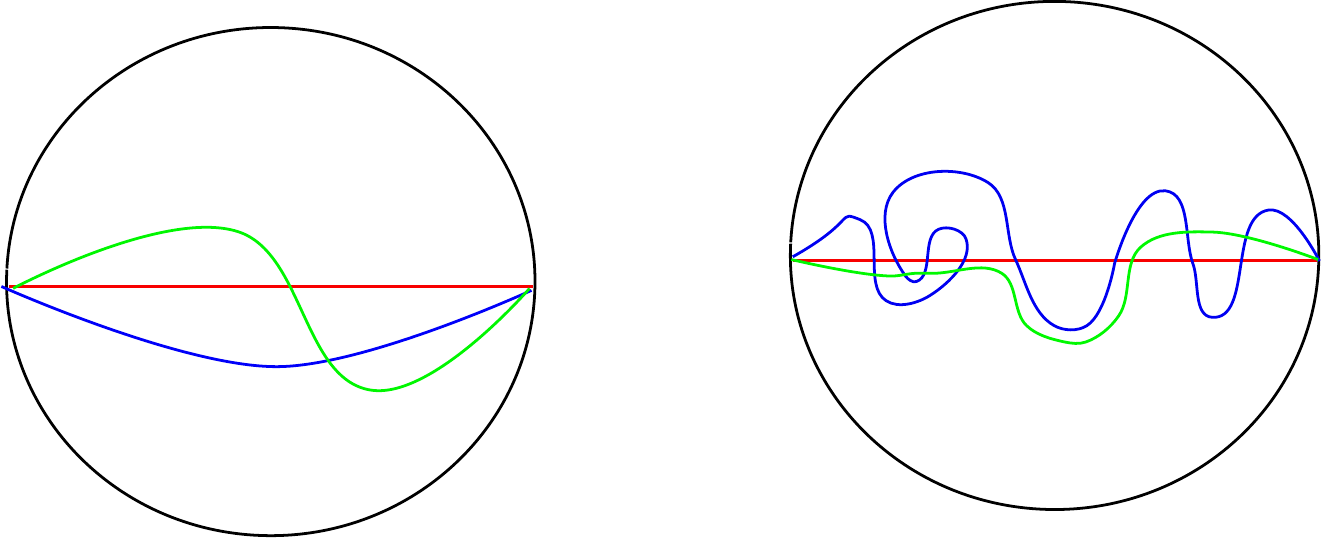
 \caption{  \label{threecasefig} }
\end{center}
\end{figure} 

Thus, proving $HF(A,B)=HF(A,D)$ reduces to the case when the lifts $\tB$ and $\tD$ intersect transversely in one interior point, as illustrated in Figure \ref{arcsfig} (after perhaps exchanging the notation for $B$ and $D$).
 \begin{figure}[h]
\begin{center}
\def\svgwidth{3.4in}
 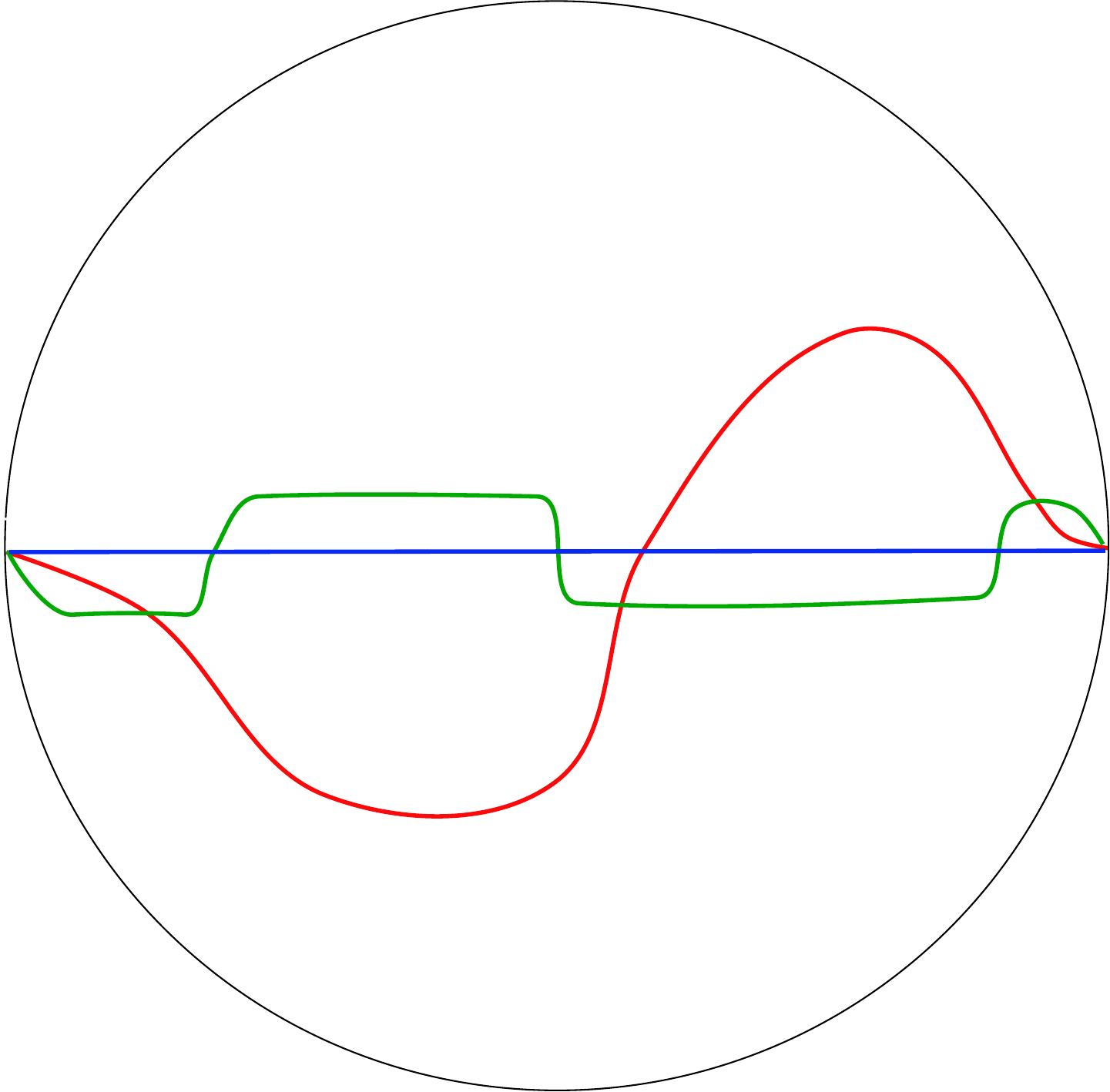
 \caption{  \label{arcsfig} }
\end{center}
\end{figure} 

  We choose an arc $\tD'$ close to and isotopic rel endpoints to $\tD$, as in Figure \ref{arcsfig}.  Not shown is the restricted immersed circle $\tA$, but we assume that there are no triple points of $A,B,D$.  This ensures that $\tA$ misses a neighborhood of $x$ containing $u$ and $w$. We also assume that  the bigons near the ends involving $e, g, f,$ and $z$ are small enough that   that $\tA$ misses them.   We use these facts, the fact that immersed $n$-gons lift to the universal cover,   and the $A_n$ relations to finish the proof of Lemma \ref{case3}.

The counterpart to Lemma \ref{welldef} is much simpler in the case of arcs.
Consider the ordered triple $(A,D,B)$. If $p\in A\cap D$, $q\in D\cap B$, and $r\in A\cap B$, and  then $\pi_2(p,q,r)$ is either empty or contains a single class.  If fact, if $(u_i,(\alpha_i,\gamma_i,\beta_i))\in \pi_2(p,q,r), ~i=1,2$, then since $B$ and $D$ are arcs we may use the homotopy extension property to assume $\beta_1=\beta_2$ and $\gamma_1=\gamma_2$. Then gluing $u_1$ to $u_2$ along $\alpha_i$ and $\beta_i$ shows that $\alpha_1\alpha_2^{-1}$ is nullhomotopic, so that we may assume further than $\gamma_1=\gamma_2$. Since $\pi_2(P^*)=0$, it follows that $u_1=u_2$ in  $\pi_2(p,q,r)$, As before, this implies that $\mathcal{M}(p,q,r)$ is finite, and hence $\mu_2:C(A,D)\times C(D,B)\to C(A,B)$ is defined.   
The same argument applies to the triple $(A,D,D')$ to show that $\mu_2:C(A,D)\times C(D,D')\to C(A,D')$ is defined, and to the 4-tuple $(A,D,B,D')$ to show that $\mu_3:C(A,D)\times C(D,B)\times C(B,D')\to C(A,D')$ is defined.

Using the same notation for  points in the universal cover and their images in $P^*$, we see that 
$\partial x= 0 \text{ in } C(D,B)$, hence the $A_2$ relation shows that 
$\mu_2(-,x):C(A,D)\to C(A,B)$ is a chain map.

Observe that $\partial (y+z)=2w=0$ in $C(B,D')$.  Also, 
$e+f=\mu_2(x, y+z)$ in $C(D,D')$.  Furthermore, $\partial(e+f)=2u=0$. Hence $\mu_2(-,e+f):C(A,D)\to C(A,D')$ is a chain map.  In fact , $\mu_2(-,e+f)$ is a chain isomorphism, since $\tD$ is close to $\tD'$ and $A$ misses neighborhoods of $\pm\infty$, so there is a unique intersection point $w'$ of $A$ with $D'$ for each intersection point $w$ of $A$ with $D$. If $w$ lies between $e$ and $u$, then $\mu_2(w,e)=w'$ and $\mu_2(w, f)=0$, and if $w$ lies between $x$ and $f$, then $\mu_2(w,f)=w'$ and $\mu_2(w, e)=0$.

Substituting these calculations into the $A_3$ relation  
$$
\begin{multlined}0=\mu_3(\partial w, x,y+z)+\mu_3( w, \partial x,y+z)+\mu_3(w, x,\partial (y+z))\\
+\mu_2(\mu_2(w,x),y+z)+\mu_2(w,\mu_2(x,y+z))+\partial\mu_3( w, x,y+z)
\end{multlined}$$
and defining $H:C(A,D)\to C(A,D')$ by $H(w)=\mu_3(w,x, y+z)$ yields
$$0=H\partial(w) +\mu_2(\mu_2(w,x),y+z)
 +\mu_2(w,e+f)+\partial H(w).
 $$
 So that the chain isomorphism $\mu_2(-,e+f):C(A,D)\to C(A,D')$ is chain homotopic to the 
 composite $C(A,D)\xrightarrow{\mu_2(-,x)}C(A,B)\xrightarrow{\mu_2(-,y+z)}C(A,D')$.
 Hence the chain map $\mu_2(-,x):C(A,D)\to C(A,B)$ is injective for all restricted immersed circles $A$ so that $(A,B)$ and $(A,D)$ are admissible.

 The reader can safely be left the task of showing that $\mu_2(-,x):C(A,D)\to C(A,B)$ is injective for all restricted immersed circles $A$,  by producing an embedded  $\tB'$ close to $\tB$ and constructing a right inverse to $\mu_2(-,x)$.    This is done by analogy with the symmetry  between Figures \ref{cylinderfig} and \ref{cylinder2fig},
The fact that the relative $\ZZ/4$ gradings is preserved is proved as before.

This completes the proof of Lemma \ref{case3}, and hence also the proof of Theorem \ref{rhi}. 
\qed

\begin{cor}\label{indepofep}
$HF(L_0^{\ep,g},L_1)$ is independent of $\ep>0$, the function $g$ and the homotopy class  of $L_1$.  \qed
\end{cor}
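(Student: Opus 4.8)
\emph{Proof strategy.} The plan is to reduce the statement to the homotopy invariance theorem, Theorem~\ref{rhi}, applied in two steps: first holding $L_1$ fixed and varying $(\ep,g)$, then holding $L_0$ fixed and varying $L_1$ within its homotopy class.

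First I would record that all the circles $L_0^{\ep,g}$ (for $\ep>0$ small and $g\in\mathcal X$) lie in a single free homotopy class in $P^*$: by the identity $L_0^{\ep,g}(t)=c_g(L_0^{\ep,0}(t),1)$ each $L_0^{\ep,g}$ is isotopic to $L_0^{\ep,0}$, and the formula \eqref{tilfixed} with $g\equiv 0$ depends continuously on $\ep$, so $L_0^{\ep,0}$ and $L_0^{\ep',0}$ are freely homotopic. Hence any two of the $L_0^{\ep,g}$ are freely homotopic, so condition~(1) of Theorem~\ref{rhi} is automatic. The hypothesis that the two curves $L_1$, $L_1'$ under comparison lie in the same homotopy class (freely, if circles; rel endpoints, if arcs) is exactly condition~(2). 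To secure condition~(3) I would, following Lemma~\ref{generic} and its proof, perturb the parameters slightly inside the family $\{L_0^{\ep,g}\}$ so that $L_0^{\ep,g}$, $L_1$, $L_0^{\ep',g'}$, $L_1'$ become pairwise transverse with no triple points; admissibility is not disturbed, since it depends only on the homotopy classes and on transversality (equivalently, on the geodesic representatives).

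With all hypotheses of Theorem~\ref{rhi} in place one obtains
$$HF(L_0^{\ep,g},L_1)\;\cong\;HF(L_0^{\ep',g'},L_1)\;\cong\;HF(L_0^{\ep',g'},L_1')$$
as relatively $\ZZ/4$ graded $\FF_2$ vector spaces: for the first isomorphism one takes $L_1'=L_1$ and invokes the ``vary the $L_0$ factor'' half of the proof of Theorem~\ref{rhi} (Lemmas~\ref{case1} and~\ref{case2}, which involve only the three distinct curves $L_0^{\ep,g}$, $L_0^{\ep',g'}$, $L_1$, so the coincidence $L_1=L_1'$ causes no trouble); for the second isomorphism one invokes the ``vary the $L_1$ factor'' half (Lemma~\ref{case3}, with $L_0^{\ep',g'}$ held fixed). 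Composing proves the corollary: taking $L_1'=L_1$ gives independence of $\ep$ and $g$, and letting $L_1$ range over its homotopy class shows that the answer depends only on that class.

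The one place needing care --- and the main, if modest, obstacle --- is the general-position reduction: one must check that $(\ep',g')$ can be nudged inside the family so that $L_0^{\ep',g'}$ is simultaneously transverse to the other $L_0$, to $L_1$, and to $L_1'$, with no triple points and with admissibility retained, and that this nudge does not change $HF(L_0^{\ep',g'},L_1')$ --- the latter being itself an instance of the first isomorphism above. I would dispatch this exactly as in the genericity argument of Lemma~\ref{generic}, together with the openness of transversality and the homotopy-invariance of the admissibility conditions, and would not belabour it. \qed
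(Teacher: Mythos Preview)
Your proposal is correct and is precisely the intended argument: the paper treats the corollary as an immediate consequence of Theorem~\ref{rhi} (hence the bare \qed), and you have spelled out exactly that deduction, verifying that the $L_0^{\ep,g}$ are all freely homotopic and invoking Lemma~\ref{generic} for the transversality hypothesis. If anything, you could streamline by applying Theorem~\ref{rhi} in a single step rather than two, since its statement already allows both $L_0$ and $L_1$ to vary simultaneously.
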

 
\section{Calculus}\label{special}
In this section, we make four technical but useful observations which  streamline the calculation of $HF(L_0,L_1)$ when $L_0=L_0^{\ep,g}$.   These calculations demonstrate the ease of working with the slope 1 line field $\ell_1$ and Equation (\ref{gradeeq}).

\subsection{} \label{pmpts} We show $gr(x_+,x_-)=1$ when $x$ is a transverse intersection of $L_1$ with the diagonal arc, and $x_+,x_-$ the two corresponding intersection points with $L_0=L_0^{\ep,g}$ for $\ep>0$ and $g$ small, as indicated in Figure \ref{plusfig}.  We take $\alpha_0$ to be  the path in $L_0$ starting at $x_+$ which heads down and to the left, around the bottom left corner, and back up to $x_-$.  The path $\alpha_1$ from $x_-$ back to $x_+$ is the short path which contains the diagonal intersection point $x$. 

In this case, $\mu(\alpha_0,\ell_1)=1$, $\mu(\alpha_1,\ell_1)=0$, $\tau(L_0,L_1,\ell_1)_{x_+}=0$,  $\tau(L_0,L_1,\ell_1)_{x_-}=1$, and $z(\alpha_0*\alpha_1)=1$ yielding $gr(x_+,x_-)=1$.
     \begin{figure}[h]
\begin{center}
\def\svgwidth{1.7in}
 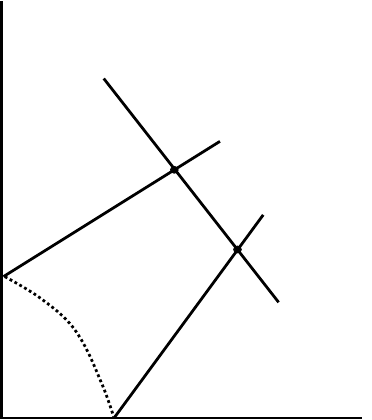
 \caption{ \label{plusfig} }
\end{center}
\end{figure}

\subsection{}\label{simp2} Suppose that $L_1$ intersects the diagonal arc $\Delta$ transversely and that $L_0=L_0^{\ep,g}$ for   $\ep>0$ and $g$  small. Suppose $p,q$ are intersection points between $L_0$ and $\Delta$.  Let $p_+,q_+$  be the corresponding intersection points with the part of $L_0$ in the front of $P$ which has slope slightly less than 1, as indicated in Figure \ref{plusfig}.

Then the arc $\alpha_0$  along $L_0$ from $p_+$ to $q_+$ can be chosen to lie entirely on the front face of the pillowcase, and has slope     slightly  than 1, hence $\mu( \alpha_0,\ell_1)=0$. Moreover, since the slope of $\ell_1$ equals 1,   $L_1$ is transverse to $\Delta$, and $\ep $ and $g$ are small, the triple index terms $\tau$ at $p_+$ and $q_+$ are both zero.  This leaves only the terms $\mu( \alpha_1,\ell_1)$ and $z(\alpha_0\alpha_1)$ in the formula for the grading.   Therefore,  {\em in this situation}
 \begin{equation}
\label{spcase}
gr(p_+,q_+)=\mu( \alpha_1 ,\ell_1)+z(\alpha_0*\alpha_1).
\end{equation}
A similar calculation applies to the other pair $p_-,q_-$.

\subsection{} If $R$ is an arc, $L_0\cap L_1$ may (and in our applications will) contain a distinguished point  which may be used to promote the relative grading to an absolute grading. To understand the meaning of the following lemma, the reader should locate the distinguished point $r_+^\ep$ in the examples illustrated below in Figures \ref{fig72}, \ref{fig34pert2}, \ref{fig35pert}, and \ref{fig45pert}.

\begin{lem} \label{dist}  Suppose that  $L_1:R\to P$ is a restricted immersed arc and one of the endpoints  (which we denote by $r_+$),   maps to the corner $(0,0)$ of $P$ with limiting slope bounded away from 1. Then for all small $\epsilon$, there is a unique continuously varying intersection point $r_+^\epsilon$ of $L_0^\epsilon$ and $L_1$ satisfying $\lim_{\epsilon\to 0}r_+^\epsilon=  r_+$.   
 \end{lem}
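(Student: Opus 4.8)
\noindent{\em Plan of proof.} The plan is to localize the statement to a small neighborhood $N$ of the corner $r_+$ and to compare the two curves with their tangent lines there. Choose $N$ so that its preimage $\widetilde N$ under the branched cover (\ref{brcover}) is a small disc about the lattice point $(0,0)$, on which the deck group is generated by $\sigma(\gamma,\theta)=(-\gamma,-\theta)$; then the diagonal arc $\Delta$ lifts to the segment $\{\gamma=\theta\}\cap\widetilde N$. Reading off the parametrization (\ref{tilfixed}) near $t=\tfrac{3\pi}{2}$ (the value of $t$ whose image lies near the corner $(0,0)$), the arc $L_0^{\ep}\cap N$ lifts to the $\sigma$-pair $\widetilde\alpha_\ep^{\,\pm}=\{(s\mp\ep\cos s,\ s\pm\ep\cos s+\eta(s))\ :\ |s|\le\delta\}$, where $\eta$ is a $C^1$-small error coming from the perturbation $g$; thus $\widetilde\alpha_\ep^{\,+}$ and $\widetilde\alpha_\ep^{\,-}$ are $C^1$-close to lines of slope near $1$, lie respectively in the half-planes $\{\theta>\gamma\}$ and $\{\theta<\gamma\}$, and pass within distance $O(\ep)$ of $0$. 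Meanwhile $L_1\cap N$ lifts to two $\sigma$-related embedded arcs issuing from $0$ with limiting slope $m\neq1$; fix one of them, $\widetilde L_1$. Since $|m-1|$ is bounded below, after shrinking $N$ the arc $\widetilde L_1\setminus\{0\}$ lies in a single open half-plane determined by the diagonal, and after applying the reflection $(\gamma,\theta)\mapsto(\theta,\gamma)$ if necessary we may assume this half-plane is $\{\theta<\gamma\}$.

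A short covering-space count (using that $L_1$ is embedded near $r_+$, so $\widetilde L_1\cap\sigma\widetilde L_1=\{0\}$) identifies the intersection points of $L_0^\ep$ with $L_1$ inside $N$ with the points of $\widetilde L_1\cap(\widetilde\alpha_\ep^{\,+}\cup\widetilde\alpha_\ep^{\,-})$; and since $\widetilde L_1\subset\{\theta<\gamma\}$ it can only meet $\widetilde\alpha_\ep^{\,-}$. Now I would write both $\widetilde L_1$ and $\widetilde\alpha_\ep^{\,-}$ as graphs $\theta=\phi_1(\gamma)$ and $\theta=\phi_2^{\ep}(\gamma)$ over a common interval $[0,\gamma_*]$ (swapping the roles of $\gamma$ and $\theta$ in the harmless case that $\widetilde L_1$ is vertical at $0$). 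The key point is twofold. First, $h_\ep:=\phi_1-\phi_2^\ep$ has derivative $C^0$-close to $m-1$, hence bounded away from $0$ once $N$ is small, so $h_\ep$ is strictly monotone and has \emph{at most one} zero. Second, $h_\ep$ changes sign on $[0,\gamma_*]$ for all small $\ep$: at $\gamma=0$ one has $\phi_1(0)=0$ while $\phi_2^\ep(0)\approx-2\ep$, whereas at the fixed endpoint $\gamma_*$ the bound $\phi_1(\gamma_*)\le(1-\tfrac c2)\gamma_*$ coming from $m\le 1-c$ beats $\phi_2^\ep(\gamma_*)\ge\gamma_*-2\ep$ once $\ep$ is small. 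Hence for each small $\ep>0$ there is exactly one zero $\gamma_\ep\in(0,\gamma_*)$, i.e. exactly one intersection point $r_+^\ep$ of $L_0^\ep$ and $L_1$ in $N$. The implicit function theorem (with $\partial_\gamma h_\ep\neq0$ and $h$ depending smoothly on $\ep$) gives that $\ep\mapsto r_+^\ep$ is continuous, and since $h_0(\gamma)=\phi_1(\gamma)-\gamma$ vanishes only at $\gamma=0$ we get $\gamma_\ep\to0$, that is $r_+^\ep\to r_+$. Finally, any family of intersection points converging to $r_+$ eventually enters $N$ and so must coincide with $r_+^\ep$, which is the uniqueness assertion.

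I do not expect a genuine obstacle here: the content is a comparison of each curve with its tangent line near the corner. The two points requiring care are the covering-space bookkeeping — identifying which lift of $L_1$ is relevant and checking the intersection count descends correctly — and, more importantly, making the $C^1$-estimates on $\widetilde L_1$ and $\widetilde\alpha_\ep^{\,\pm}$ uniform in $\ep$ and in the auxiliary perturbation $g$; this is arranged by first fixing the neighborhood $N$ (which pins down the constant $c$ measuring how far the limiting slope of $L_1$ is from $1$) and only afterwards letting $\ep$ and $\|g\|$ tend to $0$. It is precisely the hypothesis that the limiting slope of $L_1$ at $r_+$ is bounded away from $1$ — not merely unequal to $1$ — that keeps $c$, and hence all of these estimates, uniform, which is what is needed when the lemma is later applied to families of arcs.
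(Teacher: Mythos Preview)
Your proof is correct and is a careful elaboration of exactly the idea the paper invokes: near the corner the two curves are $C^1$-close to transverse lines (slope $1$ versus slope $m\neq 1$), so they meet in a unique point depending continuously on $\ep$. The paper's own proof is a single sentence---``This follows simply from the requirement that limiting slope bounded away from 1''---so you have supplied the local-model and implicit-function-theorem details that the authors leave to the reader.
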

\begin{proof}
 This follows simply from the requirement that limiting slope bounded away from 1.\end{proof}

If $L_1$ satisfies the hypothesis of Lemma \ref{dist}, then, given the additional data of a choice of  $\sigma\in \ZZ/4$, one can endow  $HF(L_0,L_1)$ with the absolute $\ZZ/4$ grading which places $r_+^\epsilon$ in grading $\sigma$. 

In our applications below,    $L_1$  will be associated to a knot $K$ in a homology 3-sphere. In this setting the hypotheses of Lemma \ref{dist} hold and we take $\sigma$ to be the signature of the knot $K$.

\subsection{}  A special class of restricted immersed circles, which we call vertically monotonic,   arise in many of our examples and have particularly simple Lagrangian-Floer homology.

\begin{df} Suppose that $L_1:R\to P^*$ is a restricted immersed circle, with domain parameterized by $[0,2\pi]$, and let  
 $\tilde L_1=(\gamma(t),\theta(t)):[0,2\pi]\to \RR^2$ denote its lift to the branched cover (\ref{brcover}).

\begin{itemize}

\item  Call   $L_1:R\to P^*$  {\em vertically monotonic} if      $\tilde L_1$   misses the vertical line segments $\gamma=k\pi, k\in \ZZ$, and if its tangent slope satisfies $|\frac{d}{dt}\tilde L_1|>1$.  
Thus $L_1$ winds     around the pillowcase without intersecting the left or right edges, and is everywhere transverse to the line field $\ell_1$ (as well as the slope $-1$ line field).

\item Define the {\em vertical degree of $L_1$} to be the absolute value of the difference of the vertical coordinates of $\tilde L_1(2\pi)$ and $\tilde L_1(0)$. Thus$$d=\frac{1}{2\pi}| \theta(0)-\theta(2\pi)|.$$ Notice that the vertical degree is a homotopy invariant. In particular, it is well defined for any circle homotopic to a vertically monotonic circle.

 \end{itemize}
\end{df}

\begin{prop} \label{vm}  Suppose that $L_1:R\to P^*$ is a vertically monotonic restricted immersed circle. Then the vertical degree $d$ of $L_1$  is even. Moreover, for $\epsilon$ small enough, all differentials are zero      and $HF(L_0,L_1)$ has rank $\frac{d}{2}$ in each of the $4$ grading degrees. 
\end{prop}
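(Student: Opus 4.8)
The plan is to analyze the intersection pattern of $L_0 = L_0^{\ep,g}$ with a vertically monotonic circle $L_1$ directly in the branched cover, exploiting that both curves are everywhere transverse to the slope-one line field $\ell_1$. First I would establish that the vertical degree $d$ is even. Since $L_1$ misses the vertical segments $\gamma = k\pi$, its lift $\tilde L_1$ stays in a vertical strip of the form $(k\pi, (k+1)\pi) \times \RR$ modulo the $\ZZ^2 \ltimes \ZZ/2$ action; composing with the hyperelliptic involution, a closed loop in $P^*$ that avoids the corners and the edges must return to its starting point in $\RR^2$ after applying some element of the group, and the constraint $\gamma(0), \gamma(2\pi) \in (k\pi,(k+1)\pi)$ together with closedness in $P^*$ forces the deck transformation to be translation by an even multiple of $2\pi$ in the $\theta$-direction (the only group elements preserving the strip are $(\gamma,\theta) \mapsto (\gamma, \theta + 2\pi m)$ and the flip composed with an odd translation, but the monotonicity of $\theta(t)$ rules out the orientation-reversing case). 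Hence $d = 2m$ is even.

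Next I would count intersection points and show the differential vanishes. Because $L_0$ limits (as $\ep, g \to 0$) to a generically $2$–$1$ map onto the diagonal $\Delta$, and $L_1$ is transverse to $\Delta$, for $\ep$ small each transverse point $x \in L_1 \cap \Delta$ contributes exactly two intersection points $x_+, x_-$ with $L_0$; thus $\#(L_0 \cap L_1) = 2 |L_1 \cap \Delta|$. Since $L_1$ is vertically monotonic of vertical degree $d$ while $\Delta$ has slope one, a lift of $L_1$ traversing a vertical distance $2\pi d$ crosses the $\ZZ^2 \ltimes \ZZ/2$–orbit of $\Delta$ exactly $d$ times (monotonicity in $\theta$ plus $|\text{slope}| > 1$ guarantees each such crossing is transverse and that there are no others), so $|L_1 \cap \Delta| = d$ and $\#(L_0 \cap L_1) = 2d$. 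For the differential: a bigon from $p$ to $q$ would lift to an embedded bigon in $\RR^2$ between a lift of $L_0$ and a lift of $L_1$; but both lifts are graphs of functions of $t$ that are everywhere transverse to slope-one lines, and an elementary argument (a bigon must have a locally extreme point of the difference of the two defining functions, where the boundary tangent directions agree or the region is non-convex at a vertex) shows no immersed Maslov-index-one bigon can exist when both boundary arcs are monotone with respect to $\ell_1$ and never tangent to it; more carefully, by Proposition \ref{convex} a bigon needs $\kappa(D) = 0$, so both corners convex, and one checks the two convex corners force the interior region to sit on one side of each arc in a way incompatible with monotonicity. Hence $\partial = 0$ and $HF(L_0,L_1) = C(L_0,L_1)$ has total rank $2d$.

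It remains to show the rank is equidistributed as $(d/2, d/2, d/2, d/2)$ in the $\ZZ/4$ grading. Here I would use the grading computations of Section \ref{special}: for each diagonal crossing $x$, Subsection \ref{pmpts} gives $gr(x_+, x_-) = 1$, so the two points coming from a single $x$ differ by $1$. It then suffices to compute the relative grading between the "$+$" points $x_+, y_+$ coming from consecutive diagonal crossings, and by Equation (\ref{spcase}) this equals $\mu(\alpha_1, \ell_1) + z(L_0(\alpha_0) * L_1(\alpha_1))$ where $\alpha_1$ is the sub-arc of $L_1$ between the two crossings and $\alpha_0$ a short arc on the front of $P$. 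I would argue that as we run once around $L_1$ through all $d$ diagonal crossings, these increments sum to a total contribution governed by $\mu(L_1, \ell_1) + z(L_1) \equiv 0 \bmod 4$ (the restricted condition, Definition \ref{RLP}), distributed so that the $d$ "$+$" generators occupy each residue class mod $4$ exactly $d/4 \cdot \dots$— more precisely I would show the successive increments are all equal to the same value $\delta \in \ZZ/4$ with $d\delta \equiv 0$, and combined with the fact (from $d$ even and the winding of $\ell_1$) that $\delta$ is a unit, conclude the $d$ values $gr(x_+^{(0)}, x_+^{(i)})$ cycle through a coset pattern hitting $\{0,2\}$ or all of $\ZZ/4$; pairing with the $+1$ shift to the "$-$" points then fills in the remaining residues, giving rank exactly $d/2$ in each of the four degrees.

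The main obstacle I anticipate is the last step: making rigorous that the relative gradings of the $2d$ generators distribute evenly into the four $\ZZ/4$-classes rather than clumping. This requires a careful bookkeeping of the Maslov term $\mu(\alpha_1,\ell_1)$ and the corner-winding class $z$ along successive sub-arcs of $L_1$, using that $L_1$ is monotone (so $\mu(\alpha_1,\ell_1)$ picks up a controlled amount depending only on how many times $\alpha_1$ wraps vertically) and that the total must be $\equiv 0 \bmod 4$. The vertical-monotonicity and the parity of $d$ are exactly what force the per-step increment to be consistent and the pattern to be the uniform one $(d/2,d/2,d/2,d/2)$; I would likely phrase this via an explicit model where $L_1$ is isotoped to a straight line of slope $d$ (using homotopy invariance, Theorem \ref{rhi} / Corollary \ref{indepofep}) and compute everything in that model.
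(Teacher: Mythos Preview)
Your evenness argument has a genuine gap. You claim the deck transformation identifying $\tilde L_1(0)$ with $\tilde L_1(2\pi)$ must be $(\gamma,\theta)\mapsto(\gamma,\theta+2\pi m)$ and then conclude $d=2m$; but that transformation gives $d=|m|$, and nothing in your argument forces $m$ to be even. A vertically monotonic \emph{unobstructed} circle of odd vertical degree exists in $P^*$ (take one vertical wind). Evenness comes from the word \emph{restricted} in the hypothesis, which you never invoke: transversality to $\ell_1$ gives $\mu(L_1,\ell_1)=0$, and since $L_1$ misses the left and right edges each vertical wind encircles two corners, so $z(L_1)=2d$; the restricted condition $\mu(L_1,\ell_1)+z(L_1)\equiv 0\pmod 4$ then forces $2d\equiv 0$, i.e., $d$ even. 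This is exactly the paper's argument.

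Your bigon argument also rests on a false premise: $L_0^{\ep,0}$ is \emph{not} everywhere transverse to $\ell_1$ (its lift has slope $(1-\ep\cos t)/(1+\ep\cos t)$, tangent to $\ell_1$ at $t=\pi/2,\,3\pi/2$). The paper's version is that for small $\ep$ the lifts of $L_0$ are $C^1$-close to slope-$1$ lines while $\tilde L_1$ has slope bounded away from $1$; hence an arc $\tilde\alpha_1$ on $\tilde L_1$ starting at one endpoint of a nearly-slope-$1$ segment $\tilde\alpha_0$ cannot terminate at the other endpoint, ruling out any bigon. For the grading distribution, the paper's computation is shorter than you fear and needs no straight-line model: transversality of $L_1$ to $\ell_1$ kills the Maslov term in Equation~(\ref{spcase}), so $gr(x_+^{(i)},x_+^{(i+1)})$ is just the $z$-contribution of one vertical wind, namely $2$; combined with $gr(x_+,x_-)=1$ from Section~\ref{pmpts}, the $2d$ generators fall into the pattern $(d/2,d/2,d/2,d/2)$ directly.
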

\begin{proof}  Since $L_1:R\to P^*$ is vertically monotonic, it is transverse to the line field $\ell_1$ and hence $\mu(L_1(R),\ell_1)=0$.  Moreover, $L_0^{\ep,g}$ is transverse to $L_1$ for all small enough $\ep, g$. Fix a transverse $L_0=L_0^{\ep,g}$ with $\ep,g$ small.

Since $\tilde L_1:[0,2\pi]\to \RR^2$   misses the vertical line segments $\gamma=k\pi$,   $z(L_1(R))$ is equal to twice the vertical degree $d$ of $L_1$. Since $z(L_1(R))\equiv -\mu(L_1(R))$ mod 4, $2d\equiv 0$ mod 4 so that $d$ is even.

Let  $p,q$ be intersection points of $L_0$ and $L_1$ and suppose that there were a bigon $(u,\alpha_0, \alpha_1)$ from $p$ to $q$. The bigon misses the corners of $P$ and hence lifts to a bigon $(\tu, \tilde \alpha_0, \tilde \alpha_1)$ in $\RR^2$ with one edge along the preimage of $L_1$ and one along the preimage of $L_0$.  

For $\ep>0 $ sufficiently small, the connected components of the preimage in $\RR^2$ of $L_0$ are very close to lines of slope $1$ through $(0,k\pi)$, and hence $\tilde \alpha_0$ is nearly a straight segment of slope 1. On the other hand, since the tangent lines to $\tilde L_1$ have slope bounded away from 1, the lift $\tilde \alpha_1$, which  starts at $\tilde \alpha_0(1)$, cannot terminate at $\tilde \alpha_0(0)$, contradicting the fact that $\tilde \alpha_0$ and $\tilde \alpha_1$ bound a bigon.
 Hence, for sufficiently small $\epsilon$,  there are no bigons, so that all differentials are zero.  The circle $L_1(R)$ winds monotonically around  $P$, intersecting the diagonal arc $\Delta$ in $d$ points, and hence intersecting $L_0$ in $2d$ points, so that $HF(L_0,L_1)$ has rank $2d$.

It remains to calculate the relative gradings. Since $L_1$ is transverse to $\ell_1$, Equation \ref{spcase} shows that this reduces to calculating $z$. One vertical wind around $P$ encircles two corner points and therefore changes $z$ by $2$ mod 4. Hence the generators come in  $\frac{d}{2}$ pairs, and, as explained in Section \ref{pmpts} and indicated in Figure \ref{plusfig}, alternate between contributing in (relative) gradings $0,1$ and $2,3$. This completes the argument.
\end{proof}

  Proposition  \ref{vm}  can be strengthened by combining it with Corollary \ref{indepofep} as follows.

\begin{thm}
 Suppose that $L_1:R\to P^*$ is a restricted immersed circle which misses the left and right edges of the pillowcase. Then the vertical degree $d$ is defined and even, and  for any $\ep>0$ and $g$, $HF(L_0^{\ep,g}, L_1)$ has rank $ \tfrac d 2$ in each of the 4 grading degrees.
\end{thm}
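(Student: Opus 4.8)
The plan is to deduce the theorem from Proposition~\ref{vm} together with the homotopy invariance of Corollary~\ref{indepofep}, by showing that $L_1$ is freely homotopic in $P^*$ to a vertically monotonic restricted immersed circle of vertical degree $d$, and then invoking Proposition~\ref{vm} for that representative.

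First I would check that the vertical degree is well defined. Since $L_1$ misses the left and right edges, any lift $\tilde L_1$ to the branched cover $\RR^2\to P$ lies in a single open strip $\{k\pi<\gamma<(k+1)\pi\}$. The only deck transformations preserving such a strip are the vertical translations $(\gamma,\theta)\mapsto(\gamma,\theta+2\pi n)$, so the lift of the infinite cyclic cover $\RR\to S^1\xrightarrow{L_1}P^*$ satisfies $\tilde L_1(t+2\pi)=\tilde L_1(t)+(0,2\pi d)$ for an integer $d$, which is the vertical degree; since unobstructed circles are essential and a loop in the simply connected strip would be nullhomotopic, $d\neq 0$, and I may assume $d>0$. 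Because $L_1$ is unobstructed and the annulus $A:=P^*\setminus(\text{left and right edges})$ is $\pi_1$-injective in $P^*$, the lift $\tilde L_1\colon\RR\to(k\pi,(k+1)\pi)\times\RR$ is a properly embedded line whose $\gamma$-coordinate is $2\pi$-periodic, hence confined to a compact subinterval of $(k\pi,(k+1)\pi)$.

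The crucial step is the assertion that $\mu(L_1(S^1),\ell_1)=0$. Granting it, restrictedness of $L_1$ together with Equation~(\ref{ivs1}) gives $0\equiv\mu(L_1(S^1),\ell_{\operatorname{inst}})\equiv\mu(L_1(S^1),\ell_1)+z(L_1(S^1))\equiv 2d\pmod 4$, where $z(L_1(S^1))=2d$ is computed exactly as in the proof of Proposition~\ref{vm}; hence $d$ is even. To prove $\mu(L_1(S^1),\ell_1)=0$ I would argue as follows: the embedded line $\tilde L_1$ separates the strip, and together with a vertical line $\{\gamma=c\}$ with $k\pi<c<\min_t\gamma(\tilde L_1(t))$ it bounds a bi-infinite Euclidean strip; the quotient of this strip by the translation $(\gamma,\theta)\mapsto(\gamma,\theta+2\pi d)$ is a compact flat annulus whose boundary consists of a closed geodesic (coming from $\{\gamma=c\}$) and the closed curve obtained from one period of $\tilde L_1$. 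By the Gauss--Bonnet theorem the total turning of $\tilde L_1$ over one period vanishes, which is precisely the statement that $\mu(L_1(S^1),\ell_1)=0$. I expect this to be the main obstacle, not because it is difficult but because it is the one place where unobstructedness (so that the lift is an \emph{embedded}, not merely immersed, line) is used in an essential way, and one must be a little careful with the choice of annulus and with matching the turning of the tangent line to the Maslov index.

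Finally I would deform $\tilde L_1$ inside the open strip $\{k\pi<\gamma<(k+1)\pi\}$ --- which automatically avoids the lattice points, hence the corners of $P$ --- to a curve that is strictly monotone in $\theta$ with tangent slope everywhere of absolute value $>1$; projecting to $P^*$ gives a free homotopy from $L_1$ to a vertically monotonic immersed circle $L_1^{\mathrm{mon}}$ of vertical degree $d$. Since vertically monotonic curves are transverse to $\ell_1$ one has $\mu(L_1^{\mathrm{mon}}(S^1),\ell_1)=0$, and $z(L_1^{\mathrm{mon}}(S^1))=2d\equiv 0\pmod 4$ as $d$ is even, so by Equation~(\ref{ivs1}) $L_1^{\mathrm{mon}}$ is a restricted immersed circle and $(L_0^{\ep,g},L_1^{\mathrm{mon}})$ is admissible for small $\ep$ and $g$. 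Corollary~\ref{indepofep} then yields $HF(L_0^{\ep,g},L_1)\cong HF(L_0^{\ep,g},L_1^{\mathrm{mon}})$ for every $\ep>0$ and $g$, and, taking $\ep$ small, Proposition~\ref{vm} applied to $L_1^{\mathrm{mon}}$ identifies this group as the $\ZZ/4$-graded $\FF_2$-vector space of rank $\tfrac d2$ in each of the four degrees.
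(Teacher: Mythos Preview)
Your proof is correct and follows the same route as the paper: homotop $L_1$ to a vertically monotonic circle and then invoke Proposition~\ref{vm} together with Corollary~\ref{indepofep}. The paper's own proof is a two-line sketch that simply asserts the homotopy and cites the corollary; you supply considerably more detail, most notably the Gauss--Bonnet argument that $\mu(L_1(S^1),\ell_1)=0$. That step is what guarantees both that $d$ is even and that the vertically monotonic representative is itself restricted, which is needed before Proposition~\ref{vm} and Theorem~\ref{rhi} can be applied to it---a point the paper leaves implicit.
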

\begin{proof} Since $L_1$ misses the right and left edges of the pillowcase, it is homotopic to a vertically monotonic immersed circle. The proof is therefore a consequence of Corollary \ref{indepofep}.
\end{proof}

\section{Traceless representation varieties of 2-stranded tangles}\label{2strand}

Hereafter,   a {\em restricted immersed 1-manifold} is a disjoint union  of a single restricted immersed arc and a finite number of restricted immersed circles.  This prompts us the make the following changes in our notation.  The first immersed curve, $L_0=L_0 ^{\ep,g}:S^1\to P$  henceforth has domain the circle, and the domain of $L_1$ has multiple components,  so we now use  the symbol $R_0$ to denote the arc component for $L_1$. More precisely, suppose $R$ is the disjoint union of an arc $R_0$ and finitely many circles $R_1,\dots,R_n$ $L_1:R\to P$ a map so that the restriction of $L_1$ to $R_0$ is a restricted immersed arc and the restriction to each $R_i,~i\ge 1$ is a restricted immersed circle.   The Lagrangian-Floer homology $H^\nat(L_0,L_1)$ is defined to be the direct sum of the homologies $HF(L_0^{\ep, g},L_1|_{R_i})$, where $\ep,g$ are chosen so that $(L_0^{\ep,g},L_1|_{R_i})$ form an admissible pair  for each $i$.    

Each summand is relatively $\ZZ/4$ graded, although initially there is neither a relative nor absolute  $\ZZ/4$ grading on all of $H^\nat(L_0,L_1)$ if $L_1$ is not connected.  The notation $H^\nat$ is adopted in order to indicate the relationship to the reduced instanton homology $IH^\nat$.

\bigskip

We now introduce traceless character varieties.  
We identify $SU(2)$ with the set of unit quaternions throughout. For the basic properties of the unit quaternions and their Lie algebra, we refer the reader to Section 2 of \cite{HHK}. A {\em traceless quaternion} means a quaternion with zero real part.

Given a pair $(A,B)$ consisting of a (compact) manifold and a properly embedded codimension 2 submanifold, call a  representation $\pi_1(A\setminus  B)\to SU(2)$   {\em traceless} if it sends all meridians of $B$ to the conjugacy class $C(\bbi)$ of $\bbi$. Define the {\em traceless character variety} (or traceless flat moduli space)
 $$R(A,B)=\{\rho:\pi_1(A\setminus B)\to SU(2)~|~\rho ~\text{is traceless}\}/_{\text{conjugation}}$$
 An embedding of pairs $(A_1,B_1)\subset (A_2,B_2)$ induces a restriction map $R(A_2,B_2)\to R(A_1,B_1)$.

\medskip

 Consider  a decomposition of a pair $(X,K)$, where $K$ is a knot (or link) in a homology 3-sphere $X$, and $X$ contains a    separating  2-sphere $S\subset X$ which intersects $K$ transversally in four points.    We assume that one of the two regions $S$ bounds is a 3-ball $D$, and that $D\cap K$ is a standard trivial 2-stranded tangle.  
\begin{equation}
\label{decom}
(X,K)=(Y,T)\cup _{(S,\{a,b,c,d\})}(D,U)
\end{equation}
We refer to (\ref{decom})   as {\em a 2-tangle  decomposition associated to the knot $(X,K)$.} 
We fix an identification $D=B^3$ so that $$U=\left\{\left(\pm\tfrac{1}{\sqrt 2},0,t\right)~\left. |~t\in\left[-\tfrac {1}{ \sqrt{ 2}},\tfrac {1}{ \sqrt{2}}\right]\right. \right\}$$
and fix an identification $ \partial(D,U)=(S^2,\{a,b,c,d\})$ with $a=\left(\tfrac 1 {\sqrt{2}}, 0,\tfrac 1 {\sqrt{2}}\right)$, $b=\left(-\tfrac 1 {\sqrt{2}}, 0,\tfrac 1 {\sqrt{2}}\right)$, $c=\left(-\tfrac 1 {\sqrt{2}}, 0,-\tfrac 1 {\sqrt{2}}\right)$, $d=\left(\tfrac 1 {\sqrt{2}}, 0, - \tfrac 1 {\sqrt{2}}\right)$.
We call  $(Y,T)$ a {\em 2-tangle associated to the knot $(X,K)$}.    

Observe that to recover $(X,K)$ from $(Y,T)$ requires only a choice of identification 
$$\iota: (Y,T)\cong(S^2,\{a,b,c,d\})$$ (in    the same way that a Dehn filling is determined by a manifold with torus boundary and an identification of its boundary with the boundary of a solid torus). 
 We will omit the choice of $\iota$ from the notation since the identification will be clear from context.

To a 2-tangle decomposition of $(X,K)$, one associates the diagram 
 \begin{equation}\label{SVKD}
\begin{diagram}\node[2]{R(S,\{a,b,c,d\})}\\
\node{R_{\pi}(Y,T)}\arrow{ne,t}{L_1}\node[2]{R^\nat_{\pi_\epsilon}(D,U)}\arrow{nw,t}{L_0}\\
\node[2]{R_{\pi''}^{\nat}(X,K)}\arrow{nw}\arrow{ne}
\end{diagram}
\end{equation}
 where $\pi, \pi_\epsilon$ refer  to certain {\em holonomy perturbations}  $\pi''=\pi\cup \pi_\ep$, and $R_\pi(M,L)$ denotes the corresponding {\em $\pi$-perturbed  traceless character variety}.   
 
Holonomy perturbations are used to make the Chern-Simons function (whose Morse theory defines instanton homology) have only non-degenerate critical points. They are constructed and explained in the context of the traceless character varieties in \cite[Section 7]{HHK} and also in Section \ref{perturbationsection} below. They were introduced in gauge theory by by Donaldson, Floer, Taubes, and others \cite{Donaldson1, Taubes,Floer}.

  It is not necessary for this article to understand precisely what the $\nat$ superscript means beyond knowing the statement of Theorem \ref{fig8} below. But roughly,
$R^\nat_{\pi_\epsilon}(D,U)$ refers to  $R_{\pi_\epsilon}(D,U\cup E)$, where $E$ is an additional small meridian component to one of the components of $U$ and one considers representations which come from flat connections on an $SO(3)$ bundle with $w_2$  dual to an arc spanning $U$ and $E$.  This construction,  introduced by Kronheimer-Mrowka  in  \cite{KM1}, is an ingredient in  the definition of  reduced instanton knot homology. 
 We refer to \cite{KM1,HHK} for the details. 
 
 \medskip

  The space $R(S^2,\{a,b,c,d\})$ is a pillowcase.  Indeed the following simple proposition is proved in \cite{HHK} (and elsewhere). In the statement we abuse notation and let $a,b,c,d$ also denote the oriented meridians of the four punctures.  
 
\begin{prop}[\cite{HHK}, Proposition 3.1]\label{pillow}
   There is a surjective quotient map
$$\psi: \RR^2\to R(S^2,\{a,b,c,d\})$$ 
given by 
$$\psi(\gamma, \theta):  a\mapsto \bbi,~ b\mapsto e^{\gamma\bbk}\bbi,~ c\mapsto e^{\theta\bbk}\bbi, ~d\mapsto e^{(\theta-\gamma)\bbk}\bbi.$$ 
The map $\psi$   induces a homeomorphism of the pillowcase $P$ with $R(S^2,\{a,b,c,d\})$.
The  four corner points are the  image under $\psi$ of the lattice $(\pi\ZZ)^2$,  and correspond  to abelian non-central representations.  All other points correspond  to  non-abelian representations.\qed
\end{prop}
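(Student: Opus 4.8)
The plan is to identify $R(S^2,\{a,b,c,d\})$ explicitly as a set of conjugacy classes of quaternion $4$-tuples. Since $S^2\setminus\{a,b,c,d\}$ deformation retracts to a wedge of three circles, $\pi_1$ is free on the meridians $a,b,c,d$ subject to the single relation $abcd=1$, so a traceless representation is exactly a $4$-tuple $(A,B,C,D)$ of unit imaginary quaternions with $ABCD=1$, and $R(S^2,\{a,b,c,d\})$ is the set of such tuples modulo simultaneous conjugation. First I would check that $\psi$ is well defined into this set: each of $\bbi,\, e^{\gamma\bbk}\bbi,\, e^{\theta\bbk}\bbi,\, e^{(\theta-\gamma)\bbk}\bbi$ has zero real part, and a short computation using the identity $\bbi\,e^{\mu\bbk}=e^{-\mu\bbk}\bbi$ shows their ordered product is $1$. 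Next I would check that $\psi$ descends to $P$: it is $2\pi$-periodic in $\gamma$ and in $\theta$ because $e^{2\pi\bbk}=1$, and conjugation by $\bbi$ fixes $\bbi$ and sends $e^{\mu\bbk}\bbi$ to $e^{-\mu\bbk}\bbi$, hence carries $\psi(\gamma,\theta)$ to $\psi(-\gamma,-\theta)$; therefore $\psi$ factors through the quotient of $\RR^2$ by $\ZZ^2\ltimes\ZZ/2$, giving $\bar\psi\colon P\to R(S^2,\{a,b,c,d\})$.

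For surjectivity I would start from an arbitrary traceless $4$-tuple $(A,B,C,D)$. After conjugating we may assume $A=\bbi$; the remaining freedom is conjugation by the circle $\{e^{s\bbi}\}$ stabilizing $\bbi$, which acts on the $\bbj\bbk$-plane by rotation through $2s$. If $B\neq\pm\bbi$, use this freedom to rotate the $\bbj\bbk$-component of $B$ onto the $\bbj$-axis, giving $B=e^{\gamma\bbk}\bbi$; the cases $B=\pm\bbi$ already have this form with $\gamma\in\pi\ZZ$ and leave the full circle of residual conjugations available. Solving the relation for $D=-CBA$ and writing $C=c_1\bbi+c_2\bbj+c_3\bbk$, the requirement $\operatorname{Re}(D)=0$ reduces to $\sin\gamma\cdot c_3=0$; when $\gamma\notin\pi\ZZ$ this forces $c_3=0$, so $C=e^{\theta\bbk}\bbi$, and then the same relation gives $D=e^{(\theta-\gamma)\bbk}\bbi$ automatically. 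When $\gamma\in\pi\ZZ$ one has instead $D=\pm C$ with $C$ arbitrary traceless, and the residual circle puts $C$ in the form $e^{\theta\bbk}\bbi$; in all cases the tuple is conjugate to $\psi(\gamma,\theta)$.

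For injectivity of $\bar\psi$, suppose $q$ conjugates $\psi(\gamma,\theta)$ to $\psi(\gamma',\theta')$. Since both send $a$ to $\bbi$, $q$ commutes with $\bbi$, so $q=e^{s\bbi}$ and conjugation by $q$ rotates the $\bbj\bbk$-plane. If $\gamma\notin\pi\ZZ$, matching the $\bbk$-components of the images of $b$ forces $\sin 2s=0$, i.e. $s\in\tfrac\pi2\ZZ$: either $q$ acts trivially, whence $(\gamma,\theta)=(\gamma',\theta')$ after also comparing $c$, or $q=\pm\bbi$, whence $(\gamma',\theta')=(-\gamma,-\theta)$; both give the same point of $P$. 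If $\gamma\in\pi\ZZ$ one runs the same argument on the image of $c$ when $\theta\notin\pi\ZZ$, and when both $\gamma,\theta\in\pi\ZZ$ the four lattice-point representations are seen directly to be pairwise non-conjugate (a conjugation fixing $\bbi$ cannot swap $\bbi$ and $-\bbi$). Thus $\bar\psi$ is a continuous bijection; since $P$ is compact (image of the rectangle $[0,\pi]\times[0,2\pi]$) and $R(S^2,\{a,b,c,d\})$ is Hausdorff (a quotient of a compact subset of $SU(2)^4$ by the compact conjugation action, equivalently points are separated by traces of words), $\bar\psi$ is a homeomorphism. Finally, at a lattice point all four quaternions lie in $\{\pm\bbi\}$, so the representation is abelian and non-central, while for $(\gamma,\theta)\notin(\pi\ZZ)^2$ either $\bbi$ and $e^{\gamma\bbk}\bbi$ or $\bbi$ and $e^{\theta\bbk}\bbi$ fail to commute, so the representation is non-abelian.

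The main obstacle I anticipate is controlling the case analysis in the surjectivity and injectivity steps: the parametrization degenerates precisely along the loci $\gamma\in\pi\ZZ$ and $\theta\in\pi\ZZ$, where pairs among $A,B,C,D$ become parallel and the residual stabilizer jumps, and one must verify that these loci still map correctly and introduce no identifications beyond those coming from $\ZZ^2\ltimes\ZZ/2$. Everything else is a bookkeeping exercise in quaternion multiplication together with the standard "continuous bijection from compact to Hausdorff" principle.
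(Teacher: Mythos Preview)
Your proof is correct and complete. The paper does not give its own proof of this proposition; it is quoted from \cite{HHK} and marked with a \qed, so there is nothing to compare against beyond noting that your direct gauge-fixing argument is the standard one and is essentially what appears in that reference.

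One small remark on conventions: elsewhere in the paper (see the proof of Theorem~\ref{collar1}) the fundamental-group relation is written as $ba=cd$ rather than your $abcd=1$. For arbitrary representations these are inequivalent presentations, but for \emph{traceless} representations they coincide: if $a,b,c,d$ are unit imaginary quaternions then $x^{-1}=-x$ and $x^2=-1$, so $ba=cd$ implies $abcd=ab\cdot ba=a(-1)a=1$, and conversely $abcd=1$ gives $cd=b^{-1}a^{-1}=ba$. In particular both relations yield the same formula $D=-CBA$, and your computation $\operatorname{Re}(D)=-c_3\sin\gamma$ goes through unchanged. You might add a sentence making this equivalence explicit so the reader is not puzzled by the apparent discrepancy with the paper's convention.
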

We urge the reader not to confuse $(S^2,\{a,b,c,d\})$ with the pillowcase $P=R(S^2,\{a,b,c,d\})$, a homeomorphic space!

 \medskip
 
 The only fact we will need to recall about $R^\nat_{\pi_\ep}(D,U)$ is the following, which follows immediately  by combining    \cite[Theorem 7.1]{HHK} with  Theorem \ref{collar1}, proved below.

 \medskip
\begin{thm} \label{fig8} Given any $\ep>0$ and   $g\in \mathcal{X}$, there is a  holonomy perturbation $\pi_\ep$ depending on $\epsilon$  and $g$ so that $R^\nat_{\pi_\ep}(D,U)$ is a circle, and the restriction to the pillowcase (the northwest map in Diagram (\ref{SVKD})) is given by a map $L_0^{\ep, g}$ of Definition \ref{elzero}.
\qed\end{thm}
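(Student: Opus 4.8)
The statement to prove, Theorem \ref{fig8}, asserts that for any $\ep > 0$ and $g \in \mathcal{X}$, there is a holonomy perturbation $\pi_\ep$ so that $R^\nat_{\pi_\ep}(D,U)$ is a circle whose restriction to the pillowcase is the curve $L_0^{\ep,g}$ of Definition \ref{elzero}. The plan is to deduce this by combining two pieces that the paper says are already in hand: \cite[Theorem 7.1]{HHK}, which handles the case $g = 0$ (or more precisely produces the restriction map as a specific immersed circle depending only on $\ep$), and Theorem \ref{collar1} (proved later in the present paper), which should describe how applying a further holonomy perturbation in a collar of the separating 2-sphere $S$ composes the restriction map with one of the pillowcase isotopies $c_g$ from Equation (\ref{iso}).

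First I would recall precisely what \cite[Theorem 7.1]{HHK} gives: a holonomy perturbation supported near a curve in $D$, depending on the parameter $\ep$, for which $R^\nat(D,U)$ is diffeomorphic to $S^1$ and for which the restriction to $R(S^2,\{a,b,c,d\}) = P$ is, after the identification $\psi$ of Proposition \ref{pillow}, the immersion $t \mapsto (t + \ep\sin t + \tfrac\pi2,\ t - \ep\sin t + \tfrac\pi2)$ — that is, the curve $\tilde L_0^{\ep,0}$ of Equation (\ref{tilfixed}) with $g = 0$, composed with the branched cover. So the $g = 0$ case is exactly \cite[Theorem 7.1]{HHK}, once one matches conventions. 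Second, I would invoke Theorem \ref{collar1}: the effect of performing an additional holonomy perturbation along a curve in a collar $S \times [0,1]$ of the separating sphere is to replace the restriction map $R^\nat_{\pi_\ep}(D,U) \to P$ by its composition with the isotopy $c_g(-,1): P \to P$ for a suitable choice of the collar perturbation determined by $g \in \mathcal X$. Concretely, the collar perturbation moves the $\theta$-coordinate by $s\,g(\gamma)$, which is exactly the formula (\ref{iso}) defining $c_g$, and this is why $\mathcal X$ is the space of perturbation functions. Composing, the restriction becomes $t \mapsto c_g(L_0^{\ep,0}(t),1)$, and by the remark just after Definition \ref{elzero} this equals $L_0^{\ep,g}(t)$. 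Since the collar perturbation does not change the moduli space $R^\nat_{\pi_\ep}(D,U)$ itself up to diffeomorphism — it only reparametrizes how it sits over the pillowcase — the space remains a circle. Setting $\pi_\ep$ to be the union of the $\ep$-dependent perturbation from \cite{HHK} and the $g$-dependent collar perturbation then gives the claim.

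The main obstacle I expect is a bookkeeping one rather than a conceptual one: verifying that the coordinate conventions of \cite[Theorem 7.1]{HHK} for $R^\nat_{\pi_\ep}(D,U)$ match the formula (\ref{tilfixed}) on the nose — in particular the $+\tfrac\pi2$ shifts and the sign of $\ep\sin$ — and checking that Theorem \ref{collar1} really produces the isotopy $c_g$ with exactly the normalization in (\ref{iso}) (that the collar perturbation fixes the left and right edges $\gamma \in \pi\ZZ$, which is needed for $c_g$ to descend to $P$ and to preserve the corners). These are the points where one must be careful that the perturbation function $g$, viewed as the derivative of a conjugation-invariant function on $SU(2)$ restricted to the maximal torus (as explained in the paragraph before Equation (\ref{iso})), induces precisely the stated $\theta$-translation. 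Once these identifications are nailed down, the proof is just the two-line composition $L_0^{\ep,g} = c_g(L_0^{\ep,0},1)$ together with the observation that neither perturbation destroys the circle topology of the moduli space.

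\begin{proof}
By \cite[Theorem 7.1]{HHK}, for each $\ep>0$ there is a holonomy perturbation $\pi_\ep^0$, supported in a neighborhood of a curve in $D$, so that $R^\nat_{\pi_\ep^0}(D,U)$ is diffeomorphic to $S^1$ and so that under the identification $\psi$ of Proposition \ref{pillow} the restriction map to $P$ is the immersed circle $L_0^{\ep,0}$ of Definition \ref{elzero}, i.e.\ the composite of $\tilde L_0^{\ep,0}$ from Equation (\ref{tilfixed}) with the branched cover (\ref{brcover}).

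Now fix $g\in\mathcal X$ and let $\pi_\ep^g$ denote the additional holonomy perturbation in a collar $S\times[0,1]$ of the separating sphere associated to $g$ as in Theorem \ref{collar1}. By Theorem \ref{collar1}, performing this collar perturbation leaves the diffeomorphism type of the moduli space unchanged, and replaces the restriction map to the pillowcase by its composition with the isotopy $c_g(-,1):P\to P$ of Equation (\ref{iso}); here we use that $c_g$ fixes the left and right edges $\{\gamma\in\pi\ZZ\}$ and hence descends to $P$. Setting $\pi_\ep:=\pi_\ep^0\cup\pi_\ep^g$, the space $R^\nat_{\pi_\ep}(D,U)$ is therefore still a circle, and its restriction to $P$ is
$$t\longmapsto c_g\big(L_0^{\ep,0}(t),1\big).$$
By the remark following Definition \ref{elzero}, this composite is exactly $L_0^{\ep,g}$. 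This proves the theorem.
\end{proof}
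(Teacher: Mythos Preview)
Your proposal is correct and takes essentially the same approach as the paper, which simply states that the theorem ``follows immediately by combining \cite[Theorem 7.1]{HHK} with Theorem \ref{collar1}.'' The only minor gap is that Theorem \ref{collar1} produces the homeomorphism $(\gamma,\theta)\mapsto(\gamma,\theta+2f(\gamma+\pi))$ rather than $c_g(-,1)$ directly, so you should specify $f(x)=\tfrac12 g(x-\pi)\in\mathcal X$ as the perturbation function for the collar; with that choice your argument goes through verbatim.
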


What Proposition \ref{pillow} and Theorem \ref{fig8} tell us is that a decomposition of a knot or link into two 2-tangles, one  of which  is trivial,  gives the pillowcase $P$ and a map $L_0:S^1\to P^*$. 

 \medskip
 
  The remaining input needed to define  a   Lagrangian-Floer homology as in Section \ref{LFT} is provided by the 1-manifold  $R=R(Y,T)$ and $L_1:R\to P$ the restriction map $R(Y,T)\to R(S^2,\{a,b,c,d\})$, as indicated in Diagram (\ref{SVKD}).    

Loosely speaking, $L_1:R(Y,T)\to R(S^2,\{a,b,c,d\})$ is generically    a union of a restricted immersed arc and some number of restricted immersed circles. The arc arises as one component of the space of traceless binary dihedral representations of $\pi_1(Y\setminus T)$, and the endpoints correspond to the two conjugacy classes of abelian traceless representations (Theorem  3.2 of \cite{FKP}.)

It is not always literally  true, however, that $L_1:R(Y,T)\to P$ is a restricted immersed 1-manifold. It is true for certain tangles associated to  2-bridge knots \cite[Section 10]{HHK}, and for some, but not all torus knots \cite{FKP}.

In fact, there exist decompositions of knots for which $R(Y,T)$, rather than being a smooth 1-manifold,  is instead a singular real algebraic variety of dimension greater than or equal to 1.
For example in \cite[Section 11]{HHK} (see Figure \ref{fig34}) it is shown that for a tangle associated to the $(3,4)$ torus knot, $R(Y,T)$ is a singular 1-dimensional variety, homeomorphic to the letter $\phi$. Many more examples are given in \cite{FKP}. In general one can construct   examples so that $R(Y,T)$ is highly singular and has strata of high dimension by placing local knots in one of the strands of a 2-tangle.  Hence the traceless character varieties $R(Y,T)$ must first be desingularized before we can apply the construction of the Lagrangian-Floer theory in the pillowcase.  In order to preserve the relationship to gauge theory and instanton homology, we use holonomy perturbations to smooth $R(Y,T)$.

The space $R(Y,T)$ for a certain natural  2-tangle decomposition of a torus knot is   typically singular  (\cite{HHK,FKP}).     We prove below that any torus knot admits a 2-tangle decomposition    and an arbitrarily small holonomy perturbation  $\pi$ so that  $R_\pi(Y,T)$ is a compact 1-manifold with two  boundary points, and $L_1$ an immersion which satisfies all the requirements to be a restricted immersed 1-manifold except possibly the requirement that it have no fishtails. Based on  index calculations and examples, it is reasonable to expect that   for any knot, arbitrarily small holonomy perturbations exist which make $L_1:R_\pi(Y,T)\to P$ a restricted immersed 1-manifold.

\begin{conj}\label{con1}  For any 2-tangle $(Y,T)$ in the 3-ball (or a homology 3-ball), there exist  arbitrarily small holonomy perturbations $\pi$  so that
 $R_\pi(Y,T)$ is a compact 1-manifold with two boundary points and the restriction map $L_1:R_\pi(Y,T)\to R(S^2,\{a,b,c,d\})$ is a restricted immersed 1-manifold  on each component  in the sense of Definition \ref{RLP}.  
 \end{conj}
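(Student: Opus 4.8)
The plan is to realize $R_\pi(Y,T)$ as the zero set of a Fredholm section and run a Sard--Smale argument over the Banach space of holonomy perturbation data, in the spirit of the gauge-theoretic transversality arguments of Floer and Kronheimer--Mrowka, adapted to the traceless setting as in \cite[Section 7]{HHK} and Section \ref{perturbationsection}. First I would fix a sufficiently large collection of disjoint perturbation solid tori embedded in $Y\setminus T$, with cores generating enough of $\pi_1(Y\setminus T)$, and let the perturbation parameter range over a Banach space of tuples of functions $f\in\mathcal{X}$. For each parameter the perturbed traceless representations are the solutions of an explicit holonomy equation, and one shows that the universal solution set is cut out transversally; the key analytic input is a unique continuation (or density) statement, namely that the variations obtained by perturbing along the chosen curves span the cokernel of the unperturbed deformation operator at every traceless representation of $\pi_1(Y\setminus T)$. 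Granting this, Sard--Smale produces a residual --- hence dense, in particular containing arbitrarily small --- set of perturbations $\pi$ for which $R_\pi(Y,T)$ is a smooth manifold.

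Next I would pin down the dimension and the boundary. The expected dimension is $1$: the relevant deformation operator on the tangle complement has index equal to half of $\dim R(S^2,\{a,b,c,d\})=\dim P$, i.e. a generic perturbed traceless flat connection on $Y$ is a nondegenerate critical point of the perturbed Chern--Simons functional with Lagrangian boundary values. The two boundary points come from the two conjugacy classes of abelian traceless representations of $\pi_1(Y\setminus T)$ (see Theorem 3.2 of \cite{FKP}), and the results of Sections \ref{general} and \ref{chris} show these are stable under small perturbations and map to corners of $P$. To make $L_1\colon R_\pi(Y,T)\to P$ an immersion, and $L_0=L_0^{\ep,g}$ transverse to it, I would add a further holonomy perturbation supported in a collar of the separating $2$-sphere, as in Theorem \ref{collar1} and Section \ref{chris}, and check --- again by Sard--Smale, now over the product of parameter spaces --- that the combined perturbation still yields a smooth $1$-manifold whose restriction to $P$ is an immersion; Lemma \ref{generic} then provides suitable $\ep,g$.

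The hard part will be verifying that the arc and circle components of $L_1$ are \emph{unobstructed} in the sense of Definition \ref{unobstructed} --- equivalently, by Lemma 2.2 of \cite{abou}, that they are homotopically essential and contain no fishtails --- which is exactly the clause missing even in the torus-knot case (Theorem \ref{torusknotsgood}). Transversality of the defining equations is purely local and gives no control over whether the image of $R_\pi(Y,T)$ develops a small nullhomotopic loop in $P^*$. Ruling this out appears to require genuinely new input: either an energy/monotonicity estimate forcing the perturbed Chern--Simons critical set to stay ``geodesic-like'' for small perturbations, or an argument that a fishtail in $L_1$ would force a degeneration of $R_\pi(Y,T)$ (an extra reducible, or an extra abelian point) incompatible with the chosen small perturbation. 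I expect essentially all of the difficulty to be concentrated here, with the Sard--Smale scaffolding in the first two paragraphs being routine by comparison.
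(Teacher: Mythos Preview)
The statement you are attempting to prove is a \emph{conjecture} in the paper, not a theorem: the paper does not contain a proof of it. Your proposal is therefore not to be compared against a proof in the paper, because none exists. What the paper does prove are partial results: Theorem~\ref{endpointsgood} handles the structure near the two abelian points for any tangle, and Theorem~\ref{torusknotsgood} establishes, for a specific tangle decomposition of each torus knot, that suitable small holonomy perturbations make $R_\pi(Y,T)$ a smooth compact $1$-manifold with two boundary points whose restriction map to $P$ satisfies every condition of a restricted immersed $1$-manifold \emph{except possibly the absence of fishtails}. The paper explicitly flags this gap (see the sentence after Theorem~\ref{torusknotsgood} and the ``Loose ends'' section).

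Your outline is an accurate diagnosis of the situation rather than a proof. The Sard--Smale scaffolding you describe in the first two paragraphs is indeed the standard mechanism, and the paper carries it out in the torus-knot case by an explicit submersion argument (Lemma~\ref{sublemma}) rather than an abstract Fredholm setup; for a general tangle one would need exactly the density/unique-continuation input you mention, which you do not supply. More importantly, you correctly identify that the entire content of the conjecture beyond what is already known lies in ruling out fishtails, and you do not propose a mechanism for doing so --- the two speculative ideas you float (an energy estimate, or a degeneration argument) are not developed. So your proposal does not close the gap; it restates it. This is not a flaw in your reasoning so much as an acknowledgment that the conjecture remains open, which is precisely the paper's position.
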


Given a 2-tangle decomposition of a knot and a perturbation $\pi$  which satisfies the conclusion of Conjecture \ref{con1},  denote by 
$H^\nat(Y,T,\pi)$ the resulting  Lagrangian-Floer homology of the complex $C(L_0,L_1)$.
We will simplify this to $H^\nat(Y,T)$ if the  perturbation $\pi$ is clear from context.
\medskip

As explained in \cite{HHK}, if $\ep\ne 0$ is small and $L_0=L_0^{\ep,0}$ intersects $L_1$ transversely, then the  intersection points of   $L_0 $ and $ L_1$ also form generators of the reduced instanton homology $I^\nat(X,K)$. Theorem \ref{collar1} implies that this holds for $L_0=L_0^{\ep,g}$ for any   small   $g\in\mathcal X$.

We state this formally.
\begin{prop}\label{instanton} Given a small perturbation $\pi$ which makes $L_1:R_\pi(Y,T)\to P$ a restricted immersed 1-manifold and a transverse $L_0=L_0^{\ep,g}$ with $\ep$ and $g$ small, there is a (possibly different) differential
$$\partial_{KM}:C(L_0,L_1)\to C(L_0, L_1)$$ so that the homology of $(C(L_0,L_1),\partial_{KM})$ is   the reduced instanton homology $I^\nat(X,K)$. \qed
\end{prop}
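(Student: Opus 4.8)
The plan is to show that, for the perturbation $\pi''=\pi\cup \pi_\ep$ appearing in Diagram (\ref{SVKD}), the $\FF_2$-vector space $C(L_0,L_1)$ is literally Kronheimer-Mrowka's reduced instanton chain group $C^\nat(X,K)$; the sought differential $\partial_{KM}$ is then their instanton-counting differential transported along this identification, and its homology is $I^\nat(X,K)$ by definition. So the real content is a bijection between $L_0\cap L_1$ and the generating set of $C^\nat(X,K)$ which respects non-degeneracy. First I would recall, from \cite[Section 7]{HHK} and \cite{KM1}, that after applying holonomy perturbations supported in $D$, in a collar of the splitting sphere $S$, and along the perturbation tori in $Y$, the generators of $C^\nat(X,K)$ are the non-degenerate critical points of the perturbed Chern-Simons functional, and that these are identified with perturbed traceless representations of the fundamental group of the appropriate link complement (the $\nat$ decoration encoding the extra meridian $E$ and the non-trivial $SO(3)$ bundle). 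Since the supports of all the perturbations are confined to $D$, the collar, and $Y$, every such critical point restricts to an honest flat connection on the complement of those supports.

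Next, using the $2$-tangle decomposition (\ref{decom}) and van Kampen's theorem, a perturbed representation as above is determined by the pair of its restrictions to $R_\pi(Y,T)$ and $R^\nat_{\pi_\ep}(D,U)$, and such a pair glues exactly when the two further restrictions to $R(S^2,\{a,b,c,d\})$ coincide. By Proposition \ref{pillow} this common restriction lies in the pillowcase $P$; by Theorem \ref{fig8} the image of $R^\nat_{\pi_\ep}(D,U)$ in $P$ is $L_0=L_0^{\ep,g}$; and by hypothesis the image of $R_\pi(Y,T)$ is the restricted immersed $1$-manifold $L_1$. Hence the generators of $C^\nat(X,K)$ are in bijection with the fibre product, i.e.\ with the set of intersection points $L_0\cap L_1$, so $C^\nat(X,K)=\bigoplus_{x\in L_0\cap L_1}\FF_2\langle x\rangle = C(L_0,L_1)$ as $\FF_2$-vector spaces.

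It remains to see that this is an identification of \emph{chain groups}, i.e.\ that no critical points are missed and that all of them are non-degenerate. Completeness was arranged in the previous paragraph. Non-degeneracy is where Theorem \ref{collar1} enters: transversality of $L_0$ and $L_1$ in $P$ (which holds for $\ep,g$ small by Lemma \ref{generic} and is assumed in the hypothesis) translates, through the Mayer-Vietoris description of the Hessian of the perturbed Chern-Simons functional across $S$ together with the collar perturbation of Theorem \ref{collar1}, into non-degeneracy of the corresponding critical point. With $C^\nat(X,K)$ identified with $C(L_0,L_1)$, Kronheimer-Mrowka's differential $\partial_{KM}$, defined by counting anti-self-dual instantons on $(X\setminus K)\times\RR$ and depending on a metric and a secondary perturbation irrelevant to the Floer-theoretic differential $\partial$ (which is why $\partial_{KM}$ may differ from $\partial$), is a differential on $C(L_0,L_1)$ whose homology is $I^\nat(X,K)$.

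The main obstacle is the compatibility of the two perturbation frameworks: one must check that the holonomy perturbations chosen on the symplectic side --- to desingularize $R_\pi(Y,T)$, to produce $L_0^{\ep,g}$, and to make $L_0$ transverse to $L_1$ --- genuinely belong to Kronheimer-Mrowka's admissible class and simultaneously achieve Morse non-degeneracy of \emph{every} critical point of the perturbed Chern-Simons functional, not merely of those visible as transverse intersections in $P$. Assembling the inputs of \cite{HHK}, Theorem \ref{fig8}, and Theorem \ref{collar1} to this effect is the substance of the argument.
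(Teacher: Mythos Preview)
Your proposal is correct and matches the paper's approach: the paper states this proposition with a \qed, relying on the sentence immediately preceding it, which cites \cite{HHK} for the identification of generators when $g=0$ and invokes Theorem \ref{collar1} to extend to general $g$. Your writeup is essentially an unpacking of what that citation contains --- the van Kampen/fiber-product description of critical points via Diagram (\ref{SVKD}), Theorem \ref{fig8} for the $L_0$ side, and the transversality-implies-nondegeneracy step --- so there is nothing materially different between your argument and the paper's (implicit) one.
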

The differential $\partial_{KM}$ is defined by Kronheimer-Mrowka in terms of singular instantons on cylinders $(X,K)\times \RR$.  There is a well known procedure for producing  approximate instantons from bigons in character varieties associated to lagrangian intersection   diagrams; see for example \cite[Section 4]{wehrheim}.   It is therefore not unreasonable to conjecture that there is a relationship between   $I^\nat(X,K)$ and $H^\nat(Y,T,\pi)$. Indeed, we have found these to be isomorphic  in every example we have computed.  We extend Conjecture \ref{con1} to an optimistic ``Atiyah-Floer'' type conjecture:
 
\begin{conj}\label{con3} Given a knot $(X,K)$ in a homology 3-sphere,  there exists a 2-tangle decomposition as in Equation (\ref{decom}), such that for suitably small generic perturbations $\pi$,  $L_1:R_\pi(Y,T)\to P$ is a restricted immersed 1-manifold and  $H^\nat(Y,T,\pi)$ is isomorphic to the reduced instanton homology $I^\nat(X,K)$. \end{conj}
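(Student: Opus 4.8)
The plan is not to prove Conjecture \ref{con3} outright — as the introduction makes clear, even \emph{defining} $H^\nat(Y,T,\pi)$ for an arbitrary knot (Conjecture \ref{con1}) is open — but rather to describe the route we expect to lead to it, at least for links in $S^3$, and to identify where the real difficulties lie. The strategy is an Atiyah--Floer--style argument that combines a skein exact triangle for pillowcase homology with a neck-stretching comparison of differentials.

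First I would establish an unoriented skein exact triangle for $H^\nat$. Given a crossing in a diagram for the complementary tangle $T$, the original tangle and its two resolutions produce restricted immersed $1$-manifolds $L_1,L_1^0,L_1^1$ in $P$ that differ by a local modification near a small disk. Using homotopy invariance (Theorem \ref{rhi}) and the $A_\infty$ structure of Section \ref{LFT}, one checks that a canonical chain map $C(L_0,L_1^0)\to C(L_0,L_1^1)$, defined by counting the relevant Maslov-index-zero triangles $\mu_2$ with the ``cobordism'' vertex, has mapping cone quasi-isomorphic to $C(L_0,L_1)$; this yields a long exact sequence $\cdots\to H^\nat(Y,T_0)\to H^\nat(Y,T_1)\to H^\nat(Y,T)\to\cdots$. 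Kronheimer--Mrowka's $I^\nat$ satisfies the parallel triangle with the same maps on generators, since the generating sets already agree (Proposition \ref{instanton}).

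Next, for a knot $K\subset S^3$ I would pick the distinguished trivial tangle $(D,U)$ at one crossing of a diagram and iterate the skein triangle over the remaining crossings. On the pillowcase side this produces a spectral sequence whose $E_1$-page is the cube of resolutions, each vertex a pillowcase complex of a crossingless (hence binary-dihedral and explicitly computable) tangle, converging to $H^\nat(Y,T,\pi)$; the analogous iteration on the gauge side gives the spectral sequence of \cite{KM1} converging to $I^\nat(S^3,K)$. One then argues the two spectral sequences are isomorphic: the base cases are crossingless tangles, where both $L_1$ and the instanton complex are transparent and can be matched directly (cf.\ Sections \ref{2b} and \ref{torussec}--\ref{examples}), and the connecting and higher differentials agree because in both theories they count the same polygons — holomorphic (here orientation-preserving immersed) on the symplectic side, and, after stretching the neck along $S\times\RR\subset(X,K)\times\RR$, degenerations of low-energy singular instantons on the gauge side, following the heuristic of \cite{wehrheim,WW}. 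The $\ZZ/4$ grading comparison is forced by the choice of instanton line field $\ell_{\operatorname{inst}}$ together with Equation (\ref{ivs1}).

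The hard part is twofold. The deeper, analytic obstacle is the Atiyah--Floer gluing step: one must prove that as the neck along $S$ is stretched, the moduli spaces of singular ASD instantons between two generators converge to the moduli spaces of immersed bigons (and, for the triangle maps, immersed triangles) in $P$. This requires the full exponential-decay, compactness and gluing package in the singular-bundle setting with holonomy perturbations, and is where essentially all of the PDE content resides; it is the analogue of the most delicate parts of the Dostoglou--Salamon and Wehrheim--Woodward programs. The more elementary but still substantial obstacle is foundational: one must establish Conjecture \ref{con1}, namely that arbitrarily small holonomy perturbations $\pi$ can always be chosen making $R_\pi(Y,T)$ a smooth compact $1$-manifold with two boundary points whose image $L_1$ is a fishtail-free restricted immersion transverse to $L_0$. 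The index and transversality estimates of Sections \ref{general}--\ref{chris} point toward this but do not yet prove it in general, and until both obstacles are resolved the conjecture remains verified only example-by-example, as is done here for $2$-bridge knots and for many torus knots.
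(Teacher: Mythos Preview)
This statement is a \emph{conjecture}; the paper does not prove it and does not claim to. What the paper offers is evidence: it verifies the conjecture for $2$-bridge knots (Section~\ref{2b}, where both complexes have vanishing differential), and checks that for many torus knots $H^\nat(Y,T,\pi)$ agrees with the known or conjectured value of $I^\nat(S^3,K)$ (Sections~\ref{torussec}--\ref{examples}). You correctly recognize this, and your write-up is explicitly a strategy sketch rather than a proof.

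Your proposed route---skein triangle for $H^\nat$, iterate over crossings to build a cube-of-resolutions spectral sequence, then match it against Kronheimer--Mrowka's spectral sequence---is essentially the program the authors themselves outline in the introduction and in the ``Loose ends'' section: they report a skein exact triangle for $H^\nat$ as work in progress and expect it to yield a spectral sequence from Khovanov homology. Where you go further than the paper is in proposing the neck-stretching/Atiyah--Floer gluing argument to identify the differentials and higher maps in the two spectral sequences. The paper gestures at this mechanism only briefly (the sentence citing \cite{wehrheim} after Proposition~\ref{instanton}), and is otherwise agnostic about \emph{how} the two theories would ultimately be shown to agree; its stated near-term goal is more modest---use the spectral sequence to show $H^\nat$ is well defined and a link invariant, thereby attacking Conjecture~\ref{con1}, rather than directly proving Conjecture~\ref{con3}.

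One caution on your sketch: the pillowcase $P^*$ is not the full traceless character variety of the Conley-index/Heegaard-splitting type that appears in the existing Atiyah--Floer literature, and $L_0$ arises from a \emph{perturbed} moduli space $R^\nat_{\pi_\epsilon}(D,U)$ rather than an honest Lagrangian in a symplectic quotient. So the neck-stretching analysis you invoke would not be a direct application of \cite{wehrheim,WW,DS} but would require a singular/orbifold variant adapted to the $\nat$ construction and to holonomy perturbations on both sides. The paper itself flags a related subtlety (paragraph after Conjecture~\ref{con3}): it expects cancelling pairs of instantons between $p_+$ and $p_-$ that have no bigon counterpart, so any identification of differentials must be at the level of chain homotopy type, not term-by-term. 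Your identification of Conjecture~\ref{con1} (smoothness, no fishtails) as a separate foundational obstacle is accurate and matches the paper's own assessment.
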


In the remainder of this article we establish some partial results and carry out calculations which provide evidence for these conjectures.   The reader should realize, however, that there are no non-zero differentials in $C(L_0,L_1)$ between generators which lie on different path components of $L_1$. We know of no reason why this should be true for  the instanton complex.  It is likely that there are differentials in the instanton complex which don't appear in $C(L_0,L_1)$. For example, the pairs of generators $p_+,p_-$ near each intersection point $p$ of $L_1$ with the diagonal arc $\Delta$, described in Section \ref{pmpts}, arise from a holomony perturbation which ``tilts'' a Bott-Morse circle of critical points of the Chern-Simons function \cite{HHK}.    Analogy with finite-dimensional Morse theory suggests that there exists a  cancelling  pair of gradient flow lines (i.e., instantons) from $p_+$ to $p_-$ in the Kronheimer-Mrowka instanton complex, whereas there are no bigons   connecting these points of intersection.   

 \subsection{Absolute grading}\label{abgr}  We remark that, by construction, $H^\nat(Y,T,\pi)$ splits as the direct sum over the path components $R_0,R_1,\dots R_n$ of $R(Y,T)$:
 $$H^\nat(Y,T,\pi)=\oplus_{i}H^\nat(L_0,L_1(R_i)).$$
 and that each of the summands admits a relative $\ZZ/4$ grading.  The relative grading of the summand corresponding to the arc component $R_0$ can be promoted to an absolute $\ZZ/4$ grading for small perturbations, using the knot signature,  as follows.  

Assume that $(X,K):=(Y,T)\cup_{\iota}(D,U)$ is a 2-tangle decomposition of a knot $K$ in an integer homology sphere $X$. The signature of $K$, $\sigma(K)$, is an even   integer.   There are two traceless abelian representations of $\pi_1(Y\setminus T)$, $r_+$ and $ r_-$ distinguished by the property that $r_+$ extends to $\pi_1(X\setminus K)$ (and $r_-$ does not).  The point $r_+$ is a Morse critical point of the Chern-Simons function, and   a regular point of $R(X,K)$.  In particular, it remains regular after  small perturbations.

The points $r_+$ and $r_-$ are endpoints of an embedded arc of binary dihedral representations, which,  by Theorem 3.2 of \cite{FKP},  is the image in the pillowcase under the branched cover (\ref{brcover})  of an embedded linear segment joining two lattice points. This line segment has slope different from 1  (the slope is different from 1 since the 2-fold branched cover of a knot in a homology sphere is a rational homology sphere, so the integer $h(bc^{-1}) $ in Theorem 3.2 of \cite{FKP} is non-zero).  In particular, the arc of binary dihedral representations  is properly immersed (in fact, embedded) in $P$.

Small perturbations only change the slopes near the endpoints slightly, and one can keep them  bounded away from 1. By Lemma \ref{dist}  there is a unique intersection point $r_+^\ep$ of $R(Y,T)$ and $L^{\ep,g}_0$ for all small $\ep,g$.   
We promote the relative grading of the subcomplex corresponding to the component $R_0$ by declaring 
\begin{equation}\label{sigma}
gr(r_+^\ep)=\sigma(K)
\end{equation}
for small perturbations.

 We have not found an elementary approach to promote the relative grading of the generators of the subcomplexes associated to the circle components $R_i, i>0$, 
 and so we will use the following awkward definition as a consequence of Proposition \ref{instanton}:
 choose a generator on each circle component and declare its absolute grading to be the one assigned to it by Kronheimer-Mrowka in \cite[Proposition 4.4]{KM-khovanov}.

 \medskip
 
A proof that the  relative $\ZZ/4$ grading of  generators of $C(L_0,L_1)$   (Definition \ref{grading}) coincides with the   grading assigned the these generators (by Proposition \ref{instanton})   of singular instanton knot homology by \cite[Proposition 4.4]{KM-khovanov} is given by using splitting theorems for spectral flow \cite{nic, DK, BHKK}. We outline how this is done, referring to \cite{BHKK} for details.

First, the relative grading is defined to be the mod 4 reduction of the spectral flow of the Hessian of the Chern-Simons function (acting on singular connections)  along a path   joining a pair $a_0,a_1$ of critical points, i.e., perturbed flat connections.   If the restrictions of $a_0,a_1$ to $Y\setminus T$   can be joined by a smooth path of flat connections, i.e., by a smooth path in $R_\pi(Y,T)$,  then the approach of  \cite[Theorem 3.9]{BHKK} can be modified to show that the spectral flow 
equals the    Maslov index along the path of the tangent space of the immersed 1-manifold  $R_\pi(Y,T)\to P^*$ in the pillowcase, with respect to some {\em a priori} unknown   line field $\ell_{\operatorname{inst}}$, and hence is given as in  Definition \ref{grading}.

Changing the homotopy class of a line field  determines a difference class 
 $z\in H^1(P^*;\ZZ/4)$.   
The  identification of $\ell_{\operatorname{inst}}$ is therefore equivalent to the identification of $z$.   Its identification with the explicit class   of Definition \ref{theclassz} is completed by calculating a few examples of 2-bridge knots, whose instanton homology is known, to deduce the values of $z$ on a basis of 1-cycles in $H_1(P^*)$.

\medskip
 
This argument, combined with the additivity of spectral flow under composition of paths of self-adjoint operators, also shows that if $L_1:R_i\to P^*$ is an immersion of a smooth circle component $R_i\subset R_\pi(Y,T)$, then $L_1$ satisfies the condition $\mu(L_1(R_i),\ell_{\operatorname{inst}})\equiv 0$ mod 4 required of restricted immersed circles.   In this case, one uses the fact that the two smooth paths in $R_i$ joining $a_0$ to $a_1$ must give the same relative $\ZZ/4$ grading, since the relative grading in the singular instanton complex is well defined (and independent of the tangle decomposition).  As the proof of Proposition \ref{grade} shows, this is only possible if $L_1:R_i\to P^*$  satisfies $\mu(L_1(R_i),\ell_{\operatorname{inst}})\equiv 0$ mod 4.

 \section{Examples: 2-bridge knots}\label{2b}
 
   Two-bridge knots can be described as the union of two trivial tangles along a 4-punctured sphere.  We recall some of the results about their tangle decompositions from   \cite{HHK}.  In particular, we will show that  for such a tangle decompositions of a 2-bridge knot $K$, $L_1:R(Y,T)\to P$ is a restricted immersed (in fact linearly embedded) arc which meets $L_0^{\ep,0}$ transversely in $\det(K)$ points, and that all differentials in the Lagrangian-Floer complex are zero.  
   
These facts, together with the identification of the relative $\ZZ/4$ gradings in $C(L_0,L_1)$ and the singular instanton complex via a spectral flow splitting theorem as explained above,  imply that $H^\nat(Y,T)$ is isomorphic to the reduced instanton homology $I^\nat(S^3,K)$, which is known \cite{KM-khovanov} to equal the reduced Khovanov homology $Kh^{red}(K^m)$ of the mirror of $K$ for a 2-bridge knot. 
We conjectured in \cite{HHK} that placing the distinguished generator $r_+^\ep$ in grading $\sigma(K)$ agrees with Kronheimer-Mrowka's absolute grading \cite[Proposition 4.4]{KM-khovanov} (a conjecture   borne out in all our calculations) and, modulo this point,  for 2-bridge knots,  $H^\nat(Y,T)$, $I^\nat(S^3,K)$, and $Kh^{red}(K^m)$ (with its bigrading $(i,j)$ reduced to $i-j+1$ mod 4) contain the same information. In particular, Conjecture \ref{con3} holds for 2-bridge knots.

 \medskip

 Suppose that $(p,q)$ are relatively prime integers, with $p$ odd and positive. Then there is a 2-bridge knot $K= K(p,q)$ determined by the condition that its 2-fold branched cover is the lens space $L(p,q)$.  In  \cite[Figure 13]{HHK},     a 2-tangle decomposition $(S^3,K)=(Y,T)\cup(D,U)$ determined by a continued fraction expansion of $\frac{p}{q}$ is described. It is proved that $R(Y,T)$ is a smooth arc and the restriction map to the pillowcase is given, in $\RR^2$  coordinates, by 
 $$R(Y,T)\cong[0,\pi]\ni t\mapsto (qt, (q-p)t)\in P.$$

 Thus $L_1:R(Y,T)\to P$ is a restricted embedded arc.  In particular, no perturbation $\pi$ is needed to smooth $R(Y,T)$. Hence we can choose $L_0=L_0^{\ep,0}$ for a small $\ep$  and form the chain complex $C(L_0,L_1)$.  Since $L_1$  maps in linearly and $L_0$ is close to the linear arc $\Delta$, there can be no immersed bigons, and therefore all differentials are zero.

 There are $p$ intersection points of $L_1$ with $L_0$. In fact,   there are $\frac{p+1}{2}$ intersection points  of $(\gamma(t),\theta(t))=(qt, (q-p)t),~t\in [0,\pi]$ with the arc $\Delta$, these  occur  at  
 \begin{equation}
\label{points}x_\ell=\left(q \tfrac{2\pi \ell}{p}, (q-p)\tfrac{2\pi \ell}{p}\right),~ \ell =0,1,\dots, \tfrac{p-1}{2}.
\end{equation}
The points $x_\ell,~\ell>0$, each give rise to a pair of intersection  points $x_\ell^+,x_\ell^-$ of $L_1$ with $L_0^\epsilon$, and the point $x_0$ gives rise to the distinguished point $r_+^\epsilon$ of Lemma \ref{dist}. Hence $C(L_0,L_1)$ and $H^\nat(Y,T)$ have rank $p$. 
The  intersection points are illustrated in the case of $K=K(11,-5)$ ($7_2$ in the knot tables) in Figure \ref{fig72}.

\medskip

We show how to calculate the gradings.  First, the observation of Section \ref{pmpts} shows that $gr(x_\ell^+,x_\ell^-)=1$.  Next, recall that we promote the relative grading to an absolute grading by setting $gr(r_+^\epsilon)=\sigma(K)$ mod 4, where $\sigma(K)$ denotes the signature of the knot $K$.  Thus  the determination of all other gradings is reduced   to calculating $gr(r_+^\epsilon,x_\ell^+)$ for $\ell=1,\dots,p$.

  The  slope $\frac{q-p}{q} $ is not equal to $\pm 1$ since $p$ and $q$ are relatively prime.   There are four different cases to be considered, depending on the slope.  For simplicity we assume $\frac{q-p}{q} $ is positive and greater than 1; the other cases are treated similarly.

For each $x_\ell^+,\ell=1,\dots ,p$, one can find a path $\alpha_0$ in $L_0$ from $r_+^\ep$ to $x_\ell^+$ which lies on the front of the pillowcase.  One can then take a path $\alpha_1$ in $L_1$ from  $x_\ell^+$ back to $r_+^\ep$. Notice that since $L_1$ is an arc, the path $\alpha_1$ is unique.  

We are in the situation explained in Section \ref{simp2} and can calculate gradings using the  Equation (\ref{spcase}).   Since the  tangent line to $L_1$ is everywhere transverse to the slope one line field $\ell_1$, Equation (\ref{spcase}) simplifies further to $gr(r_+^\ep,x_\ell^+)=z(\alpha_0\alpha_1) $.

The same formula holds when  $\frac{q-p}{q} <-1$.  When $-1<\frac{q-p}{q}<1  $, an entirely similar calculation yields $gr(r_+^\ep,x_\ell^-)=z(\alpha_0\alpha_1)$. 

We summarize:
\begin{thm}\label{2bkt}
Let $K=K(p,q)\subset S^3$ be  a    2-bridge knot with $p>0$ odd,  and equip it with    the 2-tangle decomposition   described in   \cite[Figure 13]{HHK}. 
Then $L_1:R(Y,T)\to P$ is a linearly embedded arc of slope $\tfrac{q-p}{q}$, and hence a restricted immersed arc. Taking $L_0=L_0^{\ep,0}$ with $\ep>0$ small, 
$C(L_0,L_1)$ has rank $p$,  generated by the points $r^\ep_+$ and $x_\ell^+,x_\ell^-,\ell=1,\dots \tfrac{p-1}{2}$. The  $\ZZ/4$ grading is determined by 
 $$
gr(r^\ep_+)=\sigma(K),~gr(x_\ell^+,x_\ell^-)=1$$ and, letting
$x_\ell^\circ$ denote $x_\ell^+$ 
if  
$|\tfrac{q-p}{q}|>1$ and $x_\ell^-$ if  
$|\tfrac{q-p}{q}|<1$
$$ 
gr(r_+^\ep,x_\ell^\circ)=
z(\alpha)$$ where $\alpha$ is the loop in $P^*$ which starts at $r_+^\ep$, follows $L_0$ to $x_\ell^\circ$, then returns to $r_+^\ep$ along $L_1$, and $z\in H^1(P^*;\ZZ/4)$ is the class of Definition \ref {theclassz}. All differentials are zero and hence $C(L_0,L_1)\cong H^\nat(Y,T)$.\qed
\end{thm}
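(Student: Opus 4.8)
The plan is to assemble the ingredients already in place, adding only a short local analysis near the diagonal and near the corner. First I would recall from \cite[Figure 13]{HHK} that for this decomposition $R(Y,T)$ is a smooth arc with restriction map $[0,\pi]\ni t\mapsto (qt,(q-p)t)\in P$ in the $\RR^2$-coordinates of the branched cover (\ref{brcover}); this is the straight segment from the lattice point $(0,0)$ to the lattice point $(q\pi,(q-p)\pi)$ of slope $\tfrac{q-p}{q}$. Because $p$ is odd and $\gcd(p,q)=1$, this slope is neither $1$ (else $p=0$) nor $-1$ (else $p=2q$ is even), and no interior point of the segment lies in $(\pi\ZZ)^2$; hence, by the remark following Definition \ref{RLP} (or Theorem 3.2 of \cite{FKP}), the image is a restricted embedded arc $L_1$ with endpoints at corners and limiting slopes $\tfrac{q-p}{q}$, bounded away from $1$. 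No perturbation $\pi$ is needed. For $\ep>0$ small, $L_0^{\ep,0}$ is $C^1$-close to the generically $2{:}1$ cover of $\Delta$ (slope $1$) and therefore transverse to $L_1$; since $L_0$ is a restricted immersed circle and $L_1$ a restricted immersed arc, $(L_0,L_1)$ is admissible (Definition \ref{admis}), so $(C(L_0,L_1),\partial)$ is a relatively $\ZZ/4$ graded complex by Theorem \ref{mattmademedothis}.

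Next I would count generators. As $\ep\to0$, $L_0^{\ep,0}$ limits to the $2{:}1$ map onto $\Delta$, and $L_1$ meets $\Delta$ transversally in the $\tfrac{p+1}{2}$ points $x_\ell=\big(q\tfrac{2\pi\ell}{p},(q-p)\tfrac{2\pi\ell}{p}\big)$, $\ell=0,\dots,\tfrac{p-1}{2}$, of Equation (\ref{points}). For $\ep$ small each interior crossing $x_\ell$, $\ell\ge1$, splits into the two transverse intersections $x_\ell^{+},x_\ell^{-}$ of $L_1$ with the two nearby strands of $L_0$, while the boundary crossing $x_0=(0,0)$ contributes the single distinguished point $r_+^\ep$ of Lemma \ref{dist} (applicable since the limiting slope is $\ne 1$), and no other intersections appear. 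Hence $C(L_0,L_1)$ has rank $1+2\cdot\tfrac{p-1}{2}=p$, with the asserted generating set.

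To see $\partial=0$, note any bigon misses the corners and so lifts along the covering $\RR^2\setminus(\pi\ZZ)^2\to P^*$ to one with one edge on a component of the preimage of $L_1$, an honest line of slope $\tfrac{q-p}{q}$, and one edge on the preimage of $L_0^{\ep,0}$, which for $\ep$ small is a graph over a coordinate axis of slope within $\delta$ of $1$ where $\delta<\min\{|\tfrac{q-p}{q}-1|,|\tfrac{q-p}{q}+1|\}$. Writing both edges as graphs, their difference is strictly monotone and so vanishes at most once; the two edges cannot share both endpoints, so there are no bigons and $\partial=0$, giving $H^\nat(Y,T)=HF(L_0,L_1)=C(L_0,L_1)$. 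Finally, for the grading: $gr(x_\ell^+,x_\ell^-)=1$ is the $\pm$-point computation of Section \ref{pmpts}; $gr(r_+^\ep)=\sigma(K)$ is the normalization (\ref{sigma}) of Section \ref{abgr}; and choosing $\alpha_0\subset L_0$ from $r_+^\ep$ to $x_\ell^{\circ}$ along the front face of $P$ and $\alpha_1$ the unique return path in the arc $L_1$, one is exactly in the situation of Section \ref{simp2}: $L_1$ has slope $\ne1$, hence is everywhere transverse to $\ell_1$, so $\mu(\alpha_1,\ell_1)=0$ and the triple indices at $r_+^\ep$ and $x_\ell^{\circ}$ vanish, and Equation (\ref{spcase}) yields $gr(r_+^\ep,x_\ell^{\circ})=z(\alpha_0*\alpha_1)=z(\alpha)$. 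Additivity of the relative grading (Proposition \ref{grade}) then propagates these values to all generators.

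There is no single hard step here: the theorem packages results proved above. The only place asking for care is the last one — checking, according to the sign and magnitude of the slope $\tfrac{q-p}{q}$, which of $x_\ell^{+}$ or $x_\ell^{-}$ admits a front-face path $\alpha_0$ making both $\mu(\alpha_0,\ell_1)$ and the triple-index contributions vanish, so that (\ref{spcase}) applies in its simplest form. This is precisely the $|\tfrac{q-p}{q}|>1$ versus $|\tfrac{q-p}{q}|<1$ dichotomy recorded in the statement, handled by the explicit case analysis of Section \ref{simp2}; verifying it for all four slope regimes is routine, but it is the one step I would be careful to write out in full.
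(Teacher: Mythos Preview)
Your proposal is correct and follows essentially the same approach as the paper: recall the linear embedding from \cite{HHK}, verify it is a restricted immersed arc, count the $p$ intersection points as $\epsilon\to 0$, rule out bigons by the near-linearity of both curves, and read off the gradings from Sections \ref{pmpts}, \ref{simp2}, and \ref{abgr}. Your bigon argument (lifting to $\RR^2$ and using monotonicity of the difference function) and your check that the slope is $\ne\pm1$ are slightly more explicit than the paper's, but the ideas are the same.
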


  \begin{figure}[h] 
\begin{center}
\def\svgwidth{2.1in}
 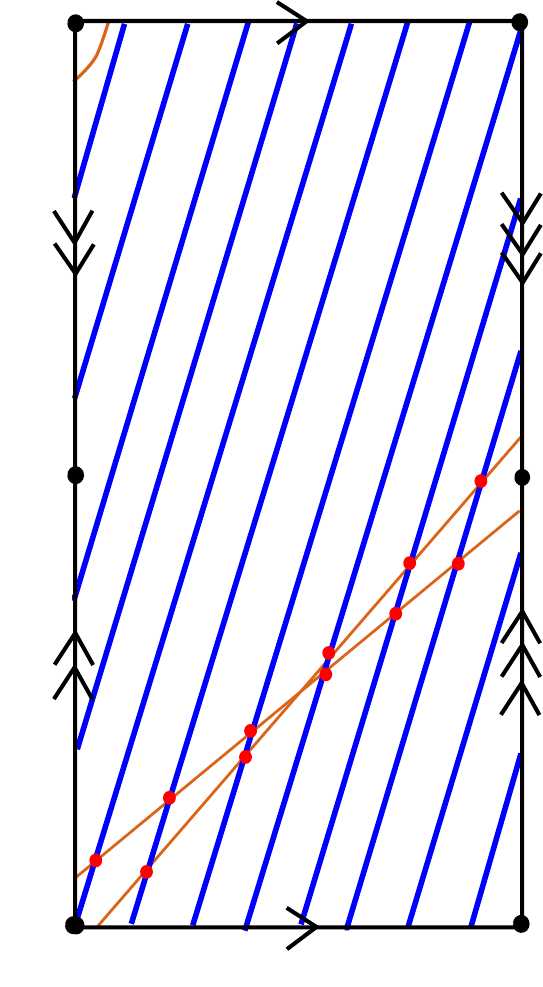
 \caption{The intersection of $L_0$ and $L_1$ in $P$ for the 2-bridge knot $K(11,-5)$. \label{fig72}}
\end{center}
\end{figure}

The knot $K(11,-5)$ has signature $\sigma=2$, and hence $gr(r_+^\ep)=2$.   The generators are illustrated in Figure \ref{fig72}.   The loop $\alpha$ which follows $L_0$ from $r_+^\ep$ to $x_1^+$ on the front of the pillowcase and then follows $L_1$ back to $r_+^\ep$ satisfies $z(\alpha)=2$. Hence
$gr(r_+^\ep, x_1^+)=2$.
A similar calculation applies to the other $x_\ell^+$ and yields  
$$ gr(r_+^\ep, x_2^+)=1 , gr(r_+^\ep, x_3^+)=0, 
gr(r_+^\ep, x_4^+)= 3,\text{~and~} gr(r_+^\ep, x_5^+)=2.$$
The gradings $gr(r_+^\ep,x_\ell^-)$ are computed using the fact that $gr(x_\ell^+,x_\ell^-)=1$.
Thus, in the notation introduced above,  $H^\nat(Y,T)=(3,2,3,3)$.

The choice $(p,q)=(11,6)$ gives a different tangle decomposition for the same knot $K=K(11,-5)=K(11,6)$.  The resulting homology is again $(3,2,3,3)$.

 The choice $(p,q)=(5,-3)$ yields a tangle decomposition of the Figure 8 knot. The map $L_1$ is illustrated (with different notation) in \cite[Figure 16]{HHK}. There are 5 generators, $r_+^\ep, x_1^\pm, x_2^\pm$, and computing gradings using $z$ yields $gr(r_+^\ep)=\sigma(K)=0 , gr(x_1^+)=3, gr(x_2^+)=2$, and hence $H^\nat=(1,1,2,1)$. This agrees with the calculation of reduced instanton homology and reduced Khovanov homology of the Figure 8 knot. 
 Choosing $(p,q)=(5,2)$ gives a different tangle decomposition for the Figure 8 knot, but again yields  $H^\nat=(1,1,2,1)$.
 
 The trefoil knot corresponds to $(p,q)=(3,-1)$; one calculates $H^\nat=(1,0,1,1)$. The same answer is obtained when taking instead $(p,q)=(3,2)$.

 \medskip
 
Theorem \ref{2bkt} can easily be used (and implemented in a computer algebra program) to compute $H^\nat(Y,T)\cong I^\nat(S^3,K)\cong Kh^{red}(K^m)$ for any 2-bridge knot $K$.  In particular, this gives a novel approach to computing the reduced Khovanov homology of 2-bridge knots (with its bigrading $(i,j)$ reduced to $i-j+1$ mod 4).

\medskip
 
 We point out that the main {\em new} ingredients contained in this discussion of 2-bridge knots which were not implicit in \cite{HHK} are first, the construction of the complex $C(L_0,L_1)$ associated to a tangle decomposition of a 2-bridge knot, and second, the use of the cohomological invariant $z\in H^1(P^*;\ZZ/4)$ and the Maslov index  to define and compute the relative grading.

\section{Some general properties of $R(Y,T)$}\label{general}

 \subsection{Structure of $R(Y,T)$ near the abelian points}   
   \bigskip
Suppose that $(Y,T)$ is a 2-tangle.  Our goal (not fully realized in this article) is to establish Conjecture \ref{con1}.  To this end, we start by showing that the two boundary  points of $R_\pi(Y,T)$ are well defined for small perturbations, and  correspond to the precisely two {\em abelian} representations in $R(Y,T)$, namely the conjugacy classes of the two representations
$$r_\pm:\pi_1(Y\setminus T)\to \{\pm 1,\pm\bbi\}\subset SU(2)$$ uniquely characterized (since $H_1(Y\setminus T)=\ZZ \oplus \ZZ$, generated by $\mu_1,\mu_2$) by 
 $$r_\pm(\mu_1)=\bbi,r_\pm(\mu_2)=\pm \bbi.$$

  The following proposition proves that   $r_+$ and $r_-$ each have a neighborhood in $R(Y,T)$  homeomorphic to a half-open interval. The outline of the argument is as follows: the space $R(Y,T)$ is identified with a subspace   of the space of conjugacy classes of  representations of the 2-fold branched cover (the {\em equivariant representations} in the sense of \cite{Saveliev}).  Then a Kuranishi model argument shows that the   representation space is locally a half open interval near the lifts of $r_\pm$.    
  
\begin{prop} \label{halfopen} For a 2-tangle $T$ in an integer homology ball $Y$, each of the two abelian traceless representations $r_\pm$ has a neighborhood in $R(Y,T)$  homeomorphic to a half-open interval.  The restriction map to the pillowcase $P$ properly embeds each half-open interval, taking the endpoints to distinct corners with limiting slope not equal to 1. \end{prop}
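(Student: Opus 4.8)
The plan is to identify the germ of $R(Y,T)$ at $r_\pm$ with the germ at the trivial representation of an equivariant representation variety of the double branched cover, to reduce the latter to an explicit Kuranishi model via a cohomology computation, and then to read off the pillowcase statements from Theorem~3.2 of \cite{FKP} together with Proposition~\ref{pillow}.

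\emph{Translation to the double branched cover.} Let $\Sigma=\Sigma_2(Y,T)$ be the double cover of $Y$ branched along $T$, with deck involution $\tau$; its boundary is the torus $\tilde S=\Sigma_2(S^2,\{a,b,c,d\})$, on which $\tau$ restricts to the hyperelliptic involution. Since a traceless representation sends meridians of $T$ to conjugates of $\bbi$, whose square $-1$ is central in $SU(2)$, the associated $SO(3)=PSU(2)$ representation of $\pi_1(Y\setminus T)$ descends to $\pi_1(\Sigma)$; this gives the standard identification (cf.\ \cite{Saveliev,HHK}) of conjugacy classes of traceless $SU(2)$ representations of $\pi_1(Y\setminus T)$ with conjugacy classes of $\tau$-equivariant $SO(3)$ representations of $\pi_1(\Sigma)$. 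Under this correspondence $r_+$ and $r_-$, whose restrictions to the index-two subgroup have central image, both go to the trivial $SO(3)$ representation, equipped with its two inequivalent lifts of the $\tau$-action. It therefore suffices to describe the germ at the trivial representation of this equivariant representation variety.

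\emph{The key cohomology computation.} At the trivial representation the relevant (equivariant) Zariski tangent space is $H^1(\Sigma;\mathfrak{su}(2))$ restricted to the invariants of the $(\tau,\text{lift})$-action. Splitting $\mathfrak{su}(2)=\RR\langle\bbi\rangle\oplus\CC\langle\bbj,\bbk\rangle$ into the $\pm1$-eigenspaces of $\mathrm{Ad}(\bbi)$ (the action of the lift), this equals $H^1(\Sigma;\RR)^{+\tau}\otimes\RR\langle\bbi\rangle\ \oplus\ H^1(\Sigma;\RR)^{-\tau}\otimes\CC\langle\bbj,\bbk\rangle$. Now two facts pin this down. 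First, $\Sigma/\tau=Y$ is a $\ZZ$-homology ball, so by the rational transfer $H^1(\Sigma;\QQ)^{+\tau}\cong H^1(Y;\QQ)=0$. Second, $\Sigma$ is a rational homology solid torus: capping $(Y,T)$ by a trivial tangle $(D,U)$ yields a knot $(X,K)$ in the homology sphere $X=Y\cup D$, with $\Sigma_2(X,K)=\Sigma\cup_{\tilde S}(S^1\times D^2)$; since $\Sigma_2(X,K)$ is a $\QQ$-homology sphere (the $2$-fold branched cover of a knot in a homology sphere, $\det\neq0$, as recalled in the discussion of Theorem~3.2 of \cite{FKP}), Mayer--Vietoris forces $\dim_\QQ H^1(\Sigma;\QQ)=1$. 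Combining, $H^1(\Sigma;\RR)^{+\tau}=0$ and $H^1(\Sigma;\RR)^{-\tau}=H^1(\Sigma;\RR)\cong\RR$, so the equivariant $H^1$ is a single complex parameter $z\in\CC$ coming from the $\CC\langle\bbj,\bbk\rangle$-direction, while the reducible directions vanish (consistent with $r_+,r_-$ being the only abelian traceless classes). This computation, and in particular establishing that $\Sigma$ is a rational homology solid torus, is the main step.

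\emph{Kuranishi model and the pillowcase.} The stabilizer of $r_\pm$ is $U(1)$, acting on $z\in\CC$ with weight $2$ (the $\mathrm{Ad}$-action on $\langle\bbj,\bbk\rangle$), i.e.\ by rotation. The Kuranishi obstruction lies in the corresponding summand of $H^2$, which by Poincar\'e--Lefschetz duality is again one complex-dimensional; were its quadratic part nonzero its zero locus would be $\{0\}$, but Theorem~3.2 of \cite{FKP} exhibits an embedded arc of binary dihedral traceless representations limiting to $r_\pm$, so the obstruction cannot cut the parameter space to a point. Hence near $r_\pm$ the (unquotiented) representation variety is a nontrivial $U(1)$-invariant germ in $\CC$, and $R(Y,T)$ near $r_\pm$ is its $U(1)$-quotient $\CC/U(1)\cong[0,\epsilon)$, a half-open interval with endpoint $r_\pm$, which is precisely the germ of the FKP arc. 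Finally, by Proposition~\ref{pillow} the image $L_1(r_\pm)$ is an abelian non-central representation of $\pi_1(S^2\setminus\{a,b,c,d\})$, hence a corner of $P$; since $r_+(\mu_2)=\bbi$ while $r_-(\mu_2)=-\bbi$, these land at distinct lattice points modulo the deck group, i.e.\ distinct corners. By Theorem~3.2 of \cite{FKP} the binary dihedral arc is carried by $L_1$ to the image under the branched cover (\ref{brcover}) of an embedded straight segment between two lattice points, hence is properly embedded in $P$; restricting to the half-open subinterval near $r_\pm$ gives the asserted proper embedding limiting to the corner. The slope of this segment is not $1$: a slope of $1$ would force the integer $h(bc^{-1})$ of Theorem~3.2 of \cite{FKP} to vanish, whereas it is nonzero because $\Sigma_2(X,K)$ is a rational homology sphere. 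This completes the proof.
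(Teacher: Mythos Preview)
Your strategy matches the paper's: pass to the double branched cover, apply a Kuranishi model at the (image of the) abelian representations, and invoke Theorem~3.2 of \cite{FKP} for the pillowcase statements. The paper frames this non-equivariantly---it proves a lemma that $R(Y,T)/(\ZZ/2)$ embeds in the full $SU(2)$ character variety $\chi(B)$ of the branched cover $B=\Sigma$, then analyzes $\chi(B)$ near the two central representations---rather than working with $\tau$-equivariant $SO(3)$ representations as you do; but these are two packagings of the same idea.

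There is one computational error worth fixing. You assert that the obstruction space is one complex-dimensional by Poincar\'e--Lefschetz duality, but in fact $H^2(\Sigma;\RR)=0$: for a rational homology solid torus, Lefschetz duality gives $H^2(\Sigma;\RR)\cong H_1(\Sigma,\partial\Sigma;\RR)$, and the latter vanishes since $H_1(\partial\Sigma;\RR)\to H_1(\Sigma;\RR)$ is onto. (You may be thinking of $H^2(\Sigma,\partial\Sigma;\RR)\cong H_1(\Sigma;\RR)\cong\RR$, but that is not where the Kuranishi obstruction for $\Hom(\pi_1(\Sigma),G)$ lives.) So there is no obstruction at all, and your appeal to the FKP arc to force the Kuranishi map to vanish---along with the remark about its ``quadratic part,'' which $U(1)$-equivariance already kills for weight reasons---is unnecessary. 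This is exactly how the paper proceeds: it computes $H^1(B;\RR^3)=\RR^3$ and $H^2(B;\RR^3)=0$ directly, giving the local model $\RR^3/SO(3)\cong[0,1)$ with no obstruction analysis. Your equivariant version should read $H^1_{\mathrm{equiv}}=\CC$, $H^2_{\mathrm{equiv}}=0$, and local model $\CC/U(1)\cong[0,\epsilon)$, which is the same conclusion reached more directly.
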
 

The proof is an extension of \cite[Theorem 3.2]{FKP}.  That theorem identifies the subvariety $R^{tbd}(Y,T)\subset R(Y,T)$ of traceless binary dihedral representations with the disjoint union of one arc and a number of circles (the number determined by the torsion submodule of the homology of the 2-fold branched cover of $Y$ branched along $T$).  The endpoints of the arc are precisely $r_+$ and $r_-$,  and the arc of binary dihedrals is linearly embedded into the pillowcase with slope different from 1 (see Section \ref{abgr}).   Hence what must be shown is that there are no non-binary dihedral representations in small enough neighborhoods of $r_\pm$.  

\medskip

We begin with a lemma which permits us to transfer the problem to one about the 2-fold branched cover of $(Y,T)$. To this end, 
 Let  $c:\pi_1(Y\setminus T)\to \{\pm 1\}$ be the unique homomorphism sending both $\mu_1$ and $\mu_2$ to $-1$ (this is just the homomorphism $r_+^2=r_-^2$).  Let   $B\to Y$ denote the corresponding 2-fold branched cover. Denote the preimage of $T$ by $\widetilde T$.  Consider $\pi_1(B\setminus \widetilde T)$ as the index 2 subgroup of $\pi_1(Y\setminus T)$, i.e., as the kernel of $c$. Let $\tilde \mu_1,\tilde\mu_2$ denote the meridians of the two components of $\widetilde T$. Hence, in $\pi_1(Y\setminus T)$, $\tilde\mu_i=\mu_i^2$.
 
 Denote by $R_{\pm 1}(B,\widetilde T)$ the space of conjugacy classes of 
 representations of $\pi_1(B\setminus \widetilde T)$ which take the $\tilde\mu_i$ to $\pm1$.  Since the square of a traceless element of $SU(2)$ is $-1$, restriction to the index 2 subgroup defines a map
 $$R(Y,T)\to R_{-1}(B,\widetilde T).$$

\begin{lem}\label{orbitlem}
Pointwise multiplication by $c$ defines a $\ZZ/2$ action on $R(Y,T)$ with fixed points the traceless binary dihedral representations. The restriction map  $R(Y,T) \to R_{-1}(B,\widetilde T)$  is constant on $\ZZ/2$ orbits and embeds the quotient $R(Y,T)/\ZZ/2\subset R_{-1}(B,\widetilde T)$.
\end{lem}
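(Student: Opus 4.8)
\textbf{Proof proposal for Lemma \ref{orbitlem}.}

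The plan is to verify each assertion by elementary group-theoretic manipulation, using the presentation of $\pi_1(Y\setminus T)$ and the fact that the meridians $\mu_1,\mu_2$ together with the kernel of $c$ generate the group. First, pointwise multiplication: given $\rho\in R(Y,T)$, define $(c\cdot\rho)(g)=c(g)\rho(g)$. Since $c$ is central-valued (it lands in $\{\pm1\}\subset Z(SU(2))$) and a homomorphism, $c\cdot\rho$ is again a homomorphism, and it sends each meridian $\mu_i$ to $(-1)\bbi=-\bbi$, which is still in the conjugacy class $C(\bbi)$; hence $c\cdot\rho$ is traceless, so $c\cdot\rho\in R(Y,T)$. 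The map $\rho\mapsto c\cdot\rho$ respects conjugation and squares to the identity, giving a $\ZZ/2$ action. I would then identify the fixed points: $c\cdot\rho$ is conjugate to $\rho$ iff there is $u\in SU(2)$ with $c(g)\rho(g)=u\rho(g)u^{-1}$ for all $g$. Restricting to $g\in\ker c$ shows $u$ commutes with $\rho(\ker c)$, and for $g\notin\ker c$ we get $-\rho(g)=u\rho(g)u^{-1}$, i.e. conjugation by $u$ negates $\rho(\mu_i)$. An element of $SU(2)$ is negated by conjugation by $u$ precisely when it is a traceless quaternion anticommuting with $u$ (which we may take traceless, say $u\in C(\bbi)$ after rescaling); so this forces $\rho(\ker c)$ to lie in the circle through $u$ and $\rho(\mu_i)$ to be traceless and anticommuting with $u$ — exactly the binary dihedral condition. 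Conversely a traceless binary dihedral $\rho$ manifestly has this form. This identifies $\mathrm{Fix}(\ZZ/2)=R^{tbd}(Y,T)$.

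Next, that the restriction map $R(Y,T)\to R_{-1}(B,\widetilde T)$ is constant on $\ZZ/2$-orbits: for $g\in\pi_1(B\setminus\widetilde T)=\ker c$ we have $(c\cdot\rho)(g)=\rho(g)$, so $\rho$ and $c\cdot\rho$ have literally the same restriction to the index-$2$ subgroup, hence the same conjugacy class in $R_{-1}(B,\widetilde T)$. (One should note $\tilde\mu_i=\mu_i^2$ maps to $(\pm\bbi)^2=-1$, so the restriction indeed lands in $R_{-1}(B,\widetilde T)$, as already observed in the excerpt.) It remains to show the induced map on the quotient is injective. Suppose $\rho_1,\rho_2\in R(Y,T)$ restrict to conjugate representations of $\ker c$; after conjugating $\rho_2$, assume $\rho_1|_{\ker c}=\rho_2|_{\ker c}$ as homomorphisms. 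Since $\pi_1(Y\setminus T)$ is generated by $\ker c$ together with $\mu_1$, the two representations can differ only in the value on $\mu_1$: write $\rho_2(\mu_1)=v\,\rho_1(\mu_1)$. Using that $\mu_1^2=\tilde\mu_1\in\ker c$ and $\rho_1,\rho_2$ agree on $\ker c$, one computes that $v$ is constrained to a small set; and using that conjugation of $\rho_1(\mu_1)$ by any element of the (abelian or binary dihedral) image $\rho_1(\ker c)$ must be absorbed, one reduces to the case $v\in\{\pm1\}$, i.e. $\rho_2=\rho_1$ or $\rho_2=c\cdot\rho_1$. Hence $\rho_1$ and $\rho_2$ lie in the same $\ZZ/2$-orbit, proving injectivity on the quotient.

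The main obstacle I expect is the injectivity-on-the-quotient step: controlling exactly how $\rho_2$ can differ from $\rho_1$ on the coset $\mu_1\ker c$, given only that they agree on $\ker c$ up to conjugacy. The subtlety is that "up to conjugacy" allows a conjugating element which might itself interact with $\rho_1(\mu_1)$; one must rule out that this produces representations in $R(Y,T)$ that restrict the same way to $B\setminus\widetilde T$ yet lie in distinct $\ZZ/2$-orbits. The cleanest route is probably to argue case-by-case on the image of $\rho_1|_{\ker c}$ (central, non-central abelian, or irreducible), since in the irreducible case the centralizer is $\{\pm1\}$ and injectivity is immediate, while in the reducible cases the binary-dihedral structure takes over and one falls back on the fixed-point analysis above. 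I would present this as a short lemma-internal case split rather than a unified argument.
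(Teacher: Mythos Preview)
Your approach is essentially the same as the paper's, and the action, constancy-on-orbits, and fixed-point arguments are fine (aside from the muddled parenthetical ``which we may take traceless \dots after rescaling'': there is no rescaling, and $u$ is \emph{forced} to be traceless by the anticommutation relation, not merely allowed to be).

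The genuine gap is in your injectivity step. Your appeal to $\mu_1^2=\tilde\mu_1\in\ker c$ gives no information: since $\rho_1(\mu_1)$ and $\rho_2(\mu_1)=v\rho_1(\mu_1)$ are both traceless, both square to $-1$, so the equation $\rho_1(\mu_1)^2=\rho_2(\mu_1)^2$ is the tautology $-1=-1$. The condition that $v\rho_1(\mu_1)$ be traceless only forces the imaginary part of $v$ to be perpendicular to $\rho_1(\mu_1)$, which is a $2$-sphere of possibilities, not a ``small set.'' What actually pins $v$ down is \emph{normality} of $\ker c$: for every $\tau\in\ker c$ one has $\mu_1\tau\mu_1^{-1}\in\ker c$, hence
\[
\rho_1(\mu_1)\bar\rho(\tau)\rho_1(\mu_1)^{-1}=\bar\rho(\mu_1\tau\mu_1^{-1})=\rho_2(\mu_1)\bar\rho(\tau)\rho_2(\mu_1)^{-1},
\]
so $\rho_2(\mu_1)^{-1}\rho_1(\mu_1)$ centralizes the image of $\bar\rho$. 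With this in hand, your proposed case split (irreducible / abelian non-central / central image of $\bar\rho$) goes through exactly as in the paper: in the irreducible case the centralizer is $\{\pm1\}$ immediately; in the reducible cases one uses the residual freedom to conjugate $\rho_2$ by elements of the centralizer of $\bar\rho$ (which preserves $\rho_2|_{\ker c}$) to arrange that $\rho_1(\mu_1)$ and $\rho_2(\mu_1)$ commute, and then the ratio is forced to be $\pm1$. Without the normality step, however, the case split has nothing to bite on.
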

 
 Assuming Lemma \ref{orbitlem}, the proof of Proposition \ref{halfopen}  can be completed as follows.

 \medskip
 
  Denote by by  $\tilde r_\pm$  the  restrictions   of $r_\pm$ to the index 2 subgroup $\pi_1(B\setminus \widetilde T)$.  Then $\tilde r_\pm$ takes values in the center $\{\pm1\}$ of $SU(2)$ and $\tilde r_\pm(\tilde \mu_i)=r_\pm(\mu_i^2)=-1$.  
  
It follows that pointwise multiplication of a representation by $\tilde r_+$ defines a continuous map $R_{-1}(B,\widetilde T)\to R_{1}(B,\widetilde T)$. This map is a homeomorphism (in fact real analytic isomorphism) with inverse given again by multiplication by $\tilde r_+$.
 
Let $\chi(B)$ denote the space of conjugacy classes of (all) $SU(2)$ representations of $\pi_1(B)$. The Seifert-Van Kampen theorem shows that the restriction $\chi(B)\to R_{1}(B,\widetilde T)$ is a homeomorphism. Hence we have a sequence of maps :
$$R(Y,T)\to R(Y,T)/\ZZ/2\subset R_{-1}(B,\widetilde T)\cong R_{1}(B,\widetilde T)\cong \chi(B).$$

It therefore suffices  to prove that a neighborhood of  $\tilde r_+\tilde r_\pm$   in $\chi(B)$ is homeomorphic to a half-open interval.   Notice that $\tilde r_+\tilde r_+:\pi_1(B)\to SU(2)$ is the trivial representation, and $\tilde r_+\tilde r_-:\pi_1(B)\to SU(2)$ is central but non-trivial (it takes $\mu_1\mu_2$ to $-1$).

The 3-manifold $B$ has torus boundary and has first homology isomorphic to $\ZZ\oplus O$ for $O$ an odd torsion abelian group, since $T$ is a tangle in a homology ball (see \cite[Section 3]{FKP} for details).

The Kuranishi method identifies a neighborhood of $c_\pm$ in $R(B)$ with $K^{-1}(0)/SU(2)$, where $K:H^1(B;su(2)_{ad ~ c_\pm})\to 
H^2(B;su(2)_{ad ~ c_\pm})$.  The adjoint action of $c_\pm$ is trivial since $c_\pm$ is central, and hence these are untwisted cohomology groups with coefficients in $su(2)=\RR^3$. The universal coefficient theorem gives $H^1(B;\RR^3)=\RR^3$ and $H^2(B,\RR^3)=0$, so that $c_\pm$ has a neighborhood homeomorphic to $\RR^3/SU(2)=\RR^3/SO(3)\cong [0,1)$, as desired.  

\bigskip

 \noindent{\sl Proof of Lemma \ref{orbitlem}.}  First, if $\rho$ represents a conjugacy class in $R(Y,T)$, then the function $c\rho(\gamma)=c(\gamma)\rho(\gamma)$ is again a representation, since $c$ takes values in the center $\{\pm 1\}$. Moreover, since $\ker c=\pi_1(B\setminus\widetilde T)$, the restrictions of $\rho$ and $c\rho$ to $\pi_1(B\setminus\widetilde T)$ agree.  Since $c^2=1$, this shows that multiplication by $c$ defines a $\ZZ/2$ action on $R(Y,T)$ and the restriction $R(Y,T)\to R_1(B,\widetilde T)$ factors through the quotient of this $\ZZ/2$ action.

 Conversely, suppose $\rho_1,\rho_2:\pi_1(Y\setminus T)\to SU(2)$ are two traceless representations whose restriction to the index 2 subgroup $\pi_1(B\setminus \widetilde T)$ are equal. For clarity, denote this   restriction by $\bar \rho$, so $\bar\rho=\rho_1|_{\ker c}=\rho_2|_{\ker c}$.  
 
 We claim that, perhaps after conjugating $\rho_2$ without changing its restriction to $\ker c$,  $\rho_1(\mu_1)$ and $\rho_2(\mu_1)$ commute.  To see this, first note that for each $\tau\in \ker c$,
 \begin{equation}\label{both}
\rho_1(\mu_1\tau\mu_1^{-1})=\bar\rho(\mu_1\tau\mu_1^{-1})=\rho_2(\mu_1\tau\mu_1^{-1})
\end{equation}
so that 
 \begin{equation}
\label{commute}
[\rho_2(\mu_1)^{-1}\rho_1(\mu_1),\bar\rho(\tau)]=1\text{~ for all ~} \tau \in \ker c
\end{equation}

If $\bar\rho$ has non-abelian image, Equation (\ref{commute}) implies that $\rho_2(\mu_1)^{-1}\rho_1(\mu_1)$ is central, so that $\rho_2(\mu_1)=\pm \rho_1(\mu_1)$ and hence they commute.
If $\bar\rho$ has central image, then conjugating $\rho_2$ by any element of $SU(2)$ does not change its restriction to $\ker c$, and since $\rho_1(\mu_1)$ and $\rho_2(\mu_1)$ are traceless, they are conjugate. Hence $\rho_2$ can be conjugated so that $\rho_1(\mu)=\rho_2(\mu)$ and their restrictions to $\ker c$ agree.

Consider as a final case that $\bar\rho$ has abelian non-central image. We show that again $\rho_2$ can be conjugated without changing its restriction to $\ker c$ to make $\rho_1(\mu_1)$ and $\rho_2(\mu_1)$ commute.   Choose a traceless quaternion ${\bf q}$  so that the image of $\bar\rho$ lies in the circle subgroup ${\bf S}:=\{e^{\theta{\bf q}}\}$.  Then Equation  (\ref{commute})  shows that 
$ \rho_2(\mu_1)^{-1}\rho_1(\mu_1)$ lies in {\bf S}.   If one of $ \rho_1(\mu_1)$ or $\rho_2(\mu_1)$ lies in {\bf S} then they both do since their product does, and hence they commute. Suppose that neither lies in {\bf S}.  Equation (\ref{both}) shows that conjugation by $\rho_1(\mu_1)$ and $\rho_2(\mu_1)$ leaves the circle {\bf S} invariant.   This in turn shows that there exists an element of {\bf S}  which conjugates $\rho_2(\mu_1)$ to $\rho_1(\mu_1)$. This conjugation leaves {\bf S} fixed, so that we have shown that in this final case,  $\rho_2$ can be conjugated without changing its restriction to $\ker c$ to make $\rho_1(\mu_1)$ and $\rho_2(\mu_1)$ commute.  
 
Define $f:\pi_1(Y\setminus T)\to \{\pm 1\}$ by the formula
$$f(\gamma)=\begin{cases} 1&\text{ if } \gamma\in \ker c\\ \rho_1(\mu_1)\rho_2(\mu_1)^{-1}&\text{ if } \gamma\not \in \ker c.\end{cases}$$
Then it is easy to see that $f$ is a homomorphism (using the fact that $\rho_1(\mu_1)$ and $\rho_2(\mu_1)$ commute).  Moreover, a simple calculation shows that
$$\rho_2(\gamma)=f(\gamma)\rho_1(\gamma)\text{ for all }\gamma\in \pi_1(Y\setminus T).$$ 
Note that there are exactly two possibilities for $f$ since $\ker c$ has order 2. In fact, the two possibilitites are the trivial homomorphism and $c$.  This proves that the restriction map $R(Y,T)\to R(B, \widetilde T)$  factors through  an injective map on the orbit space of this $\ZZ/2$ action.

It remains to prove that the fixed points are exactly the traceless binary dihedral representations (\cite[Definition 3.1]{FKP}).  Suppose that $c\rho$ is conjugate to $\rho$ for $\rho\in R(Y,T)$.
Thus there exists $g\in SU(2)$ so that $g\rho(\gamma)g^{-1}=c(\gamma)\rho(\gamma)$ for all $\gamma\in \pi_1(Y\setminus T)$. In particular, $g\rho(\gamma)g^{-1}=\rho(\gamma)$ for all $\gamma\in \ker c$. Since $c(\mu_1)=-1 $, $g\ne \pm 1$, so that $g$ lies in a unique circle subgroup which we denote ${\bf S}$. 

If $\rho$ sends every $\gamma\in \ker c$ to the center $\{\pm 1\}$, then the image of $\rho$ lies in the subgroup $\{\pm 1,\pm \rho(\mu_1)\}$  of order $4$, and $\rho$ is traceless binary dihedral.

On the other hand, if there exists $\gamma\in\ker c$ such that $\rho(\gamma)\ne \pm 1$, then $g$ and $\rho(\gamma)$ commute, and hence $\rho$ sends all of $\ker c$ into ${\bf S}$.   Furthermore, for each $\gamma$ in the non-trivial coset, $\rho(\gamma)^{-1}g\rho(\gamma)=-g$, which implies that $\rho(\gamma)$ is traceless and ${\bf S}\cup \rho(\gamma) {\bf S}$ is (a conjugate of) the binary dihedral subgroup containing the image of $\rho$. Hence $\rho$ is traceless binary dihedral.
\qed

\subsection{Perturbations}\label{perturbationsection}

Proposition \ref{halfopen} shows that $R(Y,T)$ is a 1-manifold with boundary near the  two abelian representations $r_\pm$.  The space $R(Y,T)$ is a real algebraic variety,  but in general it may be singular.  To prove Conjecture \ref{con1} for some $(Y,T)$ one must first  desingularize $R(Y,T)$. 

There are various ways to smooth the singular space $R(Y,T)$; we restrict attention to  holonomy perturbations since these have a gauge theoretical counterpart which   permits us to compare our constructions to those of \cite{KM1, KM-khovanov}. In particular, with this choice of perturbations, Proposition \ref{instanton}   identifies the generators of the reduced knot instanton homology chain complex with the intersection points of $R_\pi(Y,T)$ and $L_0$ in the pillowcase for any appropriate perturbation $\pi$.  

We recall how to understand holonomy perturbations on the level of representations.  What follows can be taken as a definition. The reader should keep in mind, however, that the perturbed equations we give below arise from a perturbation of the Chern-Simons functional on the space of traceless $SU(2)$ connections. In particular, what we call a perturbation function is essentially the derivative of the conjugacy invariant function on $SU(2)$ which is used to perturb the Chern-Simons function.

\medskip

A holonomy perturbation is associated to  a pair $\pi=(E,f)$, where 
\begin{enumerate}
\item $E$ is an  embedding $E:S^1\times D^2\subset Y\setminus T$ (we use $E$ also as notation for the image $E(S^1\times D^2)$), and
\item $f$ is a perturbation function, i.e., $f\in \mathcal X=\{f\in C^\infty(\RR,\RR)~|~  f\text{~ is odd,  $2\pi$ periodic}\}$.
\end{enumerate}
Call a representation 
$\rho:\pi_1(Y\setminus(T\cup E))\to SU(2)$  a {\em $\pi$-perturbed traceless representation} if $\rho$ takes the meridians of $T$ to $C(\bbi)$, the conjugacy class of $\bbi$, and satisfies the {\em perturbation condition} on the meridian $\mu_E=E(\{1\}\times \partial D^2)$ and longitude $\lambda_E=E(S^1\times\{1\})$:
\begin{equation}
\label{pc}\rho(\lambda_E)=e^{\alpha Q}\text{~implies~} \rho(\mu_E)=e^{f(\alpha) Q}
\end{equation}
for $Q\in su(2)$.  Then define the {\em perturbed traceless flat moduli space} 
 $R_\pi(Y,T)$ to be the space of conjugacy classes of  $\pi$-perturbed traceless representations.   We refer the reader to \cite{Donaldson, Floer, Taubes}; expositions tailored to  our notation can be found in  \cite[Lemma 61]{herald1} and  \cite[Section 7]{HHK}.

More generally, one can choose a collection $E_i, i=1,\dots ,n$ of disjoint embeddings, and corresponding functions $f_i$ define $\pi=\{E_i, f_i\}$, and take  $R_\pi(Y,T)$ to be the space of conjugacy classes of  $\pi$-perturbed traceless representations, defined by requiring the perturbation condition (\ref{pc}) to hold for each $i$.
One useful choice is  $f_{i}(x)=\epsilon_i\sin(x)$ for some small $\epsilon_i$. 

The following proposition shows that the two abelian representations are stable with respect to (sup norm of $f_i$) small perturbations.

\begin{prop} \label{abs} For small enough perturbations, $R_\pi(Y,T)$ contains exactly two conjugacy classes of abelian representations. These are sent to distinct corner points in the pillowcase by the restriction map $R_\pi(Y,T)\to R(S^2,\{a,b,c,d\})$.
 \end{prop}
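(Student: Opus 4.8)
The plan is to reduce both assertions to an elementary fixed‑point computation for abelian representations. First I would analyze these representations directly. If $\rho:\pi_1(Y\setminus(T\cup E))\to SU(2)$ is an abelian $\pi$-perturbed traceless representation, then its image is an abelian subgroup of $SU(2)$ which is not central, since the meridians of $T$ land in $C(\bbi)$ and no central element is traceless; hence the image lies in a unique circle subgroup $\mathbf{S}_Q=\{e^{tQ}\}$, and after conjugation we may take $Q=\bbi$. Because $\rho$ is abelian it factors through $H_1(Y\setminus(T\cup E))$, which a short computation identifies with $\ZZ\langle\mu_1\rangle\oplus\ZZ\langle\mu_2\rangle\oplus\ZZ\langle\mu_E\rangle$ (and with the analogous group when there are several perturbation curves $E_i$), so $\rho$ is determined by $\rho(\mu_1),\rho(\mu_2)$ and $\rho(\mu_E)$. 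Tracelessness forces $\rho(\mu_j)=e^{\epsilon_j\frac{\pi}{2}\bbi}$ with $\epsilon_j\in\{\pm1\}$ for $j=1,2$, while $\rho(\mu_E)=e^{\beta\bbi}$ for some $\beta\in\RR/2\pi\ZZ$.

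Next I would make the perturbation condition (\ref{pc}) explicit. Writing the class of the longitude $\lambda_E$ in $H_1(Y\setminus(T\cup E))$ as $p\mu_1+q\mu_2+s\mu_E$, where $p,q$ are the linking numbers of the core of $E$ with the two strands of $T$, one has $\rho(\lambda_E)=e^{\alpha\bbi}$ with $\alpha=\tfrac{\pi}{2}(p\epsilon_1+q\epsilon_2)+s\beta$, so (\ref{pc}) becomes the single congruence
\[
\beta\equiv f\!\left(\tfrac{\pi}{2}(p\epsilon_1+q\epsilon_2)+s\beta\right)\pmod{2\pi}.
\]
Here the oddness and $2\pi$-periodicity of $f$ are precisely what make the congruence independent of the residual sign ambiguity in $Q$; for several curves one gets a coupled system of the same shape in $(\beta_i)\in(\RR/2\pi\ZZ)^n$. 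For sufficiently small perturbations $\|f\|_{C^1}$ is small (for instance $f_i(x)=\epsilon_i\sin(x)$), so for each fixed sign vector $(\epsilon_1,\epsilon_2)$ the $2\pi$-periodicity and smallness of $f$ force every solution $\beta$ to lie in a small neighbourhood of $0$, on which the right-hand side is a contraction; hence there is a unique solution $\beta=\beta(\epsilon_1,\epsilon_2)$, and conversely each such datum visibly defines a genuine abelian $\pi$-perturbed traceless representation (a homomorphism into the abelian group $\mathbf{S}_{\bbi}$). Thus the abelian points of $R_\pi(Y,T)$ are indexed by the four sign vectors.

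Then I would pass to conjugacy classes and locate the corners. Conjugation by an element of the normalizer of $\mathbf{S}_{\bbi}$ that inverts the circle identifies the datum $((\epsilon_1,\epsilon_2),\beta)$ with $((-\epsilon_1,-\epsilon_2),-\beta)$, and $-\beta$ is indeed the solution belonging to the opposite signs by oddness of $f$; so the four sign vectors collapse to the two classes $\{(+,+),(-,-)\}$ and $\{(+,-),(-,+)\}$. These are genuinely distinct because $\rho(\mu_1\mu_2^{-1})$ equals the central element $1$ in the first case and $-1$ in the second, and being central and equal to a given element is conjugation invariant; hence $R_\pi(Y,T)$ has exactly two abelian classes, namely the small perturbations of $r_+$ and $r_-$. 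For the statement about corners, note that (\ref{pc}) constrains only $\rho(\mu_E)$, not $\rho(\mu_1)$ or $\rho(\mu_2)$; consequently the restriction of each of these two representations to $\pi_1(S^2\setminus\{a,b,c,d\})\subset\pi_1(Y\setminus(T\cup E))$ sends the four puncture-meridians to $\pm\bbi$ with exactly the sign pattern of the corresponding unperturbed $r_\pm$, so it coincides, as a representation of $\pi_1(S^2\setminus\{a,b,c,d\})$, with $r_\pm|_{S^2\setminus\{a,b,c,d\}}$ and therefore maps to the same point of $P=R(S^2,\{a,b,c,d\})$. By Proposition \ref{halfopen}, $r_+$ and $r_-$ restrict to distinct corners of $P$, so the two abelian classes of $R_\pi(Y,T)$ do as well.

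The step I expect to demand the most care is the second one: correctly identifying $H_1\bigl(Y\setminus(T\cup\bigcup_i E_i)\bigr)$ together with the homology classes of the longitudes (including the framing coefficients $s_{ij}$), and then verifying that the resulting system genuinely contracts near the origin and has no other solutions — this is where both the smallness hypothesis on the perturbation and the $2\pi$-periodicity of the perturbation functions are essential. Everything else is bookkeeping with circle-valued abelian representations.
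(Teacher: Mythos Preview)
Your proof is correct and shares its core reduction with the paper's: abelian $\pi$-perturbed traceless representations factor through $H_1(Y\setminus(T\cup\bigcup_i E_i))\cong\ZZ^{2+n}$, tracelessness pins down $\rho(\mu_j)=\pm\bbi$, and the perturbation condition becomes a small perturbation of a trivially solvable system in the remaining circle variables. Where you diverge is in how you solve that system. The paper packages everything into a single self-map $Q(\delta):T^{n+2}\to T^{n+2}$ whose first two coordinates encode tracelessness and whose last $n$ encode the perturbation conditions; it then observes that $Q(0)$ is a covering map, hence a submersion, and invokes stability of submersions to conclude that $Q(\delta)^{-1}(1,\dots,1)$ varies by an isotopy for small $\delta$ and so has the same cardinality as in the unperturbed case. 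You instead fix the sign vector $(\epsilon_1,\epsilon_2)$ first and solve the residual fixed-point equation for $\beta$ (or the coupled system for $(\beta_i)$) by the contraction mapping theorem. These are two faces of the same implicit-function argument: the paper's version is slicker and handles arbitrarily many perturbation curves uniformly without writing out the coupled system, while yours is more constructive and makes explicit why a $C^1$-smallness hypothesis on the $f_i$ is exactly what is needed. You also treat the ``distinct corners'' assertion more carefully than the paper's proof does---the paper defers that to a remark after the proof, noting only that abelian representations restrict to corners---by observing that the boundary meridians $a,b,c,d$ carry no $\mu_E$-component in homology, so the restriction to $R(S^2,\{a,b,c,d\})$ is literally the same as for the unperturbed $r_\pm$.
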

 
\begin{proof}
 Let $\mu_1,\mu_2$ denote  meridians of the two components of $T$.  Let $\mu_{E_1},\dots,\mu_{E_n}$ denote the meridians of the pertubation curves.  Then $\mu_1,\mu_2,\mu_{E_1},\dots,\mu_{E_n}$ generate $H_1(Y\setminus T)$.  
 
 Let $\ell_i(\mu_1,\mu_2,\mu_{E_1},\dots,\mu_{E_n})$, $i=1,\dots, n$, express the longitude $\lambda_{E_i}$ in $H_1(Y\setminus T)$ as a linear combination of the meridians of the meridians.     
 
 Identify the diagonal maximal torus in $SU(2)$ with the circle $S^1$ and let $T^{n+2}=(S^1)^{n+2}$.
  For each $\delta\ge 0$, Let $\pi(\delta)$ denote the perturbation data obtained by multiplying each $f_i$ by $\delta$.  Then  
  define a self-map of $T^{n+2}$
   $$Q(\delta):T^{n+2}\to T^{n+2}$$
 as follows. The first two coordinates encode the traceless condition and are given by
 $$Q(\delta)_1 (e^{\theta_1\bbi},e^{\theta_2\bbi}, e^{\alpha_1\bbi},\dots, e^{\alpha_n\bbi })= -e^{ 2\theta_1\bbi},~Q(\delta)_2(e^{\theta_1\bbi},e^{\theta_2\bbi}, e^{\alpha_1\bbi},\dots, e^{\alpha_n\bbi })= -e^{ 2\theta_2\bbi}. 
$$ 
The remaining coordinates encode the perturbation condition:
$$Q(\delta)_{i+2}(e^{\theta_1\bbi},e^{\theta_2\bbi}, e^{\alpha_1\bbi},\dots, e^{\alpha_n\bbi })=e^{ \alpha_i \bbi}e^{-\delta f_i(\ell_i(\theta_1,\theta_2,\alpha_1,\dots,  \alpha_n))\bbi}.
 $$
 Then $Q(\delta)^{-1}(1,\dots,1)$ parameterizes the perturbed traceless abelian representations (not conjugacy classes) with values in the diagonal maximal torus of $SU(2)$, with respect to the functions $\delta f_i$: the  point $(e^{\theta_1\bbi},e^{\theta_2\bbi}, e^{\alpha_1\bbi},\dots, e^{\alpha_n\bbi })\in Q(\delta)^{-1}(1,\dots,1)$
corresponds to the representation $$\pi_1(B\setminus (T\cup E))\to H_1(B\setminus (T\cup E)) \to S^1\subset SU(2)$$ sending each meridian  to its corresponding coordinate. 

The proof is completed by observing that $Q(0)$ is a covering map, hence a submersion. Since submersions are stable, $Q(\delta)^{-1}(1,\dots,1)$ varies by an isotopy for small $\delta$.
 \end{proof}
 
For any perturbation $\pi$, restricting to the boundary punctured sphere induces a map to the pillowcase
$$R_\pi(Y,T)\to R(S^2,\{a,b,c,d\}).$$ 
The two abelian representations guaranteed to persist after small perturbations by Proposition \ref{abs} necessarily are mapped to corners of the pillowcase, since the restriction of an abelian representation is abelian, and non-corner points are non-abelian, as one can see from Proposition \ref{pillow}.  

Putting the Propositions \ref{pillow}, \ref{halfopen} and \ref{abs}, together, we conclude the following.

\begin{thm}\label{endpointsgood} Let   $(Y,T)$ be a 2-tangle in a $\ZZ$-homology ball. Then, for any sufficiently small holonomy perturbation $\pi$, there are two abelian perturbed flat representations $r_\pm\in R_\pi(Y,T)$ with neighborhoods $U_\pm$ in $R_\pi(Y,T)$   half-open intervals.
The restriction map
 $$R_\pi(Y,T)\to R(S^2,\{a,b,c,d\})$$
 restricts to an immersion on $U_+\cup U_-$ which takes $r_\pm$ to distinct corners of the pillowcase, with slope $\ne$ 1.\qed \end{thm}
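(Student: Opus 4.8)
The plan is to assemble Propositions \ref{pillow}, \ref{halfopen}, and \ref{abs}; the only genuine work is to transport the local structure statements of Proposition \ref{halfopen} from the unperturbed variety $R(Y,T)$ to the perturbed variety $R_\pi(Y,T)$. First, Proposition \ref{abs} already supplies, for every sufficiently small holonomy perturbation $\pi$, exactly two conjugacy classes of abelian representations $r_\pm\in R_\pi(Y,T)$, together with the fact that their images under the restriction map are two \emph{distinct} corners of $R(S^2,\{a,b,c,d\})$. Combining this with Proposition \ref{pillow}, which identifies $R(S^2,\{a,b,c,d\})$ with the pillowcase $P$ and its abelian non-central representations with the four corners, gives the ``distinct corners'' part of the statement. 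Moreover, the stability argument in the proof of Proposition \ref{abs} (that $Q(\delta)^{-1}(1,\dots,1)$ moves by an ambient isotopy as $\delta\to 0$) shows that the perturbed $r_\pm$ converge to the unperturbed $r_\pm\in R(Y,T)$ and that their holonomy data along the perturbation curves tend to those of the unperturbed central representations.

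Second, I would establish the half-open interval structure of $R_\pi(Y,T)$ near the perturbed $r_\pm$. The key observation is that the Kuranishi model computation in the proof of Proposition \ref{halfopen} is essentially insensitive to the holonomy perturbation: the perturbation curves $E_i$ lie in $Y\setminus T$, away from the branch locus, and the perturbation functions $f_i\in\mathcal X$ are odd, hence vanish at the holonomy values $\pm1$ taken by a central representation; thus the central lifts of $r_\pm$ to the double branched cover $B$ remain perturbed-flat, and the perturbed deformation complex there agrees with the unperturbed one. Since $H_1(B)=\ZZ\oplus O$ with $O$ of odd order, we still have $H^1(B;\RR^3)=\RR^3$ and $H^2(B;\RR^3)=0$, the Kuranishi obstruction vanishes, and the local perturbed moduli space is again modeled on $\RR^3/SO(3)\cong[0,1)$. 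Equivalently, one may argue by stability: the perturbed flat equations are a $C^1$-small perturbation of the unperturbed ones, and because the obstruction space $H^2$ vanishes at $r_\pm$ the half-open interval picture of Proposition \ref{halfopen} persists, the interior of each interval now consisting of non-abelian (no longer necessarily binary dihedral) perturbed-flat representations limiting as $\pi\to0$ to the unperturbed arc of binary dihedrals.

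Third, the immersion and ``slope $\ne1$'' claims follow from Proposition \ref{halfopen} together with \cite[Theorem 3.2]{FKP}: in the unperturbed case each $U_\pm$ is a sub-arc of the arc of traceless binary dihedral representations, which is linearly embedded in $P$ with slope $\ne1$, the slope being governed by a nonzero integer attached to the torsion of $H_1(B)$. A sufficiently small holonomy perturbation changes this slope by an arbitrarily small amount near the two endpoints, so it stays bounded away from $1$, and the restriction map $R_\pi(Y,T)\to R(S^2,\{a,b,c,d\})$, being a $C^1$-small perturbation near each corner of a proper embedding, remains an immersion on $U_+\cup U_-$. Putting these three steps together proves the theorem.

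I expect the main obstacle to be the middle step: rigorously verifying that the Kuranishi / local deformation picture of Proposition \ref{halfopen} is stable under holonomy perturbations --- that is, that no new branches or singular points of $R_\pi(Y,T)$ can sprout near $r_\pm$ when $\pi\neq0$, and that the interior of $U_\pm$ is genuinely a $1$-manifold rather than something higher-dimensional. Everything else is bookkeeping built on Propositions \ref{pillow} and \ref{abs} and on the linear-embedding description of the binary dihedral arc from \cite{FKP}.
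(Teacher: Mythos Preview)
Your proposal is correct and follows exactly the paper's approach: the paper simply states ``Putting the Propositions \ref{pillow}, \ref{halfopen} and \ref{abs} together, we conclude the following'' and marks the theorem with a \qed, treating it as an immediate assembly of those three results. You have supplied the stability argument (that the Kuranishi picture of Proposition \ref{halfopen} persists under small holonomy perturbation because the obstruction space $H^2$ vanishes and the perturbation functions vanish at central holonomy) that the paper leaves implicit; this is the only content beyond bookkeeping, and you have identified it correctly.
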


 Thus Theorem \ref{endpointsgood} reduces the problem of defining $H^\nat(Y,T)$ for a 2-tangle $T$ to 
 finding  an (arbitrarily) small holonomy  perturbation $\pi$ so that
 \begin{enumerate}
\item $R_\pi(Y,T)\setminus \{r_+,r_-\}$ is a smooth 1-manifold.
\item The restriction of $L_1:R_\pi(Y,T)\to P$ to   the arc component is an immersion into $P^*$   containing no fishtails.
\item The restriction of $L_1:R_\pi(Y,T)\to P^*$ to each circle component   is an immersion into $P^*$    containing no fishtails.
\end{enumerate}

It is well known that calculations of Zariski tangent spaces  using Poincar\'e-Lefschetz duality  show that  if  $R_\pi(Y,T)$ is a smooth 1-manifold away from the two endpoints, then the restriction map $L_1:R_\pi(Y,T)\to P$  immerses $R_\pi(Y,T)\setminus \{r_+,r_-\}$ into $P^*$.   Thus for a given $(Y,T)$, what is needed is a holonomy perturbation which desingularizes $R(Y,T)$ so that the resulting restriction to $P$ has no fishtails.

\section{Perturbing near the 2-sphere} \label{chris}

 In this section we construct    holonomy perturbations in $(S^2,\{a,b,c,d\})\times I$ which induce a family of Hamiltonian isotopies of the pillowcase. These were used in Sections \ref{RLITP} and \ref{2strand} to make $L_0$ and $L_1$ transverse.  These will alsobe used for other purposes below and in further work.  
 
 \medskip

Consider the product pair $$(S^2\times I, \{a,b,c,d\}\times I).$$ Its traceless character variety is $P$, and the traceless character variety of its boundary $$(S^2\times\{0,1\},  \{a,b,c,d\}\times\{0,1\})$$
is $P\times P$.  The restriction map  
$$R(S^2\times I, \{a,b,c,d\}\times I)\to R(S^2\times\{0,1\},  \{a,b,c,d\}\times\{0,1\})$$ is the diagonal map $P\to P\times P$, which we consider as the graph of the identity map $P\to P$.

Given suitable perturbation data $\pi$ for $(S^2\times I, \{a,b,c,d\}\times I)$, the restriction map  
$$R_\pi(S^2\times I, \{a,b,c,d\}\times I)\to R(S^2\times\{0,1\},  \{a,b,c,d\}\times\{0,1\})$$
gives a {\em Lagrangian correspondence}   $c_\pi:P\to P$.  Choosing a path from the trivial perturbation to $\pi$ gives a homotopy  of the identity to $c_\pi$.   We focus on a special class of $\pi$ for which  $c_\pi$ is an explicitly defined diffeomorphism.

\medskip

 Figure \ref{2scurve} shows the 4-punctured 2-sphere with the four based meridian generators $a,b,c,d$ based at a point $s$. An additional curve $e$ is also indicated. 
               \begin{figure}
\begin{center}
\def\svgwidth{2.5in}
 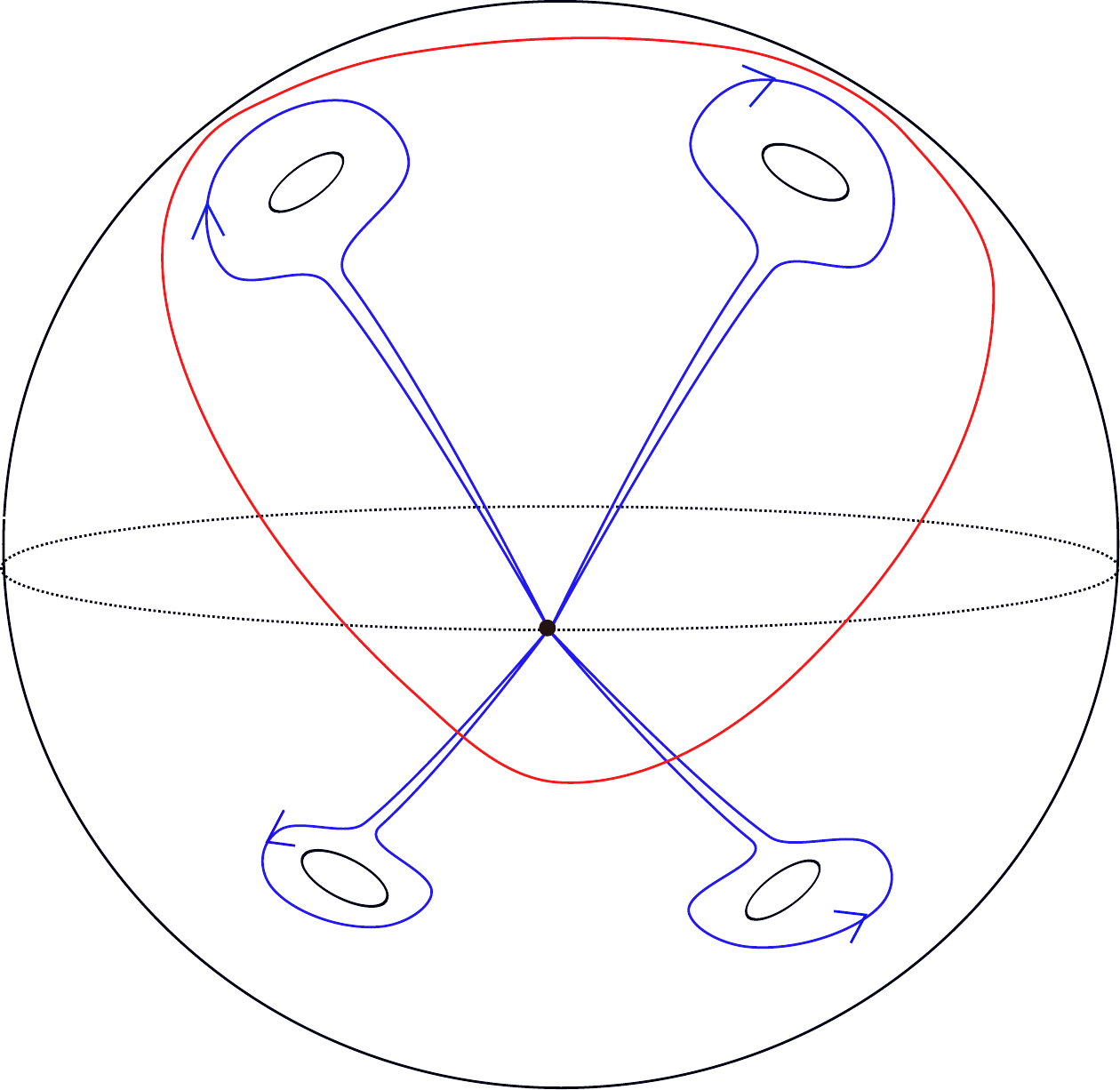
 \caption{\label{2scurve}}
\end{center}
\end{figure}

      Let  $E:S^1\times D^2\to S^2\setminus \{a,b,c,d\}\times I$ be a tubular neighborhood of   the curve obtained by pushing $e$ into the interior of $S^2\times I$.    Fix a perturbation function $f\in \mathcal{X}$ and let $\delta=(E,f)$ denote the perturbation data.  Recall that $f$ can be any smooth odd, $2\pi$ periodic function.

\begin{thm}\label{collar1} With perturbation data $\delta=(E,f)$, the map 
$$P\to R_\delta(S^2\times I, \{a,b,c,d\}\times I)$$ induced by the inclusion $S^2\times \{0\}\to S^2\times I$ is a homeomorphism, and the composite
$$P\to R_\delta(S^2\times I, \{a,b,c,d\}\times I)\to
 R(S^2\times\{0,1\},  \{a,b,c,d\}\times\{0,1\})=P\times P$$ is the graph of the self homeomorphism (smooth away from the corners) of the pillowcase
\begin{equation}
\label{horizontal}
c_\delta:P\to P,~ c_\delta(\gamma,\theta)=(\gamma,\theta+ 2f(\gamma+\pi)). 
\end{equation}  Using the 1-parameter family of perturbations $tf,t\in[0,1]$ gives an isotopy from the identity $Id:P\to P$ to $c_\delta:P\to P$.
\end{thm}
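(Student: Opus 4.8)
The plan is to compute the perturbed traceless character variety of $(S^2\times I,\{a,b,c,d\}\times I)$ directly in terms of generators and relations, using the fact that the relevant fundamental group is free. First I would fix a base point $s$ on $S^2\times\{1/2\}$ and present $\pi_1\big((S^2\times I)\setminus(\{a,b,c,d\}\times I\cup E)\big)$ as the free group on the meridians $a,b,c,d$ of the four strands together with the meridian $\mu_E$ of the perturbation curve, subject only to the single relation $abcd=1$ coming from the 2-sphere (so the group is free of rank $4$, with $\mu_E$ expressible in terms of $a,b,c,d$). The key topological input is that the curve $e$ of Figure \ref{2scurve}, pushed into the interior, is chosen so that its longitude $\lambda_E$ is freely homotopic to a product encircling exactly two of the four punctures — concretely $\lambda_E = bc$ (equivalently $ad$ up to the relation) — this is what produces the $\gamma\mapsto\gamma+\pi$ shift in the final formula versus the $\gamma$ appearing in Proposition \ref{pillow}. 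I would record this homotopy carefully from the picture, since getting the precise free homotopy class of $\lambda_E$ right is what pins down the formula.

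Next I would parametrize a traceless representation $\rho$ by its values on the four meridians. By Proposition \ref{pillow}, up to conjugation we may assume $\rho(a)=\bbi$, $\rho(b)=e^{\gamma\bbk}\bbi$, and then $\rho(c),\rho(d)$ are determined by a second parameter $\theta$; the pair $(\gamma,\theta)\in\RR^2$ descends to a point of $P$, and this identifies $R(S^2\times\{0\},\dots)$ with $P$. The perturbation condition (\ref{pc}) says: writing $\rho(\lambda_E)=e^{\alpha Q}$ for a traceless unit quaternion $Q$, we must have $\rho(\mu_E)=e^{f(\alpha)Q}$. The point is that imposing this condition does not constrain $(\gamma,\theta)$ at all — rather, for each $(\gamma,\theta)$ there is a \emph{unique} extension of the unperturbed representation over $\mu_E$ satisfying (\ref{pc}), because $\rho(\lambda_E)$ is already determined by $(\gamma,\theta)$ and $f$ is a fixed function. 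Hence the map $P\to R_\delta(S^2\times I,\dots)$ induced by $S^2\times\{0\}\hookrightarrow S^2\times I$ is a bijection; continuity and openness in both directions (so that it is a homeomorphism) follow because everything is given by explicit real-analytic formulas in $(\gamma,\theta)$ away from the corners, and a separate check at the corners using Proposition \ref{pillow}'s description of the corner points as abelian representations.

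Then I would compute the restriction to $S^2\times\{1\}$. The representation restricted to the ``far'' boundary sphere differs from the one on the ``near'' sphere precisely by the holonomy picked up crossing the perturbation curve: conjugating the meridian $c$ (say) by $\rho(\mu_E)$, which by the perturbation condition is $e^{f(\alpha)Q}$ with $Q$ in the direction of $\rho(\lambda_E)=\rho(bc)$. A direct quaternion computation — using $\rho(a)=\bbi$, $\rho(b)=e^{\gamma\bbk}\bbi$ and the explicit form of $\rho(bc)$, whose ``angle'' works out to $\gamma+\pi$ — shows that the net effect on the $\theta$-coordinate is a shift by $2f(\gamma+\pi)$ while $\gamma$ is unchanged, giving exactly (\ref{horizontal}). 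The factor of $2$ comes from $\theta$ being a ``doubled'' coordinate on the pillowcase (the branched cover (\ref{brcover})), matching the analogous factor in the lift of $c_g$ in (\ref{iso}). Finally, replacing $f$ by $tf$ and running the same computation gives $c_{(E,tf)}(\gamma,\theta)=(\gamma,\theta+2tf(\gamma+\pi))$, which is the desired isotopy from $\mathrm{Id}=c_{(E,0)}$ to $c_\delta$; each $c_{(E,tf)}$ is a homeomorphism by the case $t=1$ already proved (applied to the function $tf\in\mathcal X$), and smoothness away from the corners is immediate from the formula.

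The main obstacle I anticipate is the bookkeeping in the third step: correctly identifying the free homotopy class of $\lambda_E$ from Figure \ref{2scurve} and then tracking the conjugation of the meridians by $\rho(\mu_E)$ through the quaternion algebra to extract precisely the shift $\theta\mapsto\theta+2f(\gamma+\pi)$ — in particular pinning down the argument $\gamma+\pi$ (rather than $\gamma$ or $\theta-\gamma$, etc.) and the coefficient $2$. This is a finite but delicate computation; everything else (the free group structure, the uniqueness of the extension over $\mu_E$, the homeomorphism statement, and the isotopy) is formal once that computation is in hand.
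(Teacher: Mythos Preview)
Your overall strategy is exactly the paper's: present the fundamental group, normalize the representation via Proposition~\ref{pillow}, read off $\rho(\lambda_E)$, impose the perturbation condition to determine $\rho(\mu_E)$ uniquely, and then compute the effect on the far boundary by conjugating the appropriate meridians by $\rho(\mu_E)$. The homeomorphism and isotopy arguments are also the same.

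Two concrete points need correcting. First, your identification of $\lambda_E$ is off: from Figure~\ref{2scurve} the curve $e$ encircles $a$ and $b$ (not $b$ and $c$), and in the paper's conventions the sphere relation is $ba=cd$, with $\lambda_E = ba$. This matters: $\rho(ba)=e^{\gamma\bbk}\bbi\cdot\bbi=-e^{\gamma\bbk}=e^{(\gamma+\pi)\bbk}$, which is exactly where the argument $\gamma+\pi$ comes from, whereas $\rho(bc)$ would produce $\gamma-\theta+\pi$ and give the wrong formula. (You flagged this as the risky step, and indeed it is.) Correspondingly, the meridians that get conjugated when passing to $S^2\times\{1\}$ are $c$ and $d$: the relations are $a'=a$, $b'=b$, $c'=\mu_E c\bar\mu_E$, $d'=\mu_E d\bar\mu_E$.

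Second, your explanation of the factor of $2$ is not right. It has nothing to do with $\theta$ being ``doubled'' on the pillowcase; it comes from the quaternion identity
\[
e^{\alpha\bbk}\,(e^{\theta\bbk}\bbi)\,e^{-\alpha\bbk}
= e^{\alpha\bbk}e^{\theta\bbk}e^{\alpha\bbk}\bbi
= e^{(\theta+2\alpha)\bbk}\bbi,
\]
using $\bbi e^{-\alpha\bbk}=e^{\alpha\bbk}\bbi$. Applying this with $\alpha=f(\gamma+\pi)$ to $\rho(c)=e^{\theta\bbk}\bbi$ gives $\rho(c')=e^{(\theta+2f(\gamma+\pi))\bbk}\bbi$, which is precisely~(\ref{horizontal}).
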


\begin{proof}  Let $a',b',c',d'$ and $\mu_E$ be based loops in $\pi_1(S^2\times I\setminus (\{a,b,c,d\}\times I\cup E),s)$ so that $a',b',c',d'$ represent the meridians of the punctures in the other boundary component $S^2\times \{1\}$, and $\mu_E$ denotes the meridian to the perturbation curve $E$. 
These curves are illustrated   in Figure \ref{s2xIfig}, where, for convenience,  the four-punctured sphere is identified with a three-punctured disk.

 \begin{figure}
\begin{center}
\def\svgwidth{2.7in}
 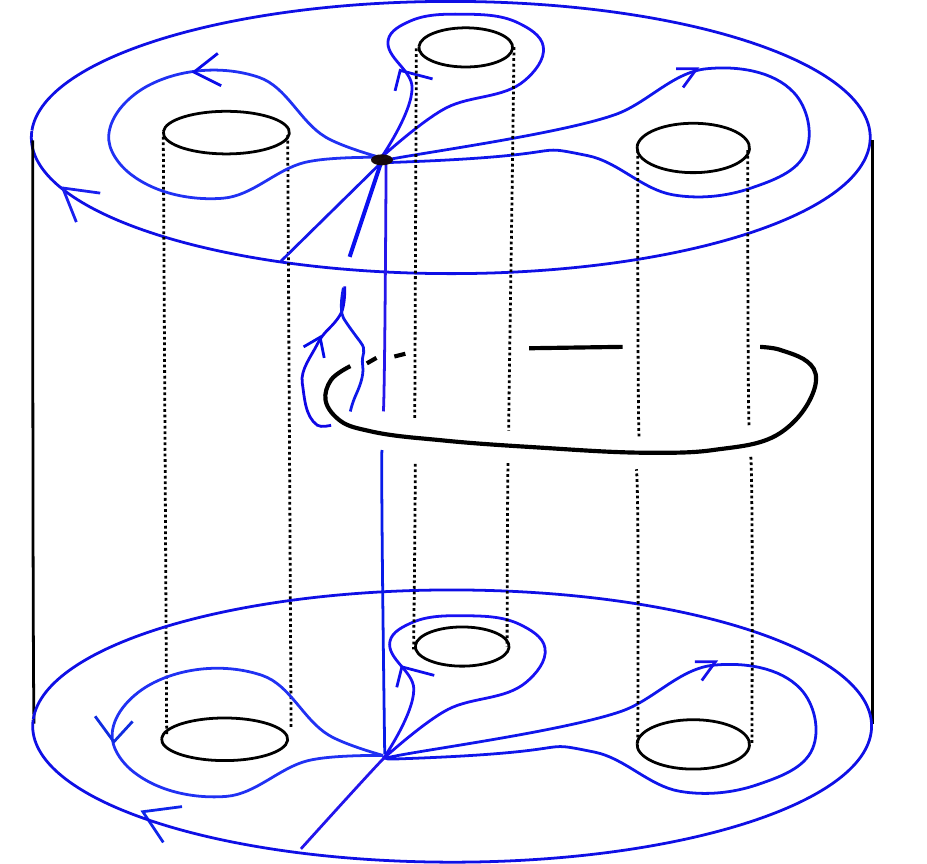
 \caption{\label{s2xIfig}}
\end{center}
\end{figure}

 The curves $a,b,c,d,\mu_E$ generate $\pi_1(S^2\times I\setminus (\{a,b,c,d\}\times I\cup E),s)$ 
 and the relations
 $$ba=cd, a'=a,  b'=b,c'=\mu_E c\bar\mu_E,d'=  \mu_E  d    \bar\mu_E,$$
hold.   
The natural longitude  $\lambda_E$  for $E$ is represented by the homotopy class $ba$.

As explained in \cite[Proposition 3.1]{HHK} (see Proposition \ref{pillow} above), any  representation of  $\langle a,b,c,d~|~ba=cd\rangle$ taking $a,b,c,d$ to traceless elements is  conjugate to one given by 
 \begin{equation}
\label{eq8.1}a\mapsto\bbi, b\mapsto e^{\gamma\bbk}\bbi,c\mapsto e^{\theta\bbk}\bbi,d\mapsto e^{(\theta-\gamma)\bbk}\bbi,
\end{equation}
for some $(\gamma,\theta)\in P$. Thus, to any representation $\rho:\pi_1(S^2\times I\setminus (\{a,b,c,d\}\times I\cup E))\to SU(2)$ sending $a,b,c,d$ to traceless elements, one can associate  $(\gamma,\theta)\in P$.  Then $\rho(\lambda_E)=\rho(ba)=-e^{\gamma\bbk}=e^{(\gamma+\pi)\bbk}$.

If  $\rho\in R_\delta(S^2\times I, \{a,b,c,d\}\times I)$, then $\rho$ satisfies the perturbation condition (see Equation (\ref{pc})):
\begin{equation}
\label{eq8.2}\rho(\mu_E)=e^{f(\gamma+\pi)\bbk}.
\end{equation}
Hence \begin{equation}\label{eq8.3}
\rho(a')=\bbi,~ \rho(b')=e^{\gamma\bbk}\bbi, ~\rho(c')=e^{f(\gamma+\pi)\bbk}e^{\theta\bbk}\bbi e^{-f(\gamma+\pi)\bbk}=e^{\theta+2f(\gamma+\pi)\bbk}
\end{equation}
Conversely, given any $(\gamma,\theta)\in P$ and $e^{\alpha\bbk}$ there exists a  unique traceless representation $\rho:\pi_1(S^2\times I\setminus (\{a,b,c,d\}\times I\cup E))\to SU(2)$ and
$
\rho(\mu_E)=e^{\alpha \bbk}.$  This satisfies the perturbation condition, and hence $\rho\in R_\delta(S^2\times I, \{a,b,c,d\}\times I)$ provided  $e^{\alpha\bbk}=e^{f(\gamma+\pi)\bbk}$.

We have shown that to each $(\gamma,\theta)\in P$ there exists a unique $\rho\in 
 R_\delta(S^2\times I, \{a,b,c,d\}\times I)$, given by (\ref{eq8.1}),  (\ref{eq8.2}), and  (\ref{eq8.3}). Moreover the restriction 
 $$R_\delta(S^2\times I, \{a,b,c,d\}\times I)\to
 R(S^2\times\{0,1\},  \{a,b,c,d\}\times\{0,1\})=P\times P$$
 has image $(\gamma,\theta, \gamma, \theta + 2f(\gamma+\pi))$

This shows that $d=(E,f)$ induces the map $c_\delta(\gamma,\theta)=(\gamma,\theta+ 2f(\gamma+\pi))$, as asserted. This map is invertible, with inverse $(\gamma,\theta)\mapsto(\gamma,\theta-2f(\gamma+\pi))$, and hence is a homeomorphism. \end{proof}

We stated Theorem \ref{collar1} for a specific curve $e$ in $S^2\setminus\{a,b,c,d\}$  but one may conjugate by any diffeomorphism $\phi$ of the punctured sphere to replace $e$ by $\phi(e)$, generating many more homeomorphisms of the pillowcase. Although   not used in the rest of this article, these perturbations will be important in forthcoming work.

\begin{thm} \label{collar2} Given any relatively prime pair of integers $p,q$ and $\phi\in \mathcal{X}$, there exists a holonomy perturbation along a single curve in $(S^2\times I, \{a,b,c,d\}\times I)$ inducing the homeomorphism
$$c_{p,q,f}(\gamma,\theta)=(\gamma-q\phi(p\gamma+q\theta),\theta+p\phi(p\gamma+q\theta))$$ of the pillowcase. This homeomorphism is Hamiltonian isotopic to the identity.\end{thm}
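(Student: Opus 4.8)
The plan is to deduce Theorem~\ref{collar2} from Theorem~\ref{collar1} and the remark preceding it, by replacing the curve $e$ of Figure~\ref{2scurve} by its image under a suitable diffeomorphism of the four--punctured sphere. The first step is to set up the standard dictionary between essential simple closed curves in $S^2\setminus\{a,b,c,d\}$ and slopes: isotopy classes of such curves separating the punctures into two pairs are in bijection with $\QQ\cup\{\infty\}$, the curve $e$ (for which $\rho(\lambda_E)=-e^{\gamma\bbk}$ in the notation of the proof of Theorem~\ref{collar1}) corresponds to ``slope $(1,0)$'', and the mapping class group of $(S^2,\{a,b,c,d\})$ acts transitively on this set. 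I would record, as a lemma, that the induced action on $P=R(S^2,\{a,b,c,d\})\cong\RR^2/(\ZZ^2\ltimes\ZZ/2)$ is by maps of the form $[\gamma,\theta]\mapsto[A(\gamma,\theta)+v]$ with $A\in SL_2(\ZZ)$ and $v\in\pi\ZZ^2$, and that under Proposition~\ref{pillow} the holonomy of the representation $\psi(\gamma,\theta)$ around a slope--$(p,q)$ curve is conjugate to $\pm e^{(p\gamma+q\theta)\bbk}$. This is classical in this circle of ideas (it underlies \cite{HHK, FKP}); a short proof is by induction on the continued fraction expansion of $p/q$, using that a half--twist exchanging two adjacent punctures induces a linear automorphism of $P$, and one must take a little care with the hyperelliptic quotient.

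Given coprime $(p,q)$, I would then choose a diffeomorphism $h=h_{p,q}$ of $(S^2,\{a,b,c,d\})$ carrying $e$ to a slope--$(p,q)$ curve $e_{p,q}$ and inducing on $P$ an affine map $H$ whose linear part has inverse represented by an integer matrix with first row $(p,q)$ and determinant $1$ (possible exactly because $\gcd(p,q)=1$), up to a translation by an element of $\pi\ZZ^2$. The product diffeomorphism $h\times\mathrm{id}_I$ of $(S^2\times I,\{a,b,c,d\}\times I)$ preserves the product structure, acts by $h$ on each boundary sphere, and carries the perturbation data $(E,\phi)$ of Theorem~\ref{collar1} to the data $(E_{p,q},\phi)$; since the perturbation condition \eqref{pc} is invariant under diffeomorphisms, it induces a bijection of perturbed traceless character varieties, and the restriction maps to the two boundary pillowcases intertwine this bijection with $H\times H$. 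Consequently the Lagrangian correspondence induced by the $(E_{p,q},\phi)$--perturbation is the graph of $H\circ c_\delta\circ H^{-1}$, where $c_\delta(\gamma,\theta)=(\gamma,\theta+2\phi(\gamma+\pi))$ is the map of Theorem~\ref{collar1}.

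It remains to compute this conjugate. The map $c_\delta$ is a Hamiltonian shear in the $\partial_\theta$ direction on $(P^*,\,d\gamma\wedge d\theta)$, generated (up to sign) by the Hamiltonian $2\Phi(\gamma+\pi)$ with $\Phi'=\phi$; since $H$ is area preserving, $H c_\delta H^{-1}$ is the corresponding shear generated by $2\Phi$ applied to the first coordinate of $H^{-1}$, i.e.\ by a function of $p\gamma+q\theta$ (plus a constant in $\pi\ZZ$). As $p\gamma+q\theta$ is constant along the resulting flow, the time--one map is computed explicitly to be
\[
(\gamma,\theta)\longmapsto\bigl(\gamma-2q\,\phi(p\gamma+q\theta+\pi+c),\ \theta+2p\,\phi(p\gamma+q\theta+\pi+c)\bigr),\qquad c\in\pi\ZZ .
\]
Because $\mathcal{X}$ is a vector space closed under precomposition with $x\mapsto x+\pi$, the function $x\mapsto 2\phi(x+\pi+c)$ again lies in $\mathcal{X}$, so after renaming the perturbation function this is precisely $c_{p,q,\phi}$ of \eqref{horizontal}'s analogue in the statement. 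Finally, that $c_{p,q,\phi}$ is Hamiltonian isotopic to the identity follows either from the one--parameter family $t\phi$, $t\in[0,1]$, of perturbations (which realizes it as the time--one map of a path of Hamiltonian shears beginning at the identity), or by conjugating the Hamiltonian isotopy of Theorem~\ref{collar1} by the symplectomorphism $H$.

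The step I expect to require the most care is the slope dictionary of the first paragraph: verifying transitivity of the mapping class group on curve slopes and identifying its action on $P$ with the affine $SL_2(\ZZ)$--action, while correctly accounting for the $\ZZ/2$--quotient defining the pillowcase and for the harmless translation ambiguity by $\pi\ZZ^2$ (harmless precisely because $\mathcal{X}$ absorbs argument--shifts by multiples of $\pi$). By contrast, the conjugation in paragraph two is unambiguous once one uses the product diffeomorphism $h\times\mathrm{id}_I$, and the final computation is routine. An alternative to the whole argument would be to work directly: realize $e_{p,q}$ as an explicit word in $\pi_1(S^2\setminus\{a,b,c,d\})$, compute $\rho(\lambda_{E_{p,q}})$ and the effect of conjugating the outgoing meridian generators by $\rho(\mu_{E_{p,q}})$ exactly as in the proof of Theorem~\ref{collar1}; this trades the mapping--class input for a somewhat longer explicit holonomy computation.
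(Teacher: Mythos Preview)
Your proposal is correct and follows essentially the same route as the paper: conjugate the shear $c_\delta$ of Theorem~\ref{collar1} by the action on $P$ of a mapping class of the four--punctured sphere carrying $e$ to a curve of the desired slope, then absorb the resulting argument--shift and factor of $2$ into the perturbation function using closure properties of $\mathcal{X}$. The only difference is presentational: where you package the mapping--class action on $P$ as a lemma (affine $SL_2(\ZZ)$--action, transitivity on slopes), the paper makes it explicit by writing down two specific diffeomorphisms $g$ (a half--twist) and $h$ (a cyclic permutation of three punctures), computing their induced affine maps on $P$ directly, and checking that their linear parts generate $SL_2(\ZZ)$ via the standard $S,T$ generators --- so the paper in effect proves your lemma on the spot rather than citing it.
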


\begin{proof} Given $(\gamma,\theta)\in \RR^2$, Let $\psi({\gamma,\theta}):\pi_1(S^2\setminus \{a,b,c,d\})\to SU(2)$ be the 
traceless representation  of  Proposition \ref{pillow}.

Let $g:(S^2,\{a,b,c,d\})\to(S^2,\{a,b,c,d\})$ be the half-Dehn twist diffeomorphism supported in 
the hemisphere containing $c$ and $d$ which sends $c$ to $d$ and $d$ to $c$. (Thus $g^2$ is the Dehn twist about the curve labeled $e$ in Figure \ref{2scurve}.) 	 Choosing a base point near $a$, the induced automorphism on $\pi_1(S^2\setminus \{a,b,c,d\})$ is given by 
$$g_*(a)=a, g_*(b)=b, g_*(c)=d, g_*(d)=d^{-1}cd.$$
Then 
$\psi(\gamma,\theta)(g_*(b))=e^{\gamma\bbk}\bbi$ and $\psi(\gamma,\theta)(g_*(c))=e^{(\theta-\gamma)\bbk}\bbi$, i.e., 
$$g^*\psi(\gamma,\theta)=(\gamma,\theta-\gamma).$$
In other words, $g$ induces the linear map on the pillowcase:
\begin{equation}
\label{Ag}
g^*\begin{pmatrix}\gamma\\ \theta\end{pmatrix}=A_g\begin{pmatrix}\gamma\\ \theta\end{pmatrix},~A_g=\begin{pmatrix}1&0\\ -1&1\end{pmatrix}.
\end{equation}

Let $h:(S^2,\{a,b,c,d\})\to(S^2,\{a,b,c,d\})$ be the diffeomorphism which fixes (a neighborhood of) $a$, and cyclically permutes $b,c,d$.  This can be chosen to induce the automorphism 
 $\pi_1(S^2\setminus \{a,b,c,d\})$ is given by 
$$h_*(a)=a, h_*(b)=c^{-1}, h_*(c)=d, h_*(d)=(cd)^{-1}b^{-1}(cd)$$
Then 
$\psi(\gamma,\theta)(h_*(b))=-e^{\theta\bbk}\bbi=e^{(\theta+\pi)\bbk}\bbi$ and $\psi(\gamma,\theta)(g_*(c))=e^{(\theta-\gamma)\bbk}\bbi$, i.e.,  $h$ induces the affine map on the pillowcase:
\begin{equation}
\label{Ah}
h^*\begin{pmatrix}\gamma\\ \theta\end{pmatrix}=A_h\begin{pmatrix}\gamma\\ \theta\end{pmatrix}+ {\bf v},~A_h=\begin{pmatrix}0&1\\ -1&1\end{pmatrix}, {\bf v}=\begin{pmatrix}\pi \\ 0\end{pmatrix}.
\end{equation}

The matrices
$$S=A_gA_h^4=\begin{pmatrix}0&-1\\ 1&0\end{pmatrix}\text{~and~} T=A_hA_g^{-1}=\begin{pmatrix}1&1\\ 0&1\end{pmatrix}$$
are the standard generators of the modular group. It follows that given any relatively prime pair of integers $p,q$, there exists a word $w=w(g,h)$ in $g$ and $h$ so that the resulting diffeomorphism $w:(S^2,\{a,b,c,d\})\to(S^2,\{a,b,c,d\})$  satisfies
\begin{equation}
\label{w}
w^*\begin{pmatrix}\gamma\\ \theta\end{pmatrix}=A\begin{pmatrix}\gamma\\ \theta\end{pmatrix}+ {\bf u},~A=\begin{pmatrix}p&q\\ r&s\end{pmatrix}
\end{equation}
where $ps-qr=1$ and ${\bf u}$ is a vector whose entries are integer multiples of $\pi$.

The diffeomorphism $w$ induces a level preserving diffeomorphism $$w:(S^2\times I,\{a,b,c,d\}\times I)\to (S^2\times I,\{a,b,c,d\}\times I).$$
This diffeomorphism takes perturbed flat connections with respect to the perturbation curve $E$ of Theorem \ref{collar1} to perturbed flat connections with respect to $w(E)$.  

To simplify notation, write $\phi(x)=2f(x+\pi)+u_1$ where $f\in\mathcal{X}$ is the function used in Theorem \ref{collar1}, and $u_1$ is the first component of the vector ${\bf u}$. Note that $\phi\in \mathcal{X}$ if and only if $f\in\mathcal{X}$.  Then the self-homeomorphism of the pillowcase given by perturbing along $w(E)$  is the conjugate
$(w^*)^{-1} \circ c_\delta \circ w^*$, which we compute
\begin{eqnarray*}
\left((w^*)^{-1} \circ c_\delta \circ w^*\right)\begin{pmatrix}\gamma\\ \theta\end{pmatrix}&=&
(w^*)^{-1} \circ c_\delta \left(A\begin{pmatrix}\gamma\\ \theta\end{pmatrix}+{\bf u}\right)\\
&=&(w^*)^{-1}    \left(A\begin{pmatrix}\gamma\\ \theta\end{pmatrix}+{\bf u}+\begin{pmatrix}0\\ \phi(p\gamma+q\theta+u_1)\end{pmatrix}\right)\\
&=&A^{-1}\left(A\begin{pmatrix}\gamma\\ \theta\end{pmatrix}+{\bf u}+\phi(p\gamma+q\theta+u_1)\begin{pmatrix}0\\ 1\end{pmatrix}\right)-A^{-1}{\bf u}\\
&=&\begin{pmatrix}\gamma\\ \theta\end{pmatrix}+\phi(p\gamma+q\theta+u_1)\begin{pmatrix}-q\\ p\end{pmatrix}.
\end{eqnarray*}
If $u_1$ is an even multiple of $\pi$, then we are done, since these are pillowcase coordinates. If $u_1$ is an odd multiple of $\pi$, replace $\phi(x)$ by $\phi(x+\pi)$; this induces a bijection of $\mathcal{X}$.

\end{proof}

 \section{holonomy perturbations to smooth the traceless character variety of a 2-tangle decomposition of a torus knot}\label{torussec}

For the rest of this article we provide a detailed study of the traceless character varieties associated to a certain 2-tangle decomposition  of a torus knot $T_{p,q}$.   In the present section we establish that $(S^3,T_{p,q})$ admits a 2-tangle decomposition as in Equation \ref{decom} which verifies Conjecture \ref{con1}  except possibly for the absence of fishtails.  

 In the next section we identify $L_1:R_\pi(Y,T)\to P$ for a number of $T_{p,q}$ and verify that in all our examples, $L_1$ is indeed a restricted immersed 1-manifold, and that $H^\nat(Y,T)$ is either isomorphic to $I^\nat(S^3,T_{p,q})$, or, in examples where the calculation of $I^\nat(S^3,T_{p,q})$ is unknown, that the calculation of  $H^\nat(Y,T)$ combined with Conjecture \ref{con3} is consistent with the  conjecture  \cite{KM-s}
  that the ranks of $I^\nat(S^3,K)$ and   knot Heegaard-Floer homology  $\widehat{HFK}(K)$ are equal.
 
 \bigskip
 
 We recall the description of the traceless $SU(2)$ character variety of a tangle associated to the $(p,q)$-torus knot from \cite{HHK, FKP}.  Figure \ref{torusknot} illustrates a 3-component link $H_A\cup H_B\cup K$ in $S^3$,  with the component $K$ intersecting a 3-ball $D$ in a trivial 2-tangle $U$.  Integers $r,s$ satisfying  $pr+qs=1$ are fixed throughout.

Performing $-\frac s p$ Dehn surgery on the component labeled $H_A$ and  $\frac q r$ Dehn surgery on the component labeled $H_B$ yields $S^3$ again, and the knot labeled $K$ becomes the $(p,q)$ torus knot. In this $S^3$,  let $Y$ denote the complement of the illustrated 3-ball $D$,  and $T$ the part of the $(p,q)$ torus knot contained in $Y$.  Precisely, $Y$ is obtained from $S^3\setminus D$ by performing  $-\frac s p$ and $\frac q r$  surgery on $H_A$ and $H_B$, and $T\subset Y$ denotes that part of $K$ which lies in $Y$.   Note that $Y$ is itself diffeomorphic to a 3-ball. 

 Let $P_A$ and $P_B$ in $Y$ be the cores of the Dehn surgery solid tori which are added after neighborhoods of $H_A$ and $H_B$ are removed.  We will perform holonomy perturbations along these curves in $Y$.

Generators $ a,b,c,d,x,y$ of the fundamental group 
 $$\pi_1(Y\setminus (T\cup P_A\cup P_B))=\pi_1(S^3\setminus (D\cup K\cup H_A\cup H_B))$$ are illustrated.

   \begin{figure}
\begin{center}
\def\svgwidth{2.5in}
 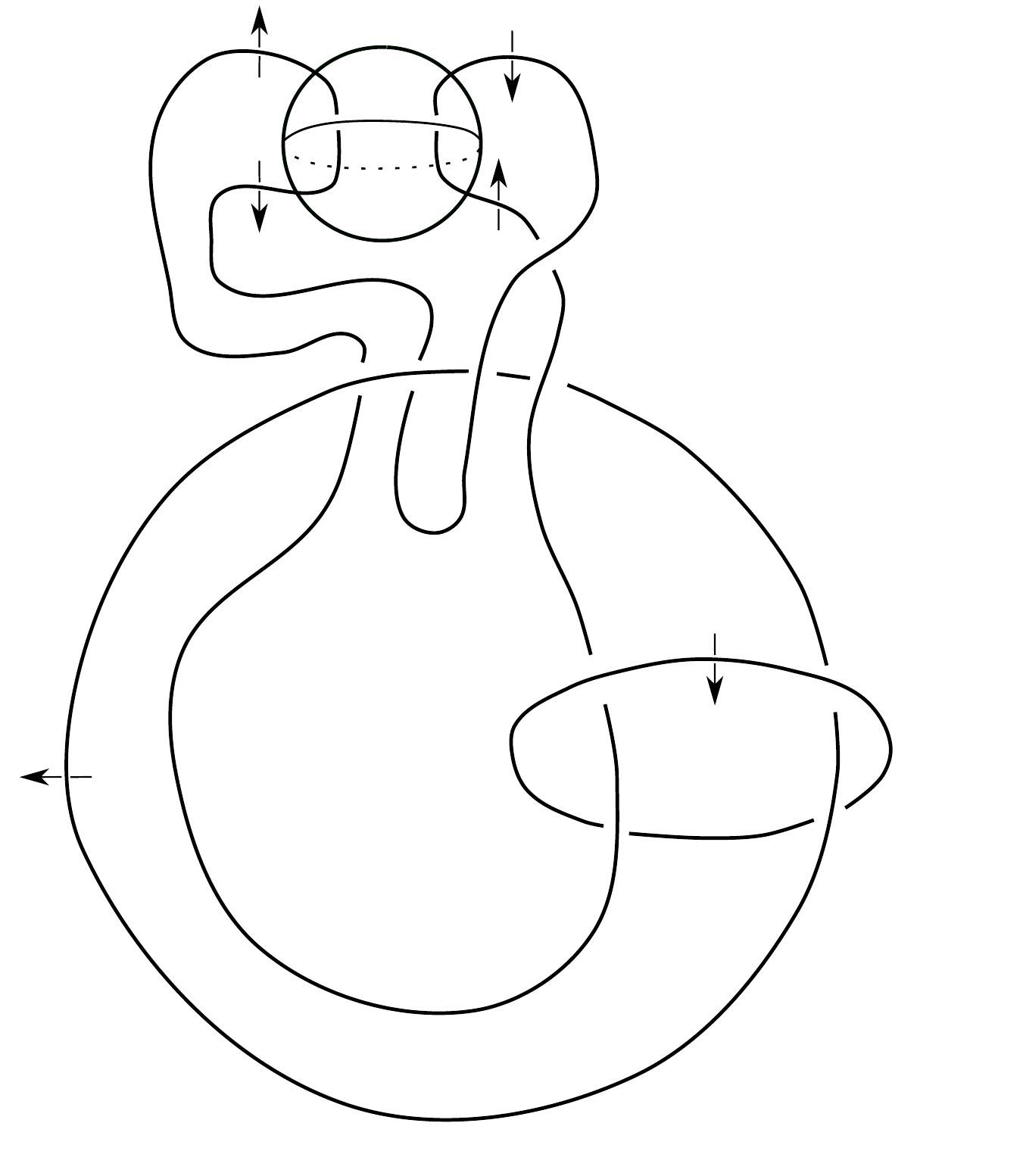
 \caption{The $(p,q)$ torus knot \label{torusknot}}
\end{center}
\end{figure} 

One computes (see \cite{HHK}) that 
$$\pi_1(Y\setminus (T\cup P_A\cup P_B))=
\langle x,y,a,b,c~|~ c=\bar x a x, a d \bar a=y x b  \bar x\bar y, [y,xb]=1, [x,d \bar a y]=1\rangle.
$$

The curves $$A_1=(xb)^qy^r\text{ and }A_2=(xb)^{-p}y^s$$ form a longitude-meridian pair for the component $P_A$.   The curves $$B_1=(d\bar a y)^{-s}x^p\text{ and }B_2=(d\bar a y)^rx^q$$ form a (commuting) longitude-meridian pair for the component $P_B$. 
In particular, $\pi_1(Y\setminus T)$ is obtained from 
$\pi_1(Y\setminus (T\cup P_A\cup P_B))$ by killing $A_2$ and $B_2$.

Working with the presentation of $\pi_1(Y\setminus (T\cup P_A\cup P_B))$ given above, together with the fact that $pr+qs=1$ yields:
$$A_1^s=(xb)^{qs}y^{rs}=(xb)(xb)^{-pr}y^{rs}=xb((xb^{-p}y^s)^r=xbA_2^r$$
and so $$xb=A_1^{s}A_2^{-r}.$$  
Similar calculations give
$$y=A_2^q A_1^p,~ x= B_2^sB_1^r,~ d\bar a y= B_1^{-q}B_2^p,$$
from which one obtains
\begin{equation}
\label{aandb}
a=yxb\bar x (d \bar a y)^{-1}=A_1^{s+p}A_2^{q-r}B_1^{q-r}B_2^{-(s+p)}             \text{ and }b=\bar x x b= B_1^{-r}B_2^{-s}A_1^sA_2^{-r}.
\end{equation}

Since $c=\bar x a x$ and $d=\bar a yxb\bar x \bar y a$, it follows that the four elements $A_1,A_2,B_1,B_2$ generate $\pi_1(Y\setminus (T\cup P_A\cup P_B))$.   A simple extension of the observation in \cite{HHK} that  $\pi_1(Y\setminus T)$ is free on $A_1$ and $B_1$ (they are labeled $A$ and $B$ in that article)  shows that 
$\pi_1(Y\setminus (T\cup P_A\cup P_B))$ is the free product of the free abelian group generated by $A_1,A_2$ and the free abelian group generated by $B_1,B_2$. 

We use the  perturbation functions $\epsilon_A\sin x$ on $P_A$ and $\epsilon_B\sin x$ on $P_B$ for some $(\epsilon_1,\epsilon_2)\in \RR^2$.   Recall from  Equation (\ref{pc}) that with this choice, perturbed-flat connections modulo gauge are identified with representations $\rho:\pi_1(Y\setminus (T\cup P_A\cup P_B))\to SU(2)$ which satisfy 
the {\em perturbation conditions}
\begin{equation}
\label{pert}
\rho(A_2)=e^{\epsilon_A \sin u \, Q_A} \text{ if } \rho(A_1)=e^{u Q_A} \text{ for some } Q_A\in C(\bbi)\end{equation}  
$$\rho(B_2)=e^{\epsilon_B \sin v \,Q_B} \text{ if } \rho(B_1)=e^{v Q_B}~ \text{ for some }Q_B\in C(\bbi).$$

If $(\eA,\eB)=(0,0)$, then perturbed-flat connections send $A_2$ and $B_2$ to $1\in SU(2)$, hence by the Seifert-Van Kampen theorem correspond exactly to $SU(2)$ representations of $\pi_1(Y\setminus T)$.
 
As above, we define the {\em perturbed traceless flat moduli space} 
 $$R_{\eA,\eB}(Y,T)=\{\rho:\pi_1(Y\setminus (T\cup P_A\cup P_B))\to SU(2)~|~\text{$\rho$ traceless,  satisfying   (\ref{pert})}\}/_{\text{conjugation}}$$

\begin{thm} \label{torusknotsgood} There exists a neighborhood  $\mathcal{O}\subset\RR^2$ of $(0,0)$ such  that    for any $(\eA,\eB)\in \mathcal{O}$, the space $R_{\eA,\eB}(Y,T)$ is a smooth compact 1-manifold with two boundary points and such that the restriction map to the pillowcase $
R_{\eA,\eB}(Y,T)\to P$ satisfies the conditions to be a restricted immersed 1-manifold except possibly the absence of fishtails. \end{thm}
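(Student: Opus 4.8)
The plan is to describe $R_{\eA,\eB}(Y,T)$ explicitly, handle its two reducible classes separately, parametrize the rest by an explicit real-analytic chart, and then show that chart is cut out transversally for all small $(\eA,\eB)$.

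\emph{Reducible classes.} Since $Y$ is a $3$-ball, Theorem \ref{endpointsgood} applies: for small perturbations there are exactly two abelian (equivalently reducible) classes $r_\pm\in R_{\eA,\eB}(Y,T)$, each with a half-open-interval neighborhood that the restriction map sends properly into $P$ onto an arc ending at a corner with limiting slope $\neq1$, the two corners being distinct; moreover a small enough neighborhood of each $r_\pm$ consists of binary dihedral classes only. By Proposition \ref{abs} these are the only abelian classes, so every other $\rho\in R_{\eA,\eB}(Y,T)$ is irreducible; for such $\rho$, both $\rho(A_1)$ and $\rho(B_1)$ are noncentral (else $\rho$ factors through one of the two abelian free factors $\langle A_1,A_2\rangle$, $\langle B_1,B_2\rangle$ of $\pi_1(Y\setminus(T\cup P_A\cup P_B))$ and is abelian). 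Since $SU(2)$ is compact, $R_{\eA,\eB}(Y,T)$ is compact; thus once it is known to be a smooth $1$-manifold away from $r_\pm$, it is automatically a compact $1$-manifold with boundary $\{r_+,r_-\}$.

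\emph{An explicit chart.} On the irreducible locus, $\rho(A_1),\rho(A_2)$ commute with $\rho(A_1)$ noncentral, so they lie in a common maximal torus, and the perturbation condition (\ref{pert}) makes $\rho(A_2)$ an explicit real-analytic function of $\rho(A_1)$, namely $e^{\eA\sin u\,Q_A}$ when $\rho(A_1)=e^{uQ_A}$; likewise for $B$. As $\pi_1(Y\setminus T)$ is free on $A_1,B_1$, a class is determined by $(\rho(A_1),\rho(B_1))\in SU(2)^2$, which we may conjugate into the form $\rho(A_1)=e^{u\bbi}$, $\rho(B_1)=e^{v(\cos\psi\,\bbi+\sin\psi\,\bbk)}$, giving a real-analytic chart $(u,v,\psi)$ on the irreducible locus. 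Substituting these data into the words for $a$ and $b$ in (\ref{aandb}) and imposing tracelessness --- tracelessness of $c$ and $d$ being automatic since they are conjugate to $a$ and $b$ --- yields a real-analytic map
\[
F\colon U\times\RR^2\longrightarrow\RR^2,\qquad F(u,v,\psi;\eA,\eB)=\big(\Real\rho(a),\,\Real\rho(b)\big),
\]
with $U\subset\RR^3$ the chart domain, such that the irreducible locus of $R_{\eA,\eB}(Y,T)$ is $F(\,\cdot\,;\eA,\eB)^{-1}(0)$.

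\emph{Transversality.} It remains to show $0$ is a regular value of $F(\,\cdot\,;\eA,\eB)$ for all small $(\eA,\eB)$; the implicit function theorem then makes the irreducible locus a smooth $1$-manifold, and, with the preceding paragraphs, $R_{\eA,\eB}(Y,T)$ a compact $1$-manifold with two boundary points. The remaining clauses of Definition \ref{RLP} other than absence of fishtails then follow: properness, transversality to $\partial\barP_\delta$, and limiting slopes $\neq1$ at the corners come from Theorem \ref{endpointsgood}; that $L_1$ immerses $R_{\eA,\eB}(Y,T)\setminus\{r_\pm\}$ into $P^*$ comes from the Poincar\'e--Lefschetz Zariski tangent-space computation recalled at the end of Section \ref{general}; and the mod-$4$ condition on any circle components comes from the spectral-flow argument of Section \ref{2strand}. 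At $(\eA,\eB)=(0,0)$ the differential $d_{(u,v,\psi)}F$ has full rank at every irreducible non-binary-dihedral class (same Zariski computation) and drops rank exactly along the binary-dihedral locus $R^{tbd}(Y,T)$, which by \cite[Theorem 3.2]{FKP} is one arc together with finitely many circles linearly embedded in $P$ and which, being cut out by the binary-dihedral equations, is stable under the perturbation (cf.\ Proposition \ref{halfopen}). The crux is to show that at each such singular class the partials $\partial F/\partial\eA$ and $\partial F/\partial\eB$ together span a complement to the image of $d_{(u,v,\psi)}F$, with a definite leading dependence on $(\eA,\eB)$, so that for every small $(\eA,\eB)$ the perturbation pushes the non-binary-dihedral branches clear of the binary-dihedral locus; using that this locus is linearly embedded with slope $\neq1$ and that the perturbation functions are $\eA\sin u$, $\eB\sin v$, this is a finite explicit computation, in spirit the one performed for individual torus knots in \cite{FKP} but now uniform in $(p,q)$. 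I expect this uniform transversality computation to be the main obstacle; the rest is bookkeeping with the chart.
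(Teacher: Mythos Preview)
Your overall architecture matches the paper's: set up the explicit chart (the paper uses $(u,v,\tau)$ with $\tau=\cos\psi$, writes down the same map you call $F$ and calls it $\Psi$, via Theorem~\ref{WR}), prove a transversality statement, and invoke Theorem~\ref{endpointsgood} for the structure near $r_\pm$. But there is a genuine gap in your transversality argument. Your claim that at $(\eA,\eB)=(0,0)$ the differential $d_{(u,v,\psi)}F$ has full rank at every irreducible non-binary-dihedral class does \emph{not} follow from the Poincar\'e--Lefschetz computation you cite; that computation says only that \emph{if} $R_\pi(Y,T)$ is already smooth at $\rho$, \emph{then} the restriction to the pillowcase is an immersion there. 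It does not establish smoothness, and the paper never claims that the unperturbed variety is regular off the binary-dihedral locus. In fact, inspecting the proof of the paper's key Lemma~\ref{sublemma}, the $\eA$- and $\eB$-partials are essential throughout: the contradiction hinges on comparing $\partial\Psi/\partial\eA$ with $\partial\Psi/\partial u$ (and similarly $\eB$ with $v$) via the ratio $\frac{(r-q)s}{(s+p)r}\neq 1$, with no case split between binary-dihedral and non-binary-dihedral points.

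The paper's actual argument is therefore both simpler and differently organized than yours. It proves uniformly that the full differential $d\Psi:\RR^5\to\RR^2$ (all five variables $\eA,\eB,u,v,\tau$) is surjective at \emph{every} interior point of the unperturbed zero locus; this is Lemma~\ref{sublemma}, an explicit calculus computation that you correctly anticipate as ``the main obstacle'' but do not carry out. From this the parameterized moduli space $\Psi^{-1}(0)$ is a smooth $3$-manifold near the $\epsilon=0$ slice, and Sard's theorem applied to the projection $\pi_{\mathcal O}$ yields regular values $(\eA,\eB)$ arbitrarily close to $0$. Note in particular that the paper's method gives only \emph{generic} small $(\eA,\eB)$, not all of a neighborhood; your attempt to upgrade to ``every small $(\eA,\eB)$'' by arguing that the perturbation partials span a complement with ``definite leading dependence'' is a strictly stronger goal that the paper does not pursue, and your sketch of it relies on the unjustified non-binary-dihedral regularity claim.
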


   In extensive calculations we have not
found any small non-zero $\eA,\eB$   for which $
R_{\eA,\eB}(Y,T)\to P$ is {\em not} a restricted immersed 1-manifold.

\bigskip

The strategy to prove Theorem \ref{torusknotsgood} is standard: we form a parameterized moduli space, prove it is a smooth manifold, and apply Sard's theorem to the projection to $\RR^2$.   We start with  a {\em gauge fixing} theorem which identifies      $ R_{\eA,\eB}(Y,T)$ with a subset of 
the box $[0,\pi]\times[0,\pi]\times[-1,1]$.

   A representation $\rho:\pi_1(Y\setminus (T\cup P_A\cup P_B))\to SU(2)$ satisfying the perturbation conditions with respect to $( \eA,\eB)$  is traceless   if and only if 
$\rho(a)$ and $\rho(b)$ are traceless. From Equation (\ref{aandb}) this holds if and only if
$$\Real(\rho(A_1^{s+p}A_2^{q-r}B_1^{q-r}B_2^{-(s+p)} ) )=0\text{ and }\Real(\rho(B_1^{-r}B_2^{-s}A_1^sA_2^{-r}))=0.$$ Assuming that $ \rho(A_1)=e^{u Q_A}$ and $\rho(B_1)=e^{v Q_B}$  for some pair of purely imaginary unit quaternions $Q_A, Q_B\in C(\bbi)$,  these can be expressed equivalently as 
$$\Real\left(e^{((s+p)u+(q-r)\eA\sin u)Q_A}e^{((q-r)v-(s+p)\eB\sin v)Q_B}\right)=0
$$
and
$$\Real\left(e^{(-rv-s\eB\sin v)Q_B} e^{(su-r\eA\sin u)Q_A)}\right)=0
$$
or equivalently (see \cite[Proposition 2.1]{HHK}) as $\Psi(\eA,\eB,u,v,\tau)=(0,0)$, where 
$\Psi=(\Psi_1,\Psi_2)$
is defined by 
\begin{equation}\label{psi1}
\begin{multlined}
\Psi_1(\eA,\eB,u,v,\tau)=\cos((q-r)v-(s+p)\eB\sin v)\cos((s+p)u+(q-r)\eA\sin u) \\
\qquad -\sin((q-r)v-(s+p)\eB\sin v)\sin((s+p)u+(q-r)\eA\sin u)\tau
\end{multlined}
\end{equation}
and
\begin{equation}
\label{psi2}
\begin{multlined}
\Psi_2(\eA,\eB,u,v,\tau)=\cos(-rv-s\eB\sin v)\cos(su-r\eA\sin u)\qquad \\
\qquad  -\sin(-rv-s\eB\sin v)\sin(su-r\eA\sin u)\tau,
\end{multlined}
\end{equation}
with $\tau$  the cosine of the angle between $Q_A$ and $Q_B$.

\begin{thm}\label{WR} Fix $(\eA,\eB)\in \RR^2$ and let
 $$W_{\eA,\eB}=\{(u,v,\tau)\in [0,\pi]\times[0,\pi]\times[-1,1]
 ~|~ \Psi( \eA,\eB,u,v,\tau)=0   \}.$$
 Then the assignment
 $$A_1\mapsto e^{u \bbi}, A_2\mapsto e^{\epsilon_A \sin u \bbi}
 , B_1\mapsto e^{v e^{\arccos \tau\bbk}\bbi}
 B_2\mapsto e^{\epsilon_B \sin v e^{\arccos \tau \bbk}\bbi }$$
 defines a surjection 
 $$f:W_{\eA,\eB}\to R_{\eA,\eB}(Y,T)$$
 whose fiber over $f(u,v,\tau)$ is the  single point $\{(u,v,\tau)\}$ unless   $ \sin u=0$ or   $\sin v=0$, in which case the fiber is the arc $\{(u,v)\}\times [-1,1]$. The map $f$ sends the points of $W_{\eA,\eB}$ interior to the box to non-abelian perturbed representations and boundary points to perturbed abelian representations.
\end{thm}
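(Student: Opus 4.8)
\emph{Proof proposal.} The plan is to check the formula for $f$ directly, then to get surjectivity and the fibre structure from a gauge-fixing argument. First I would verify that the assignment is a well-defined homomorphism. Recall that $\pi_1(Y\setminus(T\cup P_A\cup P_B))$ is the free product of the free abelian groups $\langle A_1,A_2\rangle$ and $\langle B_1,B_2\rangle$, so it is enough that the images of the two generators in each factor commute. Since $e^{u\bbi}$ and $e^{\eA\sin u\,\bbi}$ both lie in the circle subgroup $\{e^{t\bbi}\}$, and $e^{v e^{\arccos\tau\bbk}\bbi}$ and $e^{\eB\sin v\,e^{\arccos\tau\bbk}\bbi}$ both lie in the circle subgroup with axis $Q_B:=e^{\arccos\tau\,\bbk}\bbi$, each factor maps to an abelian subgroup, so the assignment extends uniquely to a homomorphism $\rho_{(u,v,\tau)}$. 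By construction $\rho_{(u,v,\tau)}$ satisfies the perturbation conditions (\ref{pert}) with $Q_A=\bbi$ and $Q_B$ as above, both of which lie in $C(\bbi)$, so it represents a perturbed-flat connection.

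Next I would show that $\rho_{(u,v,\tau)}$ is traceless exactly when $(u,v,\tau)\in W_{\eA,\eB}$; this is what makes $f$ well defined. Because $c$ and $d$ are conjugate to $a$ and $b$, tracelessness of $\rho_{(u,v,\tau)}$ is equivalent to $\Real\,\rho_{(u,v,\tau)}(a)=\Real\,\rho_{(u,v,\tau)}(b)=0$. Substituting the expressions for $a$ and $b$ from Equation (\ref{aandb}) gives $\rho_{(u,v,\tau)}(a)=e^{\alpha\bbi}e^{\beta Q_B}$ and $\rho_{(u,v,\tau)}(b)=e^{\beta'Q_B}e^{\alpha'\bbi}$, where $\alpha,\beta,\alpha',\beta'$ are precisely the affine combinations of $u,v,\sin u,\sin v$ appearing in (\ref{psi1}) and (\ref{psi2}); then the elementary identity $\Real\big(e^{\alpha P}e^{\beta Q}\big)=\cos\alpha\cos\beta+\sin\alpha\sin\beta\,\Real(PQ)$ for unit traceless $P,Q$, together with $\Real(\bbi Q_B)=\Real(Q_B\bbi)=-\tau$, yields $\Real\,\rho_{(u,v,\tau)}(a)=\Psi_1(\eA,\eB,u,v,\tau)$ and $\Real\,\rho_{(u,v,\tau)}(b)=\Psi_2(\eA,\eB,u,v,\tau)$. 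Hence $f$ maps $W_{\eA,\eB}$ into $R_{\eA,\eB}(Y,T)$, and it is visibly continuous.

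For surjectivity, pick $[\rho]\in R_{\eA,\eB}(Y,T)$ and a representative $\rho$. Write $\rho(A_1)=e^{uQ_A}$ and $\rho(B_1)=e^{vQ_B}$ with $u,v\in[0,\pi]$ and $Q_A,Q_B$ unit traceless quaternions; the perturbation conditions then force $\rho(A_2)=e^{\eA\sin u\,Q_A}$ and $\rho(B_2)=e^{\eB\sin v\,Q_B}$. After conjugating I may take $Q_A=\bbi$; the residual gauge freedom (the circle $\{e^{t\bbi}\}$ when $u\in(0,\pi)$, and all of $SU(2)$ when $u\in\{0,\pi\}$) lets me rotate $Q_B$ into the plane spanned by $\bbi$ and $\bbj$ with nonnegative $\bbj$-component, i.e.\ $Q_B=e^{\arccos\tau\,\bbk}\bbi$ with $\tau:=\langle\bbi,Q_B\rangle\in[-1,1]$. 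Then $\rho$ is conjugate to $\rho_{(u,v,\tau)}=f(u,v,\tau)$, and $(u,v,\tau)\in W_{\eA,\eB}$ by the previous step. For the fibres: when $\sin u\ne0$ and $\sin v\ne0$ the numbers $u,v$ are recovered from the traces of $\rho(A_1),\rho(B_1)$ and $\tau$ is the conjugation-invariant inner product of the (then uniquely defined) axes of $\rho(A_1)$ and $\rho(B_1)$, and $f(u,v,\tau)$ realizes exactly these values, so $f$ is injective over such points; when $u\in\{0,\pi\}$ (resp.\ $v\in\{0,\pi\}$) the pair $\rho(A_1),\rho(A_2)$ (resp.\ $\rho(B_1),\rho(B_2)$) is central, so the conjugacy class depends only on $(u,v)$, and one checks from (\ref{psi1})--(\ref{psi2}) that the $\tau$-coefficient of each $\Psi_i$ vanishes there (one of its two sine factors acquires an argument that is an integer multiple of $\pi$), so the whole arc $\{(u,v)\}\times[-1,1]$ lies in $W_{\eA,\eB}$ and is the fibre. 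Finally, an interior point has $u,v\in(0,\pi)$ and $\tau\in(-1,1)$, so $\rho(A_1),\rho(B_1)$ are non-central with non-proportional axes and $\rho_{(u,v,\tau)}$ is non-abelian; on the boundary either $\tau=\pm1$ (so $Q_B=\pm\bbi$ and the image lies in one circle) or $u\in\{0,\pi\}$ or $v\in\{0,\pi\}$ (so one commuting pair is central), and in each case $\rho_{(u,v,\tau)}$ is abelian.

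The computations are all elementary quaternion algebra, and the one place requiring care is the gauge-fixing over the degenerate strata where $\rho(A_1)$ or $\rho(B_1)$ is central: there the residual gauge group jumps from a circle to all of $SU(2)$, and one must check simultaneously that surjectivity still holds and that the fibre is exactly the stated arc. This is precisely where the normalization $Q_B=e^{\arccos\tau\,\bbk}\bbi$ and the vanishing of the $\tau$-coefficients of $\Psi_1,\Psi_2$ need to be lined up; everything else is bookkeeping.
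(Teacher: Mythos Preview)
Your proposal is correct and follows exactly the approach the paper indicates: the paper merely sketches the argument (``any representation can be uniquely conjugated so that $A_1$ is sent to $e^{u\bbi}$ and $B_1$ is sent to $e^{v e^{\arccos\tau\,\bbk}\bbi}$\ldots'') and refers to \cite[Theorem 11.1]{HHK}, while you have carried out the gauge-fixing and traceless/fibre verifications in full. Your treatment of the degenerate strata and the vanishing of the $\tau$-coefficients of $\Psi_1,\Psi_2$ when $\sin u=0$ or $\sin v=0$ is exactly the bookkeeping the paper leaves to the reader.
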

The proof of this lemma is identical to \cite[Theorem 11.1]{HHK} (see also \cite[Theorem 4.2]{FKP}).  The essential point is that if $\sin u\ne0\ne \sin v$  then any representation can be uniquely conjugated so that $A_1$ is sent to $e^{ui}$ and $B_1$ is sent to 
$e^{v e^{\arccos \tau \bbk}\bbi}$ for $\tau$ the cosine of the angle between   this representation's $Q_A$ and $Q_B$.   The perturbation condition then determines where $A_2$ and $B_2$ are sent.  We leave the details to the reader.

A point $(u,v,\tau)\in W_{\eA,\eB}$ which lies on the boundary of the box  corresponds to a representation which  sends $A_1$ and $A_2$ to the center $\pm 1$ if $\sin u=0$, sends $B_1$ and $B_2$ to  $\pm 1$ if $\sin v=0$,  and sends $A_1,A_2,B_1,B_2$ to commuting elements if $|\tau|=1$.  It follows that points in $W_{\eA,\eB}$ meeting the boundary of the cube  correspond exactly to abelian representations (i.e., representations with abelian image). There are two conjugacy classes of (unperturbed) traceless abelian representations.  This property is stable with respect to small perturbations, as shown in Proposition \ref{abs}.

 The result needed to prove complete the proof of Theorem \ref{torusknotsgood} is the following. It says that the map $\Psi$ of Equations  (\ref{psi1}), (\ref{psi2}) is submersive near non-abelian points of the unperturbed traceless character variety $R(Y,T)$.  Hence for generic small $(\epsilon_A,\epsilon_B)$, the non-abelian part of 
 $R_{\epsilon_A,\epsilon_B}(Y,T)$ is smooth.

\begin{lem}\label{sublemma}  Suppose that $u,v,\tau$ are chosen so that $\Psi(0,0,u,v,\tau)=0$, with $\sin u\ne 0, \sin v\ne 0,$ and $  |\tau|\ne 1$. 
Then $d\Psi_{(0,0,u,v,\tau)}:\RR^5\to \RR^2$ is surjective, and hence $\Psi$ is a submersion near $(0,0,u,v,\tau)$. 
\end{lem}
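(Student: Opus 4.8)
The plan is to compute $d\Psi_{(0,0,u,v,\tau)}$ restricted to the two-dimensional subspace of the $(\eA,\eB)$-directions and show that this $2\times 2$ matrix is already invertible under the stated hypotheses; surjectivity of the full differential $\RR^5\to\RR^2$ then follows a fortiori. First I would set $\eA=\eB=0$ in formulas (\ref{psi1}) and (\ref{psi2}) to get the unperturbed relations
$$\Psi_1(0,0,u,v,\tau)=\cos((q-r)v)\cos((s+p)u)-\sin((q-r)v)\sin((s+p)u)\,\tau,$$
$$\Psi_2(0,0,u,v,\tau)=\cos(rv)\cos(su)-\sin(rv)\sin(su)\,\tau,$$
which by hypothesis both vanish. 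Differentiating $\Psi_1$ with respect to $\eB$ (the only $\eB$-dependence is through the argument $(q-r)v-(s+p)\eB\sin v$) and evaluating at the basepoint gives, by the chain rule,
$$\frac{\partial\Psi_1}{\partial\eB}= (s+p)\sin v\,\bigl(\sin((q-r)v)\cos((s+p)u)+\cos((q-r)v)\sin((s+p)u)\,\tau\bigr),$$
and similarly $\partial\Psi_1/\partial\eA$, $\partial\Psi_2/\partial\eA$, $\partial\Psi_2/\partial\eB$. I would organize this by introducing the auxiliary functions $C_1=\cos((q-r)v)$, $S_1=\sin((q-r)v)$, etc., so that the Jacobian entries become products of trigonometric factors and the vanishing conditions $\Psi_1=\Psi_2=0$ can be substituted in to simplify.

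The key algebraic step is then to show that the $2\times 2$ determinant of $\partial(\Psi_1,\Psi_2)/\partial(\eA,\eB)$ is nonzero whenever $\sin u\ne 0$, $\sin v\ne 0$, and $|\tau|\ne 1$. I expect the determinant, after clearing the common factors $\sin u$ and $\sin v$ (which are nonzero by hypothesis), to reduce to an expression of the form $(1-\tau^2)$ times a product of the form $\sin((s+p)u)\sin(su)$ or an analogous combination coming from the "cross terms" in the trig identities; the factor $(1-\tau^2)$ is nonzero precisely because $|\tau|\ne 1$. The remaining trigonometric factor should be shown to be nonzero using the constraint $\Psi_1=\Psi_2=0$: if that factor vanished, one could deduce from the vanishing relations that $\cos((s+p)u)\cos((q-r)v)=0$ as well, and then that either $\sin u=0$ or $\sin v=0$ (using $pr+qs=1$ so that $s+p$ and $q-r$, and $s$ and $r$, are not both "resonant" with $u,v\in[0,\pi]$), contradicting the hypotheses. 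This case analysis — ruling out the degenerate vanishing of the surviving trig factor — is the one place requiring genuine care rather than routine computation, and it is the main obstacle; one needs to use the coprimality relation $pr+qs=1$ together with the range $u,v\in(0,\pi)$ to close it.

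Once the $2\times 2$ minor is shown to be invertible, $d\Psi_{(0,0,u,v,\tau)}:\RR^5\to\RR^2$ is surjective, so $\Psi$ is a submersion at $(0,0,u,v,\tau)$ and hence, by openness of the submersion condition, on a neighborhood of that point. To then deduce Theorem \ref{torusknotsgood} I would form the parameterized space $\widetilde W=\{(\eA,\eB,u,v,\tau)\mid \Psi=0,\ \sin u\ne 0,\ \sin v\ne 0,\ |\tau|\ne 1\}$, which by Lemma \ref{sublemma} (and its obvious perturbation-stable neighborhood version) is a smooth 3-manifold, apply Sard's theorem to the projection $\widetilde W\to\RR^2$ to get a full-measure set of regular values $(\eA,\eB)$, near which the non-abelian part of $R_{\eA,\eB}(Y,T)$ is a smooth 1-manifold; combine this with Proposition \ref{abs} and Proposition \ref{halfopen} (or Theorem \ref{endpointsgood}) to handle the two abelian endpoints and conclude that $R_{\eA,\eB}(Y,T)$ is a smooth compact 1-manifold with two boundary points, with $L_1$ an immersion into $P^*$ away from the corners by the standard Zariski tangent space computation quoted at the end of Section \ref{general}. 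The only property not obtained this way is the absence of fishtails, which is exactly the caveat in the statement.
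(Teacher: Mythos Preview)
There is a genuine gap: the $2\times 2$ minor in the $(\eA,\eB)$-directions alone is \emph{not} always invertible under the hypotheses, so your main algebraic step cannot go through. A concrete counterexample occurs already for the $(3,4)$ torus knot, $(p,q,r,s)=(3,4,3,-2)$: at $(u,v,\tau)=(\tfrac{\pi}{2},\tfrac{\pi}{2},0)$ one checks $\Psi_1=\Psi_2=0$, $\sin u,\sin v\ne 0$, $|\tau|\ne 1$, yet $\partial\Psi_1/\partial\eA=\partial\Psi_1/\partial\eB=\partial\Psi_2/\partial\eA=0$ and only $\partial\Psi_2/\partial\eB\ne 0$, so the $(\eA,\eB)$-Jacobian has rank one. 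In particular your expected factorization of the determinant as $(1-\tau^2)$ times a nonvanishing trigonometric expression cannot hold, since here $\tau=0$. The full differential $d\Psi$ \emph{is} surjective at this point, but only through the $\tau$- and $v$-derivatives.

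The paper therefore has to use the $(u,v,\tau)$-derivatives as well, and it does so by contradiction rather than by exhibiting one good minor. Assuming $d\Psi_1$ and $d\Psi_2$ are linearly dependent, the key observation is that the $\eA$-column and the $u$-column of $d\Psi$ are both built from the pair $(EH+FG\tau,\,AD+BC\tau)$ but with \emph{different} coefficient ratios, namely $(-(q-r)\sin u,\, r\sin u)$ versus $(-(s+p),\,-s)$; since $pr+qs=1$ forces $\tfrac{r-q}{r}\ne\tfrac{s+p}{s}$, dependence of the rows forces $EH+FG\tau=AD+BC\tau=0$. The $(\eB,v)$ pair of columns likewise forces $FG+EH\tau=BC+AD\tau=0$. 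Combined with $|\tau|\ne 1$ this yields $\cos(su)=\cos((s+p)u)=\cos(rv)=\cos((q-r)v)=0$, hence $u,v$ are odd multiples of $\tfrac{\pi}{2}$ and $r,s,s+p,q-r$ are all odd, contradicting $\gcd(p,q)=1$. The leverage comes precisely from playing the perturbation columns off against the $(u,v)$ columns, information which your restriction to the $(\eA,\eB)$-block throws away. Your outline of how the lemma feeds into Theorem~\ref{torusknotsgood} via the parameterized moduli space and Sard's theorem is, by contrast, exactly what the paper does.
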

\begin{proof} The proof is essentially a lengthy second-year calculus computation, and we recommend the reader skip it.    

Consider first $\Psi_2$.  To clarify, we adopt the following notation
$$A=\cos(-rv), B=\sin(-rv), C=\cos(su), D=\sin(su).$$ 
From Equation (\ref{psi2}), the partial derivatives of $\Psi_2$ at the point $(0,0,u,v,\tau)$ are given by
$$\left. \frac{\partial \Psi_2}{\partial \eA}\right|_{(0,0,u,v,\tau)}=A(-D)(-r)\sin u-BC(-r)\tau\sin u=r(AD+BC\tau)\sin u
$$
 $$\left. \frac{\partial \Psi_2}{\partial \eB}\right |_{(0,0,u,v,\tau)}=-B(-s\sin v)C -A(-s\sin v)D\tau=s(BC+AD\tau)\sin v$$
 $$\left. \frac{\partial \Psi_2}{\partial u}\right|_{(0,0,u,v,\tau)}=A(-D)s-BCs\tau=-s(AD+BC\tau)$$
  $$\left. \frac{\partial \Psi_2}{\partial v}\right|_{(0,0,u,v,\tau)}= -B(-r)C-A(-r)D\tau=r(BC+AD\tau)$$
  $$\left. \frac{\partial \Psi_2}{\partial \tau}\right|_{(0,0,u,v,\tau)}=-BD$$
Moreover, the equation $\Psi_2(0,0,u,v,\tau)=0$ is equivalent to 
$$AC-BD\tau=0.$$  
  Similarly, adopting the notation
$$E=\cos((q-r)v), F=\sin((q-r)v), G=\cos((s+p)u), H=\sin((s+p)u),$$ we obtain
$$\left. \frac{\partial \Psi_1}{\partial \eA}\right|_{(0,0,u,v,\tau)}=-(q-r)(EH+FG\tau)\sin u
$$
 $$\left. \frac{\partial \Psi_1}{\partial \eB}\right|_{(0,0,u,v,\tau)}=(s+p)(FG+EH\tau)\sin v$$
 $$\left. \frac{\partial \Psi_1}{\partial u}\right|_{(0,0,u,v,\tau)}=-(s+p)(EH+FG\tau)$$
  $$\left. \frac{\partial \Psi_1}{\partial v}\right|_{(0,0,u,v,\tau)}=-(q-r)(FG+EH\tau)$$
  $$\left. \frac{\partial \Psi_1}{\partial \tau}\right|_{(0,0,u,v,\tau)}=-FH$$
  Moreover, the equation $\Psi_1(0,0,u,v,\tau)=0$ is equivalent to 
$$EG-FH\tau=0.$$ 

Suppose that $\Psi$ were not a submersion at $(0,0,u,v,\tau)$.  Then $d\Psi_1$ and $d\Psi_2$ are linearly dependent.    
 
 If $d\Psi_2=0$, then  $AD+BC\tau=0$, $BC+AD\tau=0$ and $BD=0$. Therefore $AD=AD\tau^2$. Since $|\tau|\ne 1$, this implies $AD=0$ and $BC=0$.  If $A=\cos(-rv)=0$, then $B=\sin(-rv)\ne 0$, so $\cos(su)=C=0$ and hence $\sin(su)=D\ne0$, contradicting $BD=0$.  But if $A\ne 0$,  $\sin(su)=D=0$ and so $\cos(su)=C\ne 0$ and hence $B=0$. But this contradicts $AC-BD\tau=0$.  Therefore, $d\Psi_2\ne 0$.  
 Similarly, $d\Psi_1\ne 0$.

 Since neither $d\Psi_1$ nor $d\Psi_2$ is zero, there there exists a non-zero $\alpha\in \RR$ so that $\alpha d\Psi_1=d\Psi_2$. Comparing the first columns, (i.e., $\frac{\partial\ \ }{\partial \eA}$) and using the fact that   $\sin u\ne 0$ one sees 
 $$AD+BC\tau=\alpha\left(\frac{r-q}{r}\right)(EH+FG\tau).$$
 Similarly, comparing third columns gives
 $$AD+BC\tau=\alpha\left(\frac{s+p}{s}\right)(EH+FG\tau).$$
Since $\frac{(r-q)s}{(s+p)r}=1-\frac{1}{r(s+p)}\ne 1$,  and $\alpha\ne 0$, this implies that
$EH+FG\tau=0,$ and hence also $AD+BC\tau=0$.
Comparing the second and fourth columns and applying the same argument  yields 
$FG+EH\tau=0$ and $AD+BC\tau=0$.

Then $FG=-EH\tau=FG\tau^2$. Since $|\tau|\ne 1$, $FG=0=EH$ and, similarly, $AD=BC=0$. Since   neither $d\Psi_1$ nor $d\Psi_2$ is zero, $BD\ne 0$ and $FH\ne 0$. 
Thus $A=C=0$ and $G=E=0$.  

Recalling their definitions, this says that
 \begin{equation}
\label{cos}\cos(-rv)=\cos(su)=\cos((q-r)v)=\cos((s+p)u)=0.
\end{equation}
Hence there exist odd integers $k,\ell$ so that $-rv=k\frac{\pi}{2}$ and $(q-r)v=\ell \frac{\pi}{2}$. Thus $(q-r)k=-r\ell $. Since $r$ and $q-r$ are relatively prime, there exist odd integers
$m,n$ so that $k=rm$ and $\ell=(q-r)n$. Thus $v=\frac{k}{-r}\frac{\pi}{2}=-m\frac{\pi}{2}$.  Similarly, $u$ is an odd multiple of $\frac \pi 2$. Equation (\ref{cos}) then implies that $r,s,q-r,s+p$ are all odd, but then $p,q$ are both even, contradicts the fact that $p$ and $q$ are relatively prime.  Thus the assumption that $\Psi$ is not a submersion leads to a contradiction.
 
  Hence $\Psi$ is  a submersion at $(0,0,u,v,\tau)$, and so also near $(0,0,u,v,\tau)$. \end{proof}


\noindent{\sl Proof of Theorem \ref{torusknotsgood}.}
Recall that $W_{0,0}$ denotes the preimage in the box $[0,\pi]\times [0,\pi]\times [-1,1]$ of $0$ by the map $(u,v,\tau)\mapsto \Psi(0,0,u,v,\tau)$. Let $V$ be the intersection of a small 
open neighborhood of $W_{0,0}$ with the interior of the box.

Lemma \ref{sublemma}  implies that (after perhaps choosing a smaller $V$), there is a neighborhood $\mathcal{O}$ of $0$ in $\RR^2$ so that $\Psi: \RR^2\times [0,\pi]\times [0,\pi]\times [-1,1]\to \RR^2 $ restricts to a submersion on $\mathcal{O}\times V$. The {\em parameterized moduli space} $P:=\mathcal{O}\times V\cap \Psi^{-1}(0)$ is a smooth submanifold of $\mathcal{O}\times V$. By Sard's theorem, there exist   regular values $(\eA,\eB)$ of the composite $P\subset \mathcal{O}\times V\xrightarrow{\pi_\mathcal{O}} \mathcal{O}$ arbitrarlily close to 0. Its preimage in the interior of the box is identified with the non-abelian part of $R_{\eA,\eB}(Y,T)$ by Theorem \ref{WR}.  The structure near the two abelian representations was identified in Theorem \ref{endpointsgood}. 
\qed


 \section{Calculations for torus knots}\label{examples}

In this section we carry out calculations of $C^\nat(L_0,L_1)$  for some torus knots, including examples with non-trivial differentials. In what follows, we continue to use the description of the torus knot $K=T_{p,q}$  illustrated in Figure \ref{torusknot}, where we perform $-\frac s p $ Dehn  surgery on the component labeled $H_A$ and $\frac q r $ surgery on the component labeled $H_B$.  Figure \ref{torusknot} illustrates a decomposition
$$(S^3,T_{p.q})=(Y,T)\cup_{(S^2,\{a,b,c,d\})}(D,U).$$
Recall that this decomposition depends on the choice of integers $r,s$ satisfying $pr+qs=1$.  Different choices of $r,s$ lead to different pairs $R(Y,T)$ and restriction maps $L_1$.  

The identification of the spaces $R_\pi(Y,T)$ and their image in the pillowcase was done using a computer algebra package. 
\medskip

 Recall that Kronheimer-Mrowka prove (\cite{KM-khovanov,KM-Alexander}, also Lim  \cite{lim})  that the rank of the reduced singular instanton homology $I^\nat(S^3,K)$ is bounded below by the sum of the absolute value of coeffcients of the Alexander polynomial, which we denote by $|\Delta_K|$. 

It is conjectured that the reduced instanton homology of a torus knot $K$ has rank {\em equal} to  $|\Delta_K|$. This is a special case of a more sweeping conjecture which relates singular instanton homology of a knot and its  Heegaard-Floer homology.

If Conjecture \ref{con3} is true, then the rank of  $H^\nat(Y,T,\pi)$ must be at least as large as $|\Delta_K|$ for a tangle decomposition of a torus knot $K$, and if in addition  the conjecture of the previous paragraph is true, then rank$(H^\nat(Y,T,\pi))=|\Delta_K|$. The reader can verify that in all the examples given below, the rank of $H^\nat(Y,T,\pi)$ equals $|\Delta_K|$.

\medskip

 In  the following calculations, we take $L_0=L_0^{\ep,0}$ for a small $\ep>0$.   We calculate Maslov indices relative to the slope 1 line field $\ell_1$, making use of  Equation (\ref{spcase}) to simplify grading calculations. We make frequent use of the calculus described in Section \ref{special}.

\medskip

  \subsection{The $(5,11)$ torus knot}\label{5-11tk}
Consider the tangle associated to the $(5,11)$ torus knot, corresponding to the  choice $(5,11,9,-4)$. The {\em unperturbed} traceless character variety $R(Y,T)$  is a restricted immersed 1-manifold. 

In fact, $R(Y,T)$ is a union of an arc $R_0$ and   four circles $R_1,R_2,R_3,R_4$. 
 The arc $R_0$ embeds linearly with slope 2.     Two of the circles, say $R_1,R_2$ are vertically monotonic with vertical degree 2. The remaining circles $R_3,R_4$ each map precisely in the way illustrated in the  example of Figures \ref{re1}, \ref{PP}, \ref{re2}, and \ref{re3}.

  Since the signature of the $(5,11)$ torus knot is $-24\equiv 0$ mod 4, $R_0$ contributes $(1,0,0,0)$ to $H^\nat(Y,T)$ by our absolute grading convention.  Proposition \ref{vm} implies that the two vertically monotonic circles $R_1,R_2$  contribute $(1,1,1,1)$ each to $H^\nat(Y,T)$.   
 
The contributions  of $R_3$ and $R_4$ to $C(L_0,L_1)$ and $H^\nat(Y,T)$ were computed in detail in 
 Section \ref{examplecalc}; it was shown that each contributes a $(2,2,2,2)$ summand to    $C(L_0,L_1)$, each has two bigons contributing to the differential, and each contributes $(1,1,1,1)$ to the homology $H^\nat(Y,T)$.
Thus 
the differential $\partial:C(L_0,L_1)\to C(L_0,L_1)$ has rank 8 and
$$H^\nat(Y,T)=(1,0,0,0)\oplus_{i=1}^4(1,1,1,1)=(5,4,4,4).$$
In particular, the rank of $H^\nat(Y,T)$ is 17, which equals $|\Delta_K|$. The calculation of $I^\nat(S^3,T_{5,11})$ is unknown, but Conjecture \ref{con3} would imply that    the Kronheimer-Mrowka lower bound is attained for $T_{5,11}$.

\subsection{The  (3,7)  torus knot} Taking the decomposition of the $(3,7)$ torus knot corresponding to the choice $(p,q,r,s)=(3,7,5,-2)$, it is shown in \cite{FKP} that the space $R(Y,T)$ is the disjoint union of an arc $R_0$ (consisting of binary dihedral representations) and two circles $R_1$, $R_2$.  In particular, $L_1:R(Y,T)\to P$ is a restricted immersed 1-manifold.

The restriction  $L_1|_{R_0}:R_0\to P$ is the linear embedding of slope 2 (\cite[Theorem 4.1]{FKP}), given by 
$[0,\pi]\ni t\mapsto (t,2t)\in P$. The restrictions of $L_1$ to $R_1$ and $R_2$ have the same image, and each is a vertically monotonic circle of vertical degree 2.

The line segment of slope 2 has a unique intersection point with $L_0$, namely the point $r_+^\ep$. The signature of $T_{3,7}$ is $-8\equiv 0 $ mod 4 and so   $R_0$ contributes $(1,0,0,0)$ to $H^\nat(Y,T)$. 

 Since $R_1$ and $R_2$ are vertically monotonic with vertical degree 2,  Proposition \ref{vm} implies that $R_1$  and $R_2$ each contribute   a summand  $(1,1,1,1)$ to $C(L_0,L_1)$ and the differential is zero in these summands.
Hence 
$$H^\nat(Y,T)=(1,0,0,0)\oplus_{i=1}^2(1,1,1,1)=(3,2,2,2).$$ This agrees with the calculation of the reduced instanton homology of the $(3,7)$ torus knot $I^\nat(S^3, T_{3,7})$ (see \cite{HHK}), as well as the calculation of the ($\ZZ/4$ graded) reduced Khovanov homology $Kh^{red}(T_{3,7}^m)$.

\subsection{The  (5,7)  torus knot}  It is established in \cite{FKP} that  taking $(p,q,r,s)=(5,7,3,-2)$, $R(Y,T)$ is smooth, and has two components, an arc $R_0$ and a circle $R_1$. The restriction of $L_1$ to   $R_0$  is linear with slope 2.  The restriction of $L_1$ to the circle is a 2-1 cover onto its image which winds four times vertically around $P^*$.  In particular, $L_1:R_1\to P$ is vertically monotonic with vertical degree $d=8$.

The arc component $R_0$ has a unique intersection point with $L_0$, namely the point $r_+^\ep$.   The signature of the $(5,7)$ torus knot is $-16\equiv 0$ mod 4, and hence   $R_0$ contributes $(1,0,0,0)$ to $C(L_0,L_1)$ and $H^\nat(Y,T)$.  Since  $R_1$ is vertically monotonic with vertical degree $8$, Proposition \ref{vm} implies that $R_1$ contributes    $(4,4,4,4)$ to $C(L_0,L_1)$. Moreover, all differentials are zero in this summand.  Thus we conclude that all differential vanish and 
$$H^\nat(Y,T)=(5,4,4,4).$$ This agrees with the calculation of the reduced instanton homology $I^\nat(S^3, T_{5,7})$. Moreover, the reduced Khovanov homology $Kh^{red}(T_{5,7}^m)$   equals $(8,6,7,8)$, which has stricly larger rank,  corresponding to the fact that there are non-trivial higher differentials in the Kronheimer-Mrokwa spectral sequence from Khovanov to instanton homology \cite{KM-khovanov}.

\subsection{The  (5,12) torus knot}  Taking $(p,q,r,s)=(5,12,5,-2)$, it is shown in \cite{FKP} that $R(Y,T)\to P$ is a restricted immersed 1-manifold composed of one arc $R_0$ mapping with slope 6,  and two vertically monotonic circles $R_1,R_2$ each  of vertical degree 6. 

Proposition \ref{vm} shows that the components $R_1$ and $R_2$ each contribute $(3,3,3,3)$  to $C(L_0,L_1)$ and $H^\nat(Y,T)$. For the component $R_0$, we calculate in exactly the same manner as was done for 2-bridge knots. There are 5 representations, $r_+^\ep, x_1^\pm, y_1^\pm$, and $gr(r_+^\ep, x_1^+)=2$, $gr(x_1^+,x_2^+)=2$, and $gr(x_i^+,x^-_i)=1$. The signature of $K_{5,12}$ is $-28$, and hence $gr(r_+^\ep)=0$ mod 4. Since $R_0$ maps linearly with slope 6 to the pillowcase, there can be no bigons, and thus all differentials are zero. Therefore,  the component $R_0$ contributes $(2,1,1,1)$ to $H^\nat(Y,T)$, generated by $\{r_+^\ep, x_2^+\}, \{x_1^-\}, \{x_1^+\}, \{x_2^-\}$ respectively.   We conclude that 
$$H^\nat(Y,T)=(2,1,1,1)\oplus_{i=1}^2(3,3,3,3)=(8,7,7,7).$$ This agrees with the calculation of $I^\nat(S^3,T_{5,12})$ in \cite{HHK}, and is smaller than the reduced Khovanov homology $Kh^{red}(T_{5,12}^m)=(20,19,19,19)$.

\subsection{The   (5,17) torus knot}   Taking $(5,17,7,-2)$,  it is shown in \cite{FKP} that $R(Y,T)\to P$  is a restricted immersed 1-manifold, the union of an arc $R_0$ mapping linearly with slope 2 and three circles $R_1,R_2,R_3$ are vertically monotonic  with vertical degrees $6,6$  and $8$. The signature equals $-40\equiv 0$ mod 4, and so the single intersection $r_+^\ep$  on the arc $R_0$ contributes $(1,0,0,0)$ to $C(L_0,L_1)$ and $H^\nat(Y,T)$. Proposition \ref{vm} implies that the circles $R_1, R_2, R_3$ contribute  $(3,3,3,3),(3,3,3,3)$ and $(4,4,4,4)$  respectively to $C(L_0,L_1)$ and $H^\nat(Y,T)$. Hence
 $$H^\nat(Y,T)=(11,10,10,10),$$ which agrees with the calculation of the instanton homology $I^\nat(S^3,T_{5,17})$ in \cite{HHK}.     Two of the circles (the degree 6 circles) have the same image and the degree 8 circle double covers its image.

  \subsection{The (3,4) torus knot}     We next give an example of a tangle decomposition so that the unperturbed traceless character variety $R(Y,T)$ is not smooth.  After smoothing it using a holonomy perturbation, $R_\pi(Y,T)$ becomes a restricted immersed 1-manifold and one finds a non-trivial differential  in the summand corresponding to the arc component.   
  
  Take $(p,q,r,s)=(3,4,3,-2)$, it was shown in \cite[Proposition 11.4]{HHK} that $R(Y,T)$ is a singular space, obtained from 3 arcs $I_0\cong[0,\pi]$, $I_\pm\cong[\frac{\pi}{6},\frac{5\pi}{6}]$ by identifying the endpoints of $I_+$ and $I_-$ to $I_0$ at $\frac{\pi}{6}$ and $\frac{5\pi}{6}$ to form a singular variety homeomorphic to the letter  $\phi$.  The restriction map to the pillowcase takes the arc $I_0$ (consisting of binary dihedral representations) to the arc of slope $-2$ via $[0,\pi]\ni t\mapsto (\pi-t,-2t)$ and takes each of the two arcs $I_\pm$ to linear arcs of slope 4.  The space $R(Y,T)$ and its image in the pillowcase is illustrated in Figure \ref{fig34}.

    \begin{figure}
\begin{center}
\def\svgwidth{3.5in}
 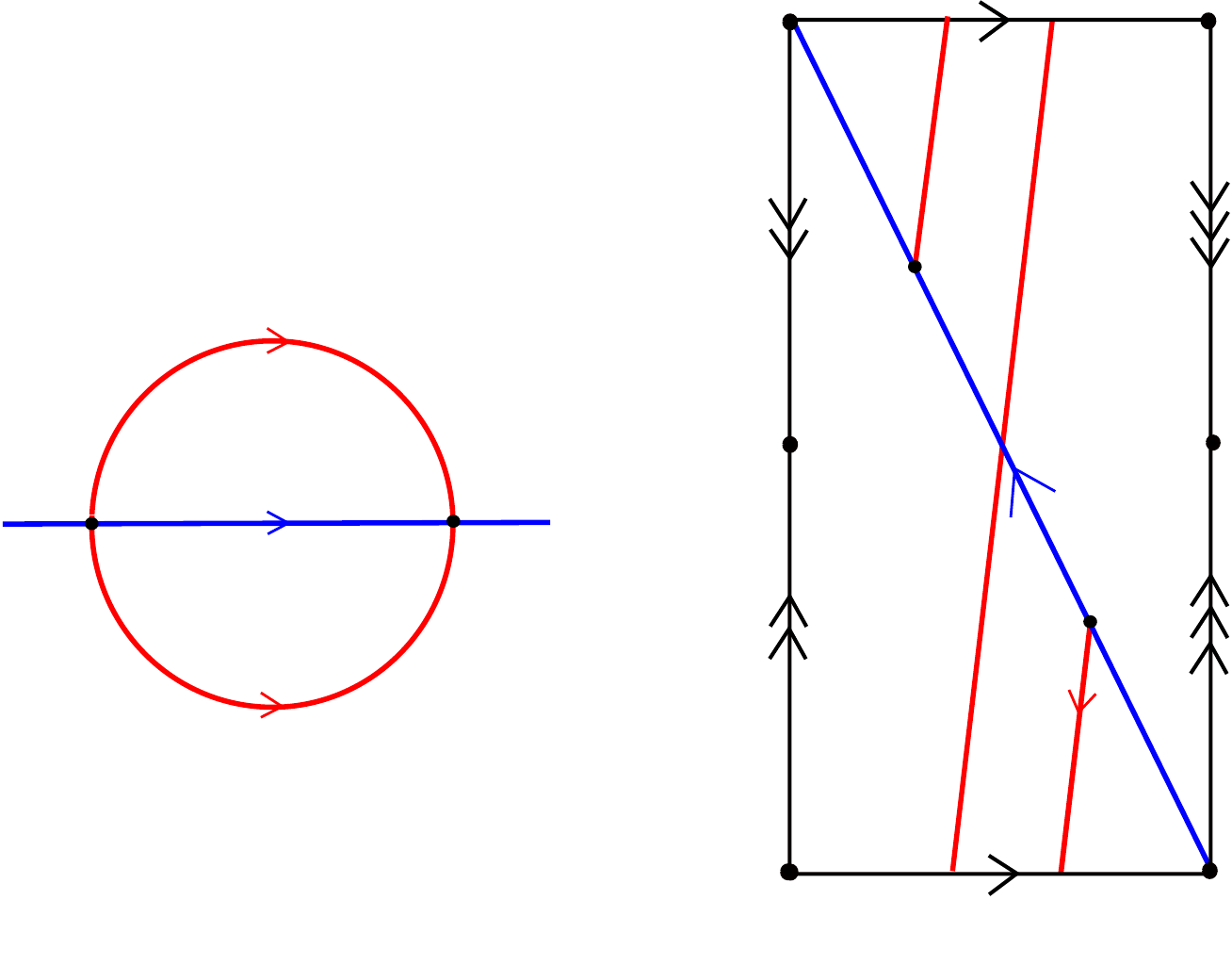
 \caption{$R(Y ,T)$ and its image in the pillowcase for the  (3,4) torus knot. \label{fig34}}
\end{center}
\end{figure} 

Applying Theorem \ref{torusknotsgood}, we can find an arbitrarily small perturbation so that $R_{\eA,\eB}(Y,T)$ is smooth.  A lengthy calculation (or using a computer algebra package) reveals that  $R_{\eA,0}(Y,T)$ is the union of an arc and a circle for any small non-zero $\eA$, as illustrated in Figure \ref{fig34pert}.

    \begin{figure}
\begin{center}
\def\svgwidth{3.5in}
 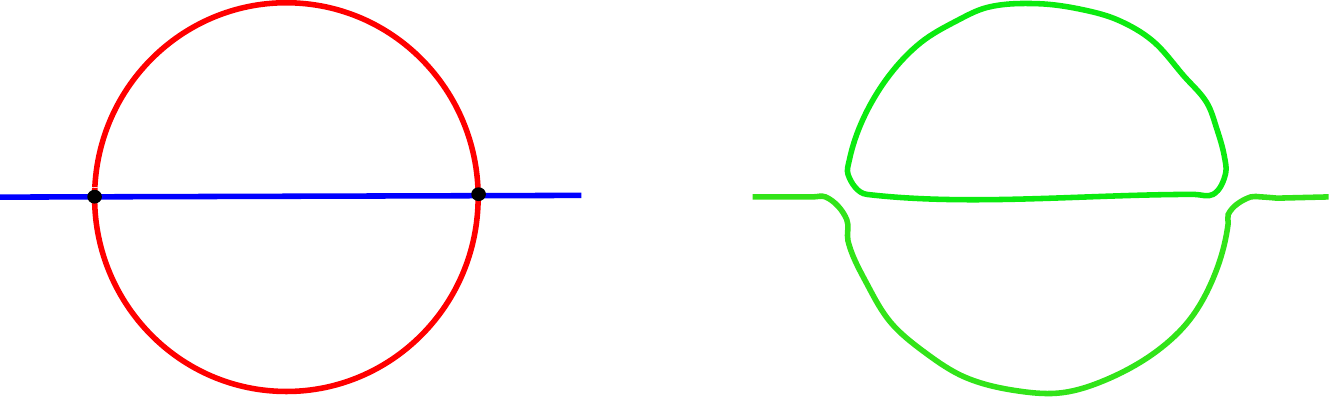
 \caption{$R(Y ,T)$ resolving to $R_{\eA,0}(Y,T)$ for the  (3,4) torus knot. \label{fig34pert}}
\end{center}
\end{figure}
The image of the perturbed character variety, a restricted immersed 1-manifold composed of one arc and one circle,  is illustrated in Figure \ref{fig34pert2}, along with the image of $L_0$. The arc component $R_0$ intersects $L_0$ in three points, $r_+^\ep,  x_1^+$ and $x_1^-$. The signature of the $(3,4)$ torus knot is $-6\equiv 2$ mod 4, and so $gr(r_+^\ep)=2$. The relative gradings are  $gr(x_1^+,x_1^-)=1$ and $gr(x_1^-,r_+^\ep)=1$. Hence $gr(x_1^-)=3$ and $gr(x_1^+)=0$.

A bigon from $x_1^-$ to $r_+^\ep$ is indicated in in Figure \ref{fig34pert2}.  This is the only bigon and gives the non-zero differential $\partial x_1^-=r_+^\ep$.  Thus $R_0$ contributes $(1,0,0,0)$ to $H^\nat(Y,T,\pi)$. The circle component $R_1$ is vertically monotonic  of vertical  degree 2 and hence contributes $(1,1,1,1)$ to $C(L_0,L_1)$ and $H^\nat(Y,T)$. Hence $$H^\nat(Y,T,\pi)=(2,1,1,1),$$ which is isomorphic   to the reduced instanton homology $I^\nat(S^3,T_{3,4})$, as well as the reduced Khovanov homology $Kh^{red}(T_{3,4}^m)$.

    \begin{figure}
\begin{center}
\def\svgwidth{4in}
 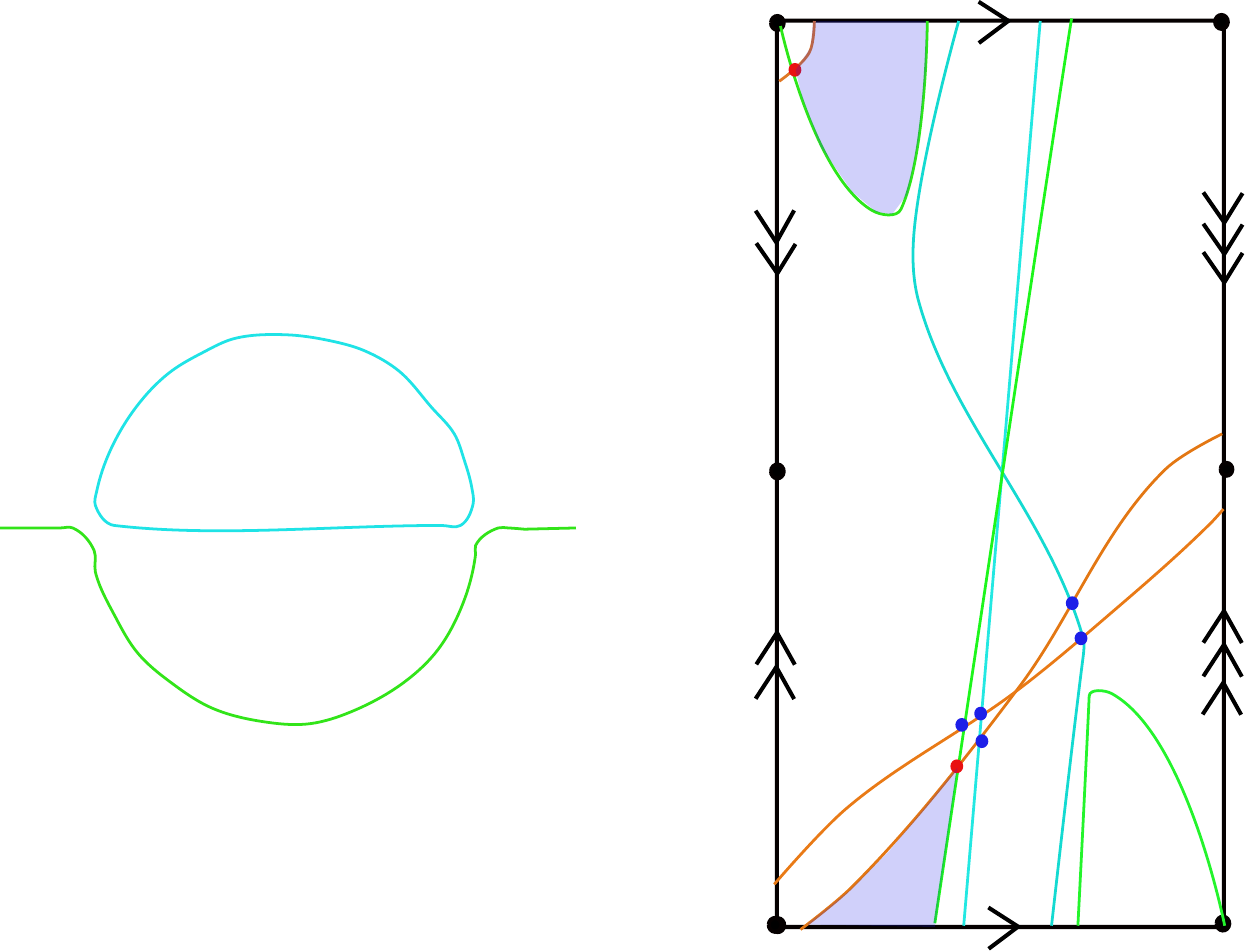
 \caption{The image of $R_{\eA,0}(Y,T)$ in the pillowcase for the (3,4) torus knot.  The bigon giving a non-trivial differential is shaded. \label{fig34pert2}}
\end{center}
\end{figure}

 \subsection{The (3,5) torus knot}     Taking $(p,q,r,s)=(3,5,2,-1)$, the space $R(Y,T)$ is again a (singular) $\phi$ curve, made up of an arc $I_0=[0,\pi]$ of traceless binary dihedral representations which maps to the bottom edge of the pillowcase with slope 0: $ [0,\pi]\ni t\mapsto (t,0)$ and  two arcs $I_\pm
 =[\tfrac\pi 6,\tfrac{5\pi} 6]$ whose interiors consist of non-binary dihedral representations and which map to the   pillowcase linearly with slope 6: $I_\pm
 =[\tfrac\pi 6,\tfrac{5\pi} 6]\ni t\mapsto (t, 6(t-\tfrac{\pi} 6))$.

However,   the singularities resolve differently than in the case of the $(3,4)$ torus knot which was illustrated in Figure \ref{fig34pert}: the perturbed variety $R_{\eA,\eB}(Y,T)$  is a single arc $R_0$.  The image of this arc in the pillowcase is illustrated in 
Figure \ref{fig35pert}.     
     \begin{figure}
\begin{center}
\def\svgwidth{4.4in}
 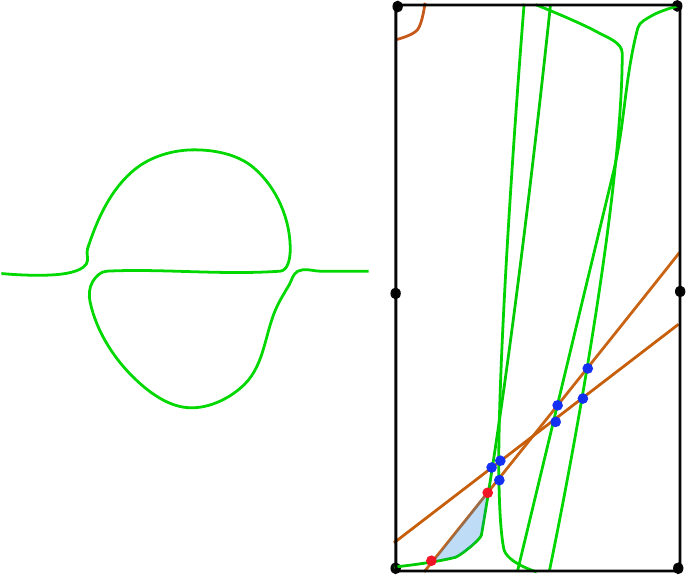
 \caption{The image of $R_{\eA,\eB}(Y,T)$ and $L_0$  in the pillowcase for the (3,5) torus knot. The bigon giving a non-trivial differential is shaded. \label{fig35pert}}
\end{center}
\end{figure}        
 
One can easily see a bigon from  $x_1^-$ to $r_+^\ep$.  This is the only bigon, so that $\partial x_1^-=r_+^\ep$ is the only non-zero differential, and $gr(x_1^-,r_+^\ep)=1$. The signature of the $(3,5)$ torus knot is $-8$, so that $gr(r_+^\ep)=0$. One computes $gr(x_1^-,x_2^-)=gr(x_2^-,x_3^-)=gr(x_3^-,x_4^-)=2$. Together with $gr(x_\ell^+,x_\ell^-)=1$, this gives $C(L_0,L_1)=(3,2,2,2)$ and 
$$H^\nat(Y,T,\pi)=(2,1,2,2).$$  Once again, this agrees as an absolutely $\ZZ/4$ graded group, with $I^\nat(S^3,T_{3,5})$ (and $Kh^{red}(T_{3,5}^m)$).
 

      \subsection{The (4,5) torus knot}    This example is interesting in the context of instanton homology, as it was shown by Kronheimer-Mrowka \cite{KM-filtrations} that there is a non-trivial  higher differential in their spectral sequence from Khovanov homology to instanton homology.

The description of $R(Y,T)$ for the $(4,5)$ torus knot, corresponding to the tangle decomposition associated to  $(p,q,r,s) =(4,5,4,-3)$,  is analyzed in detail in \cite[Section 4]{FKP}.  We refer the reader to that article, where it is shown that $R(Y,T)$ is again a $\phi$ curve, and its restriction map to $P$ is illustrated, along with its nine intersection points with $L_0$ generating $C(L_0,L_1)$  and the reduced instanton complex.

  \begin{figure}
\begin{center}
\def\svgwidth{2in}
 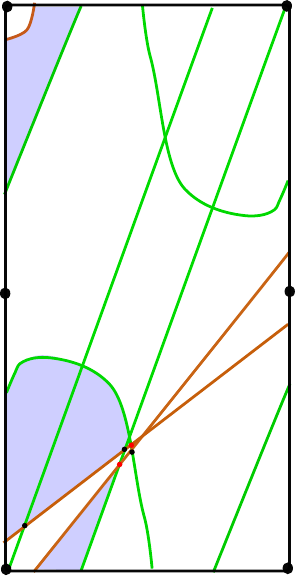
 \caption{The image of the immersed arc $R_0\subset R_{\eA,\eB}(Y,T)$ and $L_0$  in the pillowcase for the (4,5) torus knot. The bigon giving a non-trivial differential is shaded. \label{fig45pert}}
\end{center}
\end{figure}  
 
 A computer-aided calculation shows that  $R_{\eA,0}(Y,T)$ is the union of an arc $R_0$ and a circle $R_1$ and that the restriction to the pillowcase is a restricted immersed 1-manifold.   The circle $R_1$ is vertically monotonic of vertical degree 2. Hence $R_1$ contributes $(1,1,1,1)$ to $C(L_0,L_1)$ and $H^\nat(Y,T)$.

      The image of  the arc $R_0\subset R_{\eA,0}(Y,T)$ in the pillowcase is illustrated in Figure \ref{fig45pert}.  There is only one bigon and, in contrast to the examples of the $(3,5)$ and $(4,5)$ torus knots given above, the non-trivial differential does not involve the canonical generator $r^\ep_+$.  The signature of the $(4,5)$ torus knot is $-8$, so that $gr(r_+^\ep)=0$.

   One computes that $R_0$  contributes  $(2,1,1,1)$ to  $C(L_0,L_1)$. This uses the observation that $R_0$ has two slope 1 tangencies.  The differential takes a generator in grading 1 to a generator in grading 0. Hence $R_0$ contributes $(1,0,1,1)$ to $H^\nat(Y,T,\pi)$, so that 
   $$H^\nat(Y,T,\pi)=(2,1,2,2).$$ Once again, this is isomorphic as a $\ZZ/4$ graded group to $I^\nat(S^3,T_{4,5})$, computed in  \cite{KM-filtrations}.  

\medskip

It is worth contrasting this calculation with the one Kronheimer-Mrowka give of the instanton homology $I^\nat(S^3,K)$. They start with the count of the nine generators and their relative grading to get  the relatively  graded chain complex with ranks (up to cyclic reordering) $(3,2,2,2)$. They then compare this to $Kh^{red}(T_{4,5})=  (2,1,3,3)$ to conclude, from the incompatibility of gradings, that there must be a non-trivial  differential. A further non-trivial argument identifies this differential. By contrast, the differential is manifest in our pictures. Of course, Conjecture \ref{con3} may be false, and so one should be cautious in drawing   conclusions.

     \subsection{The (4,7) torus knot}         
We take the tangle decomposition of the $(4,7)$ torus knot determined by the choice $(p,q,r,s)=(4,7,2, -1)$. The singular variety  $R(Y,T)$ and its image in the pillowcase is illustrated in  \cite[Figure 9]{FKP}.   The smoothed variety $R_{\eA,\eB}(Y,T)$ is the union of an arc $R_0$ and two circles $R_1,R_2$. The map to the pillowcase is a restricted immersed 1-manifold.

         \begin{figure}
\begin{center}
\def\svgwidth{5.5in}
 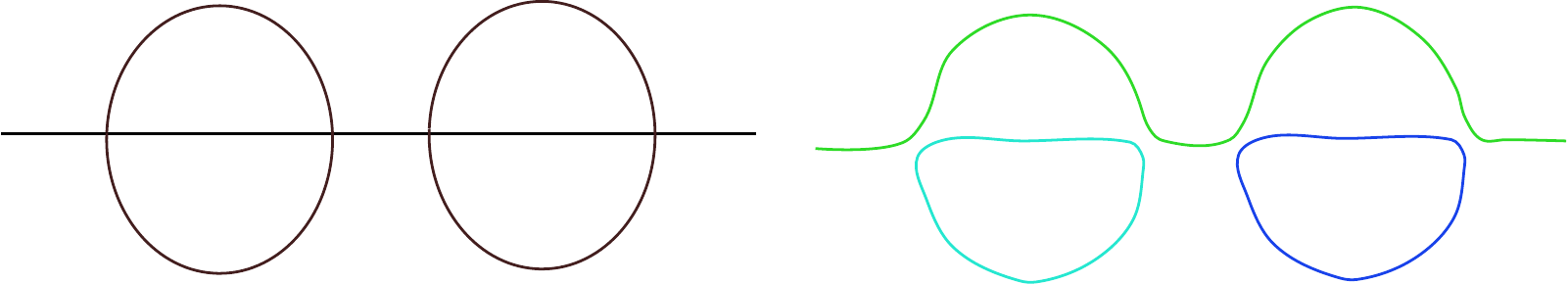
 \caption{$R(Y,T)$ and its smoothing $R_{\eA,\eB}(Y,T)=R_0\cup R_1\cup R_2$ for the (4,7) torus knot.  \label{fig47pert}}
\end{center}
\end{figure}

              \begin{figure}
\begin{center}
\def\svgwidth{1.7in}
 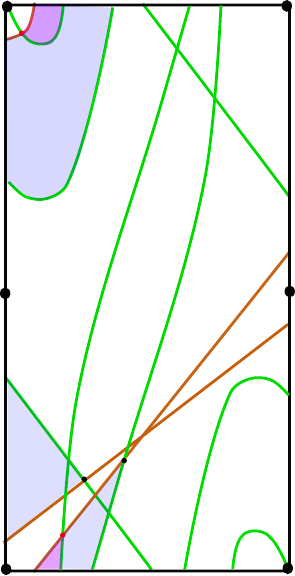
 \caption{The image of the immersed arc $R_0\subset R_{\eA,\eB}(Y,T)$ in the pillowcase for the $(4,7)$ torus knot. The two bigons are shaded.  \label{fig47pert2}}
\end{center}
\end{figure}  

The two circles each map to vertically monotonic circles of vertical degree 2; each contributes $(1,1,1,1)$ to $H^\nat(Y,T,\pi)$. The restriction map of the arc $R_0$ to the pillowcase  is illustrated in Figure \ref {fig47pert2}. There are seven intersection points, and two bigons are shaded.   Notice also the four points where the tangent line of $L_1(R_0)$ is tangent to the slope 1 line  field $\ell_1$.  
The $(4,7)$ torus knot has signature $-14\equiv 2$ mod 4, so that $gr(r_+^\ep)=2$.  From this one computes that the contribution of $R_0$ to $C(L_0,L_1)$ is $(2,1,2,2)$ and to $H^\nat(Y,T,\pi)$ is $(1,0,1,1)$. Therefore, 
$$H^\nat(Y,T,\pi)= (3,2,3,3).$$
 In particular, the rank is 11, equal to the sum of the absolute value of the coefficients of the Alexander polynomial of $T_{4,7}$.

The calculation of $I^\nat(S^3,T_{4,7})$ is unknown to us. Notice that $C(L_0,L_1)=(4,3,4,4)$ (with one non-trivial differential from grading 1 to 0 and the other from grading 3 to 2). This is consistent  with the relative gradings computed for the instanton chain complex in \cite{HHK}. 
In  that article we computed gradings  using a spectral flow splitting formula   approach (suggested in \cite{KM-khovanov}), based, not on a tangle decomposition as in the current article, but rather on the decomposition of the form
$(S^3,K)=(S^3\setminus N(K),\phi)\cup_{T^2}(N(K),K)$.  
This yielded  $CI^\nat(S^3,T_{4,7})=(1,0,0,0)_a\oplus (4,4,3,3)_b$, where the subscripts denote a possible cyclic reordering. We conjecture in that article that $a=\sigma(K)$ and $b=3$, which implies that 
$CI^\nat(S^3,T_{4,7})=(0,0,1,0)\oplus(4,3,3,4)=(4,3,4,4)$. Thus, although we do not know the instanton homology, we do see that the  generators of the instanton chain complex occur in the same gradings as for  $C(L_0,L_1)$.

\subsection{Changing $\ep$ to cancel bigons} 

  Consider the effect of varying $\ep$ in the definition for $L^{\ep,0}_0:S^1\to P^*$ in Definition \ref{tilfixed} and illustrated in Figure \ref{Lzero}. 

 For very small $\ep>0$, there are $2n+1$  intersection points of $L_1(R(Y,T))$ and $L_0^\ep(R(D,U))$, where $n$ corresponds to the number of intersections of $L_1(R(Y,T))$ with the diagonal arc $\Delta$, or equivalently (see \cite{HHK})  $n$ equals the number of conjugacy classes of non-abelian traceless representations of the corresponding knot complement. This is doubled to account for the fact that $L_0^{\ep,0}$ is a figure 8 close to $\Delta$, and the extra intersection point corresponds to the the perturbation of the unique abelian traceless representation which is  mapped to the corner.   An illustration of this is given in Figure \ref{fig72}, where one sees  $10=2\cdot 5$ points, labeled $x_i^\pm$, corresponding to the five intersection points with the interior of $\Delta$, and one extra point $r_+^\ep$ which converges to the bottom left corner as $\ep\to 0$.
 
   Increasing  the parameter $\ep$ in  the holonomy perturbation function   makes the figure 8 $L_0^{\ep,0}$ wider (see \cite{HHK}).  In some circumstances, the regular homotopy of $L_0^{\ep,0}$ obtained by increasing $\ep$ can be used to cancel pairs of intersection points, and hence reduce the rank $C(L_0,L_1)$. 
 
 For example, for the $(3,5)$ torus knot, a bigon is illustrated in Figure \ref{fig35pert}.  As $\ep$ increases, the pair of intersection points $r_+^\ep$ and $x_1^-$ get closer together and eventually cancel.    Explicitly, when $\ep=0.2, \ep_A=0.1$, and $\ep_B=0$, $L_1 (R_{\eA, 0}(Y,T))$ intersects $L_0$ in 7 points. This is the minimum possible by the lower bound   given by the sum of absolute value of the coefficients of the Alexander polynomial ${t}^{8}-{t}^{7}+{t}^{5}-{t}^{4}+{t}^{3}-t+1$ of the $(3,5)$ torus knot, since, by Proposition \ref{instanton}, the rank of $C(L_0^{\ep,g},L_1)$ cannot be smaller than the rank of the instanton homology. 
 
 The same method works  for   the tangle decomposition of the $(3,4)$ torus knot with $R(Y,T)$  illustrated in Figure \ref{fig34}; increasing $\ep$ to $0.8$ removes the two generators spanned by a bigon.   
This shows that a suitably perturbed Chern-Simons function on the configuration space of the $(3,4)$ torus knot is perfect.  

We summarize:
\begin{prop}
 There exists a holonomy perturbations $CS+h$ of the Chern-Simons function on the orbit space of singular connections on   $(S^3,T_{3,4})$ and on $(S^3,T_{3,5})$   so that $CS+h$ is  perfect, and hence all differentials in the singular instanton complex vanish. \qed
\end{prop}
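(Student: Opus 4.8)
The plan is to reduce the proposition to the counting statement recorded in the preceding discussion: for $K=T_{3,4}$ with the tangle decomposition of $(p,q,r,s)=(3,4,3,-2)$ and parameters $\ep=0.8$, $\eB=0$, $\eA$ a small nonzero constant, and for $K=T_{3,5}$ with the decomposition of $(p,q,r,s)=(3,5,2,-1)$ and parameters $\ep=0.2$, $\eA=0.1$, $\eB=0$, the restricted immersed $1$-manifold $L_1\colon R_{\eA,\eB}(Y,T)\to P$ meets $L_0=L_0^{\ep,0}$ transversely in exactly $|\Delta_K|$ points, where $|\Delta_K|$ denotes the sum of the absolute values of the coefficients of the Alexander polynomial (here $|\Delta_{T_{3,4}}|=5$ and $|\Delta_{T_{3,5}}|=7$). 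Granting this, the remainder of the argument is purely formal, and I describe it first.

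Fix one of the two knots $K$ and let $h$ be the holonomy perturbation of the Chern--Simons functional on the orbit space of singular connections on $(S^3,K)$ assembled from (i) the perturbation $\pi=(\eA,\eB)$ along the surgery cores $P_A,P_B\subset Y$ provided by Theorem~\ref{torusknotsgood}, which makes $R_\pi(Y,T)$ a smooth restricted immersed $1$-manifold (and one checks from the explicit picture that it has no fishtails), (ii) the collar perturbation $\pi_\ep$ of Theorem~\ref{fig8} realising $R^\nat_{\pi_\ep}(D,U)$ as $L_0^{\ep,0}$ (this exists for every $\ep>0$, and for the moderate values $\ep=0.2,0.8<1$ the curve $L_0^{\ep,0}$ is still a self-transverse unobstructed immersed circle with one double point), and (iii) a further small collar perturbation (Theorem~\ref{collar1}) making $L_0$ and $L_1$ transverse. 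By the construction of Kronheimer--Mrowka as implemented in \cite{HHK} (Proposition~\ref{instanton}), $CS+h$ is Morse, its critical set is in bijection with $L_0\cap L_1$, the corresponding Morse (equivalently, singular instanton) complex is $(C(L_0,L_1),\partial_{KM})$, and its homology is $I^\nat(S^3,K)$. Combining this with the Kronheimer--Mrowka lower bound $\rank I^\nat(S^3,K)\ge|\Delta_K|$ of \cite{KM-khovanov,KM-Alexander,lim} yields
\[
|\Delta_K|\;\le\;\dim_{\FF_2}I^\nat(S^3,K)\;\le\;\dim_{\FF_2}C(L_0,L_1)\;=\;|L_0\cap L_1|\;=\;|\Delta_K|,
\]
so every inequality is an equality. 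In particular $\dim_{\FF_2}C(L_0,L_1)=\dim_{\FF_2}H_*(C(L_0,L_1),\partial_{KM})$, which over $\FF_2$ forces $\partial_{KM}=0$. Hence $CS+h$ is perfect and all differentials in the singular instanton complex vanish, for each of $(S^3,T_{3,4})$ and $(S^3,T_{3,5})$.

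The one substantive step is therefore the equality $|L_0\cap L_1|=|\Delta_K|$ at the stated parameters, and this is where I expect the real work (and the main obstacle) to lie. The direct route, parallel to the computations carried out elsewhere in this section, is a computer-aided identification of the smooth resolution $R_{\eA,0}(Y,T)$ of the singular $\phi$-curve $R(Y,T)$ of Figure~\ref{fig34} (respectively of the analogous $\phi$-curve for $T_{3,5}$), together with its image in the pillowcase and its intersection with $L_0^{\ep,0}$ for the chosen $\ep$; one then simply reads off $5$ (respectively $7$) intersection points. A second, more structural way to see the count is to track the regular homotopy of $L_0^{\ep,0}$ as $\ep$ increases from a small value to the stated one: the figure eight widens, and one verifies that the only change in the intersection pattern with $L_1$ is the annihilation of the unique bigon pair $\{r_+^\ep,x_1^-\}$ of Figure~\ref{fig34pert2} (respectively Figure~\ref{fig35pert}), with no new intersection points created, so that $|L_0\cap L_1|$ drops by exactly $2$; Corollary~\ref{indepofep} guarantees that this regular homotopy leaves $HF(L_0,L_1)$ unchanged, even though it reduces the number of generators. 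Verifying that no additional intersection is created along the way (equivalently, that the widening figure eight does not sweep across another arc of $L_1$) is the delicate point in the hands-on version, which is why the computer-assisted identification of $R_{\eA,0}(Y,T)$ is the cleaner path.
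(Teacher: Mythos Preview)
Your argument is correct and follows exactly the paper's own reasoning: the preceding discussion records that at the stated parameters $|L_0\cap L_1|=|\Delta_K|$, and you combine this with Proposition~\ref{instanton} and the Kronheimer--Mrowka Alexander polynomial bound to force $\partial_{KM}=0$. This is precisely why the proposition is stated with a \qed and no separate proof.
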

   
In general, simply increasing $\ep$ does not eliminate every pair of generators spanned by a  bigon.  For example,  in the 2-tangle decomposition of the $(4,5)$ torus knot, increasing $\ep$ increases the number of intersection points of $L_0^\ep$ with $L_1$. For the $(4,7)$  torus knot, one pair ($r^\ep_+$ and $x_3^-$ in Figure \ref{fig47pert2}) can be eliminated by increasing $\ep$ but the second pair ($x_2^+$ and $x_1^-$) cannot.

 Theorem \ref{rhi}  implies that  $H^\nat(L_0,L_1)$ is unchanged by a homotopy of $L_0$, and, in particular,  bigons can be used as guides to regularly homotop away pairs of intersection points.  For example, in the case of the $(4,7)$ torus knot, one can easily find a curve $L_0'$ homotopic to $L_0^\ep$ so that all differentials in the resulting complex are zero. It is not clear whether such an $L_0'$ can can be found only using holonomy pertubations.
 
\section{Loose ends}

Several problems remain to be settled before our approach can be considered as producing a functioning invariant of knots.  Conjectures \ref{con1} and \ref{con3} need to be further investigated. 

An important 
first problem is to determine the extent to which $H^\nat(Y,T,\pi)$ depends on the perturbation $\pi$.  Given two perturbations $\pi$, $\pi'$ for which $ R_\pi(Y,T)\to P$ and $ R_{\pi'}(Y,T)\to P$ are both restricted immersed 1-manifolds, they are not necessarily related by a regular homotopy, but rather by a {\em Legendrian cobordism} \cite{herald1}.  For example, in the calculations with torus knots described in Section \ref{examples},  
  we used the perturbation corresponding to $\eA>0, \eB=0$ with $\eA$ small to smooth $R(Y,T)$.  Typically, using  $\eA<0,\eB=0$ resolves the normal crossing  singularities along the arc of binary dihedrals in the opposite way. For these examples, the resulting $H^\nat(Y,T,\pi)$ is unchanged by reversing the sign of $\eA$.
But in general, Legendrian cobordisms need not preserve Lagrangian-Floer homology.

A closely related question concerns the existence of fishtails, which obstruct $\partial^2=0$.
We would like to know that there are no fishtails for small perturbations $\pi$.

A third question concerns the relationship of $H^\nat(Y,T,\pi)$ to reduced Khovanov homology, a question already solved for Heegaard-Floer theory in \cite{OS} and for singular instanton homology in \cite{KM-khovanov}.   In forthcoming work we explore this question,  extending the definition of $H^\nat(Y,T)$  to include links, and we have established a 
skein exact triangle for $H^\nat(Y,T,\pi)$.  We expect this to lead to a spectral sequence from Khovanov homology to $H^\nat(Y,T,\pi)$ and to an approach to prove Conjecture \ref{con1}.

A fourth question concerns the promoting of the constructions of this article to $n$-tangle decompositions of knots and links. Some related work   includes \cite{JR}, which studies   decompositions of a knot into two trivial $n$-tangles.  
The symplectic variety corresponding to the pillowcase in this setting is no longer 2-dimensional, making it much more difficult to understand and compute with.

In a different direction, the rich collections of isotopies of the pillowcase described in  Theorems \ref{collar1} and \ref{collar2} are induced by holonomy perturbations, which also induce analytically appropriate perturbations of the Chern-Simons functional for the construction of instanton homology.     These should prove useful in isotoping $L_0^\ep$ to reduce the number of generators of the instanton chain complex.

  
\end{document}